\documentclass[letterpaper, 11pt]{amsart}
\usepackage{mathtools}
\usepackage{amsmath}
\usepackage{amssymb}
\usepackage{yhmath}
\usepackage{graphicx}
\usepackage{mathrsfs}
\usepackage{bbm}
\usepackage{tikz-cd}
\usepackage{tikz}
\usetikzlibrary{patterns}
\usepackage{hyperref}

\usepackage{verbatim}
\usepackage{float}

\DeclareMathAlphabet{\mathpzc}{OT1}{pzc}{m}{it}

\usepackage{thmtools}
\usepackage{thm-restate}

\usepackage{caption}

\newtheorem{theorem}{Theorem}[section]

\newtheorem*{claim*}{Claim}

\newtheorem{lemma}[theorem]{Lemma}

\newtheorem{corollary}[theorem]{Corollary}

\newtheorem{proposition}[theorem]{Proposition}

\newtheorem{prop}[theorem]{Proposition}

\theoremstyle{definition}
\newtheorem{definition}[theorem]{Definition}
\newtheorem{Def}[theorem]{Definition}
\newtheorem{example}[theorem]{Example}

\theoremstyle{remark}
\newtheorem{remark}[theorem]{Remark}

\numberwithin{equation}{section}

\newcommand{\norm}[1]{\lVert#1\rVert}
\newcommand{\op}{\operatorname}

\newcommand{\be}{\begin{equation}}
\newcommand{\ee}{\end{equation}}
\newcommand{\Ga}{\Gamma}

\newcommand{\R}{\mathbb R}
\renewcommand{\H}{\mathbb H}
\newcommand{\Z}{\mathbb Z}
\newcommand{\N}{\mathbb N}
\newcommand{\ga}{\gamma}

\newcommand{\la}{\lambda}
\newcommand{\La}{\Lambda}

\newcommand{\cal}{\mathcal}
\newcommand{\br}{\mathbb R}
\newcommand{\SO}{\op{SO}}
\newcommand{\Isom}{\op{Isom}}

\newcommand{\F}{\cal F}

\newcommand{\hull}{\op{hull}}
\newcommand{\stab}{\op{Stab}}

\newcommand{\diam}{\op{diam}}

\renewcommand{\frak}{\mathfrak}

\newcommand{\e}{\varepsilon}

\renewcommand{\L}{\mathcal L}
\newcommand{\fa}{\mathfrak a}

\renewcommand{\i}{\op{i}}

\newcommand{\so}{\SO^\circ}

\newcommand{\id}{\op{id}}

\newcommand{\fg}{\frak g}

\newcommand{\rank}{\op{rank}}
\newcommand{\Lie}{\op{Lie}}

\newcommand{\lat}{\La_{\theta}}
\newcommand{\ft}{\F_\theta}

\renewcommand{\epsilon}{\e}

\renewcommand{\d}{\mathsf{d}}

\newcommand{\SL}{\op{SL}}

 \title[A global shadow lemma in higher rank]{A global shadow lemma \\ for   relatively Morse groups in higher rank}

\author{Dongryul M. Kim}
\address{Department of Mathematics, Yale University, New Haven, CT 06511}
\email{dongryul.kim97@gmail.com}

\author{Hee Oh}
\address{Department of Mathematics, Yale University, New Haven, CT 06511}
\email{hee.oh@yale.edu}

\thanks{Oh is partially supported by the NSF grant No. DMS-2450703.}

\begin{document}

\begin{abstract}
Patterson-Sullivan measures encode the distribution of orbits of 
discrete group actions near the boundary. 
In this paper, we prove a
global shadow lemma for Patterson-Sullivan measures associated to
relatively Morse subgroups of higher-rank semisimple Lie groups. The
estimate is uniform for shadows centered at arbitrary points in a Gromov
model, including points deep in the cuspidal part. 
This extends the global shadow lemma of Stratmann-Velani for
geometrically finite real hyperbolic groups.  As applications, we obtain  uniform local estimates for Patterson-Sullivan measures, and we
 give sufficient conditions under which these measures agree, up to scale, with the
Hausdorff measure defined by the associated visual quasi-metric. 
\end{abstract}

\maketitle

\tableofcontents

\section{Introduction}

One of the basic themes in geometric group theory and dynamics is that the large-scale geometry of a discrete group action is reflected in the way its orbits accumulate at infinity.  Patterson-Sullivan theory makes this principle quantitative: starting from the exponential growth of an orbit, it produces natural measures on the boundary, and these measures reveal the asymptotic distribution of the orbit through shadow lemmas, counting estimates, and equidistribution phenomena.  In real hyperbolic geometry, this theory has been a fundamental tool in the study of Kleinian groups and negatively curved manifolds.  This paper develops a higher-rank analogue for relatively Morse subgroups of semisimple Lie groups, where the visual boundary is replaced by an appropriate flag manifold.  The main goal is to prove a global shadow lemma for the corresponding Patterson-Sullivan measures.  We begin with the classical real hyperbolic case, both as motivation and as a guide to the higher-rank relative setting considered later.

 Let
$\Gamma < \Isom^+(\H^n)$ be a non-elementary discrete subgroup, let
$o \in \H^n$, and let $\Lambda \subset \partial \H^n$ be the limit set of
$\Gamma$.  Patterson \cite{Patterson1976limit} and Sullivan
\cite{Sullivan1979density} constructed a Borel probability measure $\nu$ supported on
$\Lambda$ whose transformation rule is governed by the critical exponent
$\delta_\Gamma > 0$: 
$$
\frac{d\gamma_*\nu}{d\nu}(\xi)
=
e^{\delta_\Gamma \beta_\xi(o,\gamma o)}
\quad \text{for all } \gamma \in \Gamma
\text{ and } \nu\text{-a.e. } \xi \in \Lambda .
$$
Here $\beta_\xi$ denotes the Busemann function.  The measure $\nu$ is now referred to as the
Patterson-Sullivan measure of $\Gamma$.

A fundamental feature of this measure is Sullivan's shadow lemma.  For
$x,y\in \H^n$ and $R>0$, let $O_R(x,y)\subset \partial \H^n$ denote the
shadow of the ball $B(y,R)$ seen from $x$, namely the set of endpoints
$\xi$ such that the geodesic ray $[x,\xi] \subset \H^n$ intersects the $R$-neighborhood of
$y$.

\begin{theorem}[{Shadow Lemma \cite[Proposition 3]{Sullivan1979density}}] \label{thm.Sullivanshadowlemma}
For all sufficiently large $R>0$, there exists $C>1$ such that
$$
C^{-1}e^{-\delta_\Ga d(o,\ga o)}\le \nu(O_R(o,\ga o))\le C e^{-\delta_\Ga d(o,\ga o)}
\quad \text{for all } \ga\in \Ga.
$$
\end{theorem}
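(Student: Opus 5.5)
The plan is to transport the shadow back to the basepoint by $\ga^{-1}$ and use the conformality of $\nu$. Since $\ga^{-1}O_R(o,\ga o)=O_R(\ga^{-1}o,o)$, we have
$$
\nu(O_R(o,\ga o))=(\ga^{-1})_*\nu\big(O_R(\ga^{-1}o,o)\big)=\int_{O_R(\ga^{-1}o,o)} e^{\delta_\Ga\beta_\xi(o,\ga^{-1}o)}\,d\nu(\xi).
$$
Here I use that $(\ga^{-1})_*\nu$ is absolutely continuous with respect to $\nu$ with density $e^{\delta_\Ga\beta_\xi(o,\ga^{-1}o)}$, which is the transformation rule applied to $\ga^{-1}$.

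The first key step is a geometric estimate. For $\xi\in O_R(\ga^{-1}o,o)$ the ray $[\ga^{-1}o,\xi]$ passes within $R$ of $o$. A triangle inequality argument on the Busemann function then gives
$$
\big|\beta_\xi(o,\ga^{-1}o)+d(o,\ga^{-1}o)\big|\le 2R.
$$
Indeed, parametrize the ray and compare distances from $o$ and from $\ga^{-1}o$ to points far along it. Since $d(o,\ga^{-1}o)=d(o,\ga o)$, the integrand lies between $e^{-2\delta_\Ga R}e^{-\delta_\Ga d(o,\ga o)}$ and $e^{2\delta_\Ga R}e^{-\delta_\Ga d(o,\ga o)}$. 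The upper bound follows at once because $\nu$ is a probability measure. The lower bound reduces to showing that
$$
\inf_{\ga\in\Ga}\nu\big(O_R(\ga^{-1}o,o)\big)>0
$$
once $R$ is sufficiently large.

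The second step, which I expect to be the main obstacle, is this uniform lower bound. I would first show that $\nu$ has no atom of full mass, and in fact that $\nu$ is not a single Dirac mass. The reason is that $\Ga$ is non-elementary, so $\Lambda$ is infinite. If $\nu$ were $\delta_p$, quasi-invariance would force $\Ga$ to fix $p$, which is impossible. So $\nu$ is not concentrated at one point, and there exists $\e>0$ such that $\nu(\partial\H^n\smallsetminus B(p,\e))\ge c>0$ for every $p\in\partial\H^n$, where $B(p,\e)$ is a visual ball. To get this uniformly in $p$, I would argue by contradiction using compactness of $\partial\H^n$ and weak-$*$ limits.

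Next I show that $O_R(x,o)$ contains the complement of a small visual ball around a boundary point. For any $x\in\H^n$, either $x$ is within bounded distance of $o$, in which case $O_R(x,o)$ is everything for large $R$, or $x$ is far from $o$. In the latter case let $p$ be the endpoint of the ray from $o$ through $x$. Any $\xi$ outside $B(p,\e)$ makes an angle at $o$ with the direction of $x$ that is bounded below. By hyperbolic thinness, the geodesic $[x,\xi]$ then passes within a constant $R_0(\e)$ of $o$. Taking $R\ge R_0(\e)$ gives $O_R(\ga^{-1}o,o)\supset\partial\H^n\smallsetminus B(p_\ga,\e)$. Therefore $\nu(O_R(\ga^{-1}o,o))\ge c$, and the lower bound follows with $C=\max(e^{2\delta_\Ga R},c^{-1}e^{2\delta_\Ga R})$.
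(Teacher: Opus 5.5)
Your proof is correct, and it is essentially Sullivan's original argument; the paper cites that argument rather than reproving it. You pull the shadow back by $\ga^{-1}$, so the conformal density on $O_R(\ga^{-1}o,o)$ is $e^{-\delta_\Ga d(o,\ga o)}$ up to $e^{\pm 2\delta_\Ga R}$, and the lower bound reduces to $\inf_{\ga}\nu(O_R(\ga^{-1}o,o))>0$, which follows because $\nu$ is not a Dirac mass and such shadows contain the complement of a visual ball of fixed size. The paper uses the same two steps in higher rank: Lemmas \ref{lem.changebasepoint} and \ref{lem.buseandcartan} handle the change of basepoint, and the thick-part estimate $\nu_x(O_R(o_Y,x))\asymp 1$ supplies the uniform lower bound.
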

Thus the Patterson-Sullivan measure of a shadow is comparable to the
exponential of minus the orbit distance:
$
\nu(O_R(o,\gamma o)) \asymp e^{-\delta_\Gamma d(o,\gamma o)},
$
with implied constants independent of $\gamma$.  This estimate is the basic
prototype for the global shadow lemma proved in this paper.

\subsection*{Geometrically finite groups}

For geometrically finite groups, Sullivan's shadow lemma admits a global
version due to Stratmann-Velani \cite{SV_globalshadow}.  Let
$\Gamma<\Isom^+(\H^n)$ be geometrically finite with limit set
$\Lambda \subset \partial \H^n$, and let $\hull\Lambda\subset \H^n$ be its convex hull.  Choose
a $\Gamma$-invariant family $\cal B$ of pairwise disjoint cusp horoballs in
$\hull\Lambda$ whose complement has compact quotient.  Let $\cal P$ be a
set of representatives of the corresponding maximal parabolic subgroups,
and write $\delta_{\mathsf P}$ for the critical exponent of
$\mathsf P\in\cal P$. It is known that $\delta_{\mathsf P}<\delta_{\Ga}$ (\cite{Sullivan1979density}, \cite{Sullivan1984entropy}).

\begin{theorem}[{Global Shadow Lemma \cite[Theorem 2]{SV_globalshadow}}]
\label{thm.SVglobal}
For all sufficiently large $R>0$, the following holds.  Let
$\xi\in\Lambda$ and $x\in[o,\xi]$.  If $x$ lies in a horoball
$B\in\cal B$ whose stabilizer is conjugate to $\mathsf P\in\cal P$, then
$$
\nu(O_R(o,x))
\asymp
e^{-\delta_\Gamma d(o,x)}
e^{(2\delta_{\mathsf P}-\delta_\Gamma)d(\Gamma o,x)},
$$
with implied constants independent of $\xi$, $x$, and $B$.
\end{theorem}

In $\hull \La$ but away from the cusp horoballs, the quantity $d(\Gamma o,x)$ is uniformly
bounded, and the estimate reduces to the usual shadow lemma
$\nu(O_R(o,x))\asymp e^{-\delta_\Gamma d(o,x)}$.

\subsection*{Relatively Anosov and relatively Morse groups}
Relatively Anosov subgroups provide a higher-rank analogue of
geometrically finite Kleinian groups.  Let $G$ be a connected semisimple
real algebraic group with Cartan decomposition $G=KA^+K$ where $K$ is a maximal compact subgroup.  Let
$X:=G/K$ denote the associated Riemannian symmetric space.  Let $\fa^+:=\log A^+$, let $\Pi$ be the corresponding set of
simple roots, and fix a non-empty subset $\theta\subset\Pi$.  We denote by
$P_\theta$ the associated standard parabolic subgroup and by
\[
    \F_\theta:=G/P_\theta
\]
the corresponding flag variety.  We also set
\[
    \fa_\theta := \bigcap_{\alpha\in\Pi-\theta}\ker\alpha
\]
and regard $\fa_\theta^*$ as a subspace of $\fa^*$ via the canonical projection
$p_\theta:\fa\to\fa_\theta$.  After replacing $\theta$ by
$\theta\cup\i(\theta)$, if necessary, we assume that
\[
    \theta=\i(\theta),
\]
where $\i$ is the opposition involution on $\fa$.

Let $\Ga<G$ be a discrete subgroup which is hyperbolic relative to a finite
collection $\cal P$ of finitely generated infinite subgroups.  We write
$\partial(\Ga,\cal P)$ for its Bowditch boundary.  A Gromov model for
$(\Ga,\cal P)$ is a proper geodesic Gromov hyperbolic space $Y$ on which
$\Ga$ acts properly discontinuously, together with a $\Ga$-invariant
collection $\cal B$ of disjoint horoballs whose stabilizers are conjugates
of subgroups in $\cal P$, and on whose complement $\Ga$ acts cocompactly.
We identify $\partial Y$ with $\partial(\Ga,\cal P)$, and we assume that it contains at least three points, i.e., $(\Ga, \cal P)$ is non-elementary.

We say that $\Ga$ is $\theta$-Anosov relative to $\cal P$ if it is
$\theta$-regular and admits a transverse $\Ga$-equivariant boundary map
\[
    \zeta:\partial Y\to\F_\theta .
\]
See section \ref{sec.relmorse} for its precise definition.
The image of this map is the $\theta$-limit set, denoted by
$\La_\theta$.

For $\psi\in\fa_\theta^*$, a $(\Ga,\psi)$-Patterson-Sullivan measure is a
Borel probability measure $\nu$ on $\La_\theta$ satisfying
\[
    \frac{d\gamma_*\nu}{d\nu}(\xi)
    =
    e^{\psi(\beta^\theta_\xi(e,\gamma))}
    \quad
    \text{for all } \gamma\in\Ga
    \text{ and } \nu\text{-a.e. } \xi\in\La_\theta ,
\]
where $\beta^\theta$ is the $\fa_\theta$-valued Busemann map; see
\eqref{Bu}.  This notion of higher-rank Patterson-Sullivan measure was introduced by Quint
\cite{Quint2002Mesures}.

Let $\mu : G \to \fa^+$ be the Cartan projection.
Let $\L_\Ga\subset\fa^+$ denote the asymptotic cone of the Cartan projection $\mu(\Ga)$ of $\Ga$, called the limit cone of $\Ga$.
The shadow lemma for Patterson-Sullivan measures was proved for shadows
in the higher-rank symmetric space $X=G/K$ in
\cite[Lemma 7.8]{LO_invariant} and \cite[Lemma 7.2]{KOW_indicators}.
For relatively $\theta$-Anosov subgroups, the compatibility between shadows
in the Gromov model $Y$ and shadows in $X$ was established in
\cite[Proposition 5.7]{KO_entropy}.\footnote{The reference
\cite{KO_entropy} treats a specific Gromov model, but the same proof
applies to the Gromov models considered here.}  Combining these results
gives the following orbit-shadow estimate.

\begin{theorem}[Shadow Lemma] \label{thm.shadowlemmahigherrank}
Let $\Ga<G$ be $\theta$-Anosov relative to $\cal P$, and fix
$o_Y\in Y$.  
Let $\nu$ be a $(\Ga,\psi)$-Patterson-Sullivan measure on $\La_\theta$ for some $\psi \in \fa_{\theta}^*$.
Then, for all sufficiently large $R>0$,
\[
    \nu\bigl(\zeta(O_R(o_Y,\gamma o_Y))\bigr)
    \asymp
    e^{-\psi(\mu(\gamma))}
\]
for all $\gamma\in\Ga$, with implied constants independent of $\gamma$.
\end{theorem}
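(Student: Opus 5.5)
The plan is to combine the two ingredients the text cites: the symmetric-space shadow lemma of \cite[Lemma 7.8]{LO_invariant}, \cite[Lemma 7.2]{KOW_indicators}, and the comparison between shadows in $Y$ and in $X$ from \cite[Proposition 5.7]{KO_entropy}.

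Fix a basepoint $o=[K]\in X$. For $x,y\in X$ and $r>0$, write $O^\theta_r(x,y)\subset\F_\theta$ for the $\theta$-shadow of the ball $B(y,r)$ seen from $x$, i.e.\ the set of flags $gP_\theta$ with $gK=x$ and $gA^+o\cap B(y,r)\neq\emptyset$.

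The symmetric-space shadow lemma states the following. For a Patterson-Sullivan measure $\nu$ on $\La_\theta$ of a $\theta$-regular (in practice $\theta$-divergent) group, and for all $r$ large,
\[
\nu\bigl(O^\theta_r(o,\gamma o)\bigr)\asymp e^{-\psi(\mu_\theta(\gamma))},
\]
uniformly in $\gamma$. Its proof runs as follows.
\begin{itemize}
\item Translate by $\gamma^{-1}$ and use the conformality relation to rewrite $\nu(O^\theta_r(o,\gamma o))$ as an integral of $e^{-\psi(\beta^\theta_\xi(e,\gamma))}$ over $\gamma^{-1}O^\theta_r(o,\gamma o)=O^\theta_r(\gamma^{-1}o,o)$.
\item Observe that on this shadow the Busemann cocycle satisfies $\beta^\theta_\xi(e,\gamma)=\mu_\theta(\gamma)+O(r)$. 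This holds because $\xi$ lies in the shadow of $B(\gamma o,r)$, so $\beta^\theta$ is within $O(r)$ of the Cartan projection.
\item For the lower bound, show that $\nu(O^\theta_r(\gamma^{-1}o,o))$ is bounded below uniformly. For large $r$ these shadows contain the complement of a small neighborhood of a non-transverse locus of flags. Non-atomicity and the fact that $\La_\theta$ is not contained in such a proper algebraic subvariety give a uniform positive lower bound by compactness, using that $\theta$-regularity makes $\gamma^{-1}o$ converge conically.
\end{itemize}
Here $\psi(\mu_\theta(\gamma))=\psi(\mu(\gamma))$ because $\psi\in\fa_\theta^*$.

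The $Y$-to-$X$ comparison is \cite[Proposition 5.7]{KO_entropy}. For a relatively $\theta$-Anosov group and any $R$, there exist $r_1\le r_2$, and conversely for any $r$ there is $R'$, such that
\[
O^\theta_{r_1}(o,\gamma o)\cap\La_\theta\subset\zeta\bigl(O_R(o_Y,\gamma o_Y)\bigr)\subset O^\theta_{r_2}(o,\gamma o)
\]
for all $\gamma\in\Ga$, where the first inclusion holds after possibly enlarging $R$. Its proof uses that a geodesic ray $[o_Y,\eta]$ in $Y$ passing near $\gamma o_Y$ maps under the orbit map to a uniformly regular quasi-geodesic in $X$ away from horoballs. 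Inside horoballs it passes through parabolic subgroups, which are still uniformly $\theta$-regular (relative Morse property). The Morse lemma then puts $\zeta(\eta)$ in the $\theta$-shadow from $o$ of a bounded neighborhood of $\gamma o$. The footnote notes that the argument only uses the Gromov model axioms, so it applies to our $Y$.

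Given both ingredients, the theorem follows by monotonicity of $\nu$. Choose $R$ large enough that $r_1$ exceeds the threshold in the symmetric-space shadow lemma. Then
\[
e^{-\psi(\mu(\gamma))}\lesssim\nu\bigl(O^\theta_{r_1}(o,\gamma o)\bigr)\le\nu\bigl(\zeta(O_R(o_Y,\gamma o_Y))\bigr)\le\nu\bigl(O^\theta_{r_2}(o,\gamma o)\bigr)\lesssim e^{-\psi(\mu(\gamma))}.
\]
Here $\nu$ is supported on $\La_\theta$, so intersecting with $\La_\theta$ is harmless.

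The main obstacle is the comparison of shadows, in particular the inclusion of the $X$-shadow into the $Y$-shadow. One must show that if $\zeta(\eta)$ lies in a $\theta$-shadow of $B(\gamma o,r_1)$, then $\eta$ lies in a $Y$-shadow of $B(\gamma o_Y,R)$. I would try to prove this by contradiction and compactness: pass to a sequence, translate by $\gamma_n^{-1}$, and use transversality and injectivity of $\zeta$ to force the limiting configuration to be consistent.
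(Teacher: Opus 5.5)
This is essentially the paper's own argument. The paper also proves the theorem by combining the symmetric-space shadow lemma \cite[Lemma 7.8]{LO_invariant}, \cite[Lemma 7.2]{KOW_indicators} with the $Y$-to-$X$ shadow comparison \cite[Proposition 5.7]{KO_entropy}, using that $\nu$ is supported on $\La_\theta$, which is exactly your final chain of inequalities.

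Two cautions. First, your explanation of why the comparison holds appeals to a Morse embedding and the Morse lemma, neither of which is part of the relatively Anosov hypothesis, so use that proposition purely as a cited input. Second, state the quantifiers in the order: fix $r_1$ above the threshold of the symmetric-space lemma, then take $R$ large, then obtain $r_2$.
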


The  relatively Morse condition strengthens the relative Anosov condition by
requiring the relative geometry of $Y$ to be realized inside the
symmetric space.  We say that $\Ga$ is $\theta$-Morse relative to
$\cal P$ if there exists a $\Ga$-equivariant quasi-isometric embedding
\[
    f:Y\to X
\]
such that, writing $\L_f\subset\fa^+$ for the asymptotic cone of the
Cartan projections
\[
    \{ \mu(f(x)^{-1}f(y)) : x,y\in Y \},
\]
we have
\[
    \L_f\cap\ker\alpha=\{0\}
    \quad\text{for every } \alpha\in\theta .
\]
Thus being relative Anosov gives a boundary map into the flag variety,
whereas the relatively Morse condition gives a coarse geometric model in
the symmetric space whose Cartan projections stay uniformly away from the
walls indexed by $\theta$.  By the higher-rank Morse lemma of
Kapovich-Leeb-Porti \cite[Theorem 1.4]{KLP_2018}, the map $f$ extends continuously to a transverse
$\Ga$-equivariant homeomorphism
\[
    f:\partial Y\to\La_\theta.
\]

Fix a basepoint $o_Y\in Y$.  For $\psi\in\fa_\theta^*$ positive on
$\L_f-\{0\}$, define
\[
    \d_\psi(x,y)
    : =
    \psi(\mu(f(x)^{-1}f(y)))
     \quad \text{for } x, y \in Y,
\]
and, for subsets $E,F\subset Y$, set
\[
    \d_\psi(E,F)
    :=
    \inf_{x\in E,\,y\in F}\d_\psi(x,y).
\]

Theorem \ref{thm.shadowlemmahigherrank} estimates shadows centered at
orbit points.  The main result of this paper is a global version  for
relatively Morse groups, where the center of the shadow may be an
arbitrary point along a geodesic ray in the Gromov model.

 For a subgroup $H<\Ga$, denote by $\delta_\psi(H)$ the critical exponent of the Poincar\'e
series
\[
    s\mapsto \sum_{\gamma\in H} e^{-s\psi(\mu(\gamma))}.
\]
In the setting  below, the strict inequality $\delta_{\psi}(\mathsf{P}) < \delta_{\psi}(\Ga)$ was proved by Canary-Zhang-Zimmer \cite{CZZ_relative}.

The following theorem is the
higher-rank relatively Morse analogue of the global shadow lemma of Stratmann--Velani \cite{SV_globalshadow}. 
\begin{theorem}[Global Shadow Lemma] \label{thm.introglobal}
Let $\Ga<G$ be $\theta$-Morse relative to $\cal P$, and let $\psi\in\fa_\theta^*$ be such that $\psi>0$ on $\L_f - \{0\}$ and $
    \delta_\psi(\Ga)=1$.
Let $\nu$ be a $(\Ga,\psi)$-Patterson-Sullivan measure on $\La_\theta$.
Then there exists $C_0>0$ such that, for all sufficiently large $R>0$, the
following holds.

Let $\xi\in\partial Y$ and $x\in[o_Y,\xi]$.  Suppose that $x\in B$ for
some horoball $B\in\cal B$ whose stabilizer is conjugate to
$\mathsf P\in\cal P$.  Then
\[
\begin{aligned}
\nu(f(O_R(o_Y,x)))
\asymp{}&
e^{-\d_\psi(o_Y,x)}
e^{\d_\psi(\Ga o_Y,x)}
e^{2(\delta_{\bar\psi}(\mathsf P)-1)
    \d_{\bar\psi}(\Ga o_Y,x)}
\\
&\cdot
\bigl(C_0+\d_{\bar\psi}(\Ga o_Y,x)\bigr)^{a_{\bar\psi}(\mathsf P)},
\end{aligned}
\]
where $\bar\psi=\frac{\psi+\psi\circ\i}{2}$
and $a_{\bar\psi}(\mathsf P)$ is a non-negative integer depending on $\bar\psi$ and $\mathsf P$.  The implied
constants are independent of $\xi$, $x$, and $B$.
\end{theorem}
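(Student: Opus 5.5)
We follow the strategy of Stratmann--Velani and reduce to the orbit shadow lemma (Theorem \ref{thm.shadowlemmahigherrank}) together with a careful analysis inside a single horoball.

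\textbf{Step 1: normalization.} By $\Ga$-equivariance and cocompactness of $\Ga$ on $Y-\bigcup\cal B$, we may translate so that $B$ is the standard horoball $B_{\mathsf P}$ based at the parabolic point $\xi_{\mathsf P}$ fixed by $\mathsf P$. Let $\ga\in\Ga$ be such that $\ga o_Y$ lies within bounded distance of the entry point $x_0$ of $[o_Y,\xi]$ into $B$. Write $t=d(x_0,x)$ and let $h=d(x,\partial B)\approx d(\Ga o_Y,x)$ be the depth of $x$.

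\textbf{Step 2: transfer the estimates.} Using the Morse quasi-isometric embedding $f$ and the regularity of $\L_f$, we translate $d(\Ga o_Y,x)$ into $\d_{\psi}(\Ga o_Y,x)$ and $\d_{\bar\psi}(\Ga o_Y,x)$, up to additive constants. Since $\L_f$ stays away from the $\theta$-walls, the Cartan projection grows along quasi-geodesics in $Y$. Additivity of $\d_\psi$ along geodesics, up to bounded error (the Morse lemma in \cite{KLP_2018}), gives
\[
\d_\psi(o_Y,x)\approx \d_\psi(o_Y,\ga o_Y)+\d_\psi(\ga o_Y,x).
\]

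\textbf{Step 3: decompose the shadow.} Using the quasi-conformality
\[
\frac{d\ga^{-1}_*\nu}{d\nu}(\eta)=e^{\psi(\beta_\eta(e,\ga^{-1}))},
\]
we reduce to estimating $\nu(f(O_R(o_Y,x')))$ for $x'=\ga^{-1}x$, which lies at depth $h$ in $B_{\mathsf P}$ and on a geodesic through a bounded neighborhood of $o_Y$. The main term then contributes $e^{-\psi(\mu(\ga))}$. The shadow of $x'$ seen from $o_Y$ is, up to bounded error, a union of shadows of points in a horoball region. Concretely, it is the union of orbit shadows $f(O_R(o_Y,p o_Y))$ over those $p\in\mathsf P$ whose horospherical displacement from the foot of $x'$ is at least about $e^{h}$ in the parabolic's coarse geometry, together with $\nu$ of a neighborhood of $\xi_{\mathsf P}$ if $\xi$ lies deep. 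Two sub-cases arise, according to whether $\xi=\xi_{\mathsf P}$-type (the geodesic exits $B$ after $x$, and $x$ lies before or after the midpoint of the excursion). These sub-cases produce the asymmetric factor $e^{\d_\psi(\Ga o_Y,x)}$ and the symmetrized $\bar\psi$. The reason is that the exit direction uses $\psi\circ\i$, since $\mu(p^{-1})=\i\mu(p)$.

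\textbf{Step 4: the main obstacle.} The key remaining task is to sum
\[
\sum_{p\in\mathsf P,\ \d_{\bar\psi}(o_Y,p o_Y)\ge 2\d_{\bar\psi}(\Ga o_Y,x)} e^{-\psi(\mu(p))}
\]
and to show that it is
\[
\asymp e^{2(\delta_{\bar\psi}(\mathsf P)-1)h'}(C_0+h')^{a}, \qquad h'=\d_{\bar\psi}(\Ga o_Y,x).
\]
This requires a precise asymptotic for the orbit counting
\[
\#\{p\in\mathsf P:\bar\psi(\mu(p))\le T\}\asymp e^{\delta_{\bar\psi}(\mathsf P)T}T^{a_{\bar\psi}(\mathsf P)}.
\]
Since $\mathsf P$ is virtually nilpotent (by the relative Morse property), we would prove this via polynomial growth. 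We would first show that $\bar\psi(\mu(p))$ is comparable to $2\log$ of the word length plus a bounded error. To do this, we would use the structure of $\mathsf P$ as a lattice in a unipotent-times-compact group and compute $\mu$ on unipotent elements, where the logarithmic behavior gives the exponent $\delta$ and the polynomial correction gives the power $a$. Using $\delta_\psi(\Ga)=1>\delta_{\bar\psi}(\mathsf P)$, the tail sum converges and yields the claimed formula. Symmetrizing $\psi$ to $\bar\psi$ handles pairing $p$ with $p^{-1}$. This counting step is where we expect the real difficulty. Combining the estimates with Theorem \ref{thm.shadowlemmahigherrank} applied to each $p o_Y$ gives both bounds.
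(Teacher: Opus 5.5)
Your inbound computation follows the paper's route to Theorems \ref{thm.shadowatpararep} and \ref{thm.atparabolicgamma}: translate by the orbit point at the entry of $B$, cover the shadow by $\mathsf P$-translates of orbit shadows, apply Theorem \ref{thm.shadowlemmahigherrank} to each piece, and sum a tail. The outline nonetheless has two genuine gaps.

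\textbf{The outbound case is never handled.} Your Step 3 decomposition (pieces with displacement at least $\sim e^{h}$) and your Step 4 tail sum only describe the situation where the basepoint $\eta$ of $B$ still lies in the shadow of $x$. Suppose instead $x$ is past the point where $[o_Y,\xi]$ and $[o_Y,\eta]$ separate. Then the nearest orbit point is near the exit point $o'$. The shadow $O_R(o_Y,x)$ now consists of the translates $pQ$ clustered around the exit, and is comparable to the complement $\partial Y-O_{A/2}(o',\eta_{t'+R})$ of a parabolic shadow seen from $o'$; see \eqref{eqn.shadowincomplement} and \eqref{eqn.complementinshadow} in Case 2 of the proof of Theorem \ref{thm.generalglobal}. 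Its measure is a ball count, not a tail. Let $z\in[o_Y,\xi_{\mathsf P}]$ be at the corresponding depth. Then $\nu_z$ gives each piece $pQ$ with $\tfrac12\bar\psi(\mu(p))\lesssim\d_{\bar\psi}(o_Y,z)$ comparable mass $\asymp e^{-\d_\psi(z,o_Y)}$. There are $\asymp e^{2\delta_{\bar\psi}(\mathsf P)\d_{\bar\psi}(o_Y,z)}(C_{\bar\psi}+\d_{\bar\psi}(o_Y,z))^{a_{\bar\psi}(\mathsf P)}$ such pieces (Proposition \ref{prop.complementshadow}). Only the identity $e^{-\d_\psi(z,o_Y)}=e^{\d_\psi(o_Y,z)-2\d_{\bar\psi}(o_Y,z)}$ turns this into the same final formula, with $e^{\d_\psi(\Ga o_Y,x)}$ now coming from the exit orbit point. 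Saying that the two sub-cases ``produce'' the asymmetric factor and $\bar\psi$ does not supply this. Your suggested mechanism (the exit direction uses $\psi\circ\i$) is also misattributed, since $\bar\psi$ already appears in the inbound case by itself.

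\textbf{The fact that makes $\bar\psi$ appear is missing.} For $p\in\mathsf P$, the geodesic $[o_Y,po_Y]$ has a coarse midpoint $y$ with $py$ uniformly close to $y$ (Lemma \ref{lem.parabolictransmiddle}). Hence $\mu_\theta(p)\approx\mu_\theta(f(y))+\i\mu_\theta(f(y))$ (Proposition \ref{prop.gromovproductalongparabolic}). This one fact gives three things you need:
the threshold $pQ\subset O_R(o_Y,x)\iff\tfrac12\bar\psi(\mu(p))\gtrsim\d_{\bar\psi}(o_Y,x)$ for $x\in[o_Y,\xi_{\mathsf P}]$, which you state but never derive;
the bound $|\psi(\mu(p))-\bar\psi(\mu(p))|=O(1)$ on $\mathsf P$, needed to evaluate $\sum e^{-\psi(\mu(p))}$ by a $\bar\psi$-count;
and Lemma \ref{lem.closestinhoroball}.
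On the last point, $d$ and $\d_\psi$ are only multiplicatively comparable, so your Step 2 cannot be done ``up to additive constants'' directly. Pairing $p$ with $p^{-1}$ is no substitute. It gives $\sum e^{-\psi(\mu(p))}=\sum e^{-\psi(\i\mu(p))}$, and hence by AM--GM only the lower bound $\ge\sum e^{-\bar\psi(\mu(p))}$, never the upper bound.

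\textbf{Your counting plan fails in higher rank.} $\bar\psi(\mu(p))$ need not be $2\log|p|+O(1)$. For a lattice in $U$, $\omega_\alpha(\mu(\exp Y))$ is $\frac1{m_\alpha}\log$ of a rational function on $\mathfrak u$ up to $O(1)$, and this need not be a fixed multiple of the log of the word length. If it were, polynomial growth would give a pure exponential count and force $a_{\bar\psi}(\mathsf P)=0$, which the paper explicitly leaves open. The paper instead combines the Canary--Zhang--Zimmer comparison $e^{\omega_\alpha(\mu(\exp Y))}\asymp R_\alpha(Y)^{1/m_\alpha}$ with the Benoist--Oh volume asymptotics for sublevel sets of $\prod_\alpha R_\alpha^{c_\alpha/m_\alpha}$ (Theorem \ref{thm.volumegrowth}). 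That is where the factor $(C_0+h')^{a_{\bar\psi}(\mathsf P)}$ comes from.
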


We also show that when $\mathsf P$ is virtually cyclic or when $G$ has rank one, then $a_{\bar\psi}(\mathsf P)$ is zero (Theorem \ref{thm.countinginsegments}). It would be interesting to know whether 
$a_{\bar\psi}(\mathsf P)$ is always zero in the general setting of this theorem.

If $x$ lies in the thick part
$Y-\bigcup_{B\in\cal B}B $,
then Theorem \ref{thm.introglobal}
recovers the usual orbit-shadow estimate.  The content of the theorem is
therefore the precise correction term that appears when $x$ penetrates a
cusp.

\begin{remark}
Bray-Tiozzo \cite{BT_global} proved a global shadow lemma for
relatively hyperbolic groups using Patterson-Sullivan measures associated
to the Busemann functions of a Gromov model.  Our setting is different:
although shadows are taken in the Gromov model, the Busemann maps,
Patterson-Sullivan measures, and critical exponents come from the ambient
higher-rank Lie group.
\end{remark}

\begin{example}
    Here are two standard examples of relatively $\Pi$-Morse groups.
    \begin{enumerate}
        \item Let $G = \prod_{i = 1}^k \so(n_i, 1)$, $n_i \ge 2$. It follows from the work of Bowditch \cite{Bowditch1998} and Yaman \cite{Yaman2004topological} that a discrete subgroup $\Ga < G$ is relatively $\Pi$-Anosov if and only if there exists a geometrically finite subgroup $\Ga_1 < \so(n_1, 1)$ and geometrically finite type-preserving representations $\rho_i : \Ga_1 \to \so(n_i, 1)$, $2 \le i \le k$, so that the diagonal embedding $(\id \times \rho_2 \times \cdots \times \rho_k)(\Ga_1)$ is a finite-index subgroup of~$\Ga$.
The work of Tukia \cite{Tukia1985isomorphisms} (also see \cite{DSU}) then implies that $\Ga$ is relatively $\Pi$-Morse. Moreover, there exists a Morse embedding $f$ such that $\L_f = \L_{\Ga}$.

        \item Let $G = \SL(n, \R)$, $n \ge 2$. We consider a relatively $\theta$-Anosov $\Ga < G$ with peripheral subgroups $\cal P$, such that $(\Ga, \cal P)$ is isomorphic to  a geometrically finite Fuchsian group. In this case, Zhu-Zimmer showed that $\Ga$ is relatively $\theta$-Morse (\cite[Corollary 1.14]{ZZ_relatively2}, \cite[Proposition 11.3]{ZZ_relatively}). They explicitly constructed a Morse embedding $f$ in \cite[Sections 6, 10, 11]{ZZ_relatively}, based on the notion of cusp representations introduced by Canary-Zhang-Zimmer \cite[Proposition 3.6]{CZZ_cusped}, and their construction gives that $\L_f = \L_{\Ga}$.
    \end{enumerate}
\end{example}
\subsection*{Local properties of Patterson-Sullivan measures}

The global shadow lemma also gives local information on Patterson-Sullivan
measures.  Assume, in addition, that $\psi=\psi\circ\i$.  For
$\xi,\eta\in\Lambda_\theta$, set
\[
    d_\psi(\xi,\eta):=e^{-\psi(\cal G^\theta(\xi,\eta))}
\]
where $\cal G^\theta$ is the $\fa_\theta$-valued Gromov product.  This is
a higher-rank visual quasi-metric on $\Lambda_\theta$; denote by
$B_\psi(\xi,r)$ the corresponding balls.
As applications, we obtain the following uniform local estimates for
Patterson-Sullivan measures.

 \begin{corollary}[Uniform local estimates]
Let $\Ga$, $\psi$, $\nu$ be as in Theorem \ref{thm.introglobal}. Suppose further that $\psi = \psi \circ \i$. For every $\kappa\ge 1$, there exists $L_\kappa>1$ and $C_\kappa >1$ such that
\[C_\kappa^{-1} \nu(B_\psi(\xi, L_\kappa r) )\le \nu(B_\psi(\xi, r)) \le \kappa^{-1}\nu(B_\psi (\xi, L_\kappa r) )\]
for all $\xi\in \Lambda_\theta$ and all $0<r\le L_\kappa^{-1}$.
\end{corollary}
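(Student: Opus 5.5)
We derive the corollary from Theorem \ref{thm.introglobal} by comparing visual balls with shadows in the Gromov model, as Stratmann--Velani do in rank one. Throughout, $\psi=\psi\circ\i$, so $\bar\psi=\psi$.

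\emph{Step 1: balls versus shadows.} For $\xi=f(\xi_Y)\in\La_\theta$ and $t\ge 0$, let $x_t\in[o_Y,\xi_Y]$ be the point with $\d_\psi(o_Y,x_t)\approx t$; this uses the Morse property, which makes $t\mapsto \d_\psi(o_Y,x_t)$ coarsely linear along geodesics. I would show that there is $c>0$, independent of $\xi$ and $t$, such that
\[
f(O_R(o_Y,x_{t+c}))\subset B_\psi(\xi,e^{-t})\subset f(O_{R'}(o_Y,x_{t-c}))
\]
for suitable $R,R'$. The underlying fact is that $\psi(\cal G^\theta(f\eta,f\eta'))$ agrees up to an additive constant with $\d_\psi$ from $o_Y$ to the geodesic $(\eta,\eta')$ in $Y$. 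This should follow from the Morse lemma together with the Busemann/Gromov-product comparison already used in \cite{KO_entropy}. The shadow lemma is stated for all large $R$, so changing $R$ only changes the constants.

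\emph{Step 2: doubling.} Take $r=e^{-t}$, so that $L_\kappa r=e^{-(t-\log L_\kappa)}$. The problem then reduces to comparing $\nu(f(O_R(o_Y,x)))$ and $\nu(f(O_R(o_Y,x')))$, where $x'$ lies before $x$ on $[o_Y,\xi_Y]$ and $\d_\psi(x',x)\approx \log L_\kappa$. Write $F(x)$ for the right-hand side of Theorem \ref{thm.introglobal}, with exponent $\phi(x)=-\d_\psi(o_Y,x)+(2\delta_\psi(\mathsf P)-1)\,d(x)$ plus the polynomial factor, where $d(x)=\d_\psi(\Ga o_Y,x)$ and we use $\delta_\psi(\Ga)=1$.
\begin{itemize}
\item Since $d$ is $1$-Lipschitz for $\d_\psi$ up to constants, $\phi(x)-\phi(x')\ge -c'\log L_\kappa$. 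This gives the lower bound with $C_\kappa\approx L_\kappa^{c'}$, including when $x$ and $x'$ lie in different horoballs or in the thick part.
\item For the upper bound we need $F(x)\le \kappa^{-1}F(x')$, i.e.\ a definite decay of $F$ along the ray.
\end{itemize}

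\emph{Step 3: the decay, which I expect to be the main obstacle.} Inside a horoball, the geodesic first descends, so $d$ increases with slope about $1$, and then ascends, so $d$ decreases with slope about $1$. On the descending part the exponent changes at rate $-1+(2\delta_\psi(\mathsf P)-1)=-2(1-\delta_\psi(\mathsf P))<0$. Here the strict inequality $\delta_\psi(\mathsf P)<1$ of Canary--Zhang--Zimmer is essential. On the ascending part the rate is $-1-(2\delta_\psi(\mathsf P)-1)=-2\delta_\psi(\mathsf P)<0$, and $\delta_\psi(\mathsf P)>0$ because $\mathsf P$ is infinite with linear growth. In the thick part the rate is $-1$.

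The polynomial factor $(C_0+d)^{a}$ contributes only $O(\log L_\kappa)$ up to terms sublinear in the step. So, with $\epsilon=\min(1,2\delta_\psi(\mathsf P),2(1-\delta_\psi(\mathsf P)))$, taking the minimum over the finitely many $\mathsf P\in\cal P$, we get $F(x)\le C e^{-\epsilon \log L_\kappa/2}F(x')$ once $L_\kappa$ is large. Choose $L_\kappa$ so that this is at most $\kappa^{-1}$. The restriction $r\le L_\kappa^{-1}$ guarantees that $x'$ exists on the ray, i.e.\ $t\ge \log L_\kappa$.

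The delicate points are the following.
\begin{itemize}
\item Making the rates precise, since $d$ and $\d_\psi(o_Y,\cdot)$ are only coarsely controlled along the geodesic. I would handle this using the horoball geometry of the Gromov model and the Morse property.
\item Absorbing the additive constants from Step 1 into a single choice of $L_\kappa$.
\end{itemize}
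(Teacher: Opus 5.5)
Your proposal is correct and has the same architecture as the paper. The paper obtains the corollary by combining Theorem \ref{thm.doubling} (the lower bound at every fixed scale $L$, so $C_\kappa=\varepsilon_{L_\kappa}^{-1}$) with Theorem \ref{thm.localnonconcentration} (reverse doubling). Both are proved as you describe: balls are sandwiched between shadows via \eqref{eqn.compareshadowsandballs}, and the compact global shadow estimate \eqref{eqn.uniformshadowlocal} is compared at two points of $[o_Y,\xi]$ whose $\d_\psi$-separation is about $\log L$.

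The real difference is how you control the change in cusp depth. The paper uses a case analysis: different horoballs; the same closest orbit point; or closest orbit points near opposite ends of $[o_Y,\xi]\cap B_\eta$. The last case needs a midpoint comparison through Lemma \ref{lem.braytiozzo} and Proposition \ref{prop.gromovproductalongparabolic}. Your observation that $x\mapsto\d_\psi(\Ga o_Y,x)$ is coarsely $1$-Lipschitz for $\d_\psi$ replaces all of this. It follows in one line from the coarse triangle inequality for $\d_\psi$ in Section \ref{sec.relmorse}, together with the symmetry $\d_\psi(x,y)=\d_\psi(y,x)$. That symmetry holds because $\psi=\psi\circ\i$, and you should say so explicitly, since this is where the hypothesis enters.

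The Lipschitz bound also makes the descending/ascending rate analysis in your Step 3, which you flag as delicate, unnecessary. Set $\sigma:=\max_{\mathsf P\in\cal P}|2\delta_\psi(\mathsf P)-1|<1$. If the two points lie in the same horoball, the Lipschitz bound applies directly. Otherwise $[x',x]$ meets the thick part, and additivity gives $\d_\psi(\Ga o_Y,x)+\d_\psi(\Ga o_Y,x')\le \log L_\kappa+O(1)$. In either case $F(x)/F(x')\ll L_\kappa^{-(1-\sigma)}(1+\log L_\kappa)^{a_0}$, which is exactly the paper's bound; note $1-\sigma=\min(2\delta,2-2\delta)$ is your $\epsilon$.

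Two small points need fixing.
\begin{itemize}
\item The polynomial factor changes by at most a multiplicative $(C(1+\log L_\kappa))^{a_0}$, i.e.\ $O(\log\log L_\kappa)$ in the exponent. Your phrase ``$O(\log L_\kappa)$'' is not enough: with an unspecified constant it need not be beaten by the decay.
\item $\delta_\psi(\mathsf P)>0$ does not follow from $\mathsf P$ being infinite; a cyclic group generated by a loxodromic element has exponent $0$. It follows from the logarithmic distortion of parabolic subgroups, and is part of Theorem \ref{thm.entropydrop}.
\end{itemize}
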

This corollary follows immediately from  Theorems \ref{thm.doubling} and \ref{thm.localnonconcentration}; it records the resulting two-sided comparison at the scale $L_\kappa$, while Theorem \ref{thm.doubling} gives the lower bound for every fixed scale $L>1$.

Finally, we compare $\nu$ with the Hausdorff measure defined by the
quasi-metric $d_\psi$ in Theorem \ref{thm.Hmeasure}.  The resulting criterion is governed by the
parabolic critical exponents $\delta_\psi(\mathsf P)$, which measure the
growth of peripheral subgroups, compared to the ambient 
exponent $\delta_\psi(\Gamma)$.

\section{Preliminaries}
Let $G$ be a connected semisimple real algebraic group. 
Let $P$ be a minimal parabolic subgroup with a fixed Langlands decomposition $P=MAN$ where $A$ is a maximal real split torus of $G$, $M$ is the maximal compact subgroup of $P$ commuting with $A$ and $N$ is the unipotent radical of $P$.
Let $\fg$ and $\fa$ denote, respectively, the Lie algebras of $G$
and $A$. Fix a positive Weyl chamber $\fa^+\subset \fa$ so that
$\log N$ consists of positive root subspaces and
set $A^+=\exp \fa^+$. We fix a maximal compact subgroup $K< G$ such that the Cartan decomposition $G=K A^+ K$ holds. We denote by 
$\mu : G \to \fa^+$ the Cartan projection defined by the condition $g\in K\exp \mu(g) K$ for $g \in G$. Let $X = G/K$ be the associated Riemannian symmetric space and $o=[K]\in X$.  Fix a $K$-invariant norm $\| \cdot \|$ on $\fg$. This induces the left $G$-invariant Riemannian metric $d$ on $X$.

\begin{lemma} \cite[Lemma 4.6]{Benoist1997proprietes} \label{lem.cptcartan}
For any compact subset $Q \subset G$, there exists $C=C(Q)>0$ such that for all $g \in G$, $$\sup_{q_1, q_2\in Q} \| \mu(q_1gq_2) -\mu(g)\| \le C .$$  
\end{lemma}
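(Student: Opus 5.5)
The statement to prove is Lemma \ref{lem.cptcartan}: for compact $Q\subset G$, $\|\mu(q_1gq_2)-\mu(g)\|$ is bounded uniformly in $g$ and $q_1,q_2\in Q$. The plan is to translate the Cartan projection into a distance statement in the symmetric space $X=G/K$ and then use the triangle inequality together with $G$-invariance of $d$.

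\textbf{Step 1.} Relate $\mu$ to the Weyl-chamber-valued distance. For $g=k_1\exp(\mu(g))k_2$, $K$-invariance of the norm gives $d(o,go)=\|\mu(g)\|$. More generally, the vector-valued distance $d_{\fa^+}(xo,yo):=\mu(x^{-1}y)$ is well defined and $G$-invariant. The key input is the standard fact that it satisfies a vector triangle inequality:
\[
\|\mu(x^{-1}y)-\mu(x'^{-1}y')\|\le d(xo,x'o)+d(yo,y'o).
\]
This holds because the $\fa^+$-valued distance is $1$-Lipschitz in each variable for the Riemannian metric (the symmetric-space metric is induced by the $K$-invariant norm).

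\textbf{Step 2.} Apply the inequality with $x=q_1^{-1}$, $y=gq_2$ and $x'=e$, $y'=g$. Then $\mu(x^{-1}y)=\mu(q_1gq_2)$ and $\mu(x'^{-1}y')=\mu(g)$, so
\[
\|\mu(q_1gq_2)-\mu(g)\|\le d(q_1^{-1}o,o)+d(gq_2o,go)=d(o,q_1o)+d(o,q_2o).
\]
The last equality uses left $G$-invariance of $d$. Setting $C=2\sup_{q\in Q}d(o,qo)$, which is finite by compactness of $Q$ and continuity of the orbit map, gives the claimed bound.

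\textbf{Main obstacle.} The only nontrivial point is the Lipschitz property of $\mu$ in Step 1. If I do not cite it, I would prove it by reducing to the rank-one-style estimate $\|\mu(gh)-\mu(g)\|\le\|\mu(h)\|$. One route is through representations: for each fundamental weight, $\chi_i(\mu(g))=\log\|\rho_i(g)\|$ for suitable irreducible representations $\rho_i$ with $K$-invariant norms. Submultiplicativity of operator norms then gives $|\chi_i(\mu(gh))-\chi_i(\mu(g))|\le\log\max(\|\rho_i(h)\|,\|\rho_i(h^{-1})\|)$. Since the $\chi_i$ span $\fa^*$, this yields a bound linear in $\|\mu(h)\|$ with a uniform constant, which is enough for the lemma. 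Applying it on both sides, using $\mu(g^{-1})=\i(\mu(g))$ to handle the left factor, gives the result with a possibly different constant $C(Q)$.
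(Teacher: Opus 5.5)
Your proof is correct. The paper gives no argument of its own for this lemma; it is quoted from Benoist. Your proposal therefore supplies a proof rather than reproducing one, and the useful comparison is between your two routes.

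\textbf{The vector-distance route.} The first route (Step~2) is the cleaner one. It even yields the explicit constant $C(Q)=2\sup_{q\in Q}d(o,qo)$. However, all of its content sits in the inequality $\|\mu(x^{-1}y)-\mu(x'^{-1}y')\|\le d(xo,x'o)+d(yo,y'o)$. This $1$-Lipschitz property of the $\fa^+$-valued distance is a known but genuinely nontrivial fact about symmetric spaces. It does not follow merely from the metric being induced by a $K$-invariant norm, so your parenthetical justification should be replaced by a citation.

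\textbf{The representation route.} Your fallback via representations is the more elementary argument. Suppose that for each $\alpha\in\Pi$ you have an irreducible representation $\rho_\alpha$ whose highest weight $\chi_\alpha$ is a positive multiple of $\omega_\alpha$. Equip it with a $K$-invariant inner product for which $\rho_\alpha(A)$ is self-adjoint. Then the weight spaces are orthogonal and $\|\rho_\alpha(g)\|=e^{\chi_\alpha(\mu(g))}$. Submultiplicativity gives
$|\chi_\alpha(\mu(q_1gq_2))-\chi_\alpha(\mu(g))|\le \sum_{i=1,2}\log\max\bigl(\|\rho_\alpha(q_i)\|,\|\rho_\alpha(q_i^{-1})\|\bigr)$.
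Since the $\chi_\alpha$ form a basis of $\fa^*$ ($G$ semisimple), this bounds $\|\mu(q_1gq_2)-\mu(g)\|$ uniformly over $Q$. This route loses the sharp constant, but it needs nothing beyond Tits' representations, and that is all the lemma requires.

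\textbf{A simplification.} In the representation route you do not need $\mu(g^{-1})=\i\mu(g)$ to handle the left factor. The inequalities $\|\rho_\alpha(q_1g)\|\le\|\rho_\alpha(q_1)\|\,\|\rho_\alpha(g)\|$ and $\|\rho_\alpha(g)\|\le\|\rho_\alpha(q_1^{-1})\|\,\|\rho_\alpha(q_1g)\|$ treat it directly.
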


Let $\Pi $ denote the set of simple roots determined by $\fa^+$.
We fix a non-empty subset $\theta\subset \Pi$. 
Let $P_\theta$ denote the corresponding standard parabolic subgroup with the convention that $P_\Pi=P$, and set $\F_\theta:=G/P_\theta$.
We also set
\begin{equation*}
\mathfrak{a}_\theta :=\bigcap_{\alpha \in \Pi-\theta} \ker \alpha  \quad \text{and} \quad \fa_\theta^+  :=\fa_\theta\cap \fa^+. \end{equation*}
Let
\be \label{att}
p_\theta:\mathfrak{a}\to\mathfrak{a}_\theta
\ee denote the projection invariant under all Weyl elements fixing $\fa_\theta$ pointwise. We write $\mu_{\theta} := p_{\theta} \circ \mu :G\to \fa_\theta^+.$
We identify $\fa_\theta^*=\op{Hom}(\fa_\theta, \br)$  with the subspace of $\fa^*$ consisting of linear forms invariant under $p_\theta$.

 Abusing notation, for $p, q \in X$, we set
 $$
 \mu(p) := \mu(g) \quad \text{and} \quad \mu(p^{-1}q) := \mu(g^{-1} h)
 $$
 for $g, h \in G$ such that $go = p$ and $ho = q$. This definition is independent of the choice of $g$ and $h$, and we similarly define $\mu_{\theta}(p)$ and $\mu_{\theta}(p^{-1}q)$.

Let $w_0\in K$ represent the longest Weyl element.  The opposition
involution is
\[
    \i:=-\op{Ad}_{w_0}:\fa\to\fa,
\]
and it induces an involution of $\Pi$, again denoted by $\i$.

\subsection*{Limit set} 
The subgroup $K$ acts transitively on $\F_\theta$, and hence
 $\F_\theta\simeq K/ M_\theta$ where $M_\theta:=P_\theta\cap K$.  Set $\xi_\theta:=[M_\theta]\in \F_\theta$.
\begin{definition} \label{fc} For a sequence $g_i\in G$  and $\xi\in \ft$, we write $\lim_{i\to \infty} g_i =\xi$ and
 say $g_i $ {\it converges} to $\xi$ if \begin{itemize}
     \item for each $\alpha\in \theta$, $\alpha(\mu(g_i)) \to \infty$ as $g_i\to \infty$; 
\item $\lim_{i\to\infty} \kappa_{i}\xi_\theta= \xi$ in $\F_\theta$ for some $\kappa_{i}\in K$ such that $g_i\in \kappa_{i} A^+ K$.
 \end{itemize}         
\end{definition}

 The {\it $\theta$-limit set} of a discrete subgroup $\Ga$ can be defined as follows:
\be \label{def.limitset} \lat=\lat(\Ga):=\{\lim {\ga}_i\in \F_\theta: {\ga}_i\in \Ga\}\ee  where $\lim \ga_i$ is defined as in Definition \ref{fc}.
If $\Ga$ is Zariski dense, this is the unique $\Ga$-minimal subset of $\F_{\theta}$ \cite{Benoist1997proprietes}.

\subsection*{Busemann map}
The {\it $\frak a$-valued Busemann map} $\beta: \cal F_\Pi \times G \times G \to\frak a $ is defined as follows: for $\xi\in \cal F$ and $g, h\in G$,
$$  \beta_\xi ( g, h):=\sigma (g^{-1}, \xi)-\sigma(h^{-1}, \xi)$$
where  $\sigma(g^{-1},\xi)\in \fa$ 
is the unique element such that $g^{-1}k \in K \exp (\sigma(g^{-1}, \xi)) N$ for any $k\in K$ with $\xi=kP$.
For $(\xi,g,h)\in \cal F_\theta\times G\times G$, we define
 \be\label{Bu} \beta_{\xi}^\theta (g, h): = 
p_\theta ( \beta_{\xi_0} (g, h)) \ee 
for any $\xi_0\in \F_\Pi$ projecting to $\xi$.
This is well-defined independent of the choice of $\xi_0$ 
\cite[Lemma 6.1]{Quint2002Mesures}. Moreover, since product map $ K\times A \times N\to G$ is a diffeomorphism, Busemann maps are continuous.

\subsection*{Gromov product}

Two points $\xi \in \F_{\theta}$ and $\eta \in \F_{\i(\theta)}$ are said to be {\it transverse} or be {\it in general position} if 
\be \label{gp} \xi=gP_{\theta}\quad \text{ and } \quad \eta = gw_0 P_{\i(\theta)} \quad \text{ for some $g\in G$}.\ee 
We set
\be\label{fgp} \F_\theta^{(2)}=\{(\xi,\eta)\in \F_\theta\times \F_{\i(\theta)}:  \text{$\xi, \eta$ are in general position} \}\ee 
which is the unique open $G$-orbit in $\F_\theta\times \F_{\i(\theta)}$ under the diagonal $G$-action.

For $(\xi, \eta) \in \F_{\theta}^{(2)}$, we define the {\it $\fa_{\theta}$-valued Gromov product } as 
\be \label{eqn.Gromovproduct}
\cal G^{\theta} (\xi, \eta) = \frac{1}{2} \left( \beta_{\xi}^{\theta}(e, g) + \i \beta_{\eta}^{\i(\theta)}(e, g) \right)
\ee
where $g \in G$ satisfies $(gP_\theta, gw_0P_{\i(\theta)}) = (\xi, \eta)$. This is independent of the choice of $g$ \cite[Lemma 9.13]{KOW_indicators}.

\subsection*{Patterson-Sullivan measures}
For $\psi\in \fa_\theta^*$, a {\it $(\Gamma, \psi)$-conformal measure} is a Borel probability measure on $\F_\theta$ such that 
\be \label{eqn.psmeas}
\frac{d \gamma_*\nu}{d\nu}(\xi)=e^{\psi(\beta_\xi^\theta(e,\gamma))} \quad \text{for all $\gamma \in \Gamma$ and $ \xi \in \F_\theta$}
\ee
 where ${\ga}_* \nu(D) = \nu(\ga^{-1}D)$ for any Borel subset $D\subset \F_\theta$ and $\beta_\xi^\theta$ denotes the $\fa_\theta$-valued Busemann map defined in \eqref{Bu}. A $(\Ga, \psi)$-conformal measure supported on $\La_\theta$ is called a {\it $(\Ga, \psi)$-Patterson-Sullivan measure}.

\section{Relatively hyperbolic groups and Gromov models} \label{sec.relhyp}

In this section, we recall relatively hyperbolic groups, Gromov hyperbolic spaces,
and Gromov models.

\subsection*{Relatively hyperbolic groups}

Let $\Ga$ be a countable group acting by homeomorphisms on a compact metrizable
space $\cal X$.  The action is called a {\it convergence group action} if, for
every sequence of distinct elements $\ga_n\in\Ga$, there exist a subsequence
$\ga_{n_k}$ and points $a,b\in\cal X$ such that $\ga_{n_k}(x)$ converges to
$a$ for all $x\in\cal X-\{b\}$, uniformly on compact subsets.

An infinite-order element $\ga\in\Ga$ is called {\it loxodromic} if it fixes
exactly two points of $\cal X$, and {\it parabolic} if it fixes exactly one
point.  An infinite subgroup $\mathsf P<\Ga$ is called {\it parabolic} if it
fixes a point of $\cal X$ and every infinite-order element of $\mathsf P$ is
parabolic.

A point $\xi\in\cal X$ is called a {\it conical limit point} if there exist a
sequence of distinct elements $\ga_n\in\Ga$ and distinct points $a,b\in\cal X$
such that
\[
    \ga_n^{-1}\xi\to a
    \quad\text{and}\quad
    \ga_n^{-1}\eta\to b
    \quad\text{for all } \eta\in\cal X-\{\xi\}.
\]
A point $\xi\in\cal X$ is called a {\it parabolic limit point} if it is fixed
by a parabolic subgroup of $\Ga$.  Such a point is called {\it bounded
parabolic} if
\[
    \stab_{\Ga}(\xi)\backslash(\cal X-\{\xi\})
\]
is compact.  The action of $\Ga$ on $\cal X$ is called a {\it geometrically
finite convergence group action} if every point of $\cal X$ is either conical
or bounded parabolic.  A typical example is the action of a geometrically finite
Kleinian group on its limit set.

Let $\Ga$ be a finitely generated group and let $\cal P$ be a finite collection
of finitely generated infinite subgroups of $\Ga$.  We say that $\Ga$ is
{\it hyperbolic relative to $\cal P$}, or that $(\Ga,\cal P)$ is
{\it relatively hyperbolic}, if $\Ga$ admits a geometrically finite convergence
group action on a compact perfect metrizable space $\cal X$ whose maximal
parabolic subgroups are precisely
\[
    \cal P^\Ga
    :=
    \{\ga\mathsf P\ga^{-1}:\mathsf P\in\cal P,\ \ga\in\Ga\}.
\]

Bowditch \cite{Bowditch_relhyp} showed that, if $\Ga$ is hyperbolic relative to
$\cal P$, then the space $\cal X$ satisfying the above condition is unique up to
$\Ga$-equivariant homeomorphism.  This space is called the {\it Bowditch
boundary} and is denoted by $\partial(\Ga,\cal P)$.  Since
$\partial(\Ga,\cal P)$ is assumed to be perfect, we have
$\#\partial(\Ga,\cal P)\ge 3$; equivalently, $(\Ga,\cal P)$ is non-elementary.

\subsection*{Gromov hyperbolic spaces}

A proper geodesic metric space $(Y,d)$ is called {\it Gromov hyperbolic} if
there exists $\delta>0$ such that every geodesic triangle in $Y$ is
$\delta$-thin; that is, each side is contained in the $\delta$-neighborhood of
the union of the other two sides.  The Gromov boundary $\partial Y$ is the set
of equivalence classes of geodesic rays, where two rays are equivalent if they
have finite Hausdorff distance.  We write
\[
    \overline Y := Y\cup\partial Y
\]
for the corresponding compactification.

For $C_1,C_2\ge 1$ and an interval $I\subset\R$, a map $\sigma:I\to Y$ is
called a $(C_1,C_2)$-quasi-geodesic if
\[
    C_1^{-1}|t-s|-C_2
    \le d(\sigma(t),\sigma(s))
    \le C_1|t-s|+C_2
    \quad\text{for all } t,s\in I.
\]
We also call the image $\sigma(I)$ a $(C_1,C_2)$-quasi-geodesic.  We shall use
the following standard stability property.

\begin{lemma} \label{lem.stableqg}
For any $C_1,C_2\ge 1$, there exists $R>0$ such that any two
$(C_1,C_2)$-quasi-geodesics in $Y$ with the same endpoints in $\overline Y$
have Hausdorff distance at most $R$.
\end{lemma}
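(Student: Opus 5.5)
The stability of quasi-geodesics in a Gromov hyperbolic space is classical (the Morse lemma), so I would give a streamlined version of the standard argument rather than anything new. First I reduce to tame quasi-geodesics. Replace each $(C_1,C_2)$-quasi-geodesic $\sigma$ by the piecewise geodesic $\sigma'$ obtained by joining the points $\sigma(n)$, $n\in I\cap\Z$ (together with the endpoints of $I$), by geodesic segments. Then $\sigma'$ is continuous and is a $(C_1',C_2')$-quasi-geodesic with constants depending only on $C_1,C_2$. It lies within Hausdorff distance $C_1+C_2$ of $\sigma$. Moreover, its length between any two parameters is bounded linearly by the distance between their images. It therefore suffices to prove the statement for such tame quasi-geodesics, at the cost of enlarging $R$ by a constant.

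Next I treat the case of finite endpoints: a tame quasi-geodesic $\sigma$ from $p$ to $q$ lies in a uniform neighborhood of a geodesic $[p,q]$. I would use the standard logarithmic estimate. In a $\delta$-hyperbolic space, any point $y\in[p,q]$ is within $\delta|\log_2 \ell(c)|+1$ of any rectifiable path $c$ from $p$ to $q$; this follows by repeated bisection of $c$ and thinness of triangles. Let $D=\sup_{y\in[p,q]}d(y,\sigma)$ and pick $y$ realizing it. Consider the arc of $[p,q]$ around $y$ of radius $2D$. Connect its endpoints to $\sigma$ by segments of length at most $D$ and take the sub-path of $\sigma$ between the feet; its length is $O(D)$ by tameness. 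The logarithmic estimate then gives $D\le \delta\log_2(O(D))+O(1)$, which bounds $D$ uniformly. For the converse inclusion, each sub-arc of $\sigma$ leaving the $D$-neighborhood of $[p,q]$ has endpoints near points of $[p,q]$ whose neighborhoods cover $[p,q]$ by connectedness. This forces such excursions to have uniformly bounded length, and hence bounded distance from $[p,q]$.

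Finally I handle endpoints in $\partial Y$. Given two quasi-geodesics with common endpoints in $\overline Y$, truncate both at parameters tending to the ideal endpoints. I connect corresponding truncation points, which converge to the same boundary point, and use that geodesic rays or lines to the same boundary point are eventually within $O(\delta)$ of each other. Each truncated quasi-geodesic is then within the uniform constant of the previous step from a geodesic between the truncation points. Those geodesics converge, by properness and Arzelà–Ascoli, to a geodesic ray or line with the given endpoints, and passing to the limit keeps the bound. The triangle inequality for Hausdorff distance then gives a uniform $R=R(C_1,C_2,\delta)$ for any two such quasi-geodesics. I expect the main care to go into the ideal-endpoint case, where one must make sure the truncation points sit at comparable places on the two curves so the constants do not grow. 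If this becomes delicate, I would simply cite the standard references (Bridson–Haefliger III.H.1.7 and its extension to bi-infinite quasi-geodesics), since the lemma is classical.
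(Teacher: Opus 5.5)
The paper gives no proof of this lemma; it invokes it as the standard stability (Morse) lemma for quasi-geodesics in Gromov hyperbolic spaces, and your sketch is exactly the classical Bridson--Haefliger argument (taming, the logarithmic estimate, then an Arzel\`a--Ascoli limit together with $O(\delta)$-thinness of geodesics sharing endpoints in $\overline Y$), so it is correct. Under your limiting-geodesic strategy you never need to place the truncation points on the two curves at comparable positions: it suffices that each quasi-geodesic is uniformly close to some geodesic with the same endpoints, after which you compare the two geodesics.
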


We use the following notation. 

\begin{Def} \label{def.shadowdefinition}
Let $y,y_1,y_2\in Y$, let $z_1,z_2\in\overline Y$, and let $R>0$.
\begin{enumerate}
    \item Let
    \[
        B(y,R)=\{x\in Y:d(y,x)<R\}
    \]
    denote the ball of radius $R$ centered at $y$.

    \item The notation $[y_1,y_2]$ denotes a choice of geodesic in $Y$
    connecting $y_1$ to $y_2$.  Such a geodesic need not be unique.

    \item The shadow $O_R(y_1,y_2) \subset \partial Y$ is
    \[
        O_R(y_1,y_2)
        =
        \{\xi\in\partial Y:
        \text{ some geodesic ray } [y_1,\xi] \text{ intersects } B(y_2,R)\}.
    \]

    \item We denote by $\pi_{z_1,z_2}(y)$ the set of nearest-point projections
    of $y$ to all geodesics between $z_1$ and $z_2$.

    \item We denote by $\pi_{[z_1,z_2]}(y)$ the set of nearest-point
    projections of $y$ to the chosen geodesic $[z_1,z_2]$.
\end{enumerate}
\end{Def}

It is well known that, for any $x\in\pi_{z_1,z_2}(y)$, the concatenation
$[y,x]\cup[x,z_1]$ is a $(1,O(\delta))$-quasi-geodesic, where $O(\delta)$
denotes a constant depending only on $\delta$.

Given $o_Y\in Y$ and two distinct points $\xi,\eta\in Y\cup\partial Y$, define
their Gromov product with respect to $o_Y$ by
\[
\langle \xi,\eta\rangle_{o_Y}
:=
\sup \liminf_{i,j\to\infty}
\frac{1}{2}\bigl(d(o_Y,x_i)+d(o_Y,y_j)-d(x_i,y_j)\bigr),
\]
where the supremum is taken over all sequences $x_i,y_j\in Y$ such that
$x_i\to\xi$ and $y_j\to\eta$.  This quantity measures the distance from $o_Y$
to $\pi_{\xi,\eta}(o_Y)$ up to a uniform additive error depending only on
$\delta$.  We shall use the standard inequality
\begin{equation} \label{eqn.Gromovineq}
\langle \xi,\eta\rangle_{o_Y}
\ge
\min\left(
    \langle \xi,\zeta\rangle_{o_Y},
    \langle \zeta,\eta\rangle_{o_Y}
\right)
-
O(\delta)
\end{equation}
for all $\xi,\eta,\zeta\in Y\cup\partial Y$.

The following standard comparison between shadows and Gromov products will be
used repeatedly.

\begin{lemma} \label{lem.shadowandgromprod}
Let $o_Y\in Y$, $\xi\in\partial Y$, and $\xi_t\in[o_Y,\xi]$ be such
that $d(o_Y,\xi_t)=t$ for $t\ge 0$.  For any $R>0$:
\begin{enumerate}
    \item if $\eta\in O_R(o_Y,\xi_t)$, then
    \[
        \langle \xi,\eta\rangle_{o_Y}
        \ge t-R-O(\delta);
    \]
    \item if $\langle \xi,\eta\rangle_{o_Y}
        \ge t-R+O(\delta)$,
    then $$\eta\in O_R(o_Y,\xi_t).$$
\end{enumerate}
\end{lemma}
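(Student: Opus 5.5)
Both parts follow from thin triangles in $Y$ together with the fact, recalled above, that $\langle\xi,\eta\rangle_{o_Y}$ agrees with $d(o_Y,\pi_{\xi,\eta}(o_Y))$ up to $O(\delta)$. Throughout, I consider the ideal triangle with vertices $o_Y$, $\xi$, $\eta$ and sides $[o_Y,\xi]$, $[o_Y,\eta]$, $[\xi,\eta]$. Ideal triangles are $O(\delta)$-thin, and each side splits into segments that fellow-travel the other two sides. The splitting points on $[o_Y,\xi]$ and $[o_Y,\eta]$ lie at distance $\langle\xi,\eta\rangle_{o_Y}+O(\delta)$ from $o_Y$.

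\emph{Part (1).} Suppose $\eta\in O_R(o_Y,\xi_t)$. Then some ray $[o_Y,\eta]$ passes through a point $y$ with $d(y,\xi_t)<R$, so by the triangle inequality $d(o_Y,y)\ge t-R$. Choose sequences $x_i\to\xi$ along $[o_Y,\xi]$ and $y_j\to\eta$ along this ray $[o_Y,\eta]$. For large $i,j$ we have
\[
d(x_i,y_j)\le d(x_i,\xi_t)+R+d(y,y_j),
\]
and the right-hand side equals $\bigl(d(o_Y,x_i)-t\bigr)+R+\bigl(d(o_Y,y_j)-d(o_Y,y)\bigr)$. Substituting this into the Gromov product expression gives
\[
\tfrac12\bigl(d(o_Y,x_i)+d(o_Y,y_j)-d(x_i,y_j)\bigr)\ge \tfrac12\bigl(t+d(o_Y,y)-R\bigr)\ge t-R.
\]
Passing to the $\liminf$ and the supremum over sequences yields $\langle\xi,\eta\rangle_{o_Y}\ge t-R$. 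Since the definition uses a supremum over sequences, the $O(\delta)$ term only absorbs the choice of rays.

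\emph{Part (2).} Suppose $\langle\xi,\eta\rangle_{o_Y}\ge t-R+c\delta$, where $c$ is a large constant to be fixed. Take any ray $[o_Y,\eta]$. By thinness of the ideal triangle, the initial subsegments of $[o_Y,\xi]$ and $[o_Y,\eta]$ of length $\langle\xi,\eta\rangle_{o_Y}-O(\delta)$ stay $O(\delta)$-close. More precisely, the points at equal distance $s$ from $o_Y$ are $O(\delta)$-close for $s\le\langle\xi,\eta\rangle_{o_Y}-O(\delta)$. This is the standard tripod comparison for ideal triangles, obtained by approximating with finite triangles.

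The remaining step is to locate a point of $[o_Y,\eta]$ within $R$ of $\xi_t$. If $t\le\langle\xi,\eta\rangle_{o_Y}-O(\delta)$, then the point of $[o_Y,\eta]$ at distance $t$ from $o_Y$ is within $O(\delta)$ of $\xi_t$, which is less than $R$ once $R$ dominates $\delta$.

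Otherwise $t$ exceeds $\langle\xi,\eta\rangle_{o_Y}-O(\delta)$. In this case set $s:=\langle\xi,\eta\rangle_{o_Y}-O(\delta)$, which satisfies $s\ge t-R+(c-O(1))\delta$. Then $\xi_s$ is $O(\delta)$-close to the ray $[o_Y,\eta]$. Moreover
\[
d(\xi_s,\xi_t)=t-s\le R-(c-O(1))\delta.
\]
Combining these two bounds, the ray $[o_Y,\eta]$ meets $B(\xi_t,R)$ once $c$ is chosen larger than the accumulated $O(1)$ constants. Hence $\eta\in O_R(o_Y,\xi_t)$.

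\textbf{Main obstacle.} The only delicate point is bookkeeping of the $O(\delta)$ constants in the ideal-triangle tripod approximation, since geodesics to boundary points need not be unique. To handle this I would quote thinness of ideal triangles with constant $O(\delta)$. I would then choose the implicit constant in (2) large enough to absorb it, and the constant in (1) likewise.
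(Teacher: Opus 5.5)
Your proof is correct, but part (1) takes a genuinely different route from the paper.

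\textbf{Part (2).} This is essentially the paper's argument. A point of $[o_Y,\xi]$ at distance at most $\langle\xi,\eta\rangle_{o_Y}-O(\delta)$ from $o_Y$ lies $O(\delta)$-close to $[o_Y,\eta]$, and $\xi_t$ is within $R-O(\delta)$ of such a point. Your case split on whether $t\le\langle\xi,\eta\rangle_{o_Y}-O(\delta)$ writes this out a bit more carefully than the paper's one-line version.

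\textbf{Part (1): the paper's route.} The paper takes $y\in\pi_{[\xi,\eta]}(o_Y)$ and reduces to the case $\langle\xi,\eta\rangle_{o_Y}\le t$, so that $\xi_t$ is $O(\delta)$-close to $[\xi,\eta]$. It then applies Lemma~\ref{lem.stableqg} to the quasi-geodesic $[o_Y,y]\cup[y,\eta]$. A two-case analysis follows, according to which piece of that concatenation passes near $\xi_t$. The conclusion comes from $d(o_Y,y)\le\langle\xi,\eta\rangle_{o_Y}+O(\delta)$.

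\textbf{Part (1): your route.} You insert points of the two rays directly into the $\sup\liminf$ definition. You then use only the triangle inequality through $\xi_t$ and the point of $[o_Y,\eta]$ lying in $B(\xi_t,R)$. This gives the sharper bound $\langle\xi,\eta\rangle_{o_Y}\ge t-R$, with no $O(\delta)$ loss and no use of hyperbolicity at all.

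\textbf{What the paper's route buys.} Its projection-based structure is exactly what is reused in Proposition~\ref{prop.shadowcompare}(1). There the Gromov product is the $\mathfrak a_\theta$-valued $\mathcal G^\theta$, which has no $\sup\liminf$ description. It is controlled only through nearest-point projections (Lemma~\ref{lem.gromovproductnearest}), so your trick does not transfer there.

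\textbf{On your assumption in (2).} You tacitly assume in (2) that $R$ dominates $\delta$; this is not a defect. For $R\ll\delta$, statement (2) fails with any uniform constant. For example, in $\mathbb H^2$ the ray $[o_Y,\eta]$ passes at distance roughly $e^{t-\langle\xi,\eta\rangle_{o_Y}}$ from $\xi_t$ when $t<\langle\xi,\eta\rangle_{o_Y}$. The paper's one-line proof of (2) uses the same restriction implicitly.
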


\begin{proof}
We first prove (1).
Let $\eta\in O_R(o_Y,\xi_t)$ and let $y\in\pi_{[\xi,\eta]}(o_Y)$.  It
suffices to consider the case $\langle \xi,\eta\rangle_{o_Y}\le t$.  In this
case, $\xi_t$ is $O(\delta)$-close to $[\xi,\eta]$.  Choose
$x\in[\xi,\eta]$ with $d(\xi_t,x)<O(\delta)$.  Since
$\eta\in O_R(o_Y,\xi_t)$, Lemma \ref{lem.stableqg} implies that $\xi_t$ is
$R+O(\delta)$-close to $[o_Y,y]\cup[y,\eta]$.  Hence $x$ is also
$R+O(\delta)$-close to $[o_Y,y]\cup[y,\eta]$.

If $x$ is $R+O(\delta)$-close to $[y,\eta]$, then
$d(x,y)<R+O(\delta)$, since both $x$ and $[y,\eta]$ lie on the geodesic
$[\xi,\eta]$.  Therefore
\[
    d(o_Y,y)
    \ge d(o_Y,x)-R-O(\delta)
    \ge d(o_Y,\xi_t)-R-O(\delta).
\]
Since $d(o_Y,\xi_t)=t$ and
$d(o_Y,y)\le \langle \xi,\eta\rangle_{o_Y}+O(\delta)$, this proves the desired
estimate in this case.

If $x$ is $R+O(\delta)$-close to $[o_Y,y]$, then choose
$w\in \pi_{[o_Y,y]}(x)$. Then we have $d(x,w)<R+O(\delta)$.  On the other hand, the concatenation
$[w,y]\cup[y,x]$ is a $(1,O(\delta))$-quasi-geodesic.  Hence $y$ is also
$R+O(\delta)$-close to $x$, and the same argument as above proves (1).

For (2), suppose that
\[
    \langle \xi,\eta\rangle_{o_Y}\ge t-R+O(\delta).
\]
Then $\xi_{t-R+O(\delta)}$ is $O(\delta)$-close to $[o_Y,\eta]$.  Hence
$\xi_t$ is $R$-close to $[o_Y,\eta]$, and therefore
$\eta\in O_R(o_Y,\xi_t)$.
\end{proof}

\subsection*{Gromov models}

For the rest of this section, let $(\Ga,\cal P)$ be a relatively hyperbolic
group.  Following \cite{KL_relativizing}, we recall the basic properties of
Gromov models.  Such a model is a proper geodesic Gromov hyperbolic space on
which $\Ga$ acts in a way analogous to the action of a geometrically finite
Kleinian group on the real hyperbolic space.

Let $(Y,d)$ be a proper geodesic Gromov hyperbolic space.  For any
$\eta\in\partial Y$, a horofunction based at $\eta$ is obtained as follows:
if $y_n \in Y$ is a sequence converging to $\eta$, then, after passing to a subsequence, there exist
$t_n\to\infty$ and a function $h:Y\to\R$ such that
\[
    h(x)=\lim_{n\to\infty} d(x,y_n)-t_n ,
\]
uniformly on compact subsets. There exists a constant $C>0$, depending only
on $Y$, such that every such function satisfies
\begin{equation}\label{hii}
\left|
\bigl(h(\ell_{\eta}(t_2))-h(\ell_{\eta}(t_1))\bigr)
-
(t_1-t_2)
\right|
\le C
\quad\text{for all } t_1,t_2\ge 0,
\end{equation}
for every unit-speed geodesic ray $\ell_{\eta}:[0,\infty)\to Y$ asymptotic to
$\eta$.  By a horofunction at $\eta$, we mean any function satisfying
\eqref{hii} for every such ray.

A subset $H\subset Y$ is called a horoball at $\eta$ if there exists a
horofunction $h$ at $\eta$ such that
\[
    \{h\le 0\}\subset H\subset \{h\le 10C\}.
\]
Then $\overline H\cap\partial Y=\{\eta\}$, and horoballs are uniformly
quasi-convex.

\begin{Def} \label{def.gromovmodel}
A proper geodesic Gromov hyperbolic space $(Y,d)$ is called a {\it Gromov model}
for $(\Ga,\cal P)$ if:
\begin{enumerate}
    \item $\Ga$ acts properly discontinuously on $Y$ by isometries;

    \item $Y$ is taut, meaning that there exists $R>0$ such that every point
    of $Y$ lies within distance $R$ of a bi-infinite geodesic;

    \item there exists a $\Ga$-invariant collection $\cal B=\{B_i\}$ of
    disjoint open horoballs such that the stabilizer of each $B_i$ in $\Ga$ is
    of the form $\ga\mathsf P\ga^{-1}$ for some $\ga\in\Ga$ and
    $\mathsf P\in\cal P$;

    \item the action of $\Ga$ on $Y-\bigcup_i B_i$ is cocompact.
\end{enumerate}
\end{Def}

By the uniqueness of the Bowditch boundary, the Gromov boundary of a Gromov
model $Y$ for $(\Ga,\cal P)$ is $\Ga$-equivariantly homeomorphic to
$\partial(\Ga,\cal P)$.  By \cite[Proposition 2.12]{Bowditch1999convergence},
the action of $\Ga$ on $\partial Y$ is a convergence group action.

For each $B\in\cal B$, write
\[
    \{p\}=\overline B\cap\partial Y.
\]
We call $p$ the basepoint of $B$ and also write $B=B_p$.  Given
$\mathsf P\in\cal P^\Ga$, we denote its basepoint by
$\xi_{\mathsf P}\in\partial Y$.

These basepoints are precisely the parabolic limit points of $(\Ga,\cal P)$ in
$\partial Y$.  We shall use the following standard properties of parabolic
subgroups, whose proofs are included for completeness.

\begin{lemma} \label{lem.shadowincpt}
Let $o_Y\in Y$, let $\mathsf P\in\cal P^\Ga$, and set
$\xi=\xi_{\mathsf P}$.  For every $R>0$, there exists a compact set
$Q\subset\partial Y-\{\xi\}$ such that
\[
    O_R(o_Y,g o_Y)\subset gQ
    \quad\text{for all but finitely many } g\in\mathsf P.
\]
\end{lemma}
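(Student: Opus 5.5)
We need to show: for every $R>0$, the shadows $O_R(o_Y,go_Y)$ with $g\in\mathsf P$ lie, after translating back by $g^{-1}$, in a fixed compact subset $Q$ of $\partial Y-\{\xi\}$. Since $g^{-1}O_R(o_Y,go_Y)=O_R(g^{-1}o_Y,o_Y)$, it suffices to find a compact $Q\subset\partial Y-\{\xi\}$ containing $O_R(h o_Y,o_Y)$ for all but finitely many $h\in\mathsf P$. We will argue with Gromov products based at $o_Y$: a set $E\subset\partial Y$ has closure avoiding $\xi$ exactly when $\langle \eta,\xi\rangle_{o_Y}$ is bounded above for $\eta\in E$. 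So the target is a uniform bound $\langle \eta,\xi\rangle_{o_Y}\le D$ for all $\eta\in O_R(ho_Y,o_Y)$ and all $h\in\mathsf P$ outside a finite set. We then take $Q=\{\eta\in\partial Y:\langle\eta,\xi\rangle_{o_Y}\le D\}$. This set is closed in the compact space $\partial Y$, up to the usual $O(\delta)$ ambiguity in lower semicontinuity, which we absorb by enlarging $D$ slightly. It also avoids $\xi$, because $\langle\xi,\xi\rangle_{o_Y}=\infty$.

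The first step is to show that $\langle ho_Y,\xi\rangle_{o_Y}$ is uniformly bounded for $h\in\mathsf P$. Let $B=B_\xi$ be the horoball stabilized by $\mathsf P$. The orbit $\mathsf P o_Y$ lies within bounded distance of the horosphere $\partial B$, since $\mathsf P$ preserves $B$ and any horofunction at $\xi$ up to a bounded additive error. The geodesic ray $[o_Y,\xi]$ eventually enters $B$ and stays in it, with the horofunction decreasing linearly along the ray by \eqref{hii}. If $\langle ho_Y,\xi\rangle_{o_Y}$ were very large, the geodesic $[o_Y,ho_Y]$ would fellow-travel $[o_Y,\xi]$ for a long time. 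It would then pass through points far inside $B$, where the horofunction is very negative, while its endpoints lie near $\partial B$. The horofunction estimate \eqref{hii}, together with $\delta$-thinness, shows that a geodesic with endpoints at horofunction level $O(1)$ cannot fellow-travel the ray $[o_Y,\xi]$ deep into $B$ beyond a bounded distance. Hence $\langle ho_Y,\xi\rangle_{o_Y}\le D_0$ for all $h\in\mathsf P$.

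The second step handles the shadow. Let $\eta\in O_R(ho_Y,o_Y)$, so some ray $[ho_Y,\eta]$ passes within $R$ of $o_Y$. Then $\langle ho_Y,\eta\rangle_{o_Y}\le R+O(\delta)$, since $o_Y$ is $R$-close to a geodesic joining $ho_Y$ and $\eta$. The Gromov inequality \eqref{eqn.Gromovineq} gives
\[
R+O(\delta)\ge\langle ho_Y,\eta\rangle_{o_Y}\ge\min\bigl(\langle ho_Y,\xi\rangle_{o_Y},\langle\xi,\eta\rangle_{o_Y}\bigr)-O(\delta).
\]
This inequality alone is useless, because $\langle ho_Y,\xi\rangle_{o_Y}$ is already small. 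We therefore use it the other way round:
\[
\langle\eta,\xi\rangle_{o_Y}\ge\min\bigl(\langle\eta,ho_Y\rangle_{o_Y},\langle ho_Y,\xi\rangle_{o_Y}\bigr)-O(\delta)
\]
bounds $\langle\eta,\xi\rangle_{o_Y}$ from below, not from above, which is the wrong direction. So instead we use the convergence property: as $h\to\infty$ in $\mathsf P$, we have $h^{\pm1}o_Y\to\xi$, since $\xi$ is the unique limit point of the parabolic group $\mathsf P$.

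The cleaner route is to take $\eta\in O_R(o_Y,go_Y)$ directly, which forces $\langle\eta,go_Y\rangle_{o_Y}\ge d(o_Y,go_Y)-R-O(\delta)$, a large quantity. Apply $g^{-1}$ and base the Gromov product at $g^{-1}o_Y$. Then $\eta':=g^{-1}\eta$ satisfies that $o_Y$ lies within $R+O(\delta)$ of the geodesic $[g^{-1}o_Y,\eta']$. We want to bound $\langle\eta',\xi\rangle_{o_Y}$. Since $g^{-1}o_Y\to\xi$, the first step applied with basepoint swapping gives that $\langle g^{-1}o_Y,\xi\rangle_{o_Y}$ is bounded, while $d(o_Y,g^{-1}o_Y)\to\infty$. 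If $\langle\eta',\xi\rangle_{o_Y}$ were large, then $[o_Y,\eta']$ would fellow-travel $[o_Y,\xi]$ deep into $B$. Meanwhile $[g^{-1}o_Y,\eta']$ passes near $o_Y$, so the concatenation through $o_Y$ is a quasi-geodesic. By Lemma \ref{lem.stableqg}, the ray $[o_Y,\eta']$ starts at $o_Y$, near the horosphere, and goes deep into $B$ along $[o_Y,\xi]$. A geodesic from $g^{-1}o_Y$ through $o_Y$ onward along $[o_Y,\xi]$ would then have its initial and middle points near $\partial B$ and later points deep inside. Thinness of the triangle $(g^{-1}o_Y,o_Y,\xi)$, with bounded Gromov product at $o_Y$, means the segment $[g^{-1}o_Y,o_Y]$ is near $[g^{-1}o_Y,\xi]\cup[\xi,o_Y]$ and turns sharply at $o_Y$. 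This is consistent, so no contradiction arises from thinness alone; the genuine constraint must come from the horoball. The main obstacle is therefore making precise that a geodesic between two points of $\partial B$ far apart, here $g^{-1}o_Y$ and a far point of $[o_Y,\xi]$, must penetrate $B$ near its middle rather than pass close to $o_Y$. I would prove this from \eqref{hii} and quasi-convexity of horoballs: the geodesic between two points near $\partial B$ at distance $L$ dips to depth about $L/2$, so it stays $\gg R$ away from $o_Y$ once $d(o_Y,g^{-1}o_Y)$ is large. This gives the contradiction for all but finitely many $g$, yielding the uniform bound $D$ and hence $Q$.
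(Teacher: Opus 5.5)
\textbf{There is a genuine gap, and it is in your first step.}

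You claim that $\langle ho_Y,\xi\rangle_{o_Y}$ is uniformly bounded over $h\in\mathsf P$. This is false. Since $\xi$ is the unique limit point of the parabolic group $\mathsf P$, we have $ho_Y\to\xi$ in $\overline Y$ as $h\to\infty$ in $\mathsf P$, and this is \emph{equivalent} to $\langle ho_Y,\xi\rangle_{o_Y}\to\infty$.

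Your own horoball heuristic at the end confirms this. A geodesic $[o_Y,ho_Y]$ between two points near $\partial B$ dips to depth roughly $\frac12 d(o_Y,ho_Y)$, so it fellow-travels $[o_Y,\xi]$ for about half its length, and $\langle ho_Y,\xi\rangle_{o_Y}\approx\frac12 d(o_Y,ho_Y)$. For instance, in the upper half-plane with $o_Y=i$ and $ho_Y=i+n$, the product is about $\log n$. So the assertion in your first step, that such a geodesic cannot fellow-travel $[o_Y,\xi]$ deep into $B$, is wrong. Your third paragraph then asserts both $g^{-1}o_Y\to\xi$ and that $\langle g^{-1}o_Y,\xi\rangle_{o_Y}$ is bounded, and these are incompatible.

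\textbf{Consequence: you discarded the inequality that proves the lemma.} For $\eta\in O_R(ho_Y,o_Y)$ you correctly have
\[
R+O(\delta)\ge\langle ho_Y,\eta\rangle_{o_Y}\ge\min\bigl(\langle ho_Y,\xi\rangle_{o_Y},\langle\xi,\eta\rangle_{o_Y}\bigr)-O(\delta).
\]
Because $\langle ho_Y,\xi\rangle_{o_Y}\to\infty$, for all but finitely many $h\in\mathsf P$ the minimum must be $\langle\xi,\eta\rangle_{o_Y}$. This gives the uniform bound $\langle\xi,\eta\rangle_{o_Y}\le R+O(\delta)$. Taking $h=g^{-1}$ and letting $Q$ be the closure of this Gromov-product sublevel set (which still avoids $\xi$) finishes the proof.

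This is exactly the paper's argument, phrased there by contradiction. Using an exhaustion $Q_n$ of $\partial Y-\{\xi\}$, one obtains $g_n^{-1}\eta_n\to\xi$, while also $g_n^{-1}\eta_n\in O_R(g_n^{-1}o_Y,o_Y)$ and $g_n^{-1}o_Y\to\xi$. This is impossible, because a geodesic whose two ends converge to the same boundary point cannot pass within $R$ of $o_Y$.

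Your final horoball-depth route is not carried out. Its setup treats a far point of $[o_Y,\xi]$, which lies deep in $B$, as a point near $\partial B$. Made rigorous, it reduces to precisely the fact $\langle g^{-1}o_Y,\xi\rangle_{o_Y}\to\infty$ that your first step denies.
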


\begin{proof}
Fix $R>0$ and suppose the claim fails.  Let
$Q_n\subset\partial Y-\{\xi\}$ be an increasing sequence of compact
sets with
\[
    \bigcup_n Q_n=\partial Y-\{\xi\}.
\]
Then there exist sequences $g_n\in\mathsf P$ and
$\eta_n\in O_R(o_Y,g_n o_Y)$ such that $\eta_n\notin g_nQ_n$. We may assume that the sequence
$g_n$ is infinite. Since $g_n^{-1}\eta_n\notin Q_n$ for every $n$, 
$g_n^{-1}\eta_n\to\xi$.  On the other hand,
\[
    g_n^{-1}\eta_n\in O_R(g_n^{-1}o_Y,o_Y)
    \quad\text{and}\quad
    g_n^{-1}o_Y\to\xi,
\]
which is impossible.  This proves the lemma.
\end{proof}

\begin{lemma} \label{lem.cptinshadow}
Let $o_Y\in Y$, let $\mathsf P\in\cal P^\Ga$, and set
$\xi=\xi_{\mathsf P}$.  For every compact set
$Q\subset\partial Y-\{\xi\}$, there exists $R>0$ such that
\[
    gQ\subset O_R(o_Y,g o_Y)
    \quad\text{for all } g\in\mathsf P.
\]
\end{lemma}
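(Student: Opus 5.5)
By $\Ga$-equivariance of shadows, $gQ\subset O_R(o_Y,go_Y)$ is equivalent to $Q\subset O_R(g^{-1}o_Y,o_Y)$. Since $g$ ranges over the group $\mathsf P$, it therefore suffices to find $R>0$ such that for every $h\in\mathsf P$ and every $\eta\in Q$, some geodesic ray $[ho_Y,\eta]$ meets $B(o_Y,R)$. The plan is to reduce this to a uniform upper bound on the Gromov products $\langle ho_Y,\eta\rangle_{o_Y}$.

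\textbf{Step 1 (from Gromov products to shadows).} In a $\delta$-hyperbolic space, the distance from $o_Y$ to any geodesic $[ho_Y,\eta]$ equals $\langle ho_Y,\eta\rangle_{o_Y}$ up to an additive $O(\delta)$. So if
\[
M:=\sup_{h\in\mathsf P,\ \eta\in Q}\langle ho_Y,\eta\rangle_{o_Y}<\infty,
\]
then every ray $[ho_Y,\eta]$ passes within $M+O(\delta)$ of $o_Y$. Taking $R=M+O(\delta)+1$ then gives the lemma.

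\textbf{Step 2 (the uniform bound, the main point).} I will prove $M<\infty$ by contradiction. Suppose there are $h_n\in\mathsf P$ and $\eta_n\in Q$ with $\langle h_no_Y,\eta_n\rangle_{o_Y}\to\infty$.

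Since $\langle h_no_Y,\eta_n\rangle_{o_Y}\le d(o_Y,h_no_Y)$, the set $\{h_n\}$ must be infinite. Passing to a subsequence, the $h_n$ are distinct and $\eta_n\to\eta\in Q$ by compactness.

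Because $\mathsf P$ acts properly on $Y$, the points $h_no_Y$ leave every compact set. Their accumulation points in $\partial Y$ lie in the limit set of $\mathsf P$. This limit set is $\{\xi\}$: $\mathsf P$ is a parabolic subgroup of the convergence action on $\partial Y$, fixing only $\xi$. Alternatively, $\mathsf P o_Y$ lies within bounded distance of the horoball $B_\xi$, whose closure meets $\partial Y$ only in $\xi$. Hence $h_no_Y\to\xi$.

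A divergent Gromov product forces the two sequences to have the same limit. Using \eqref{eqn.Gromovineq}, from $\langle h_no_Y,\eta_n\rangle_{o_Y}\to\infty$ we get $\eta_n\to\xi$. Thus $\eta=\xi$, contradicting $\eta\in Q\subset\partial Y-\{\xi\}$.

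\textbf{Expected obstacle.} The only delicate point is justifying $h_no_Y\to\xi$ for distinct $h_n\in\mathsf P$. I would cite either the horoball description (the $\mathsf P$-orbit stays in a bounded neighbourhood of $\overline{B_\xi}$) or the convergence-group property, as in the proof of Lemma \ref{lem.shadowincpt}. Everything else is routine hyperbolic geometry.
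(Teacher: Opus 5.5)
Your proof is correct and follows the same route as the paper. Both argue by contradiction, use equivariance to rewrite $gQ\subset O_R(o_Y,go_Y)$ as $Q\subset O_R(g^{-1}o_Y,o_Y)$, and use $g_n^{-1}o_Y\to\xi$ along an infinite sequence in $\mathsf P$ to force the offending points of $Q$ to converge to $\xi$. Your Gromov-product bookkeeping just makes explicit the step the paper compresses into ``this forces $\xi\in Q$.''
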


\begin{proof}
Let $Q\subset\partial Y-\{\xi\}$ be compact and suppose the conclusion
fails.  Then, for every $n\ge 1$, there exists $g_n\in\mathsf P$ such that
\[
    g_nQ\not\subset O_n(o_Y,g_n o_Y).
\]
Thus $g_n$ is an infinite sequence and
\[
    Q\not\subset O_n(g_n^{-1}o_Y,o_Y)
    \quad\text{for every } n\ge 1.
\]
Since $g_n^{-1}o_Y\to\xi$, this forces $\xi\in Q$, a contradiction.
\end{proof}

Finally, a point $\xi\in\partial Y$ is conical if and only if, for any
$o_Y\in Y$, there exist $R>0$ and an infinite sequence $\ga_n\in\Ga$ such that
\[
    \xi\in O_R(o_Y,\ga_n o_Y)
    \quad\text{for all } n.
\]
In this case, we say that $\ga_n o_Y$ converges conically to $\xi$. It is easy to see that this notion of conicality is equivalent to the conicality defined in terms of convergence action.

\section{Relatively Morse groups} \label{sec.relmorse}

Let $\Ga<G$ be a discrete subgroup which is hyperbolic relative to a finite
collection $\cal P$ of finitely generated infinite subgroups of $\Ga$.  We fix
a non-empty subset $\theta\subset\Pi$.

\begin{Def} \label{def.Morsesubgroup}
We say that $\Ga$ is {\it $\theta$-Morse relative to $\cal P$} if there
exist a Gromov model $Y$ for $(\Ga,\cal P)$ and a $\Ga$-equivariant
quasi-isometric embedding
\[
    f:Y\to X
\]
such that, if $\L_f\subset\fa^+$ denotes the asymptotic cone of
\[
    \{\mu(f(x)^{-1}f(y)):x,y\in Y\},
\]
then
\[
    \L_f\cap\ker\alpha=\{0\}
    \quad\text{for every } \alpha\in\theta .
\]
Such a map $f$ is called a Morse embedding of $Y$.
\end{Def}

For a Morse embedding $f:Y\to X$, we call $\L_f$ the Morse limit cone of
$f$.  We also set
\[
    \L_{\theta,f}:=p_\theta(\L_f)
\]
and call it the Morse $\theta$-limit cone.  Since $\L_f=\i(\L_f)$, being
$\theta$-Morse relative to $\cal P$ is equivalent to being
$\theta\cup\i(\theta)$-Morse relative to $\cal P$.  Thus, without loss of
generality, we assume
\[
    \theta=\i(\theta)
\]
throughout the rest of this section.

A subgroup $\Ga<G$ which is hyperbolic relative to $\cal P$ is called
{\it $\theta$-Anosov relative to $\cal P$} if:
\begin{itemize}
    \item $\Ga$ is $\theta$-regular, meaning that
    \[
        \min_{\alpha\in\theta}\alpha(\mu(\ga_n))\to\infty
    \]
    for every infinite sequence $\ga_n\in\Ga$;

    \item there exists a transverse $\Ga$-equivariant embedding
    \[
        \zeta:\partial Y\to\F_\theta,
    \]
    i.e., it sends distinct points of $\partial Y$ to points in general position.
\end{itemize}
If $\Ga$ is $\theta$-Morse relative to $\cal P$, then $\Ga$ is
$\theta$-Anosov relative to $\cal P$.  Indeed, $\theta$-regularity follows
from the fact that $f$ is a quasi-isometric embedding and
\[
    \L_f\cap\bigcup_{\alpha\in\theta}\ker\alpha=\{0\}.
\]
Moreover, by \cite[Theorem 1.4]{KLP_2018}, the Morse embedding
$f:Y\to X$ extends continuously to a transverse $\Ga$-equivariant embedding
\[
    f:\partial Y\to\F_\theta.
\]
Its
image is the $\theta$-limit set $\La_\theta$.

\begin{remark}
A map $f:Y\to X$ satisfying
\[
    \L_f\cap\bigcup_{\alpha\in\theta}\ker\alpha=\{0\}
\]
is called uniformly $\theta$-regular in \cite{KL_relativizing}.  Although
this condition is a priori different from the original definition of a
Morse embedding in \cite{KL_relativizing}, the Morse lemma
\cite{KLP_2018} implies that the two notions are equivalent for
quasi-isometric embeddings.
\end{remark}

\subsection*{Geometric properties of relatively Morse groups}

For the rest of this section, let $\Ga<G$ be $\theta$-Morse relative to
$\cal P$, with Morse embedding $f:Y\to X$ from a Gromov model $(Y,d)$ and
continuous extension $f:\partial Y\to\F_\theta$.  We fix a basepoint
$o_Y\in Y$ and  we may assume that $f(o_Y)=o$.

The following proposition is a key ingredient in the proof of the global
shadow lemma.  Its second assertion follows from Lemma \ref{lem.gromovproductnearest} below.

\begin{proposition} \label{prop.gromovproductalongparabolic}
Let $\mathsf P\in\cal P$ and let $p=\xi_{\mathsf P}$.  For every compact
subset $Q\subset\partial Y-\{p\}$, there exists a constant
$c=c(Q)>0$ such that
\[
    \sup_{\xi\in Q,\,\ga\in\mathsf P}
    \left\|
        \cal G^\theta(f(p),\ga f(\xi))
        -
        \frac{1}{2}\mu_\theta(\ga)
    \right\|
    \le c .
\]
In particular, $\mu_\theta(\ga)-\i\mu_\theta(\ga)$ is uniformly bounded
for all $\ga\in\mathsf P$.
\end{proposition}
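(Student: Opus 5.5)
The plan is to reduce the statement to a comparison, inside the symmetric space, between the $\fa_\theta$-valued Gromov product of a transverse pair and the Cartan projection of a point lying near the geodesic joining them. This is the higher-rank analogue of the rank-one fact that the Gromov product of two boundary points, seen from $o$, equals the distance from $o$ to their geodesic up to bounded error. We already know $p$ is fixed by $\mathsf P$, so $f(p)=\ga f(p)$. Hence
\[
\cal G^\theta(f(p),\ga f(\xi))=\cal G^\theta(\ga f(p),\ga f(\xi)).
\]
By the equivariance $\cal G^\theta(\ga\eta_1,\ga\eta_2)=\cal G^\theta(\eta_1,\eta_2)+(\text{Busemann terms})$ coming from the cocycle property of $\beta^\theta$, I will express $\cal G^\theta(\ga f(p),\ga f(\xi))$ as
\[
\cal G^\theta(f(p),f(\xi))+\tfrac12\bigl(\beta^\theta_{\ga f(p)}(\ga^{-1},e)+\i\beta^{\i\theta}_{\ga f(\xi)}(\ga^{-1},e)\bigr)
\]
up to sign conventions. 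Here $\cal G^\theta(f(p),f(\xi))$ is bounded for $\xi\in Q$ by continuity and compactness, since $f$ is transverse on $\partial Y$.

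The main step is estimating the two Busemann terms. I expect
\[
\beta^\theta_{\ga f(p)}(e,\ga)=\beta^\theta_{f(p)}(\ga^{-1},e)\approx \mu_\theta(\ga)\quad\text{and}\quad \beta^{\theta}_{\ga f(\xi)}(e,\ga)\approx \i\mu_\theta(\ga^{-1})\text{-type terms}.
\]
The standard tool is the following: if $\eta\in\F_\theta$ is such that $g o$ lies uniformly close to a Weyl cone from $o$ toward $\eta$, then $\beta^\theta_\eta(e,g)=\mu_\theta(g)+O(1)$. I would establish the needed uniform closeness from the Morse property. By Lemma \ref{lem.cptinshadow}, choose $R$ with $\ga Q\subset O_R(o_Y,\ga o_Y)$ for all $\ga\in\mathsf P$, so the geodesic ray $[o_Y,\ga\xi]$ passes within $R$ of $\ga o_Y$. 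Since $f$ is a Morse embedding, the higher-rank Morse lemma \cite{KLP_2018} says $f$ of this ray stays uniformly close to a $\theta$-diamond/Weyl cone from $o$ to $f(\ga\xi)$. Hence $f(\ga o_Y)=\ga o$ is uniformly close to the cone toward $\ga f(\xi)$, which gives $\beta^\theta_{\ga f(\xi)}(e,\ga)=\mu_\theta(\ga)+O(1)$.

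For the term involving $f(p)$, I will use symmetry: from $\ga o$, the ray toward $p=\ga p$ passes near $o$ up to bounded error. Indeed, the geodesic $[\ga o_Y,p]$ followed backward is asymptotic to $\ga[o_Y,p]$, and by $\delta$-thinness of the triangle $(o_Y,\ga o_Y,p)$ together with the horoball geometry, $[\ga o_Y,p]$ passes near $[o_Y,p]$ close to the point where it enters the horoball. Applying the same Morse-lemma argument based at $\ga o$, possibly after translating by $\ga^{-1}$ and using $\xi'$ in a compact set away from $p$, gives $\beta^\theta_{f(p)}$-terms comparable to $\i\mu_\theta(\ga)$ after applying $\i$. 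Collecting terms and using the definition of $\cal G^\theta$ with its factor $\frac12$ yields $\frac12\mu_\theta(\ga)+O(1)$.

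I expect the main obstacle to be this second estimate: showing uniformly in $\ga\in\mathsf P$ that $o$ lies within bounded distance of the Weyl cone from $\ga o$ toward $f(p)$. My first attempt would replace $p$ by $\ga\xi_0$ for a fixed $\xi_0\in Q$, using a bi-infinite quasi-geodesic from $p$ to $\ga\xi$ passing near both $o_Y$ and $\ga o_Y$. Its existence follows from the horoball picture: the geodesic enters near $o_Y$'s horosphere shadow and exits near $\ga o_Y$. I would then apply the Morse lemma to this quasi-geodesic to get a diamond containing both $o$ and $\ga o$ near its axis, with endpoints $f(p),f(\ga\xi)$. Both Busemann estimates then drop out simultaneously.

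Finally, the "in particular" clause follows from the antisymmetry of $\cal G^\theta$ under swapping arguments and applying $\i$, combined with $\ga\mapsto\ga^{-1}$ preserving $\mathsf P$. Swapping the roles and replacing $\ga$ by $\ga^{-1}$ gives $\i\cal G^\theta(f(p),\ga f(\xi))\approx\frac12\mu_\theta(\ga^{-1})=\frac12\i\mu_\theta(\ga)$. Comparing the two estimates bounds $\mu_\theta(\ga)-\i\mu_\theta(\ga)$, which is the second assertion credited to Lemma \ref{lem.gromovproductnearest}.
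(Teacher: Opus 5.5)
There is a genuine gap, and it sits in your treatment of the Busemann term at $f(p)$.

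Your cocycle identity is fine once written correctly. Since $\ga f(p)=f(p)$,
\[
\cal G^\theta(f(p),\ga f(\xi))=\cal G^\theta(f(p),f(\xi))+\tfrac12\bigl(\beta^\theta_{f(p)}(e,\ga)+\i\beta^\theta_{\ga f(\xi)}(e,\ga)\bigr).
\]
The first term on the right is bounded for $\xi\in Q$. Your estimate of the last term via Lemma \ref{lem.cptinshadow} and Lemma \ref{lem.buseandcartan} is correct and gives $\i\mu_\theta(\ga)+O(1)$.

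However, $\beta^\theta_{f(p)}(e,\ga)=\beta^\theta_{f(p)}(\ga^{-1},e)$ is not $\approx\mu_\theta(\ga)$; it is identically zero on $\mathsf P$. Because $\mathsf P$ fixes $f(p)$, the map $\ga\mapsto\beta^\theta_{f(p)}(e,\ga)$ is a homomorphism $\mathsf P\to\fa_\theta$. Elements of $\mathsf P$ are not loxodromic on $Y$, so they have zero stable translation length, and hence $\|\beta^\theta_{f(p)}(e,\ga^n)\|\ll\|\mu(\ga^n)\|\ll d(o_Y,\ga^n o_Y)+1=o(n)$. In rank one this is just the fact that a parabolic isometry preserves the horospheres at its fixed point.

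The geometric picture you offer for the opposite claim is false:
\begin{itemize}
\item If the ray from $\ga o_Y$ to $p$ passed uniformly near $o_Y$ for infinitely many $\ga\in\mathsf P$, then applying $\ga^{-1}$ would make the points $\ga^{-1}o_Y$ converge conically to $p$. This is impossible for a parabolic point, and it is exactly the contradiction used in the proof of Lemma \ref{lem.projinhoroball}.
\item No uniform quasi-geodesic from $p$ to $\ga\xi$ passes near both $o_Y$ and $\ga o_Y$. By Lemma \ref{lem.stableqg} it would be close to $\ga[\xi,p]$, whose distance to $o_Y$ is $\frac12 d(o_Y,\ga o_Y)+O(1)$ by Lemma \ref{lem.braytiozzo}.
\item If $o$ were near the axis of a diamond from $f(p)$ to $\ga f(\xi)$, Lemma \ref{lem.gromovproductnearest} would make $\cal G^\theta(f(p),\ga f(\xi))$ bounded, contradicting the statement you are proving.
\end{itemize}
The geodesic that does pass near both $o_Y$ and $\ga o_Y$ is $[\xi,\ga\xi]$, which has the wrong endpoints.

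Your bookkeeping is also inconsistent. If both Busemann terms had size $\mu_\theta(\ga)$, the identity would give $\frac12\bigl(\mu_\theta(\ga)+\i\mu_\theta(\ga)\bigr)$. By the second assertion this is about $\mu_\theta(\ga)$, twice the claimed value.

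With the correct value, your route yields $\cal G^\theta(f(p),\ga f(\xi))=\frac12\i\mu_\theta(\ga)+O(1)$. You then still need a separate input to replace $\i\mu_\theta(\ga)$ by $\mu_\theta(\ga)$. That input is the coarse $\i$-invariance of $\cal G^\theta$ on $\La_\theta$, which follows from Lemma \ref{lem.gromovproductnearest} because its right-hand side is $\i$-invariant; it also gives the second assertion at once.

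Your swap-and-invert argument for the second assertion does not work as written. Replacing $\ga$ by $\ga^{-1}$ controls $\cal G^\theta(f(p),\ga^{-1}f(\xi))$, not $\cal G^\theta(\ga f(\xi),f(p))=\i\,\cal G^\theta(f(p),\ga f(\xi))$.

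Repaired in this way, your argument is a genuinely different proof from the paper's, which never uses the cocycle identity. The paper proceeds as follows:
\begin{itemize}
\item It takes the nearest-point projection $\ga w$ of $x\in[\xi,p]\cap B(o_Y,R)$ to $\ga[\xi,p]$.
\item Lemma \ref{lem.gromovproductnearest} gives $\cal G^\theta\approx\frac12\bigl(\mu_\theta(f(\ga w))+\i\mu_\theta(f(\ga w))\bigr)$.
\item The midpoint Lemma \ref{lem.parabolictransmiddle} says $w$, $\ga w$ and a point of $[x,\ga x]$ are uniformly close.
\item Combined with the additivity Lemma \ref{lem.DKO_additive}, this gives $\mu_\theta(\ga)\approx\mu_\theta(f(\ga w))+\i\mu_\theta(f(\ga w))$.
\end{itemize}
The corrected cocycle route bypasses the midpoint lemma. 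It does so only by using the vanishing of $\beta^\theta_{f(p)}(e,\cdot)$ on $\mathsf P$, which is precisely the idea your proposal is missing and in fact contradicts.
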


We first collect several lemmas.  The following was proved in
\cite[Lemma 6.6]{LO_invariant} when $\Ga$ is Borel Anosov, using the
Morse property of the orbit map of an Anosov group into $X$ \cite{KLP_2018}.
The same argument applies in our setting, replacing the Cayley graph of a
hyperbolic group by the Gromov hyperbolic space $Y$.

\begin{lemma} \label{lem.gromovproductnearest}
There exists $C>0$ such that, for any distinct $\xi,\eta\in\partial Y$,
\[
    \sup_{z\in\pi_{\xi,\eta}(o_Y)}
    \left\|
        \cal G^\theta(f(\xi),f(\eta))
        -
        \frac{1}{2}
        \left(
            \mu_\theta(f(z))+\i\mu_\theta(f(z))
        \right)
    \right\|
    \le C .
\]
\end{lemma}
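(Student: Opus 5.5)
We follow the strategy of \cite[Lemma 6.6]{LO_invariant}: realize the pair $(f(\xi),f(\eta))$ by a group element whose Cartan projection is controlled by $f(z)$, and then compute the Gromov product from the definition \eqref{eqn.Gromovproduct}.

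Fix distinct $\xi,\eta\in\partial Y$ and $z\in\pi_{\xi,\eta}(o_Y)$, and let $\ell\subset Y$ be a bi-infinite geodesic from $\xi$ to $\eta$ passing through $z$. The image $f(\ell)$ is a uniformly $\theta$-regular quasi-geodesic in $X$, since $f$ is a quasi-isometric embedding and $\L_f\cap\ker\alpha=\{0\}$ for all $\alpha\in\theta$. By the higher-rank Morse lemma \cite[Theorem 1.4]{KLP_2018}, $f(\ell)$ lies within a uniform distance $D$ of a $\theta$-diamond, or equivalently of a Weyl-cone parallel set, joining its endpoints. Those endpoints are exactly $f(\xi)$ and $f(\eta)$ by the continuous extension, and $D$ depends only on the quasi-isometry constants and the regularity data.

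From this we can find $g\in G$ with $(gP_\theta,gw_0P_{\i(\theta)})=(f(\xi),f(\eta))$ and $d(go,f(z))\le D'$ for a uniform constant $D'$. Concretely, the parallel set $P(f(\xi),f(\eta))$ is the orbit $gL_\theta o$ of the Levi subgroup $L_\theta = P_\theta\cap w_0P_{\i(\theta)}w_0^{-1}$, and $f(z)$ is uniformly close to it. Changing $g$ by an element of $L_\theta$ leaves the pair of flags unchanged, so we may move $go$ to within $D'$ of $f(z)$.

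Since $\cal G^\theta$ does not depend on the choice of $g$ \cite[Lemma 9.13]{KOW_indicators}, we have
\[
\cal G^\theta(f(\xi),f(\eta))=\tfrac12\bigl(\beta^\theta_{f(\xi)}(e,g)+\i\beta^{\i(\theta)}_{f(\eta)}(e,g)\bigr).
\]
The remaining task is to show that $\beta^\theta_{gP_\theta}(e,g)$ is within a uniform distance of $\mu_\theta(g)$, and likewise $\beta^{\i(\theta)}_{gw_0P_{\i(\theta)}}(e,g)$ is within a uniform distance of $\mu_{\i(\theta)}(g)=\i\mu_\theta(g)$, up to replacing $g$ by $g\ell$ for some bounded $\ell$. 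Then Lemma~\ref{lem.cptcartan} lets us replace $\mu_\theta(g)$ by $\mu_\theta(f(z))$ at bounded cost. This yields the claimed bound, because $\i\circ\i=\id$ turns the second term into $\i\mu_\theta(f(z))$.

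\textbf{Main obstacle.} The hard step is the Busemann comparison. Write $g=k a k'$ in the Cartan decomposition. We need $\beta^\theta_{gP_\theta}(e,g)=\mu_\theta(g)+O(1)$, and this requires the flag $gP_\theta$ to be uniformly transverse to the repelling flag of $g^{-1}$, with transversality measured from $o$. That is the geometric meaning of $z$ being the nearest-point projection of $o_Y$. The concatenations $[o_Y,z]\cup[z,\xi]$ and $[o_Y,z]\cup[z,\eta]$ are $(1,O(\delta))$-quasi-geodesics, so their $f$-images are uniformly regular quasi-geodesics. Applying the Morse lemma again places $o$ within a uniform distance of the diamonds (Weyl cones) from $f(z)$ towards $f(\xi)$ and towards $f(\eta)$, taken in the opposite directions.

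This yields a uniform lower bound on the angle between $gP_\theta$ (respectively $gw_0P_{\i(\theta)}$) and the shadow direction $k\xi_\theta$ as seen from $f(z)$. The Busemann cocycle is then computed via the Iwasawa decomposition of $g^{-1}k$, with the $N$-component ranging in a compact set. The $\eta$ side is symmetric and uses the $\i(\theta)$ data. We expect to make this angle bound quantitative by arguing by contradiction with compactness (sequences of normalized configurations), exactly as in \cite{LO_invariant}.
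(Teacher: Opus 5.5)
Your overall route is the one the paper intends: it simply transfers the argument of \cite[Lemma 6.6]{LO_invariant} from a Cayley graph to $Y$. That argument uses the Morse lemma to realize $(f(\xi),f(\eta))=(gP_\theta,gw_0P_{\i(\theta)})$ with $go$ uniformly close to $f(z)$, and then compares both Busemann terms with $\mu_\theta(g)$. Your first step (the parallel set and adjusting $g$ by the Levi subgroup) is fine.

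The gap is in the step you yourself call the crux. You leave it as an expectation, and the condition you propose for it is stated incorrectly.

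What is needed is that $f(\xi)$, and likewise $f(\eta)$, lie in the shadow seen from $o$ of a bounded ball around $go$. That is, $f(\xi)=k_1P_\theta$ for some $k_1\in K$ with $k_1A^+o\cap B(go,R)\neq\emptyset$. This is a transversality condition measured at $go$, not at $o$. Write $g=kak'$; then $\beta^\theta_{gP_\theta}(e,g)$ is the $p_\theta$-projection of the $A$-component of $ak'$ in $G=KAN$. For this to be close to $\mu_\theta(g)$, one needs $\xi_\theta$ to be uniformly transverse to $k'^{-1}w_0P_{\i(\theta)}$, the attracting flag of $g^{-1}$.

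Your ``uniform lower bound on the angle between $gP_\theta$ and $k\xi_\theta$ as seen from $f(z)$'' points the wrong way. Already in rank one, $gP_\theta$ lies in a uniform shadow precisely when the angle at $go$ between $gP_\theta$ and the forward direction $k\xi_\theta$ stays bounded away from $\pi$. Equivalently, the angle to the direction of $o$ is bounded below. A lower bound on the angle to $k\xi_\theta$ gives nothing.

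No new compactness argument is needed either. The Morse-lemma input you mention is the right one, but what it should deliver is shadow membership. Since $z\in\pi_{\xi,\eta}(o_Y)$, the concatenations $[o_Y,z]\cup[z,\xi]$ and $[o_Y,z]\cup[z,\eta]$ are $(1,O(\delta))$-quasi-geodesics. Hence $\xi,\eta\in O_R(o_Y,z)$ with $R=O(\delta)$. Lemma~\ref{lem.buseandcartan} (that is, \cite[Lemma 5.7]{LO_invariant} transported to shadows in $Y$ by the Morse lemma) then gives $\beta^\theta_{f(\xi)}(e,h)\approx\mu_\theta(h)\approx\beta^\theta_{f(\eta)}(e,h)$ for any $h$ with $ho=f(z)$. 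Passing from $h$ to your $g$ costs only $O(d(ho,go))$. Indeed, $\beta_\zeta(h,g)=\sigma(h^{-1}g,g^{-1}\zeta)$ has norm at most $\|\mu(h^{-1}g)\|=d(ho,go)$ by Kostant convexity.

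There is also a slip in the $\i$-bookkeeping. The identity $\mu_{\i(\theta)}(g)=\i\mu_\theta(g)$ is false; in general $\i\mu_\theta(g)=\mu_{\i(\theta)}(g^{-1})$. Under the standing assumption $\theta=\i(\theta)$ one simply has $\mu_{\i(\theta)}(g)=\mu_\theta(g)$. Hence $\i\beta^{\i(\theta)}_{f(\eta)}(e,g)\approx\i\mu_\theta(g)$, and $\mathcal G^\theta(f(\xi),f(\eta))\approx\frac12(\mu_\theta(g)+\i\mu_\theta(g))$, as required.

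Taken literally, your identity followed by $\i\circ\i=\id$ would give $\i\beta^{\i(\theta)}_{f(\eta)}(e,g)\approx\mu_\theta(g)$. That yields $\mathcal G^\theta\approx\mu_\theta(f(z))$, which differs from the claim by $\frac12(\mu_\theta(f(z))-\i\mu_\theta(f(z)))$, a quantity that is not uniformly bounded in general. So the final formula you wrote is correct, but not for the reason you gave.
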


The following was proved in \cite[Corollary 4.12]{DKO_AR}.  Although the
original statement applies to the values of a linear form on the Cartan projections, the
same proof gives the following vector-valued version.

\begin{lemma} \label{lem.DKO_additive}
There exists $D_0>0$ such that, for any $x,z\in Y$ and any
$y\in[x,z]$,
\[
\left\|
    \mu_\theta(f(x)^{-1}f(z))
    -
    \left(
        \mu_\theta(f(x)^{-1}f(y))
        +
        \mu_\theta(f(y)^{-1}f(z))
    \right)
\right\|
< D_0 .
\]
\end{lemma}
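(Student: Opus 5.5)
The plan is to deduce the additivity from the higher-rank Morse lemma of Kapovich--Leeb--Porti, applied to the image of a geodesic segment, together with an exact additivity of $\theta$-Cartan projections along Weyl cones, which comes from the cocycle property of the Busemann map.

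\textbf{Step 1 (reduction to long segments).} Since $f$ is a quasi-isometric embedding and $\|\mu(p^{-1}q)\|$ is comparable to $d(p,q)$ on $X$, each of the three Cartan projections in the statement has norm at most a linear function of $d(x,z)$. Hence, for every $T>0$, the estimate holds trivially with a constant depending on $T$ whenever $d(x,z)\le T$. It therefore suffices to treat segments $[x,z]$ with $d(x,z)$ large. On such segments the condition $\L_f\cap\ker\alpha=\{0\}$ for $\alpha\in\theta$ makes $f([x,z])$ a uniformly $\theta$-regular quasi-geodesic.

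\textbf{Step 2 (Morse lemma).} By the Morse lemma \cite{KLP_2018}, there is a constant $D$, depending only on the quasi-isometry constants of $f$ and on the regularity of $\L_f$, with the following property. For all $x,z$, the image $f([x,z])$ lies in the $D$-neighborhood of the $\theta$-diamond $\diamondsuit_\theta(f(x),f(z))=V(f(x),\op{st}\tau)\cap V(f(z),\op{st}\hat\tau)$; here $\tau$ is the $\theta$-flag, an element of $\F_\theta$, spanning the Weyl cone at $f(x)$ that contains $f(z)$ (the notation $V(\cdot,\op{st}\cdot)$ is as in \cite{KLP_2018} and is not used elsewhere in the paper). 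Given $y\in[x,z]$, I choose a point $y'$ in this diamond with $d(f(y),y')\le D$. Because $\mu$ changes by a bounded amount when either argument moves a bounded distance (Lemma \ref{lem.cptcartan}), it then suffices to prove exact additivity
\[
\mu_\theta(p^{-1}q)=\mu_\theta(p^{-1}y')+\mu_\theta(y'^{-1}q)
\]
for $p=f(x)$, $q=f(z)$ and every $y'\in\diamondsuit_\theta(p,q)$.

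\textbf{Step 3 (exact additivity in a diamond).} I use the identity that, whenever $q\in V(p,\op{st}\tau)$,
\[
\mu_\theta(p^{-1}q)=\pm\beta^\theta_\tau(p,q),
\]
with the sign fixed by the convention in \eqref{Bu}. To see this, write $p=go$, $q=gk\exp(v)o$ with $v\in\fa^+$ and $\tau=gkP_\theta$, and compute from the definition of $\sigma$ that $\beta^\theta_{\tau}(p,q)$ equals $p_\theta(v)$ up to sign. For $y'$ in the diamond we have $y'\in V(p,\op{st}\tau)$, and also $q\in V(y',\op{st}\tau)$, because the diamond $\diamondsuit_\theta(y',q)$ is contained in $\diamondsuit_\theta(p,q)$ and the Weyl cones at $y'$ and at $p$ share the same $\theta$-flag $\tau$. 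The cocycle identity $\beta_\tau(p,q)=\beta_\tau(p,y')+\beta_\tau(y',q)$ then yields the exact additivity.

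\textbf{Main obstacle.} I expect the hardest step to be this geometric fact about diamonds: showing that $q\in V(y',\op{st}\tau)$ for the same flag $\tau$, uniformly in $y'$. If a clean citation is unavailable, my fallback is to approximate. I would let $\tau_n$ be the $\theta$-flags determined by long forward subsegments of $f([y,z])$, and use the uniform regularity together with the Morse lemma to show that these flags are uniformly close to $\tau$. I would then control the difference of Busemann functions using the continuity of $\beta$, which costs only a bounded additive error. This bounded error is exactly what is allowed in the constant $D_0$ of the lemma.
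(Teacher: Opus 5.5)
Your argument is sound in its main line, and it is a more explicit route than the paper's. The paper gives no proof of its own here. It cites \cite[Corollary 4.12]{DKO_AR}, which is proved for values $\psi(\mu(\cdot))$ of a linear form in the Anosov setting, and asserts that the same proof gives the vector-valued statement for a Morse embedding of a Gromov model.

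You instead prove the vector-valued statement directly. The Morse lemma of \cite{KLP_2018}, which is where the quasi-isometric embedding and the condition $\L_f\cap\ker\alpha=\{0\}$ enter, reduces everything to a $\theta$-diamond. There additivity is exact, from the identity $\beta^\theta_\tau(p,q)=\mu_\theta(p^{-1}q)$ for $q\in V(p,\op{st}\tau)$ together with the cocycle identity. With the paper's conventions the sign is $+$: if $q=gk\exp(v)o$ with $gkP_\theta=\tau$ and $v\in\overline{\fa^+}$, then $\beta_{gkP}(g,gk\exp v)=v$. Your route makes vector-valuedness automatic and uses nothing about the group beyond the Morse lemma; the citation buys brevity.

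One small correction: the Morse lemma in \cite{KLP_2018} yields a diamond $\diamondsuit(x_-,x_+)$ with $d(x_-,f(x)),\,d(x_+,f(z))\le D$. Its tips are not exactly $f(x),f(z)$. So run Step 3 with $p=x_-$, $q=x_+$, and then move the tips using Lemma \ref{lem.cptcartan}.

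The step you flagged is where the write-up is not yet a proof. Saying that $\diamondsuit_\theta(y',q)\subset\diamondsuit_\theta(p,q)$ and that the cones at $y'$ and $p$ share the flag $\tau$ assumes the conclusion. Moreover, $\diamondsuit_\theta(y',q)$ need not even be defined, since $y'q$ can be singular, e.g.\ for $y'$ on the boundary of the diamond. The fact itself is true and has a short proof:
\begin{itemize}
\item Normalize $p=o$ and $q=\exp(v)o$ with $v\in\fa^+$ $\theta$-regular, so $\tau=P_\theta$.
\item Since $p=\exp(v)w_0\exp(\i v)o$, we get $\hat\tau=\exp(v)w_0P_{\i(\theta)}=w_0P_{\i(\theta)}$.
\item If $y'\in V(q,\op{st}\hat\tau)$, write $y'=hk\exp(u)o$ with $h=\exp(v)$, $hkP_{\i(\theta)}=\hat\tau$ and $u\in\overline{\fa^+}$.
\item Then $q=(hk\exp u)\,w_0\exp(\i u)\,o$ with $\i u\in\overline{\fa^+}$, so $q\in V(y',\op{st}\tau')$ where $\tau'=hk\exp(u)w_0P_\theta=hkw_0P_\theta$.
\item The condition $hkP_{\i(\theta)}=w_0P_{\i(\theta)}$ forces $k\in w_0(K\cap P_{\i(\theta)})$.
\item Since $w_0^{-1}(K\cap P_{\i(\theta)})w_0\subset L_\theta$ and $w_0^2\in M$, we get $kw_0P_\theta=P_\theta$, hence $\tau'=\tau$.
\end{itemize}
This works for the closed star, so boundary points of the diamond are covered.

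Your fallback would not have worked as stated. Continuity of $\beta$ gives no uniform control of $\beta_{\tau_n}(y',q)-\beta_\tau(y',q)$ when $d(y',q)$ is unbounded, so you would need a shadow estimate such as Lemma \ref{lem.buseandcartan}. With that lemma one can avoid diamonds altogether: by tautness pick $\xi\in\partial Y$ with $z$ uniformly close to $[x,\xi]$. Then $\xi$ lies in uniform shadows $O_R(x,y)$, $O_R(x,z)$ and $O_R(y,z)$, and you can apply the cocycle identity for $\beta^\theta_{f(\xi)}$.
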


We also need the following elementary observation.

\begin{lemma} \label{lem.projinhoroball}
Let $\mathsf P\in\cal P$ and let $p=\xi_{\mathsf P}$.  For every
$R>0$, there exists a finite subset $\mathsf P(R)\subset\mathsf P$ such
that, for every $\ga\in\mathsf P-\mathsf P(R)$, every geodesic
$[\xi,p]$ with $\xi\in\partial Y-\{p\}$, and every
$u\in B(o_Y,R)\cap[\xi,p]$, we have
\[
    \pi_{\ga[\xi,p]}(u)
    \subset
    \ga[u,p]
\]
where we choose $[u, p] \subset [\xi, p]$.
\end{lemma}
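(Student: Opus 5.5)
We argue by contradiction, combining compactness with the thin-triangle geometry of $Y$ near the parabolic point $p$. Fix $R>0$ and suppose the conclusion fails. Then there are infinitely many distinct $\ga_n\in\mathsf P$, points $\xi_n\in\partial Y-\{p\}$, geodesics $[\xi_n,p]$, points $u_n\in B(o_Y,R)\cap[\xi_n,p]$, and points $w_n\in\pi_{\ga_n[\xi_n,p]}(u_n)$ with $w_n\notin\ga_n[u_n,p]$. Equivalently, $w_n$ lies on the subray $\ga_n[\xi_n,u_n)$, that is, on the part of $\ga_n[\xi_n,p]$ on the $\ga_n\xi_n$ side of $\ga_nu_n$.

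Translate by $\ga_n^{-1}$. Then $\ga_n^{-1}w_n$ is a nearest-point projection of $\ga_n^{-1}u_n$ to $[\xi_n,p]$, and it lies on $[\xi_n,u_n)$. Since $\ga_n$ are distinct elements of $\mathsf P$ acting properly, $\ga_n^{-1}u_n\to p$. The convergence is as parabolic elements fixing $p$, so $\ga_n^{-1}u_n$ stays within bounded distance of the horosphere through $o_Y$. Pass to subsequences so that $u_n\to u\in\overline{B(o_Y,R)}$ and $\xi_n\to\xi_\infty\in\partial Y$. Since $u_n$ lies on a geodesic to $p$ within distance $R$ of $o_Y$, the rays $[u_n,p]$ converge, after a further subsequence, to a ray $[u,p]$.

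We now use the standard tree-like picture. The projection of a point $q$ to the geodesic $[\xi_n,p]$ lies, up to $O(\delta)$, at the branching point of the tripod spanned by $q,\xi_n,p$. Take $q=\ga_n^{-1}u_n$, which tends to $p$ with Gromov product $\langle q,p\rangle_{u_n}\to\infty$. Then this branching point moves off toward $p$ along $[u_n,p]$. Concretely, $d(u_n,\pi(q))\approx\langle q,p\rangle_{u_n}-\langle \xi_n,p\rangle_{u_n}$ up to $O(\delta)$. Here $\langle\xi_n,p\rangle_{u_n}$ is at most $O(\delta)$, because $u_n$ lies on $[\xi_n,p]$. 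Hence the projection lies on $[u_n,p]$ at distance tending to infinity from $u_n$.

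Two points remain.

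\begin{itemize}
\item \emph{Exact containment.} A projection point at large distance from $u_n$ on the $p$-side cannot lie on $[\xi_n,u_n)$. Every projection point lies within $O(\delta)$ of the branch point. So once that distance exceeds $O(\delta)$, every point of $\pi_{[\xi_n,p]}(q)$ lies strictly in $[u_n,p]$, which contradicts the choice of $w_n$.
\item \emph{Uniformity in $\xi_n$.} The degenerate case $\xi_n\to p$ needs no special treatment, since $\langle\xi_n,p\rangle_{u_n}$ is bounded by $O(\delta)$ regardless of $\xi_n$.
\end{itemize}

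Thus only finitely many $\ga$ can fail, and these form $\mathsf P(R)$.

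The main obstacle is showing $\langle \ga_n^{-1}u_n,p\rangle_{u_n}\to\infty$ uniformly. I would deduce it from $\ga_n^{-1}o_Y\to p$, together with the facts that $d(u_n,o_Y)<R$ and that $\ga_n$ isometries preserve distances. These give $\langle \ga_n^{-1}u_n,p\rangle_{o_Y}\ge\langle\ga_n^{-1}o_Y,p\rangle_{o_Y}-R-O(\delta)\to\infty$. Changing the basepoint from $o_Y$ to $u_n$ costs at most $R$.
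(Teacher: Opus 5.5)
Your proof is correct. It reaches the contradiction by a slightly different route from the paper.

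\textbf{The paper's route.} The paper never translates by $\gamma_n^{-1}$. From $\gamma_n u_n\in[y_n,p]$ it observes that the $(1,O(\delta))$-quasi-geodesic $[u_n,y_n]\cup[y_n,p]$ passes through $\gamma_n u_n$. Hence $p\in O_{R'}(o_Y,\gamma_n o_Y)$ for all $n$, so $\gamma_n o_Y$ converges conically to $p$, which is impossible for a parabolic point.

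\textbf{Your route.} Your argument is the $\gamma_n^{-1}$-translate of the same picture. The failure condition says that $u_n$ is $O(\delta)$-close to a geodesic $[\gamma_n^{-1}u_n,p]$, i.e.\ $\langle \gamma_n^{-1}u_n,p\rangle_{u_n}=O(\delta)$. You contradict this with $\langle \gamma_n^{-1}u_n,p\rangle_{u_n}\ge \langle \gamma_n^{-1}o_Y,p\rangle_{o_Y}-2R-O(\delta)\to\infty$.

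\textbf{Comparison.} You replace the appeal to ``parabolic points are not conical'' by the more elementary fact that every infinite sequence in $\mathsf P$ sends $o_Y$ to $p$; the paper uses this fact elsewhere too. You also get an explicit $\mathsf P(R)$: the finitely many $\gamma$ with $\langle \gamma^{-1}o_Y,p\rangle_{o_Y}$ below $2R+O(\delta)$. The paper's version is shorter, given the standard non-conicality fact.

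\textbf{Corrections.}
\begin{enumerate}
\item Your displayed formula $d(u_n,\pi(q))\approx \langle q,p\rangle_{u_n}-\langle \xi_n,p\rangle_{u_n}$ is not right in general. If the branch point lies on the $\xi_n$-side, the left side is roughly $\langle q,\xi_n\rangle_{u_n}$, while the right side is $O(\delta)$. What you actually use is correct: once $\langle q,p\rangle_{u_n}$ exceeds a suitable $O(\delta)$ constant, \eqref{eqn.Gromovineq} together with $\langle \xi_n,p\rangle_{u_n}=O(\delta)$ forces $\langle q,\xi_n\rangle_{u_n}=O(\delta)$, and this places every projection point on $[u_n,p]$.
\item The exact-containment step can be done more directly. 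If some $z\in\pi_{[\xi_n,p]}(q)$ lay in $[\xi_n,u_n)$, then $[q,z]\cup[z,p]$ would be a $(1,O(\delta))$-quasi-geodesic through $u_n$. By Lemma \ref{lem.stableqg} this gives $\langle q,p\rangle_{u_n}=O(\delta)$, a contradiction.
\item The horosphere remark and the compactness extraction of limits $u$, $\xi_\infty$, $[u,p]$ play no role and can be dropped.
\end{enumerate}
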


\begin{proof}
Suppose not.  Then there exist $R>0$, an infinite sequence
$\ga_n\in\mathsf P$, and sequences
\[
    \xi_n\in\partial Y - \{p\}, \quad
    u_n\in B(o_Y,R)\cap[\xi_n,p],
    \quad \text{and} \quad
    y_n\in\pi_{\ga_n[\xi_n,p]}(u_n),
\]
such that
\[
    y_n\notin\ga_n[u_n,p]
\]
where we choose $[u_n, p] \subset [\xi_n, p]$, for all $n \ge 1$.

For each $n$, choose  $[y_n,p] \subset \ga_n[\xi_n,p]$.  By the assumption, $\ga_nu_n\in[y_n,p]$.  By Gromov
hyperbolicity, the concatenation
\[
    [u_n,y_n]\cup[y_n,p]
\]
is a $(1,O(\delta))$-quasi-geodesic.  Hence
\[
    p\in O_{O(\delta)}(u_n,\ga_nu_n).
\]
Since $d(o_Y,u_n)<R$, it follows that for all $n$, 
\[
    p\in O_{R'}(o_Y,\ga_no_Y)
\]
for some $R'>0$ depending only on $R$ and $\delta$.  Thus
$\ga_no_Y$ converges conically to $p$, contradicting the fact that $p$ is
a parabolic limit point.
\end{proof}

We will use the following coarse midpoint estimate in the Gromov model:

\begin{lemma} \cite[Lemma 3.7]{BT_global}  \label{lem.braytiozzo}
Let $\mathsf P\in\cal P$ and let $p=\xi_{\mathsf P}$.  Let $R>0$, and
let $Q\subset\partial Y-\{p\}$ be compact.  Then there exists
$D>0$ such that, for every $x\in B(o_Y,R)$ and every $\ga\in\mathsf P$,
\[
    \sup_{\xi\in Q,\, y\in\pi_{\ga\xi,p}(x)}
    \left|
        d(x,y)-\frac{1}{2}d(x,\ga x)
    \right|
    \le D .
\]
\end{lemma}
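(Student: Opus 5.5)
The idea is to look at the ideal triangle with vertices $x$, $\ga x$ and $p$, and to show that the projection $y$ is a coarse center of this triangle lying on the horosphere level of $x$. Horofunctions at $p$ are coarsely $\mathsf P$-invariant, so the two legs from $x$ and from $\ga x$ to the center must have coarsely equal lengths. Their sum is $d(x,\ga x)$ up to a bounded error, which gives the estimate. Throughout, ``bounded'' means bounded in terms of $R$, $Q$, $\delta$ and the model $Y$ only.

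\emph{Step 1 (uniform reductions).} By compactness of $Q\subset\partial Y-\{p\}$ there is $R_Q>0$ such that every geodesic $[\xi,p]$ with $\xi\in Q$ meets $B(o_Y,R_Q)$. Otherwise there would be $\xi_n\in Q$ with $[\xi_n,p]$ leaving every ball around $o_Y$, forcing $\langle\xi_n,p\rangle_{o_Y}\to\infty$ and hence $\xi_n\to p$, which is impossible.

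Fix such a point $u\in[\xi,p]\cap B(o_Y,R_Q)$. Then $d(x,u)\le R+R_Q$, so $|d(x,\ga x)-d(u,\ga u)|\le 2(R+R_Q)$.

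Any two geodesics between $\ga\xi$ and $p$ are $O(\delta)$-Hausdorff close. Nearest-point projection to a geodesic is coarsely $1$-Lipschitz with fibres of diameter $O(\delta)+$(displacement). Hence replacing $x$ by $u$ and the geodesic by $\ga[\xi,p]$ changes $d(x,y)$ by a bounded amount. So it suffices to bound $|d(u,y)-\tfrac12 d(u,\ga u)|$ for $y\in\pi_{\ga[\xi,p]}(u)$.

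\emph{Step 2 (the exceptional finite set).} Apply Lemma \ref{lem.projinhoroball} with radius $R_Q$ to get a finite set $\mathsf P(R_Q)$. For the finitely many $\ga$ in it, $d(u,\ga u)$ and $d(u,y)\le d(u,\ga u)$ are bounded, so the estimate holds trivially. For all other $\ga$, the lemma gives $y\in\ga[u,p]$, so $y$ lies between $\ga u$ and $p$ on $\ga[\xi,p]$.

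The concatenation $[u,y]\cup[y,\ga u]$ is a $(1,O(\delta))$-quasi-geodesic, because $[u,y]$ is a projection segment and $[y,\ga u]\subset\ga[\xi,p]$. Therefore
\[
d(u,\ga u)=d(u,y)+d(y,\ga u)+O(\delta).
\]
Moreover, $[u,y]\cup[y,p]$ is also a $(1,O(\delta))$-quasi-geodesic, so $y$ is within $O(\delta)$ of some geodesic ray from $u$ to $p$.

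\emph{Step 3 (horofunction balance; the main point).} Let $h$ be a horofunction at $p$ defining the horoball $B_p\in\cal B$, so that $\{h\le0\}\subset B_p\subset\{h\le 10C\}$. For $\ga\in\mathsf P$, the function $h\circ\ga^{-1}$ is again a horofunction at $p$. Any two horofunctions at the same point differ by a constant up to bounded error, by \eqref{hii} and the standard comparison. Since $\ga B_p=B_p$, the sandwich condition pins this constant down to a bounded amount. Hence $|h(\ga z)-h(z)|\le C'$ for all $z\in Y$ and $\ga\in\mathsf P$.

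Now apply \eqref{hii} along the ray from $u$ to $p$, which passes $O(\delta)$-near $y$, and along $\ga[u,p]$, which contains $y$. This gives
\[
d(u,y)=h(u)-h(y)+O(1), \qquad d(\ga u,y)=h(\ga u)-h(y)+O(1).
\]
Subtracting and using $|h(\ga u)-h(u)|\le C'$ yields $|d(u,y)-d(\ga u,y)|=O(1)$. Combined with Step 2, this gives $d(u,y)=\tfrac12 d(u,\ga u)+O(1)$, as required.

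I expect the main obstacle to be Step 3. The uniform coarse $\mathsf P$-invariance of the horofunction has to be extracted carefully from the definition of horoballs via \eqref{hii}, and one must check that the resulting constant is independent of $\ga$. The remaining steps are standard thin-triangle bookkeeping.
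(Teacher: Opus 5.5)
The paper gives no proof of this lemma; it is quoted from Bray--Tiozzo \cite[Lemma 3.7]{BT_global}. Your argument is a correct, self-contained proof that fits naturally with Section~\ref{sec.relmorse}, so the comparison is between a citation and your derivation.

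Your proof has three steps. You use Lemma~\ref{lem.projinhoroball} to place the projection $y$ on $\gamma[u,p]$; its proof only uses that $p$ is not conical, so there is no circularity. The projection property then splits $d(u,\gamma u)=d(u,y)+d(y,\gamma u)+O(\delta)$. Finally, a horofunction balance shows that the two legs agree. The citation buys brevity. Your route buys transparency: it shows that the only inputs are non-conicality of $p$ and a uniformly bounded Busemann character of $\mathsf P$ at $p$, and it removes the dependence on the set-up of \cite{BT_global}.

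One point in Step 3 should be tightened. You use two facts: that two horofunctions at $p$ differ by a constant up to bounded error, and that $h(y)\approx h(y')$ when $d(y,y')=O(\delta)$. Both hold for Busemann-type horofunctions $h=\lim_n\bigl(d(\cdot,y_n)-t_n\bigr)$, which are $1$-Lipschitz. They do not follow from \eqref{hii} alone, which only constrains $h$ along individual rays. For example, on the upper half-plane, $z\mapsto -\log\operatorname{Im}z+\phi(\operatorname{Re}z)$ satisfies \eqref{hii} at $\infty$ for arbitrary $\phi$. So take $h$ of Busemann type. For such $h$, your sandwich argument does give $|h\circ\gamma-h|\le 10C+O(\delta)$ uniformly in $\gamma\in\mathsf P$, and the rest of Step 3 goes through.

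Alternatively, the balance can be proved without horofunctions. Let $\ell=[u,p]$, let $\ell(s)$ be a point of $\ell$ within $O(\delta)$ of $y$, and put $K:=s-d(\gamma u,y)$; this equals $d(u,y)-d(\gamma u,y)$ up to $O(\delta)$. The rays $\ell$ and $\gamma\ell$ fellow-travel beyond $y$ with some constant $\epsilon=O(\delta)$. If $K>\epsilon$, set $a_j:=\ell\bigl(d(\gamma u,y)+jK\bigr)$. Then $d(\gamma a_{j-1},a_j)\le\epsilon$ for all $j\ge 1$, so $d(a_0,\gamma^n a_0)\ge n(K-\epsilon)$. Hence $\gamma$ would have positive stable translation length, which is impossible for an element of the parabolic subgroup $\mathsf P$. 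The case $K<-\epsilon$ follows by applying the same argument to $\gamma^{-1}$.
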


As an intermediate step in the proof of Proposition
\ref{prop.gromovproductalongparabolic}, we prove the following.  We write
$\approx$ for equality up to a uniform additive error.

\begin{lemma} \label{lem.parabolictransmiddle}
Let $\mathsf P\in\cal P$ and let $p=\xi_{\mathsf P}$.  Let $R>0$, and
let $Q\subset\partial Y-\{p\}$ be compact.  Then there exists a
constant $C=C(R,Q)>0$ such that, for every $\ga\in\mathsf P$, every
geodesic $[\xi,p]$ with $\xi\in Q$, every
$x\in[\xi,p]\cap B(o_Y,R)$, and every $w\in\ga^{-1}\pi_{\ga[\xi,p]}(x)$,
there exists $y\in[x,\ga x]$ such that
\[
    d(y,w)<C,
    \quad
    d(y,\ga w)<C,
    \quad \text{and} \quad 
    d(w,\ga w)<C.
\]
\end{lemma}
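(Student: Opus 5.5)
We work entirely inside the Gromov model $Y$, using thin triangles together with Lemma \ref{lem.projinhoroball} and Lemma \ref{lem.braytiozzo}. Fix $\ga\in\mathsf P$, $\xi\in Q$, $x\in[\xi,p]\cap B(o_Y,R)$, and a projection point $z\in\pi_{\ga[\xi,p]}(x)$. Set $w=\ga^{-1}z\in[\xi,p]$. For the finitely many $\ga$ in the exceptional set $\mathsf P(R)$ of Lemma \ref{lem.projinhoroball}, we have $d(x,\ga x)\le 2R+\max_{\mathsf P(R)}d(o_Y,\ga o_Y)$. Also, $z$ is a nearest point projection of $x$ onto a geodesic ending at $p$. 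Since $Q$ is compact and avoids $p$, every geodesic $[\xi,p]$ with $\xi\in Q$ passes within a uniform distance of $o_Y$; applying this to $\ga[\xi,p]$ (translated), $d(x,z)$ is bounded in terms of $d(x,\ga x)$. So for these finitely many $\ga$ all three distances are bounded, with $y=x$. From now on we assume $\ga\notin\mathsf P(R)$, so that Lemma \ref{lem.projinhoroball} gives $z\in\ga[x,p]$, i.e.\ $w\in[x,p]\subset[\xi,p]$.

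The first step is to locate $w$ on the ray $[x,p]$. Lemma \ref{lem.braytiozzo} (with $\ga\xi$ in place of $\xi$, noting $\pi_{\ga\xi,p}(x)\ni z$) gives
\[
d(x,z)\approx \tfrac12 d(x,\ga x).
\]
Next consider the geodesic triangle with vertices $x$, $\ga x$, $p$, whose sides are $[x,p]$, $\ga[x,p]$ and $[x,\ga x]$. Because $z$ is the projection of $x$ onto $\ga[\xi,p]\supset\ga[x,p]$, the concatenation $[x,z]\cup[z,p]$ is a $(1,O(\delta))$-quasi-geodesic. Hence $z$ is the coarse center of this triangle: $z$ lies $O(\delta)$-close to $[x,\ga x]$, and the subsegment $[z,\ga x]\subset\ga[x,p]$ has length $d(z,\ga x)\approx d(x,\ga x)-d(x,z)\approx\tfrac12 d(x,\ga x)$.

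Since $z=\ga w$ and $\ga x\in\ga[x,p]$, we have $d(x,w)=d(\ga x,\ga w)=d(\ga x,z)\approx\tfrac12 d(x,\ga x)$. By the same center argument applied to the side $[x,p]$ (the triangle is thin, and the internal point on $[x,p]$ sits at distance equal to the Gromov product $\langle \ga x,p\rangle_x\approx d(x,z)$ from $x$), the point $w\in[x,p]$ at distance $\approx\tfrac12 d(x,\ga x)$ from $x$ is within $O(\delta)+O(D)$ of the internal point of $[x,p]$. Choose $y\in[x,\ga x]$ at distance $d(x,z)$ from $x$. Thinness puts the internal points on all three sides within $O(\delta)$ of each other, and $|d(x,w)-d(x,z)|$ is bounded. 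Therefore $d(y,w)$, $d(y,\ga w)=d(y,z)$, and hence $d(w,\ga w)$ are all bounded by a constant depending only on $\delta$, $R$, and the constant $D=D(R,Q)$ of Lemma \ref{lem.braytiozzo}.

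The main obstacle is making the thin-triangle step rigorous with an ideal vertex $p$ and arbitrary (non-unique) geodesics. Specifically, one must check that the internal point on $[x,p]$ lies at distance $\approx\langle \ga x,p\rangle_x$ from $x$, and that this Gromov product is $\approx d(x,z)$, using that $z$ is a projection. We would handle this with Lemma \ref{lem.stableqg} applied to the quasi-geodesic $[x,z]\cup[z,p]$ versus $[x,p]$, which also allows replacing $[x,p]$ by any choice of geodesic up to bounded error.
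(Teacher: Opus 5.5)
Your proof is correct and follows essentially the same route as the paper. Both arguments use Lemma \ref{lem.projinhoroball} to place $\ga w$ on $\ga[x,p]$, the Bray--Tiozzo midpoint estimate (Lemma \ref{lem.braytiozzo}), the thin triangle with vertices $x,\ga x,p$, and the identity $d(x,w)=d(\ga x,\ga w)$ to put $w$ near the coarse center of that triangle on $[x,p]$. You also treat the finite exceptional set $\mathsf P(R)$ explicitly, which the paper dismisses with ``it suffices to consider''; your argument there is fine, since $d(x,\ga w)\le d(x,\ga x)$ is bounded.
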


\begin{proof}
Let $\mathsf P(R)\subset\mathsf P$ be the finite subset given by Lemma
\ref{lem.projinhoroball}.  It suffices to consider
$\ga\in\mathsf P-\mathsf P(R)$. Let $\xi\in Q$, and choose a geodesic $[\xi,p]$.  Let
$x\in[\xi,p]\cap B(o_Y,R)$, and let $w\in[\xi,p]$ be such that
$\ga w\in\pi_{\ga[\xi,p]}(x)$.
By Lemma \ref{lem.braytiozzo},
\begin{equation} \label{egw}
    d(x,\ga w)
    \approx
    \frac{1}{2}d(x,\ga x),
\end{equation}
with additive error depending only on $R$ and $Q$.

Consider the geodesic $[\ga x,p]\subset\ga[\xi,p]$ and the geodesic
triangle
\[
    [x,p]\cup[x,\ga x]\cup[\ga x,p].
\]
By Lemma \ref{lem.projinhoroball}, we have $\ga w\in[\ga x,p]$.  Since
$w\in[\xi,p]$, we also have $w\in[x,p]$.  Since
$\ga w\in\pi_{\ga[\xi,p]}(x)$, Gromov hyperbolicity implies that there
exist $y\in[x,\ga x]$ and $z\in[x,p]$ such that the three points
$\ga w$, $y$, and $z$ are uniformly close to one another.  Together with
\eqref{egw}, this gives
\[
    \frac{1}{2}d(x,\ga x)
    \approx
    d(x,\ga w)
    \approx
    d(x,y)
    \approx
    d(x,z).
\]
Hence
\begin{equation} \label{xgx}
    d(x,\ga x)
    \approx
    d(x,y)+d(x,z).
\end{equation}
On the other hand,
\[
\begin{aligned}
    d(x,\ga x)
    &=
    d(x,y)+d(y,\ga x) \\
    &\approx
    d(x,y)+d(\ga w,\ga x) \\
    &=
    d(x,y)+d(w,x).
\end{aligned}
\]
Comparing this with \eqref{xgx}, we obtain
\[
    d(x,w)\approx d(x,z).
\]
Since $z,w\in[x,p]$, this implies that the points $z$ and $w$ are uniformly close.  Since
$\ga w$, $y$, and $z$ are uniformly close, it follows that $\ga w$, $y$,
and $w$ are uniformly close as well.  This proves the lemma.
\end{proof}

\subsection*{Proof of Proposition \ref{prop.gromovproductalongparabolic}}

There exists $R=R(Q)>0$ such that every geodesic $[\xi,p]$ with
$\xi\in Q$ intersects $B(o_Y,R)$. 
Let $\ga \in \mathsf{P}$.

Let $\xi\in Q$, and choose a geodesic $[\xi,p]$.  Choose
$x\in[\xi,p]\cap B(o_Y,R)$, and let $w\in[\xi,p]$ be such that
\[
    \ga w\in\pi_{\ga[\xi,p]}(x).
\]
Since $x\in B(o_Y,R)$, the sets $\pi_{\ga[\xi,p]}(x)$ and
$\pi_{\ga[\xi,p]}(o_Y)$ have uniformly bounded Hausdorff distance.  Hence,
by Lemmas \ref{lem.cptcartan} and \ref{lem.gromovproductnearest},
\[
    \cal G^\theta(f(p),\ga f(\xi))
    \approx
    \frac{1}{2}
    \left(
        \mu_\theta(f(\ga w))
        +
        \i\mu_\theta(f(\ga w))
    \right).
\]

By Lemma \ref{lem.parabolictransmiddle}, there exists
$y\in[x,\ga x]$ such that $y$, $w$, and $\ga w$ are uniformly close.  Since
$y\in[x,\ga x]$, Lemma \ref{lem.DKO_additive} gives
\[
    \mu_\theta(f(x)^{-1}f(\ga x))
    \approx
    \mu_\theta(f(x)^{-1}f(y))
    +
    \mu_\theta(f(y)^{-1}f(\ga x)).
\]
Since $f$ is a quasi-isometric embedding, the points
$f(y)$, $f(w)$, and $f(\ga w)$ are uniformly close.  Since
$x\in B(o_Y,R)$, the point $f(x)$ remains in a uniformly bounded subset of
$X$.  Therefore, by Lemma \ref{lem.cptcartan},
\[
\begin{aligned}
    \mu_\theta(\ga)
    &\approx
    \mu_\theta(f(x)^{-1}f(\ga x)) \\
    &\approx
    \mu_\theta(f(x)^{-1}f(\ga w))
    +
    \mu_\theta(f(w)^{-1}f(x)) \\
    &\approx
    \mu_\theta(f(x)^{-1}f(\ga w))
    +
    \i\mu_\theta(f(x)^{-1}f(w)) \\
    &\approx
    \mu_\theta(f(\ga w))
    +
    \i\mu_\theta(f(\ga w)) \\
    &\approx
    2\cal G^\theta(f(p),\ga f(\xi)).
\end{aligned}
\]
This finishes the proof of the main claim.

For the second assertion, note that  by Lemma
\ref{lem.gromovproductnearest},
\[
    \cal G^\theta(f(p),\gamma f(\xi))
    \approx
    \frac12\left(
        \mu_\theta(f(z_\gamma))
        +
        \i\mu_\theta(f(z_\gamma))
    \right)
\]
for some $z_\gamma\in\pi_{p,\gamma\xi}(o_Y)$.  Hence
\[
    \mu_\theta(\gamma)
    \approx
    \mu_\theta(f(z_\gamma))
    +
    \i\mu_\theta(f(z_\gamma)).
\]
Since the right-hand side is $\i$-invariant, this implies that $\mu_\theta(\gamma)-\i\mu_\theta(\gamma)$ is uniformly bounded.
\qed

\subsection*{A metric-like function on $Y$}

We identify $\fa_\theta^*$ with the subspace of $\fa^*$ obtained by
precomposing with the projection $p_\theta:\fa\to\fa_\theta$.  Let
$\psi\in\fa_\theta^*$ be such that
\[
    \psi>0
    \quad\text{on } \L_f-\{0\}.
\]
Define
\[
    \d_\psi(x,y)
    :=
    \psi(\mu(f(x)^{-1}f(y)))
    \quad\text{for } x,y\in Y.
\]
It was shown in \cite{DKO_AR} that $\d_\psi$ behaves like a metric on
$Y$; for example, it satisfies a coarse triangle inequality.  The following
is a consequence of Lemma \ref{lem.DKO_additive} and the Gromov
hyperbolicity of $Y$.  Since $\psi>0$ on $\L_f-\{0\}$, there
exists $c>0$ such that
\begin{equation} \label{eqn.lbdpsi}
    \d_\psi(x,y)>-c
    \quad\text{for all } x,y\in Y.
\end{equation}

\begin{proposition}
There exists $C>0$ such that, for all $x,y,z\in Y$,
\[
    \d_\psi(x,z)
    \le
    \d_\psi(x,y)+\d_\psi(y,z)+C.
\]
\end{proposition}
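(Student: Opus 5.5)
The plan is to reduce the coarse triangle inequality to the additivity of Lemma \ref{lem.DKO_additive} along geodesics, using thin triangles in $Y$. Fix $x,y,z\in Y$ and choose geodesics $[x,y]$, $[y,z]$, $[x,z]$. By $\delta$-hyperbolicity, the triangle has a coarse center: there are points $a\in[x,z]$, $b\in[x,y]$, $c\in[y,z]$ that are pairwise within $O(\delta)$ of one another (internal points of the triangle). Since $f$ is a quasi-isometric embedding, the points $f(a),f(b),f(c)$ are pairwise within a uniform distance of one another in $X$. By Lemma \ref{lem.cptcartan} applied to the compact set of elements of $G$ moving $o$ a bounded distance, replacing $a$ by $b$ or $c$ inside any Cartan projection $\mu(f(\cdot)^{-1}f(\cdot))$ changes it by a uniformly bounded amount. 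Hence the same holds after applying $\psi$.

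Apply Lemma \ref{lem.DKO_additive}, composed with the linear form $\psi$ (which factors through $p_\theta$, so $\psi\circ\mu=\psi\circ\mu_\theta$), to $a\in[x,z]$. This gives
\[
\d_\psi(x,z)\le \d_\psi(x,a)+\d_\psi(a,z)+C_1 .
\]
Substituting $b$ for $a$ in the first term and $c$ for $a$ in the second, at bounded cost, we get
\[
\d_\psi(x,z)\le \d_\psi(x,b)+\d_\psi(c,z)+C_2 .
\]
Applying Lemma \ref{lem.DKO_additive} again to $b\in[x,y]$ and $c\in[y,z]$ gives
\[
\d_\psi(x,y)\ge \d_\psi(x,b)+\d_\psi(b,y)-C_1
\quad\text{and}\quad
\d_\psi(y,z)\ge \d_\psi(y,c)+\d_\psi(c,z)-C_1 .
\]
It remains to drop the terms $\d_\psi(b,y)$ and $\d_\psi(y,c)$. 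By \eqref{eqn.lbdpsi} these are each bounded below by $-c$, so
\[
\d_\psi(x,b)+\d_\psi(c,z)\le \d_\psi(x,y)+\d_\psi(y,z)+2C_1+2c .
\]
Combining the three displays yields the claim with a constant depending only on $\delta$, the quasi-isometry constants of $f$, $D_0$, $\|\psi\|$, and $c$.

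The main point needing care is the uniform lower bound \eqref{eqn.lbdpsi}, which lets us discard the pieces $\d_\psi(b,y)$ and $\d_\psi(y,c)$. If one does not take it as given, it follows from $\psi>0$ on the asymptotic cone $\L_f-\{0\}$: the vectors $\mu(f(x)^{-1}f(y))$ of large norm lie in a small conical neighborhood of $\L_f$, on which $\psi\ge \epsilon\|\cdot\|>0$. The vectors of bounded norm give $\psi$-values bounded below by a constant. The remaining substitutions are routine bounded-perturbation arguments via Lemma \ref{lem.cptcartan}.
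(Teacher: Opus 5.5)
Your proof is correct and follows essentially the same route as the paper. The paper uses a nearest-point projection $w\in\pi_{[x,z]}(y)$ as the coarse center and applies Lemma \ref{lem.DKO_additive} at $w$. It then uses stability of the quasi-geodesics $[x,w]\cup[w,y]$ and $[y,w]\cup[w,z]$ to get $\d_\psi(x,y)\approx\d_\psi(x,w)+\d_\psi(w,y)$ and $\d_\psi(y,z)\approx\d_\psi(y,w)+\d_\psi(w,z)$, and discards the middle terms via \eqref{eqn.lbdpsi}. Your internal points on the three geodesic sides, together with Lemma \ref{lem.cptcartan}, let you apply Lemma \ref{lem.DKO_additive} directly on geodesics instead of going through the quasi-geodesic comparison, which is only a cosmetic difference.
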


\begin{proof}
Let $w\in\pi_{[x,z]}(y)$ be a nearest-point projection of $y$ to a
geodesic $[x,z]$.  By Lemma \ref{lem.DKO_additive},
\[
    \d_\psi(x,z)
    \le
    \d_\psi(x,w)+\d_\psi(w,z)+D_0\|\psi\|,
\]
where $D_0$ is the constant from Lemma \ref{lem.DKO_additive}.  Since the
concatenation $[x,w]\cup[w,y]$ is a uniform quasi-geodesic, Lemmas
\ref{lem.stableqg} and \ref{lem.DKO_additive} imply that
\[
    \d_\psi(x,y)
    \approx
    \d_\psi(x,w)+\d_\psi(w,y)
    >
    \d_\psi(x,w)-c,
\]
where $c$ is as in \eqref{eqn.lbdpsi}.  Similarly,
\[
    \d_\psi(y,z)
    \approx
    \d_\psi(y,w)+\d_\psi(w,z)
    >
    \d_\psi(w,z)-c.
\]
Combining these three estimates gives the desired inequality.
\end{proof}

We also note that the metric-like function $\d_{\psi}$ behaves quasi-isometrically to the Gromov model $(Y, d)$ as follows: 

\begin{lemma}  \label{lem.QIpropsofmetriclike}
\begin{enumerate}
    \item For any $x, y, z \in Y$, if $y \in [x, z]$, then
    $$
    \d_{\psi}(x, z) \approx \d_{\psi}(x, y) + \d_{\psi}(y, z).
    $$
    \item There exist $a,b>0$ such that 
for all $x,y\in Y$,
    $$
    a \, d(x,y)-b
    \le \d_\psi(x,y)
    \le b \, d(x,y)+b.$$
    \item 
There exists $A>0$ such that, for every $\xi\in\partial Y$ and every
$s\ge0$, there exists $x \in [o_Y, \xi]$ such that 
\[
    |\d_\psi(o_Y,x)-s|\le A.
\]
\end{enumerate}
\end{lemma}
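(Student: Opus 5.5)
Part (1) is immediate from Lemma \ref{lem.DKO_additive}. If $y\in[x,z]$, that lemma gives
\[
\bigl\|\mu_\theta(f(x)^{-1}f(z))-\mu_\theta(f(x)^{-1}f(y))-\mu_\theta(f(y)^{-1}f(z))\bigr\|<D_0 .
\]
Since $\psi\in\fa_\theta^*$, we have $\psi\circ\mu=\psi\circ\mu_\theta$. Applying $\psi$ to the display therefore yields
\[
|\d_\psi(x,z)-\d_\psi(x,y)-\d_\psi(y,z)|\le D_0\|\psi\|,
\]
with an error independent of $x,y,z$.

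For (2), the upper bound is the easy half. The map $f$ is a quasi-isometric embedding, so $d_X(f(x),f(y))\le L\,d(x,y)+L$ for some $L$. Also $\|\mu(f(x)^{-1}f(y))\|=d_X(f(x),f(y))$, and $|\psi(v)|\le\|\psi\|\,\|v\|$. Together these give $\d_\psi(x,y)\le b\,d(x,y)+b$.

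The lower bound will use the positivity of $\psi$ on $\L_f-\{0\}$. Since $\L_f$ is a closed cone, compactness of $\L_f\cap\{\|v\|=1\}$ gives some $\epsilon>0$ with $\psi(v)\ge 2\epsilon\|v\|$ on $\L_f$. Since $\L_f$ is the asymptotic cone of $\{\mu(f(x)^{-1}f(y))\}$, there is $T>0$ such that every such vector $v$ with $\|v\|\ge T$ lies within angle small enough of $\L_f$ that $\psi(v)\ge\epsilon\|v\|$. I will justify this by contradiction. Suppose vectors $v_n$ in the set have $\|v_n\|\to\infty$ and $\psi(v_n)<\epsilon\|v_n\|$. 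Then $v_n/\|v_n\|$ subconverges to a unit vector in $\L_f$ on which $\psi\le\epsilon$, which contradicts $\psi\ge2\epsilon$ there. Hence
\[
\d_\psi(x,y)\ge \epsilon\|\mu(f(x)^{-1}f(y))\|-C',
\]
where $C'$ accounts for the bounded set of vectors of norm $<T$. Finally, the quasi-isometric lower bound $d_X(f(x),f(y))\ge L^{-1}d(x,y)-L$ gives $a\,d(x,y)-b$ after enlarging $b$.

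For (3), parametrize $[o_Y,\xi]$ by arclength as $\xi_t$. The function $t\mapsto\d_\psi(o_Y,\xi_t)$ has coarsely bounded jumps: for $t\le t'\le t+1$, part (1) gives
\[
\d_\psi(o_Y,\xi_{t'})\approx\d_\psi(o_Y,\xi_t)+\d_\psi(\xi_t,\xi_{t'}),
\]
and part (2) bounds the last term by $2b$. At $t=0$ the value is $\d_\psi(o_Y,o_Y)=0$, and by (2) it tends to $\infty$ as $t\to\infty$. So for any $s\ge0$ we take the last integer $t$ with $\d_\psi(o_Y,\xi_t)\le s$. Then the value at $t$ is at most $s$, and the value at $t+1$ exceeds $s$ yet lies within a uniform constant of the value at $t$. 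Hence $x=\xi_t$ satisfies $|\d_\psi(o_Y,x)-s|\le A$, with $A$ depending only on $D_0$, $\|\psi\|$ and $b$.

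The only step requiring care is the uniform lower bound in (2). I expect to handle it by the contradiction argument above, using that $\L_f$ is exactly the set of limits of normalized Cartan projections.
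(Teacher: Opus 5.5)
Your proposal is correct and follows the paper's proof. Part (1) comes from Lemma \ref{lem.DKO_additive} after applying $\psi$. Part (2) comes from positivity of $\psi$ on $\L_f-\{0\}$ together with the quasi-isometric embedding; you supply the asymptotic-cone compactness argument that the paper only asserts. Part (3) is a discrete intermediate-value argument along $[o_Y,\xi]$, where you take the largest integer $t$ with $\d_\psi(o_Y,\xi_t)\le s$ instead of the paper's smallest integer $n$ with $F(n)\ge s$, with the same bounded-increment estimate.
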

\begin{proof}
(1) is an immediate consequence of Lemma \ref{lem.DKO_additive}.

Since $\psi>0$ on
$\L_f-\{0\}$ and $f$ is a quasi-isometric embedding, there exist $a,b>0$ such that
\begin{equation}\label{eqn.dpsi-linear}
    a\,d(x,y)-b
    \le \d_\psi(x,y)
    \le b\,d(x,y)+b
\end{equation}
for all $x,y\in Y$.   This shows (2).

Finally, fix $\xi\in\partial Y$ and denote by $\xi_t \in [o_Y, \xi]$ the point such that $d(o_Y, \xi_t) = t$ for $t \ge 0$. We then set
\[
    F(t):=\d_\psi(o_Y,\xi_t).
\]
Note that definition of $F(t)$ involves a choice of $[o_Y, \xi]$, but different choices only make uniformly bounded error which is allowed for our purpose.
By Lemma \ref{lem.DKO_additive}, if $0\le t\le u$, then
\[
    F(u) \approx F(t)+\d_\psi(\xi_t,\xi_u).
\]
Together with \eqref{eqn.dpsi-linear}, this gives constants $a_1,b_1>0$,
independent of $\xi$, such that for all $0\le t\le u$,
\begin{equation}\label{eqn.F-coarse-linear}
    a_1(u-t)-b_1
    \le F(u)-F(t)
    \le b_1(u-t)+b_1.
\end{equation}
Since $F(0)=0$ and $F(t)\to\infty$ as $t \to \infty$, for any $s\ge0$ choose the smallest
integer $n\ge0$ with $F(n)\ge s$.  If $n=0$, then $s=0$.  If $n\ge1$, then
$F(n-1)<s$, and hence by \eqref{eqn.F-coarse-linear},
\[
    0\le F(n)-s\le F(n)-F(n-1)\le 2b_1 .
\]
Thus (3) holds with $A=2b_1$.
\end{proof}

For subsets $E,F\subset Y$, set
\[
    \d_\psi(E,F)
    :=
    \inf_{x\in E,\,y\in F}\d_\psi(x,y).
\]

\begin{lemma} \label{lem.closestinhoroball}
Let $\mathsf P\in\cal P$ and let $p=\xi_{\mathsf P}$.  For every
$x\in[o_Y,p]$, we have
\[
    \d_\psi(x,\Ga o_Y)
    \approx
    \d_\psi(x,o_Y),
\]
where the implied constant depends only on the ambient constants and
$\psi$.
\end{lemma}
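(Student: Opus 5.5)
The upper bound $\d_\psi(x,\Ga o_Y)\le \d_\psi(x,o_Y)$ is trivial, since $o_Y\in\Ga o_Y$. So the content of the lemma is the reverse inequality up to an additive constant: for every $\ga\in\Ga$ and every $x\in[o_Y,p]$,
\[
    \d_\psi(x,\ga o_Y)\ge \d_\psi(x,o_Y)-C.
\]
I plan to reduce this to a statement in the Gromov model $Y$ and then transfer it to $\d_\psi$ using the additivity along geodesics from Lemma \ref{lem.QIpropsofmetriclike}(1).

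The geometric input is that the ray $[o_Y,p]$ coarsely realizes the distance to the orbit. Fix $\ga\in\Ga$ and let $w\in\pi_{[o_Y,p]}(\ga o_Y)$. By hyperbolicity, the concatenation $[x,w]\cup[w,\ga o_Y]$ is a $(1,O(\delta))$-quasi-geodesic. Lemma \ref{lem.stableqg} then lets me place a point of $[x,\ga o_Y]$ within bounded distance of $w$, and Lemma \ref{lem.QIpropsofmetriclike}(1) together with Lemma \ref{lem.cptcartan} gives
\[
    \d_\psi(x,\ga o_Y)\approx \d_\psi(x,w)+\d_\psi(w,\ga o_Y).
\]

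I split into two cases. If $w$ lies on $[x,p]$, then $x$ lies on $[o_Y,w]$, so $\d_\psi(o_Y,w)\approx\d_\psi(o_Y,x)+\d_\psi(x,w)$. In this case it suffices to show $\d_\psi(w,\ga o_Y)\gtrsim \d_\psi(w,o_Y)$, which is the statement of the lemma again but at the point $w$. If instead $w$ lies on $[o_Y,x]$, then $\d_\psi(x,w)\approx\d_\psi(x,o_Y)-\d_\psi(o_Y,w)$, and it suffices to show $\d_\psi(w,\ga o_Y)\gtrsim \d_\psi(o_Y,w)$, which is the same claim at $w$. In both cases everything reduces to the following key claim: for every $w\in[o_Y,p]$ that is a nearest-point projection of $\ga o_Y$,
\[
    d(w,\ga o_Y)\ge d(w,o_Y)-C.
\]
Once the key claim holds in $Y$, I convert it to $\d_\psi$ using the same additivity. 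Since $w\in\pi_{[o_Y,p]}(\ga o_Y)$, both $\ga o_Y$ and $o_Y$ are seen from $w$ through $[o_Y,p]$ up to bounded error. This yields $\d_\psi(w,\ga o_Y)\approx\d_\psi(w,u)+\d_\psi(u,\ga o_Y)$ for a suitable point $u$, and also $\d_\psi(w,o_Y)\approx\d_\psi(w,u)+\d_\psi(u,o_Y)$. Alternatively, I could apply the key claim directly in the form $\d_\psi(o_Y,w)\lesssim\d_\psi(\ga o_Y,w)$ via Lemma \ref{lem.QIpropsofmetriclike}(1).

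The key claim is the main obstacle. I expect to prove it by contradiction with a compactness and conicality argument, in the style of Lemma \ref{lem.projinhoroball}. Suppose there are $\ga_n$ and projections $w_n\in[o_Y,p]$ with $d(w_n,\ga_n o_Y)\le d(w_n,o_Y)-n$. Then $\ga_n o_Y$ is much closer to the ray than $o_Y$ is, and projects far out along it. After translating by elements of $\mathsf P$, which preserve $p$, I can assume $\ga_n o_Y$ lies in a fixed compact fundamental region near $[o_Y,p]$ at bounded distance. Here I use cocompactness of $\mathsf P$ on horospheres together with the fact that $o_Y$ lies in the thick part. The orbit points $\ga_n o_Y$ then accumulate conically at $p$, contradicting that $p$ is a bounded parabolic point, exactly as in Lemma \ref{lem.projinhoroball}.

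The point that needs the most care is the uniformity in the horoball $B_p$. A point $w$ deep in $B_p$ satisfies $d(w,\Ga o_Y)\approx d(w,\partial B_p)$, and orbit points on $\partial B_p$ near the ray lie at distance roughly $d(w,\partial B_p)$, which is comparable to $d(w,o_Y)$ minus the bounded entry depth of the ray. I will therefore use the horofunction estimate \eqref{hii} to show that the ray enters $B_p$ within bounded distance of $o_Y$, after adjusting by a bounded amount, and that orbit points lie outside $B_p$, since horoballs are disjoint and $\Ga$-invariant. Then $d(w,\ga o_Y)\ge h(\ga o_Y)-h(w)-C\ge d(o_Y,w)-C'$. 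This horofunction argument may in fact replace the contradiction argument entirely.
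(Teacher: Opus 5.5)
Your reduction is sound up to the point where everything rests on the key claim at the projection point $w\in\pi_{[o_Y,p]}(\ga o_Y)$. Your two-case bookkeeping is in fact the same as the paper's, up to one slip. Since $\d_\psi(w,o_Y)\neq\d_\psi(o_Y,w)$ in general, in Case B you should use $\d_\psi(x,o_Y)\approx\d_\psi(x,w)+\d_\psi(w,o_Y)$, and both cases need the claim with $\d_\psi(w,o_Y)$.

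\emph{The gap.} You prove the key claim only for the metric $d$ of $Y$, and then propose to convert it into $\d_\psi$ ``using the same additivity.'' That conversion does not exist. Lemma \ref{lem.QIpropsofmetriclike}(1) compares $\d_\psi$ only along a single geodesic. Because $w$ is a nearest-point projection, $[w,\ga o_Y]$ leaves $[o_Y,p]$ at $w$, so there is no useful common point $u$ other than $w$ itself. The only comparison between $d$ and $\d_\psi$ across different segments is Lemma \ref{lem.QIpropsofmetriclike}(2). That comparison is multiplicative and gives only $\d_\psi(w,\ga o_Y)\ge (a/b)\,\d_\psi(w,o_Y)-C$. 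This is not a technicality: in higher rank, two segments of equal $d$-length can have Cartan projections pointing in different directions of $\fa^+$. Your horofunction estimate sees only $d$, so it cannot control $\psi$ additively. The conicality sketch also fails as stated. The bound $d(w_n,\ga_n o_Y)\le d(w_n,o_Y)-n$ does not put $\ga_n o_Y$ within bounded distance of the ray, and translating by $\mathsf P$ moves the ray.

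\emph{The missing idea.} What is needed is the group-theoretic symmetry of parabolic elements, which yields a vector-valued comparison. The paper proceeds in two steps.

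First, it reduces from $\Ga o_Y$ to $\mathsf P o_Y$. A geodesic from $x$ to $h o_Y$ exits $B_p$ at some point $u_h$. By cocompactness of $\mathsf P$ on $\partial B_p$, $u_h$ is $A$-close to $\ga_h o_Y$ for some $\ga_h\in\mathsf P$. Additivity together with \eqref{eqn.lbdpsi} then gives $\d_\psi(x,\ga_h o_Y)\le \d_\psi(x,h o_Y)+c'$.

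Second, for $\ga\in\mathsf P$ and $w\in\pi_{[o_Y,p]}(\ga o_Y)$, Lemma \ref{lem.parabolictransmiddle} shows that $w$ and $\ga^{-1}w$ are uniformly close. That lemma rests on the Bray--Tiozzo midpoint estimate, Lemma \ref{lem.braytiozzo}. By equivariance of $f$ and Lemma \ref{lem.cptcartan}, it follows that
\[
\mu_\theta\bigl(f(w)^{-1}f(\ga o_Y)\bigr)=\mu_\theta\bigl(f(\ga^{-1}w)^{-1}f(o_Y)\bigr)\approx\mu_\theta\bigl(f(w)^{-1}f(o_Y)\bigr).
\]
This holds in $\fa_\theta$, hence for every $\psi$, and it is exactly the key claim you need. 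With it, your case analysis completes the proof.
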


\begin{proof} Let $B_p\in\cal B$ be the horoball based at $p$.  We first claim that
\[
    \d_\psi(x,\Ga o_Y)
\approx
    \d_\psi(x,\mathsf P o_Y).
\]
It suffices to consider the case that $x$ is sufficiently deep in $B_p$.

Since $\mathsf P$ acts cocompactly on $\partial B_p$, there exists
$A>0$ such that every point of $\partial B_p$ is within distance $A$ of
some point of $\mathsf P o_Y$.  Let $h\in\Ga$.  A geodesic from $x$ to
$h o_Y$ leaves $B_p$ through a point $u_h\in\partial B_p$.  Choose
$\gamma_h\in\mathsf P$ such that
\[
    d(u_h,\gamma_h o_Y)\le A.
\]
By Lemma \ref{lem.cptcartan},
\[
    \d_\psi(x,\gamma_h o_Y)
    \approx
    \d_\psi(x,u_h).
\]
On the other hand, since $u_h\in[x,h o_Y]$, Lemma
\ref{lem.DKO_additive} and \eqref{eqn.lbdpsi} give
\[
    \d_\psi(x,h o_Y)
    \approx
    \d_\psi(x,u_h)+\d_\psi(u_h,h o_Y)
    \ge
    \d_\psi(x,u_h)-c
\]
where $c > 0$ is given in \eqref{eqn.lbdpsi}.
Hence
\[
    \d_\psi(x,\gamma_h o_Y)
    \le
    \d_\psi(x,h o_Y) + c' 
\]
for some uniform constant $c' > 0$ independent of $h$ and $x$. Since this holds for all $h \in \Ga$ and $\mathsf{P} < \Ga$, the claim follows.

By the claim, we can choose $\ga \in \mathsf{P}$ such that $\d_{\psi}(x, \Ga o_Y) \approx \d_{\psi}(x, \ga o_Y)$.
Let $w\in\pi_{[o_Y,p]}(\ga o_Y)$.  By tautness of $Y$ and Lemma
\ref{lem.projinhoroball}, we may apply Lemma
\ref{lem.parabolictransmiddle} to obtain a point
$y\in[o_Y,\ga^{-1}o_Y]$ such that the points $w$, $\ga^{-1}w$, and $y$
are uniformly close.  Since $f$ is a quasi-isometric embedding, the points
$f(w)$, $f(\ga^{-1}w)$, and $f(y)$ are also uniformly close.  Hence
\[
    \mu_\theta(f(w)^{-1})
    \approx
    \mu_\theta(f(\ga^{-1}w)^{-1})
    =
    \mu_\theta(f(w)^{-1}f(\ga o_Y)).
\]

We distinguish two cases according to the relative positions of $w$ and
$x$ on $[o_Y,p]$. First suppose that $w\in[o_Y,x]$.  By Lemma \ref{lem.DKO_additive},
\[
\begin{aligned}
    \mu_\theta(f(x)^{-1})
    &\approx
    \mu_\theta(f(x)^{-1}f(w))
    +
    \mu_\theta(f(w)^{-1}) \\
    &\approx
    \mu_\theta(f(x)^{-1}f(w))
    +
    \mu_\theta(f(w)^{-1}f(\ga o_Y)) \\
    &\approx
    \mu_\theta(f(x)^{-1}f(\ga o_Y)),
\end{aligned}
\]
since the concatenation $[x,w]\cup[w,\ga o_Y]$ is a
$(1,O(\delta))$-quasi-geodesic.  Applying $\psi$, we get
\[
    \d_\psi(x,o_Y)
    \approx
    \d_\psi(x,\ga o_Y)
    \approx
    \d_\psi(x,\Ga o_Y),
\]
as desired.

Now suppose that $x\in[o_Y,w]$.  Again by Lemma \ref{lem.DKO_additive},
\[
\begin{aligned}
    \mu_\theta(f(x)^{-1}f(\ga o_Y))
    &\approx
    \mu_\theta(f(x)^{-1}f(w))
    +
    \mu_\theta(f(w)^{-1}f(\ga o_Y)) \\
    &\approx
    \mu_\theta(f(x)^{-1}f(w))
    +
    \mu_\theta(f(w)^{-1}) \\
    &\approx
    \mu_\theta(f(x)^{-1}f(w))
    +
    \mu_\theta(f(w)^{-1}f(x))
    +
    \mu_\theta(f(x)^{-1}).
\end{aligned}
\]
Applying $\psi$ gives
\[
\begin{aligned}
    \d_\psi(x,\ga o_Y)
    &\approx
    \d_\psi(x,w)+\d_\psi(w,x)+\d_\psi(x,o_Y) \\
    &>
    \d_\psi(x,o_Y)-2c,
\end{aligned}
\]
where the last inequality follows from \eqref{eqn.lbdpsi}.  Since
$\d_\psi(x,\ga o_Y)\approx\d_\psi(x,\Ga o_Y)$ and
$o_Y\in\Ga o_Y$, the desired estimate follows in this case as well.
\end{proof}

\subsection*{$\fa_\theta$-valued Gromov products in shadows}

We now translate the elementary comparison between shadows and Gromov
products in the Gromov hyperbolic space $Y$ into a comparison involving
$\fa_\theta$-valued Gromov products on the flag variety.  The main input
from the relatively Morse property is that nearest-point projections in
$Y$ coarsely control the higher-rank Gromov product
$\cal G^\theta(f(\xi),f(\eta))$ through Cartan projections; see Lemma
\ref{lem.gromovproductnearest}.  Together with the coarse additivity of
Cartan projections along geodesics in $Y$, Lemma \ref{lem.DKO_additive}, this
gives the following analogue of Lemma \ref{lem.shadowandgromprod}.  For
Anosov groups, this comparison was proved in \cite{DKO_AR}; the same
argument applies in the relatively Morse setting.

For $\psi\in\fa_\theta^*$, write
\[
    \bar\psi:=\frac{\psi+\psi\circ\i}{2}.
\]

\begin{prop} \label{prop.shadowcompare}
Let $\psi\in\fa_\theta^*$ be such that $ \psi>0
$ on $\L_f-\{0\}$.
\begin{enumerate}
    \item For every $R>0$, there exists $c_1>0$ such that, for every
    $\xi\in\partial Y$ and every $z\in[o_Y,\xi]$, if
    $\eta\in O_R(o_Y,z)-\{\xi\}$, then
    \[
        \psi(\cal G^\theta(f(\xi),f(\eta)))
        \ge
        \d_{\bar\psi}(o_Y,z)-c_1.
    \]

    \item For every $c_2>0$, there exists $R>0$ such that, for every
    $\xi\in\partial Y$ and every $z\in[o_Y,\xi]$, if
    $\eta\in\partial Y-\{\xi\}$ satisfies
    \[
        \psi(\cal G^\theta(f(\xi),f(\eta)))
        \ge
        \d_{\bar\psi}(o_Y,z)+c_2,
    \]
    then
    \[
        \eta\in O_R(o_Y,z).
    \]
\end{enumerate}
\end{prop}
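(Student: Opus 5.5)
The plan is to reduce both parts to Lemma \ref{lem.shadowandgromprod}, using Lemma \ref{lem.gromovproductnearest} to convert the higher-rank Gromov product into Cartan data at a nearest-point projection. Fix $\xi\ne\eta$ in $\partial Y$ and choose $y\in\pi_{\xi,\eta}(o_Y)$. Lemma \ref{lem.gromovproductnearest} gives
\[
\psi(\cal G^\theta(f(\xi),f(\eta)))\approx \tfrac12\psi\bigl(\mu_\theta(f(y))+\i\mu_\theta(f(y))\bigr)=\bar\psi(\mu_\theta(f(y)))=\d_{\bar\psi}(o_Y,y),
\]
since $\psi\circ\i$ evaluated on $\mu_\theta$ equals $\psi$ evaluated on $\i\mu_\theta$. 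Here $f(o_Y)=o$ and $\bar\psi\in\fa_\theta^*$ is positive on $\L_f-\{0\}$, because $\L_f=\i\L_f$. So the whole comparison becomes a comparison between $\d_{\bar\psi}(o_Y,y)$ and $\d_{\bar\psi}(o_Y,z)$. I also use that $d(o_Y,y)\approx\langle\xi,\eta\rangle_{o_Y}$, and that $y$ lies within $O(\delta)$ of the point $\xi_{t_0}$ of $[o_Y,\xi]$ with $t_0=d(o_Y,y)$. By Lemma \ref{lem.cptcartan}, this gives $\d_{\bar\psi}(o_Y,y)\approx\d_{\bar\psi}(o_Y,\xi_{t_0})$.

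For (1), write $z=\xi_t$. If $\eta\in O_R(o_Y,z)$, Lemma \ref{lem.shadowandgromprod}(1) gives $\langle\xi,\eta\rangle_{o_Y}\ge t-R-O(\delta)$, so $t_0\ge t-R-O(\delta)$. If $t_0\ge t$, the additivity in Lemma \ref{lem.QIpropsofmetriclike}(1) along $[o_Y,\xi]$ gives
\[
\d_{\bar\psi}(o_Y,\xi_{t_0})\approx\d_{\bar\psi}(o_Y,z)+\d_{\bar\psi}(z,\xi_{t_0}).
\]
The last term is bounded below by $-c$ from \eqref{eqn.lbdpsi}. If $t-R-O(\delta)\le t_0<t$, then $d(\xi_{t_0},z)\le R+O(\delta)$, and Lemma \ref{lem.QIpropsofmetriclike}(2) bounds the difference by a constant depending on $R$. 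Either way, $\psi(\cal G^\theta)\ge\d_{\bar\psi}(o_Y,z)-c_1$.

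For (2), assume $\psi(\cal G^\theta(f(\xi),f(\eta)))\ge\d_{\bar\psi}(o_Y,z)+c_2$. Then $\d_{\bar\psi}(o_Y,\xi_{t_0})\ge\d_{\bar\psi}(o_Y,\xi_t)+c_2-C'$, where $C'$ is uniform. I need to turn this into $t_0\ge t-R+O(\delta)$ for a suitable $R$, and then apply Lemma \ref{lem.shadowandgromprod}(2). If $t_0<t$, additivity gives
\[
\d_{\bar\psi}(o_Y,\xi_t)\approx\d_{\bar\psi}(o_Y,\xi_{t_0})+\d_{\bar\psi}(\xi_{t_0},\xi_t)\ge\d_{\bar\psi}(o_Y,\xi_{t_0})+a(t-t_0)-b.
\]
Combined with the hypothesis, this yields $a(t-t_0)\le C''-c_2$, so $t-t_0$ is bounded by a constant depending on $c_2$ and the ambient constants. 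I then choose $R$ larger than this bound plus $O(\delta)$ and conclude $\eta\in O_R(o_Y,z)$.

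The main point needing care is the bookkeeping of uniform constants: everything must be independent of $\xi$, $\eta$ and $z$. In particular, the step replacing $y$ by a point on $[o_Y,\xi]$ needs a thin-triangle argument. The geodesic $[o_Y,\xi]$ fellow-travels $[o_Y,y]\cup[y,\xi]$, which is a $(1,O(\delta))$-quasi-geodesic, so by Lemma \ref{lem.stableqg} the point $y$ is uniformly close to $[o_Y,\xi]$, at a parameter that is $\approx t_0$. The rest is routine once that is in place.
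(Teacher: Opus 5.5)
Your proposal is correct and follows essentially the same route as the paper. Both convert $\psi(\mathcal G^\theta(f(\xi),f(\eta)))$ into $\d_{\bar\psi}(o_Y,y)$ via Lemma \ref{lem.gromovproductnearest}, move $y$ to a uniformly close point of $[o_Y,\xi]$, and then finish with coarse additivity plus the lower bound \eqref{eqn.lbdpsi} for (1), and with additivity plus coarse linearity for (2). The only difference is packaging: you locate $y$ on the ray through the scalar Gromov product and Lemma \ref{lem.shadowandgromprod}, whereas the paper carries out the corresponding thin-triangle case analysis directly.
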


\begin{proof}
We first prove (1).  Fix $R>0$, $\xi\in\partial Y$, and
$z\in[o_Y,\xi]$.  Let $\eta\in O_R(o_Y,z)$ and
$y\in\pi_{[\xi,\eta]}(o_Y)$.  Let $[z,\xi]\subset[o_Y,\xi]$.

We claim that there exists $r=r(R)>0$ such that
\begin{equation} \label{c1}
    [z,\xi]\cap B(y,r)\ne\emptyset.
\end{equation}
Since $\eta\in O_R(o_Y,z)$, Lemma \ref{lem.stableqg} implies that the
uniform quasi-geodesic
\[
    [o_Y,y]\cup[y,\eta]
\]
intersects $B(z,R+O(\delta))$.  Choose
$w\in[o_Y,y]\cup[y,\eta]$ such that
\[
    d(z,w)<R+O(\delta).
\]
We consider two cases, according to the position of $w$ relative to $y$.

If $w\in[o_Y,y]$, then both
\[
    [w,y]\cup[y,\xi]
    \quad\text{and}\quad
    [w,z]\cup[z,\xi]
\]
are uniform quasi-geodesics with the same endpoints.  By
Lemma \ref{lem.stableqg}, the distance from $y$ to
$[w,z]\cup[z,\xi]$ is bounded above by a uniform constant depending $R$.  Since $[w,z]$ has length at most $R+O(\delta)$, the
claim follows in this case.

If $w\in[y,\eta]$, choose a geodesic $[w,\xi]\subset[\xi,\eta]$.  Then
$y\in[w,\xi]$.  As in the previous case,
\[
    [z,w]\cup[w,\xi]
\]
is a  uniform quasi-geodesic.  Hence, by Lemma
\ref{lem.stableqg}, the point $y\in[w,\xi]$ lies in a uniform
neighborhood of $[z,\xi]$. Now the claim follows.

By \eqref{c1}, there exists $u\in[z,\xi]\subset[o_Y,\xi]$ such that
$d(y,u)<r$.  Since $f$ is a quasi-isometric embedding, Lemma
\ref{lem.cptcartan} implies that
\[
    \mu(f(y))\approx\mu(f(u)),
\]
where the implied constant depends on $r$.  By Lemma
\ref{lem.gromovproductnearest},
\[
    \psi(\cal G^\theta(f(\xi),f(\eta)))
    \approx
    \d_{\bar\psi}(o_Y,y)
    \approx
    \d_{\bar\psi}(o_Y,u).
\]
Since $u\in[z,\xi]\subset[o_Y,\xi]$, Lemma \ref{lem.DKO_additive} gives
\[
    \d_{\bar\psi}(o_Y,u)
    \approx
    \d_{\bar\psi}(o_Y,z)+\d_{\bar\psi}(z,u).
\]
By \eqref{eqn.lbdpsi}, the term $\d_{\bar\psi}(z,u)$ is bounded from
below by a uniform constant.  Combining these estimates, we obtain a
constant $c_1>0$ such that
\[
    \psi(\cal G^\theta(f(\xi),f(\eta)))
    \ge
    \d_{\bar\psi}(o_Y,z)-c_1.
\]
This proves (1).

We now prove (2).  Let $c_2>0$, let $\xi\in\partial Y$, and let
$z\in[o_Y,\xi]$.  Suppose that $\eta\in\partial Y$ satisfies
\[
    \psi(\cal G^\theta(f(\xi),f(\eta)))
    \ge
    \d_{\bar\psi}(o_Y,z)+c_2.
\]
We claim that there exists $r_0=r_0(c_2)>0$ such that, for every
$y\in\pi_{[\xi,\eta]}(o_Y)$, the union
\[
    [o_Y,y]\cup[y,\eta]
\]
intersects $B(z,r_0)$.  Since $[o_Y,y]\cup[y,\eta]$ and
$[o_Y,\eta]$ have uniformly bounded Hausdorff distance by Lemma
\ref{lem.stableqg}, the claim implies that
\[
    \eta\in O_R(o_Y,z)\quad\text{ for $R=r_0+O(\delta)$.}
\]

Let $C$ be the constant from Lemma \ref{lem.gromovproductnearest}.  Then
\begin{equation} \label{eqn.proofshadowballcompare}
    \d_{\bar\psi}(o_Y,y)
    \ge
    \d_{\bar\psi}(o_Y,z)+c_2-\|\psi\|C.
\end{equation}
By Lemma \ref{lem.stableqg}, there exists $w\in[o_Y,\xi]$ such that
$d(y,w)<O(\delta)$.  Since $f$ is a quasi-isometric embedding,
\begin{equation} \label{eqn.proofshadowballcompare2}
    \d_{\bar\psi}(o_Y,w)
    \approx
    \d_{\bar\psi}(o_Y,y).
\end{equation}

We distinguish two cases according to the position of $w$ relative to $z$.
If $w\in[z,\xi]\subset[o_Y,\xi]$, then $z\in[o_Y,w]$.  Since
$d(y,w)<O(\delta)$, the concatenation
\[
    [o_Y,w]\cup[w,y]
\]
is a $(1,O(\delta))$-quasi-geodesic.  Hence, by Lemma
\ref{lem.stableqg}, the geodesic $[o_Y,y]$ intersects a uniform
neighborhood of $z$.  Thus $[o_Y,y]\cup[y,\eta]$ intersects a uniform
neighborhood of $z$.

If $w\in[o_Y,z]\subset[o_Y,\xi]$, then Lemma \ref{lem.DKO_additive}
gives
\[
    \d_{\bar\psi}(o_Y,z)
    \approx
    \d_{\bar\psi}(o_Y,w)+\d_{\bar\psi}(w,z).
\]
Together with \eqref{eqn.proofshadowballcompare} and
\eqref{eqn.proofshadowballcompare2}, this implies that
$\d_{\bar\psi}(w,z)$ is uniformly bounded above.  Since $\psi>0$ on
$\L_f-\{0\}$, the same is true for $\bar\psi$, and therefore
$\|\mu(f(w)^{-1}f(z))\|$ is uniformly bounded.  Since $f$ is a
quasi-isometric embedding, $d(w,z)$ is uniformly bounded.  As
$d(y,w)<O(\delta)$, it follows that $[o_Y,y]\cup[y,\eta]$ intersects a
uniform neighborhood of $z$.

In both cases the claim follows, and this completes the proof.
\end{proof}

\section{Counting in parabolic subgroups}

Throughout this section, let $\Ga<G$ be $\theta$-Anosov relative to a
finite collection $\cal P$.  Let $\L_\Ga\subset\fa^+$ denote the limit
cone of $\Ga$, that is, the asymptotic cone of $\mu(\Ga)$.

Fix $\psi\in\fa_\theta^*$ that is positive on $\L_\Ga-\{0\}$.
For a subgroup $H<\Ga$, let $\delta_\psi(H)$ denote the abscissa of
convergence of the Poincar\'e series
\[
    s\mapsto \sum_{\ga\in H} e^{-s\psi(\mu(\ga))}.
\]
We prove the following counting estimate for parabolic subgroups.

\begin{theorem} \label{thm.countinginsegments}
Let $\mathsf P\in\cal P$.  Then there exist
$a_\psi(\mathsf P)\in\Z_{\ge 0}$, $C>1$, $k\in\N$, and $T_0>0$ such that,
for all $T>T_0$ and all $n\ge 0$,
\[
\begin{aligned}
& C^{-1} e^{\delta_\psi(\mathsf P)(T+kn)}
    (1+T+kn)^{a_\psi(\mathsf P)}
\\
&\le
\#\left\{
    \ga\in\mathsf P:
    T+kn\le \psi(\mu(\ga))<T+k(n+1)
\right\}
\\
&\le
C e^{\delta_\psi(\mathsf P)(T+kn)}
    (1+T+kn)^{a_\psi(\mathsf P)} .
\end{aligned}
\]
Moreover, if $\mathsf P$ is virtually cyclic or if $G$ has rank one, then
$a_\psi(\mathsf P)=0$.
\end{theorem}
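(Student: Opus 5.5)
The plan is to reduce the count to a volume computation in a unipotent Lie group and then to use the standard asymptotics for volumes of sublevel sets of products of powers of polynomials.

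\emph{Step 1 (algebraic structure of $\mathsf P$).} Since $\mathsf P$ is a maximal parabolic subgroup of a relatively Anosov group, it is virtually nilpotent, and every element of $\mathsf P$ has all eigenvalues of modulus one. This is proved by Canary--Zhang--Zimmer \cite{CZZ_relative} and is the reason $\delta_\psi(\mathsf P)<\delta_\psi(\Ga)$. Pass to a finite-index torsion-free nilpotent subgroup $\mathsf P_0$; this changes the counts only by a bounded factor. Its Zariski closure is then contained in a product of a compact group $L$ and a unipotent group $U$, with $L$ and $U$ commuting. Replacing $U$ by the Malcev completion of the projection of $\mathsf P_0$, we get a connected unipotent group $N$ in which (the projection of) $\mathsf P_0$ is a cocompact lattice. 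The compact factor moves Cartan projections only by $O(1)$, by Lemma \ref{lem.cptcartan}.

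\emph{Step 2 (writing $\psi\circ\mu$ through polynomials).} For each $\alpha\in\theta$, let $\rho_\alpha$ be the irreducible representation with highest weight a multiple of the fundamental weight $\omega_\alpha$. Then $\omega_\alpha(\mu(g))=\log\|\rho_\alpha(g)\|+O(1)$. Since $\psi\in\fa_\theta^*$, we can write $\psi=\sum_{\alpha\in\theta}c_\alpha\omega_\alpha$, and hence
\[
\psi(\mu(u))=\sum_{\alpha}c_\alpha\log\|\rho_\alpha(u)\|+O(1)=\tfrac12\log\Phi(u)+O(1),\qquad \Phi(u):=\prod_\alpha\|\rho_\alpha(u)\|_2^{2c_\alpha}.
\]
In Malcev coordinates on $N\cong\R^m$, each $\|\rho_\alpha(u)\|_2^2$ is a polynomial. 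The coefficients $c_\alpha$ need not all be positive, but positivity of $\psi$ on $\L_\Ga-\{0\}$ forces $\Phi\to\infty$ properly on $N$. Using a bounded fundamental domain for $\mathsf P_0$ in $N$ together with the coarse invariance above, the count of $\ga\in\mathsf P$ with $T\le\psi(\mu(\ga))<T+k$ is comparable, up to multiplicative constants, to
\[
V(T+k+c)-V(T-c),\qquad V(T):=\op{vol}\{u\in N:\Phi(u)\le e^{2T}\},
\]
with matching reverse inequalities coming from slightly shrunken windows.

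\emph{Step 3 (volume asymptotics; the main obstacle).} We need $V(T)\sim c\,e^{\delta T}T^{a}$ for some $a\in\Z_{\ge0}$, and we must identify $\delta$ with $\delta_\psi(\mathsf P)$. To do this, consider the zeta integral $Z(s)=\int_{\{\Phi\ge1\}}\Phi(u)^{-s}\,du$. By resolution of singularities applied at infinity (Bernstein--Gelfand / Atiyah, via a compactification of $N$ and a monomialization of the polynomial factors), $Z$ extends meromorphically, with poles at rational points. Its first real pole is $\delta/2$, with some order $a+1$. A Tauberian theorem for the monotone function $V$ then gives $V(T)\asymp e^{\delta T}T^{a}$, and, after choosing $k$ large, also the two-sided bound on the windows $[T+kn,T+k(n+1))$. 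The abscissa of $Z$ equals the critical exponent of the Poincar\'e series by Step 2, so $\delta=\delta_\psi(\mathsf P)$, and $a_\psi(\mathsf P):=a$. The delicate point is to obtain a genuine asymptotic rather than only an upper bound, which is necessary for the lower bound in every window. If the Tauberian route is too coarse, I would instead use the explicit monomial charts: there $V$ becomes a finite sum of volumes of polyhedral regions in logarithmic coordinates, and each such volume is an exact exponential-polynomial in $T$.

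\emph{Step 4 (the special cases).} If $\mathsf P$ is virtually $\Z=\langle u\rangle$, then each $\|\rho_\alpha(u^n)\|\asymp n^{d_\alpha}$ for some integer $d_\alpha$ (from the Jordan form). Hence $\psi(\mu(u^n))=D\log n+O(1)$, and the count in a window is $\asymp e^{T/D}$, so $a=0$. In rank one, $\psi$ is a multiple of the distance. It is standard that $d(o,\ga o)=2\log\|\ga\|_{\mathrm{Heis}}+O(1)$, where $\|\cdot\|_{\mathrm{Heis}}$ is a homogeneous norm on the (generalized) Heisenberg group $N$. Sublevel sets of this norm are dilates of a fixed bounded set, so $V(T)=c\,e^{\delta T}$ exactly, and again $a=0$.
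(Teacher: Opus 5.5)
\textbf{Where your proposal matches the paper.} Apart from the rank-one clause, your route is the paper's. The Canary--Zhang--Zimmer structure theorem makes $\mathsf P$ (virtually) a cocompact lattice in a unipotent group. The quantity $e^{\psi(\mu(\exp Y))}$ is comparable to a product of real powers of positive polynomial or rational functions on $\mathfrak u$; your representation-theoretic derivation is what the paper imports as \cite[Proposition 2.3]{CZZ_relative}. The sublevel-volume asymptotic $\sim c_0e^{\delta T}T^{a}$ comes from resolution of singularities at infinity: the paper cites Benoist--Oh adapted to real exponents, and you sketch the Tauberian or monomial-chart version of the same argument. Windows are then handled by differencing with $k$ large. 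Your Jordan-form argument for virtually cyclic $\mathsf P$ is correct and more explicit than the paper's remark.

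\textbf{Small corrections.} If $\rho_\alpha$ has highest weight $m_\alpha\omega_\alpha$, then $\log\|\rho_\alpha(g)\|=m_\alpha\omega_\alpha(\mu(g))$, so the exponents in $\Phi$ are $2c_\alpha/m_\alpha$. These exponents are real, not rational, so the poles of $Z(s)$ need not be rational. What the Tauberian step actually uses is that the poles are real, so $\delta/2$ is the only pole on its vertical line. Differencing with $k$ large also needs $\delta>0$, which you never state. In your setup it follows because $\|\rho_\alpha(u)\|\ge 1$ for unipotent $u$, so $\Phi$ is bounded by a polynomial; the paper gets it from Canary--Zhang--Zimmer. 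Finally, the eigenvalue claim in Step 1 needs the preliminary reduction removing simple factors of $G$ contained in $P_\theta$; this is harmless, since those factors do not affect $\psi\circ\mu$.

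\textbf{The gap: the rank-one case.} The volume you must compute is the Haar measure of $\{u\in U:\|u\|_{\mathrm{Heis}}\le R\}$, where $U$ is the Malcev completion from Step 1 (which you also called $N$). In general this is a proper subgroup of the horospherical group. Your claim that these sublevel sets are dilates of a fixed set, so that $V(T)=ce^{\delta T}$ exactly, presupposes that $U$ is invariant under the dilations $X+Z\mapsto \lambda X+\lambda^2 Z$, with $X\in\mathfrak g_\alpha$ and $Z\in\mathfrak g_{2\alpha}$.

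That need not hold. $\mathfrak u$ can consist of vectors $v+\phi(v)+z$ with $\phi(v)\in\mathfrak g_{2\alpha}$ not in $\mathfrak u$. Nor is it a harmless normalization: conjugating by an element $\exp W\in N$, $W=W_1+W_2$ with $W_1\in\mathfrak g_\alpha$, changes $\phi$ only by $v\mapsto[W_1,v]$. In $F_4^{(-20)}$, take $V\subset\mathfrak g_\alpha$ two-dimensional with $[V,V]\ne0$. This is an at most $8$-dimensional family inside the $12$-dimensional space $\mathrm{Hom}(V,\mathfrak g_{2\alpha}/[V,V])$. So a generic $\mathfrak u=\{v+\phi(v)+z: v\in V,\ z\in[V,V]\}$ is not conjugate within $MAN$ to a graded subalgebra.

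\textbf{How to close it.} The conclusion $a=0$ survives, and the proof of Theorem \ref{thm.rankone_no_log} shows how. Set $V_{\mathsf P}=\mathrm{pr}_{\mathfrak g_\alpha}(\mathfrak u)$ and $Z_{\mathsf P}=\mathfrak u\cap\mathfrak g_{2\alpha}$, and write $\mathfrak u=\{v+\phi(v)+z\}$. Then $\|\exp(v+\phi(v)+z)\|_{\mathrm{cusp}}\le R$ is equivalent, up to constants, to $\|v\|\ll R$ and $\|\phi(v)+z\|\ll R^2$. Since $\phi$ is linear, $\|\phi(v)\|\ll R\le R^2$. Hence each $z$-fibre is comparable to a ball of radius $R^2$ in $Z_{\mathsf P}$, and the volume is $\asymp R^{\dim V_{\mathsf P}+2\dim Z_{\mathsf P}}$. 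This is only a $\asymp$ statement, not an exact formula, but it suffices: compared with $V(T)\sim c_0e^{\delta T}T^{a}$ from your Step 3, it forces $a=0$.
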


We deduce Theorem \ref{thm.countinginsegments} from the following 
counting estimate. We write $\ll$ for inequality up to a uniform multiplicative constant, and similarly for $\gg$. We write $\asymp$ when we have both $\ll$ and $\gg$.

\begin{proposition} \label{prop.countingperipheral}
Let $\mathsf P\in\cal P$.  Then there exist
$a_\psi(\mathsf P)\in\Z_{\ge 0}$ and $c>0$ such that, for all sufficiently
large $T$,
\[
    e^{\delta_\psi(\mathsf P)T}(T-c)^{a_\psi(\mathsf P)}
    \ll
    \#\{\ga\in\mathsf P:\psi(\mu(\ga))\le T\}
    \ll
    e^{\delta_\psi(\mathsf P)T}(T+c)^{a_\psi(\mathsf P)} .
\]
Moreover, if $\mathsf P$ is virtually cyclic  or if $G$ has rank one, then
$a_\psi(\mathsf P)=0$.
\end{proposition}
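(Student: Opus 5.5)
The plan is to reduce the count to a lattice-point count for a coarse \emph{height function} on a nilpotent group, and then to compute that count through a polyhedral (log-coordinate) volume computation. The starting point is the structure of peripheral subgroups of relatively Anosov groups due to Canary--Zhang--Zimmer \cite{CZZ_relative}: each $\mathsf P\in\cal P$ is virtually nilpotent. After passing to a finite-index subgroup and conjugating, $\mathsf P$ is contained, up to a bounded error, in $L\ltimes U$ with $L$ compact and $U$ a unipotent subgroup of $G$. Passing to finite index and conjugating changes $\psi(\mu(\ga))$ only by a bounded amount (Lemma \ref{lem.cptcartan}), and it changes the count only by bounded multiplicative factors, which the $\asymp$-statement allows. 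So it suffices to treat a lattice $\Delta$ in a simply connected nilpotent group $\mathsf N$ together with a homomorphism $\mathsf N\to U$, the compact factor being absorbed into bounded errors.

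\textbf{Step 1: a max-of-monomials height.} Fix an irreducible representation $\rho_\psi$ of $G$ adapted to $\psi$. One may have to decompose $\psi=\sum_{\alpha\in\theta}c_\alpha\omega_\alpha$ into fundamental weights and use a product of representations $\rho_\alpha$, one for each $\alpha\in\theta$. Then $\omega_\alpha(\mu(g))=\log\|\rho_\alpha(g)\|+O(1)$, and hence
\[
e^{\psi(\mu(\ga))}\asymp\prod_{\alpha\in\theta}\|\rho_\alpha(\ga)\|^{c_\alpha}.
\]
For $\ga\in\Delta$ written in Malcev coordinates $x=(x_1,\dots,x_m)\in\Z^m$, each $\rho_\alpha(\ga)$ is unipotent, so its matrix entries are polynomials in $x$. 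Consequently $e^{\psi(\mu(\ga))}$ is comparable to a product of real powers of maxima of absolute values of polynomials. The key reduction I would aim for is that, for $|x|$ large and after a finite subdivision of $\Z^m$ into cones, such a function is comparable to
\[
\max_j \prod_i (1+|x_i|)^{e_{ij}}
\]
for finitely many exponent vectors $e_j$ with nonnegative real entries. I expect this to be the main obstacle, because lower-order terms can cancel leading monomials. I would first handle it using the graded (Carnot-type) dilation structure of $\mathsf N$: the weight decomposition of $\rho_\alpha$ restricted to $\log U$ makes the top-degree part of each entry quasi-homogeneous, and the regularity supplied by $\theta$-regularity excludes degenerate directions. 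If that does not suffice, I would fall back on a Newton-polyhedron/toroidal resolution argument, which yields the same monomial form after finitely many coordinate changes.

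\textbf{Step 2: the polyhedral count.} With the height in this form, pass to logarithmic coordinates $u_i=\log(1+|x_i|)$. The number of integer points with a given sign pattern and $u\in$ a unit cube around $u_0$ is comparable to $e^{\sum_i u_{0,i}}$. Hence
\[
\#\{\ga:\psi(\mu(\ga))\le T\}\asymp\int_{\{u\ge 0:\ \max_j\langle e_j,u\rangle\le T+O(1)\}}e^{\sum_i u_i}\,du.
\]
The domain is the dilate by $T$ of a fixed polyhedron $\Sigma=\{u\ge0:\max_j\langle e_j,u\rangle\le 1\}$. Let $\delta=\max_{u\in\Sigma}\sum_iu_i$; a linear program on $\Sigma$ shows $\delta$ is the critical exponent $\delta_\psi(\mathsf P)$. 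Laplace's method on a dilated polyhedron then gives that the integral is $\asymp e^{\delta T}T^{a}$, where $a=\dim F$ and $F$ is the face of $\Sigma$ on which $\sum_iu_i$ attains its maximum. This gives the exponent $a_\psi(\mathsf P)\in\Z_{\ge0}$. The additive shifts $T\pm c$ in the statement absorb the $O(1)$ errors coming from Step 1.

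\textbf{Step 3: the cases $a_\psi(\mathsf P)=0$.} If $\mathsf P$ is virtually cyclic, then $m=1$, so $F$ is a point and $a=0$. If $G$ has rank one, then $\psi$ is a multiple of the distance, and a horoball computation gives $d(o,\ga o)=2\log|\ga|_{\mathrm{hom}}+O(1)$ for a homogeneous quasi-norm $|\cdot|_{\mathrm{hom}}$ on the graded group. The count of a homogeneous-norm ball is then $\asymp e^{\delta T}$ with no logarithmic factor, since the lattice is coarsely dense and volume scales exactly, so again $a=0$.
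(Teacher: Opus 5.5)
\textbf{The gap is in Step 1, and Step 2 depends on it.} Step 2 needs the height $e^{\psi(\mu(\ga))}$ to be, up to bounded factors, a function of $(\log|x_1|,\dots,\log|x_m|)$ and the sign pattern, of monomial type on finitely many pieces. For matrix coefficients of unipotent elements this fails in general. Cancellations occur along directions that are not coordinate directions of the lattice, and neither quasi-homogeneity nor $\theta$-regularity prevents them.

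Here is an example. In $\SL(4,\R)$ let $M=E_{12}+E_{23}$ and $N=E_{14}$. They commute, $N^2=0$ and $M^2=E_{13}$, so $\|\exp(sN+tM)\|\asymp\max(1,|s|,t^2)$. Take the lattice in $U=\{\exp(sN+tM)\}$ generated by $\exp(N+M)$ and $\exp(\sqrt2 M)$. In lattice coordinates the height is
\[
\max\bigl(1,|x|,(x+\sqrt2\,y)^2\bigr).
\]
On the region $|x|\asymp|y|$, at integer points, this ranges from about $|x|$ (near the line $x=-\sqrt2 y$) up to about $x^2$. Since $t$ takes the values $1$ and $\sqrt2$ on the generators, no integral change of basis makes its zero line a coordinate axis. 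Nothing in your argument rules out such configurations for $\mathsf P$, and for non-abelian $U$ the group law adds nonlinear cancellations as well.

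Your fallback (Newton polyhedra or toroidal resolution) does give local monomial form. However, it does so in new analytic coordinates in which the lattice is no longer $\Z^m$, so the unit-cube lattice-point count of Step 2 is no longer available. Two further points:
\begin{itemize}
\item The coefficients $c_\alpha$ in $\psi=\sum c_\alpha\omega_\alpha$ may be negative, so the height is not a maximum of monomials with nonnegative exponents.
\item You never use properness of the height, which you need for $\Sigma$ to be bounded.
\end{itemize}

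\textbf{The missing idea is to get rid of the lattice before changing coordinates.} This is what the paper does, in three steps.
\begin{enumerate}
\item Since $\mathsf P_1=\pi(\mathsf P\cap H^\circ)$ is a cocompact lattice in $U$, a fundamental-domain argument together with Lemma \ref{lem.cptcartan} compares the count with the Haar volume of $\{u\in U:\psi(\mu(u))\le T\pm c'\}$.
\item The Canary--Zhang--Zimmer estimate $e^{\omega_\alpha(\mu(\exp Y))}\asymp R_\alpha(Y)^{1/m_\alpha}$ turns this into $\ell(\{Y\in\mathfrak u:R_\psi(Y)\le c^{\pm1}e^T\})$, where $R_\psi=\prod_\alpha R_\alpha^{c_\alpha/m_\alpha}$ is proper.
\item The Benoist--Oh asymptotic $\ell(\{R_\psi\le T\})\sim c_0T^r(\log T)^q$ (Theorem \ref{thm.volumegrowth}) is proved by exactly the resolution you allude to. There the local integrals, including the Jacobian factor of the volume form, are of monomial type. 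This gives $\delta_\psi(\mathsf P)=r$ and $a_\psi(\mathsf P)=q$.
\end{enumerate}
On this route your polyhedral Laplace computation survives only chart by chart, in the resolved coordinates and with the Jacobian weight.

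Your Step 3 is fine and matches the paper, with one caveat. In rank one, $\mathfrak u$ need not be graded, so you should first write it as a graph over $\operatorname{pr}_{\mathfrak g_\alpha}(\mathfrak u)\oplus(\mathfrak u\cap\mathfrak g_{2\alpha})$, as in Theorem \ref{thm.rankone_no_log}, before computing volumes of quasi-norm balls.
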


The key input is a volume estimate of Benoist-Oh \cite{BO}.  We first
recall the structural description of parabolic subgroups in the relatively
Anosov setting.  After the reduction in \cite[Section 4.4]{CZZ_relative},
we may assume that $P_\theta$ contains no simple factor of $G$.  Let
$\mathsf P\in\cal P$.  By \cite[Theorem 4.4]{CZZ_relative}, there exists a
closed subgroup $H<G$ with finitely many connected components such that
$\mathsf P$ is a cocompact lattice in $H$.  Moreover, if $U$ denotes the
unipotent radical of $H$, then
\be \label{eqn.structure}
    H=L\ltimes U \quad \text{and} \quad  H^\circ=L^\circ\times U,
    \ee
where $L<H$ is compact and its identity component $L^\circ$ is abelian.

Let $\mathfrak u=\Lie(U)$, and let $\ell$ denote Lebesgue measure on
$\mathfrak u$.  We shall use the following consequence of \cite{BO}:

\begin{theorem} \label{thm.volumegrowth}
Let $R_1,\ldots,R_m$ be positive rational functions on $\mathfrak u$ which
are defined everywhere.  Let $c_1,\ldots,c_m\in\R$, and suppose that
$R:=R_1^{c_1}\cdots R_m^{c_m}$
is a proper function.  Then
\[
    \ell\bigl(\{Y\in\mathfrak u:R(Y)\le T\}\bigr)
    \sim c_0 T^r(\log T)^q
\]
for some $c_0>0$, $r\ge 0$, and $q\in\Z_{\ge 0}$.
\end{theorem}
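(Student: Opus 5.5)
The plan is to follow Benoist–Oh: study the Mellin transform of the distribution function of $R$, reduce it to monomial integrals by resolution of singularities, and conclude with a Tauberian theorem. Set
\[
    V(T):=\ell\bigl(\{Y\in\mathfrak u:R(Y)\le T\}\bigr).
\]
Since $R$ is proper and continuous, each $V(T)$ is finite and $V$ is non-decreasing. We may assume $V(T)\to\infty$; otherwise $\mathfrak u=\{0\}$ and the claim is trivial with $r=q=0$. Consider the zeta integral
\[
    Z(s):=\int_{\mathfrak u} (1+R(Y))^{-s}\,d\ell(Y)=\int_0^\infty (1+T)^{-s}\,dV(T).
\]
The first step is to show that $Z(s)$ converges for $\operatorname{Re}s$ large. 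For this I will use that $R$ is bounded below by a positive power of $\|Y\|$ outside a compact set; this follows from properness combined with a Łojasiewicz-type inequality for semialgebraic functions composed with real powers.

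The second step makes the integrand monomial. Write each $R_i=P_i/Q_i$ with $P_i,Q_i$ polynomials. Embed $\mathfrak u$ in projective space $\mathbb P(\mathfrak u\oplus\R)$, and apply Hironaka's embedded resolution to the divisor formed by the hyperplane at infinity together with the zero loci of all $P_i$ and $Q_i$. This gives a proper real-analytic map $\pi:M\to\mathbb P$ from a compact manifold $M$ with the following property: near every point of $M$ there are local coordinates $x$ in which each $P_i\circ\pi$ and $Q_i\circ\pi$ is a monomial times a non-vanishing analytic unit, and in which both the Jacobian of $\pi$ and the pullback of $d\ell$ have the form $|x^{a}|\,u(x)\,dx$. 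Because the $R_i$ are positive and defined everywhere, $R\circ\pi=|x^{-b}|\,v(x)$ locally, with real exponents $b$ (linear combinations of the $c_i$) and a positive unit $v$. The real exponents $c_i$ cause no difficulty, since only the exponent vectors change.

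With a finite partition of unity on the compact manifold $M$, $Z(s)$ becomes a finite sum of local integrals
\[
    \int_{[0,\epsilon]^n} x^{a}\,\bigl(1+x^{-b}v(x)\bigr)^{-s}\phi(x)\,dx .
\]
Properness of $R$ means that on the chart pieces lying over infinity, $x^{-b}$ blows up exactly along the boundary coordinates. Iterated one-variable integration shows that each local integral extends meromorphically to $\operatorname{Re}s>r-\eta$ for some $\eta>0$. All of its poles are real, of the form $(a_j+1)/b_j$, and the pole orders are at most $n$.

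Let $r$ be the largest pole of $Z$ and $q+1$ its order. The Laurent coefficients of the summands at $r$ cannot cancel, because every integrand is non-negative; in fact the leading coefficient is a positive multiple of a positive integral. Hence $Z(s)\sim C(s-r)^{-(q+1)}$ with $C>0$. Since $V$ is monotone and $Z$ is analytic on $\operatorname{Re}s\ge r$ apart from this real pole, Delange's Tauberian theorem gives $V(T)\sim c_0T^r(\log T)^q$ with $c_0=C/(r\,q!)$. Moreover $r\ge 0$ because $V$ is non-decreasing, and $r>0$ unless $V$ stays bounded.

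I expect the main obstacle to be the second step. One has to make sure the resolution simultaneously monomializes every $R_i$ and the measure, including at infinity; in particular, it must handle points at infinity where some numerator and denominator vanish together. One also has to verify the sign bookkeeping showing that properness forces the correct exponents to be positive in each chart. If this becomes delicate, I would instead argue by o-minimality. The function $V$ is definable in $\R_{\mathrm{an},\exp}$ (Lion–Rolin / Comte–Lion–Rolin preparation for parametric integrals of subanalytic functions composed with real powers). Definable volumes have asymptotics of the form $T^r(\log T)^q$, and $q$ is an integer because the preparation theorem produces only integer powers of $\log$.
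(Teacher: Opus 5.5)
Your proposal is correct and takes the same route as the paper. The paper compactifies $\mathfrak u$ and resolves so that the boundary, the zero and polar divisors of the numerators and denominators, and the divisor of the volume form are simultaneously SNC; it notes that real exponents only change the exponent vectors, that properness forces the boundary exponents of $R^{-1}$ to be positive, and then defers the remaining zeta-integral and Tauberian argument to Benoist--Oh, which you simply write out. (Two cosmetic points: with your conventions the candidate poles are at $-(a_j+1)/b_j$, and it is cleaner to work with $\int R^{-s}\,d\ell$, which is legitimate since $\inf R>0$, so that the local integrands are $x^{a+bs}$ times a function smooth in $x$ even for irrational exponents.)
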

\begin{proof}
This is immediate from \cite[Proposition 7.2]{BO} when all exponents
\(c_i\) are positive rational numbers. The same proof also gives the
present form; we briefly indicate the additional points needed for real
exponents.
Following the notation in \cite[Proposition 7.2]{BO}, let \(Z\) be the
affine space over \(\mathbb R\) associated to the real vector space
\(\mathfrak u\), so that \(Z(\mathbb R)=\mathfrak u\). Let \(\omega\) be the
standard algebraic volume form on \(Z\) inducing \(\ell\).  Write \(R_i=f_i/g_i\), with \(f_i\) and \(g_i\)
polynomial. Choose a smooth projective real compactification \(V\) of \(Z\).
After replacing \(V\) by a resolution, and keeping the same notation, we may
assume that \(D:=V-Z\), the divisors of the rational functions \(f_i,g_i\),
and the divisor of \(\omega\), have simple normal crossings. Put \(F=R\)
and \(f=1/F\).

Let \(y_0\in D(\mathbb R)\cap\overline{Z(\mathbb R)}\). In a neighborhood
of \(y_0\), choose local real analytic coordinates \((x_1,\ldots,x_N)\) such
that
\[
    D=\{x_1\cdots x_r=0\}.
\]
By the above resolution of singularities, and arguing as in the proof of
\cite[Lemma 6.7]{CZZ_relative}, we may write locally

\[
    f_i=x_1^{a_{i1}}\cdots x_r^{a_{ir}}\widehat f_i,
    \quad \text{and} \quad
    g_i=x_1^{b_{i1}}\cdots x_r^{b_{ir}}\widehat g_i,
\]
where \(a_{ij},b_{ij}\in\mathbb Z\) and
$\widehat f_i,\widehat g_i$ are nowhere-vanishing real analytic functions.
After restricting to one orthant and replacing \(x_j\) by \(|x_j|\), we may
assume \(x_j>0\) for \(j=1,\ldots,r\). Then
$R_i=\widehat R_i\prod_{j=1}^r x_j^{a_{ij}-b_{ij}}$ where $\widehat R_i := \widehat f_i / \widehat g_i$,
and hence
\[
    f=R^{-1}=\widehat f\prod_{j=1}^r x_j^{s_j}.
\]
for some \(s_j\in\mathbb R\) and some positive real analytic unit
\(\widehat f\). 
 Since \(R\) is proper on \(Z(\mathbb R)\), \(f=R^{-1}\)
tends to \(0\) along every boundary component meeting the closure of
\(Z(\mathbb R)\). Hence \(s_j>0\) for every relevant boundary component.

Moreover, locally
\[
    \omega
    =
    a_0(x)\prod_{j=1}^r x_j^{\beta_j}\,dx_1\cdots dx_N
\]
with \(\beta_j\in\mathbb Z\) and \(a_0\) a nowhere-vanishing real analytic
function.

Thus the local integrals appearing in the proof of
\cite[Proposition 7.2]{BO} are of monomial type.
 The remainder of their argument applies without change.
\end{proof}

\subsection*{Proof of Proposition \ref{prop.countingperipheral}}

By \cite[Proposition 2.3]{CZZ_relative}, for each $\alpha\in\theta$ there
exist $m_\alpha\in\N$, $C_\alpha>1$, and an everywhere-defined positive
rational function $R_\alpha:\mathfrak u\to\R$ such that, for all
$Y\in\mathfrak u$,
\[
    C_\alpha^{-1}R_\alpha(Y)^{1/m_\alpha}
    \le
    e^{\omega_\alpha(\mu(\exp Y))}
    \le
    C_\alpha R_\alpha(Y)^{1/m_\alpha},
\]
where $\omega_\alpha\in\fa_\theta^*$ is the fundamental weight associated
to $\alpha$.

Write
\[
    \psi=\sum_{\alpha\in\theta} c_\alpha\omega_\alpha
\]
for coefficients $c_\alpha\in\R$, $\alpha \in \theta$.
Set
\[
    R_\psi:=\prod_{\alpha\in\theta} R_\alpha^{c_\alpha/m_\alpha}
    \quad \text{and} \quad
    C_\psi:=\prod_{\alpha\in\theta} C_\alpha^{|c_\alpha|}.
\]
Then, for all $Y\in\mathfrak u$,
\[
    C_\psi^{-1}R_\psi(Y)^{-1}
    \le
    e^{-\psi(\mu(\exp Y))}
    \le
    C_\psi R_\psi(Y)^{-1}.
\]
By \cite[Lemma 7.3]{CZZ_relative}, the function $R_\psi$ is proper.

Let
\[
    \mathsf P_1 := \pi(\mathsf P\cap H^\circ),
\]
where $\pi:H^\circ\to U$ is the projection.  Then $\mathsf P_1$ is a
cocompact lattice in $U$.  Since $\mathsf P\cap H^\circ$ has finite index
in $\mathsf P$ and $\ker\pi$ is compact, we have
\[
    \#\{g\in\mathsf P_1:\psi(\mu(g))\le T\}
    \asymp
    \#\{\ga\in\mathsf P:\psi(\mu(\ga))\le T\}
\]
for all sufficiently large $T$.  It therefore suffices to count elements
of $\mathsf P_1$.

Let $\la_U$ denote a Haar measure on $U$.
Let $Q_1\subset U$ be a bounded open set such that
the translates $gQ_1$, $g\in\mathsf P_1$, are pairwise disjoint.  Then
\[
\begin{aligned}
\#\{g\in\mathsf P_1:\psi(\mu(g))\le T\}
&\ll
\lambda_U\left(
    \bigcup_{\substack{g\in\mathsf P_1\\ \psi(\mu(g))\le T}} gQ_1
\right) \\
&\ll
\lambda_U\left(
    \{u\in U:\psi(\mu(u))\le T+c'\}
\right) \\
&\ll
\ell\left(
    \{Y\in\mathfrak u:R_\psi(Y)\le c e^T\}
\right)
\end{aligned}
\]
for some constants $c',c>1$.

Similarly, choose a compact set $Q_2\subset U$ such that
$\mathsf P_1Q_2=U$.  Then, after increasing $c',c$ with $c>1$, if necessary,
\[
\begin{aligned}
\#\{g\in\mathsf P_1:\psi(\mu(g))\le T\}
&\gg
\lambda_U\left(
    \bigcup_{\substack{g\in\mathsf P_1\\ \psi(\mu(g))\le T}} gQ_2
\right) \\
&\gg
\lambda_U\left(
    \{u\in U:\psi(\mu(u))\le T-c'\}
\right) \\
&\gg
\ell\left(
    \{Y\in\mathfrak u:R_\psi(Y)\le c^{-1}e^T\}
\right).
\end{aligned}
\]
The proposition now follows from Theorem \ref{thm.volumegrowth}, together
with the fact that the exponential growth rate of
\[
    \#\{g\in\mathsf P_1:\psi(\mu(g))\le T\}
\]
is $\delta_\psi(\mathsf P)$.  If $\mathsf P$ is virtually cyclic, then the
corresponding unipotent group is one-dimensional, and the volume asymptotic
has no logarithmic factor.  Hence $a_\psi(\mathsf P)=0$. See Proposition \ref{thm.rankone_no_log} for the claim about the case $\rank G =~1$.
\qed

\medskip

Before proving Theorem \ref{thm.countinginsegments}, we record the
following entropy gap for parabolic subgroups, due to
Canary-Zhang-Zimmer.

\begin{theorem} \cite[Lemma 7.4, Theorem 6.1, Theorem 7.1]{CZZ_relative}
\label{thm.entropydrop}
For each $\mathsf P\in\cal P$, we have
\[
    0<\delta_\psi(\mathsf P)<\delta_\psi(\Ga).
\]
\end{theorem}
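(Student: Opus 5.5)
The plan is to prove the two inequalities separately. Positivity comes from the volume asymptotics behind Proposition \ref{prop.countingperipheral}. The strict inequality comes from a Dal'bo--Otal--Peign\'e type ping-pong argument that exploits divergence of the Poincar\'e series of $\mathsf P$ at its critical exponent.

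\emph{Positivity.} By the proof of Proposition \ref{prop.countingperipheral}, $\#\{\ga\in\mathsf P:\psi(\mu(\ga))\le T\}$ is comparable to $\ell(\{Y\in\mathfrak u: R_\psi(Y)\le c^{\pm1}e^T\})$. Since $R_\psi=\prod R_\alpha^{c_\alpha/m_\alpha}$ is a product of real powers of everywhere-defined positive rational functions, it is bounded above by $C(1+\|Y\|)^{N}$ for some $N>0$. Hence the sublevel set $\{R_\psi\le c^{-1}e^T\}$ contains a Euclidean ball of radius $\gg e^{T/N}$. Since $\mathsf P$ is infinite, $U$ is nontrivial, so $\dim\mathfrak u\ge1$ and this ball has volume $\gg e^{T\dim\mathfrak u/N}$. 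Therefore $\delta_\psi(\mathsf P)\ge \dim\mathfrak u/N>0$. Finiteness of $\delta_\psi(\mathsf P)$ follows in the same way from properness of $R_\psi$ and the volume asymptotic in Theorem \ref{thm.volumegrowth}.

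\emph{Divergence at the critical exponent.} Proposition \ref{prop.countingperipheral} gives $\#\{\ga\in\mathsf P:\psi(\mu(\ga))\le T\}\gg e^{\delta_\psi(\mathsf P)T}$. Summing over unit shells $T\le\psi(\mu(\ga))<T+1$ then shows that $\sum_{\ga\in\mathsf P}e^{-\delta_\psi(\mathsf P)\psi(\mu(\ga))}=\infty$, so $\mathsf P$ is of divergence type. By monotone convergence, for any constant $M>0$ there exist a finite set $F\subset\mathsf P$ and some $s_0>\delta_\psi(\mathsf P)$ with $\sum_{\ga\in F}e^{-s_0\psi(\mu(\ga))}>M$.

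\emph{Ping-pong and strict inequality.} Since $(\Ga,\cal P)$ is non-elementary, there is a loxodromic $g\in\Ga$ whose fixed points in $\partial Y$ differ from $p=\xi_{\mathsf P}$. Choose disjoint neighborhoods $V_p\ni p$ and $V_g$ of the fixed points of $g$. For $n$ large, $g^{\pm n}$ maps $\partial Y- V_g$ into $V_g$, and every $\ga\in\mathsf P-\mathsf P_0$ (with $\mathsf P_0$ finite) maps $\partial Y- V_p$ into $V_p$. Ping-pong then shows that the words $\ga_1g^n\ga_2g^n\cdots\ga_kg^n$ with $\ga_i\in F-\mathsf P_0$ are pairwise distinct elements of $\Ga$.

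The key estimate is coarse additivity:
\[
\psi(\mu(\ga_1g^n\cdots\ga_kg^n))\le\sum_{i=1}^k\bigl(\psi(\mu(\ga_i))+\psi(\mu(g^n))\bigr)+kD
\]
with $D$ independent of $k$ and the $\ga_i$. I would obtain it in the relatively Morse setting by realizing the word as a uniform quasi-geodesic in $Y$, a concatenation of segments $[h o_Y,h\ga_i o_Y]$ and $[h' o_Y,h'g^no_Y]$ with bounded backtracking guaranteed by the ping-pong transversality, and then applying Lemma \ref{lem.DKO_additive} at the breakpoints. In the merely relatively Anosov setting I would instead use the standard fact that $\mu(ab)\approx\mu(a)+\mu(b)$ whenever the attracting flag of $a$ and the repelling flag of $b^{-1}$ stay in a compact set of transverse pairs, which the ping-pong sets ensure via the transverse boundary map $\zeta$.

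Granting the additivity estimate, the Poincar\'e series of $\Ga$ at $s_0$ dominates
\[
\sum_k\Bigl(e^{-s_0(\psi(\mu(g^n))+D)}\sum_{\ga\in F-\mathsf P_0}e^{-s_0\psi(\mu(\ga))}\Bigr)^k.
\]
This diverges once $M$ is chosen larger than $e^{s_0(\psi(\mu(g^n))+D)}$ plus the finite contribution of $\mathsf P_0$. The constants $n$ and $D$ are fixed before $M$, and $s_0$ can be kept in a fixed bounded interval, so this choice of $M$ is legitimate. Hence $\delta_\psi(\Ga)\ge s_0>\delta_\psi(\mathsf P)$.

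I expect the main obstacle to be the uniform coarse additivity along alternating words. One must check that the breakpoints of the concatenated path have bounded Gromov products, uniformly in $k$, using the separation of $V_p$ and $V_g$, before Lemma \ref{lem.DKO_additive} can be iterated without accumulating error beyond $kD$.
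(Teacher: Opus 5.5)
The paper gives no proof of this statement; it imports it from Canary--Zhang--Zimmer. Your three-step strategy is the standard route: positivity from the unipotent structure of $\mathsf P$, divergence type from the asymptotic behind Proposition~\ref{prop.countingperipheral}, then a Dal'bo--Otal--Peign\'e ping-pong. It also uses the same ingredients the paper takes from that reference.

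Your first two steps are fine, with one small repair. The bound $R_\psi(Y)\ll(1+\|Y\|)^N$ needs lower bounds on the $R_\alpha$ when some $c_\alpha<0$. It is easiest to get it from $R_\psi(Y)\asymp e^{\psi(\mu(\exp Y))}\le e^{\|\psi\|\,\|\mu(\exp Y)\|}$ together with $\|\mu(\exp Y)\|\ll\log(2+\|Y\|)$. The problems are in the third step.

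\emph{Distinctness of words.} The claim that the words $\ga_1g^n\cdots\ga_kg^n$ with $\ga_i\in F-\mathsf P_0$ are pairwise distinct is false as stated. Suppose $w=w'$ with $\ga_1\neq\ga_1'$. Your ping-pong only gives a contradiction when $\ga_1^{-1}\ga_1'\notin\mathsf P_0$, and nothing you assume controls the nontrivial elements of $\mathsf P_0$.

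This really happens. For $\Ga=\SL(2,\Z)$, the central element $-I$ lies in every peripheral subgroup and acts trivially on the boundary. Then $\ga_1g^n\ga_2g^n=(-\ga_1)g^n(-\ga_2)g^n$, so the map $(F-\mathsf P_0)^k\to\Ga$ can have fibres of size $2^{k-1}$.

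There are two easy repairs.
\begin{enumerate}
\item Pass to a torsion-free finite-index subgroup of $\mathsf P$ (Selberg), which has the same $\delta_\psi$ and the same divergence type. Its nontrivial elements fix only $p$. Then enlarge $n$ so that $g^n(\partial Y-V_g)$ is disjoint from its images under the nontrivial elements of $\mathsf P_0$.
\item Alternatively, note that your ping-pong determines the first letter up to right multiplication by $\mathsf P_0$. Hence each fibre has at most $|\mathsf P_0|^k$ elements, and you can absorb this by choosing $M$ larger. You only need the sum over words of each fixed length $k$ to blow up, so distinctness across different $k$ is not required.
\end{enumerate}

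\emph{Coarse additivity.} This is the real higher-rank content, and you only sketch it. In rank one the needed upper bound is just the triangle inequality. Here $\psi$ need not be a nonnegative combination of fundamental weights, so even an upper bound for $\psi(\mu(ab))$ needs transversality.

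Your argument via a quasi-geodesic in $Y$ and Lemma~\ref{lem.DKO_additive} is sound, but it uses the Morse embedding. The statement is made for relatively $\theta$-Anosov $\Ga$.

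In that generality, the two-element fact you quote has the wrong hypothesis. The estimate $\mu_\theta(ab)\approx\mu_\theta(a)+\mu_\theta(b)$ needs the attracting flag of $b$ to be uniformly transverse to the \emph{repelling} flag of $a$, i.e. the attracting flag of $a^{-1}$. Your version pairs it with the attracting flag of $a$, since the repelling flag of $b^{-1}$ is just the attracting flag of $b$.

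More importantly, the two-element fact does not iterate on its own. At each step you need transversality with the repelling flag of the whole prefix. Keeping that flag near the repelling flag of the last letter, uniformly in $k$, requires every letter to be sufficiently $\theta$-regular relative to the transversality constant. You can arrange this by enlarging $\mathsf P_0$ and $n$, using $\theta$-regularity and the facts that the Cartan flags of $\ga^{\pm1}$, $\ga\in\mathsf P$, tend to $\zeta(p)$ and those of $g^{\pm n}$ tend to $\zeta(g^{\pm})$, where $g^\pm$ are the fixed points of $g$. This is a Schottky-type product lemma, or the Kapovich--Leeb--Porti local-to-global principle, and it has to be supplied.

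You can also avoid long words entirely.
\begin{enumerate}
\item Take a $\Ga$-conformal measure $\nu$ on $\La_\theta$ with exponent $\delta_\psi(\Ga)\psi$, and a compact $Q$ with $\mathsf PQ=\partial Y-\{p\}$.
\item Then $\nu(Q)>0$, since $\nu$ is not the Dirac mass at $\zeta(p)$.
\item As in the proof of Proposition~\ref{prop.shadowatparabolic_pre}, $\nu(gQ)\asymp e^{-\delta_\psi(\Ga)\psi(\mu(g))}\nu(Q)$ for $g\in\mathsf P$. This needs the relatively Anosov analogue of Lemma~\ref{lem.buseandcartan}, which the paper states only under the Morse hypothesis.
\item Bounded multiplicity of $\{gQ\}$ then gives $\sum_{g\in\mathsf P}e^{-\delta_\psi(\Ga)\psi(\mu(g))}<\infty$, which contradicts divergence type if $\delta_\psi(\mathsf P)=\delta_\psi(\Ga)$.
\end{enumerate}
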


\subsection*{Proof of Theorem \ref{thm.countinginsegments}}

By Proposition \ref{prop.countingperipheral}, there exist $c>0$ and
$T_0>0$ such that, for all $T>T_0$,
\[
    e^{\delta_\psi(\mathsf P)T-c}(1+T)^{a_\psi(\mathsf P)}
    \le
    \#\{\ga\in\mathsf P:\psi(\mu(\ga))<T\}
    \le
    e^{\delta_\psi(\mathsf P)T+c}(1+T)^{a_\psi(\mathsf P)}.
\]
For simplicity, write
\[
    \delta=\delta_\psi(\mathsf P)
    \quad \text{and} \quad
    a=a_\psi(\mathsf P),
\]
and define
\[
    N(T):=\#\{\ga\in\mathsf P:\psi(\mu(\ga))<T\}.
\]

Choose $k\in\N$ so large that
\[
    e^{\delta k-c}-e^c>0,
\]
which is possible since $\delta >0$ by Theorem
\ref{thm.entropydrop}.  Set $S:=T+kn$.  Then
\[
\begin{aligned}
N(S+k)-N(S)
&\le
e^{\delta(S+k)+c}(1+S+k)^a
-
e^{\delta S-c}(1+S)^a  \\
&=
e^{\delta S}(1+S)^a
\left[
    e^{\delta k+c}
    \left(1+\frac{k}{1+S}\right)^a
    -
    e^{-c}
\right] \\
&\le
C e^{\delta S}(1+S)^a
\end{aligned}
\]
for a constant $C>1$ independent of $T$ and $n$.  Similarly,
\[
\begin{aligned}
N(S+k)-N(S)
&\ge
e^{\delta(S+k)-c}(1+S+k)^a
-
e^{\delta S+c}(1+S)^a  \\
&=
e^{\delta S}(1+S)^a
\left[
    e^{\delta k-c}
    \left(1+\frac{k}{1+S}\right)^a
    -
    e^c
\right] \\
&\ge
C^{-1} e^{\delta S}(1+S)^a,
\end{aligned}
\]
after increasing $C$, if necessary.  Since
\[
    N(S+k)-N(S)
    =
    \#\{\ga\in\mathsf P:S\le \psi(\mu(\ga))<S+k\},
\]
and $S=T+kn$, this proves the theorem.
\qed

\subsection*{Rank-one parabolic subgroups}

We record a rank-one refinement of Proposition
\ref{prop.countingperipheral}, which shows the absence of the polynomial term in Theorem \ref{thm.countinginsegments} for rank-one $G$. In this subsection, 
assume that $G$ has real rank one.  Let $\Pi=\{\alpha\}$, so that
$\theta=\{\alpha\}$ and $\fa_\theta=\fa \simeq \R$.  
Let $\mathfrak{n} := \Lie(N)$ and write 
\[
    \mathfrak n=\mathfrak g_\alpha\oplus\mathfrak g_{2\alpha},
\]
where $\mathfrak{g}_{\alpha}$ and $\mathfrak{g}_{2\alpha}$ are the corresponding root spaces, 
with the convention that $\mathfrak g_{2\alpha}=0$ if $2\alpha$ is not a
root.

Let
$\mathsf P\in\cal P$. Let $H=L\ltimes U$ be the subgroup associated to $\mathsf P$ as in \eqref{eqn.structure}. Up to conjugation, we may assume that $U<N$, and use the same notation  $\mathfrak u:=\Lie(U)$ as before.  Define
\[
    V_{\mathsf P}:=\operatorname{pr}_{\mathfrak g_\alpha}(\mathfrak u),
    \quad
    Z_{\mathsf P}:=\mathfrak u\cap\mathfrak g_{2\alpha},\quad \text{and} \quad
    Q(\mathsf P):=\dim V_{\mathsf P}+2\dim Z_{\mathsf P}.
\]

Since $\fa$ is one-dimensional, $\L_{\Ga} = \fa^+$ and hence any $\psi\in \fa^*$ positive on $\L_\Ga-\{0\}$ is a multiplication by a positive real number.
The norm $\|\psi\|$ is given by $\psi(H_0)$ where $H_0\in \fa^+$ is the unique unit vector.
\begin{theorem} 
\label{thm.rankone_no_log} Suppose that $\text{rank } G=1$. For any positive $\psi\in\fa^*$ on $\fa^+-\{0\}$, we have 
\[
    \#\{\ga\in\mathsf P:\psi(\mu(\ga))\le T\}
    \asymp
    e^{\delta_\psi(\mathsf P)T},
\]
and 
   $\delta_\psi(\mathsf P) = \frac{\norm{\alpha} Q(\mathsf{P})}{2 \norm{\psi}}$. In particular, in Theorem \ref{thm.countinginsegments}, we have
\[
    a_\psi(\mathsf P)=0  = a_{\bar\psi}(\mathsf P).
\]
\end{theorem}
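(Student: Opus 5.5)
We compute $\mu(u)$ for $u\in U<N$ explicitly and then count lattice points in $U$ by a volume comparison, as in the proof of Proposition \ref{prop.countingperipheral}. Since $\fa\simeq\R$ and $\psi=\|\psi\|\,\langle\cdot,H_0\rangle$, we have $\psi(\mu(g))=\|\psi\|\,d(o,go)$. It therefore suffices to estimate $d(o,\exp(Y+Z)o)$ for $Y\in\mathfrak g_\alpha$ and $Z\in\mathfrak g_{2\alpha}$.

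\textbf{Step 1: horospherical distance estimate.} We first prove the standard formula
\[
e^{\|\alpha\| d(o,\exp(Y+Z)o)}\asymp 1+\|Y\|^4+\|Z\|^2 \qquad (\ast).
\]
One route is the classification of rank-one spaces ($\H^n_{\mathbb K}$), using the explicit Siegel-domain distance formula up to bounded error. Alternatively, choose a finite-dimensional irreducible representation with highest weight a multiple of $\alpha$ (or of the fundamental weight). In it, $\|\rho(\exp(Y+Z))\|$ is comparable to a polynomial whose top-degree terms in $Y$ and $Z$ reflect the weights $\alpha$ and $2\alpha$. Either way, the quantity $\|Y\|^2+\|Z\|$ behaves like the square of a homogeneous quasi-norm $\mathcal N$ for the dilations $\delta_t(Y,Z)=(tY,t^2Z)$, with $e^{\|\alpha\|d/2}\asymp 1+\mathcal N^2$.

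\textbf{Step 2: volume of balls in $U$.} Write $\mathfrak u\subset\mathfrak g_\alpha\oplus\mathfrak g_{2\alpha}$. Since $\mathfrak u$ is a subalgebra, we have $[\mathfrak u,\mathfrak u]\subset Z_{\mathsf P}$. Elements of $U$ are $\exp(Y+\phi(Y)+Z)$, where $Y\in V_{\mathsf P}$, $Z\in Z_{\mathsf P}$, and $\phi:V_{\mathsf P}\to\mathfrak g_{2\alpha}$ is linear with image complementary to $Z_{\mathsf P}$ (a graph over $V_{\mathsf P}$). Using $(\ast)$, the set
\[
\{u\in U:\psi(\mu(u))\le T\}
\]
is comparable, up to bounded distortion, to
\[
\{(Y,Z):\|Y\|^4+\|Y\|^2+\|Z\|^2\le e^{2\|\alpha\|T/\|\psi\|}\}.
\]
The term $\|\phi(Y)\|^2\lesssim\|Y\|^2$ is dominated by $\|Y\|^4$ for large $Y$. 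This region is a quasi-box of size $e^{\|\alpha\|T/(2\|\psi\|)}$ in $V_{\mathsf P}$ and $e^{\|\alpha\|T/\|\psi\|}$ in $Z_{\mathsf P}$. Its Lebesgue volume is therefore
\[
\asymp e^{\|\alpha\|T(\dim V_{\mathsf P}+2\dim Z_{\mathsf P})/(2\|\psi\|)},
\]
with no logarithmic factor.

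\textbf{Step 3: lattice count and conclusion.} Repeat the comparison with $Q_1,Q_2$ from the proof of Proposition \ref{prop.countingperipheral}: the left-translate error is an additive $O(1)$ in $T$ by Lemma \ref{lem.cptcartan}. This gives
\[
\#\{\gamma\in\mathsf P:\psi(\mu(\gamma))\le T\}\asymp e^{\|\alpha\|Q(\mathsf P)T/(2\|\psi\|)}.
\]
Hence $\delta_\psi(\mathsf P)=\|\alpha\|Q(\mathsf P)/(2\|\psi\|)$ and the exponent $a_\psi(\mathsf P)$ equals $0$. In rank one, $\i=\id$, so $\bar\psi=\psi$ and $a_{\bar\psi}(\mathsf P)=0$ as well.

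\textbf{Main obstacle.} The hard part is Step 1 together with the reduction in Step 2. The formula $(\ast)$ must hold uniformly, including the correct normalization relating $\|\alpha\|$ to the distance. In addition, the graph part $\phi(Y)$ and the noncommutativity of the coordinates must be handled. A clean approach is to note that $U$ is normalized by neither $A$ nor its dilations in general. So one should work directly with the quasi-norm and the bound $\|\phi(Y)\|\lesssim\|Y\|$, rather than with a homogeneity argument on $U$.
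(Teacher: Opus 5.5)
Your proof follows the paper's route. You use an explicit horospherical distance formula, which gives $\psi(\mu(u))=\frac{2\|\psi\|}{\|\alpha\|}\log\max\{1,\|Y\|,\|Z\|^{1/2}\}+O(1)$; the paper takes this from Rouvière's formula $\cosh^2(d_{NA}/2)=(1+\|X\|^2/8)^2+\|Z\|^2/4$ with $d_{NA}=2\|\alpha\|\,d$. You then use the linear isomorphism $V_{\mathsf P}\oplus Z_{\mathsf P}\to\mathfrak u$, $(v,z)\mapsto v+\phi(v)+z$, which gives volume $\asymp R^{Q(\mathsf P)}$ for the quasi-box $\|v\|\ll R$, $\|\phi(v)+z\|\ll R^2$. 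Finally you use the fundamental-domain comparison for the cocompact lattice $\mathsf P_1<U$.

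\textbf{Error in $(\ast)$.} Your $(\ast)$ is off by a factor $2$ in the exponent. This is not cosmetic, since the statement asserts the exact value of $\delta_\psi(\mathsf P)$.

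The correct relation is $e^{2\|\alpha\|d(o,\exp(Y+Z)o)}\asymp 1+\|Y\|^4+\|Z\|^2$. Equivalently, $e^{\|\alpha\|d}\asymp 1+\|Y\|^2+\|Z\|\asymp 1+\mathcal N^2$, not $e^{\|\alpha\|d/2}\asymp 1+\mathcal N^2$.

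A quick check in $\mathbb H^2$ with curvature $-1$: there $\|\alpha\|=\alpha(H_0)=1$ and $d(i,i+x)=2\operatorname{arcsinh}(|x|/2)=2\log|x|+O(1)$. So $e^{\|\alpha\|d}\asymp x^2$, not $x^4$.

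Used literally, your $(\ast)$ would give the region $1+\|Y\|^4+\|Z\|^2\ll e^{\|\alpha\|T/\|\psi\|}$, hence $\delta_\psi(\mathsf P)=\|\alpha\|Q(\mathsf P)/(4\|\psi\|)$, which is wrong. Your Step 2 actually writes the correct bound $e^{2\|\alpha\|T/\|\psi\|}$, so as written Steps 1 and 2 do not match. Once $(\ast)$ is corrected, the argument is consistent. It then yields the stated value $\|\alpha\|Q(\mathsf P)/(2\|\psi\|)$ with no logarithmic factor, so $a_\psi(\mathsf P)=0=a_{\bar\psi}(\mathsf P)$.

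\textbf{Minor point.} You say $\phi(V_{\mathsf P})$ is complementary to $Z_{\mathsf P}$. What you actually need is that the graph $\{v+\phi(v)\}$ is a complement of $Z_{\mathsf P}$ in $\mathfrak u$. Only the resulting linear isomorphism is used in the volume count.
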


\begin{proof} Let $H_0 \in \fa^+$ be the unit vector. Let $b:=\alpha(H_0)$ and $\la_\psi:=\psi(H_0)$.
We use the same reduction as in Proposition
\ref{prop.countingperipheral}. Recall that 
$ \pi:H^\circ\to U$ is the projection and 
$\mathsf P_1 =\pi(\mathsf P\cap H^\circ)$.
Then $\mathsf P_1$ is a cocompact lattice in $U$.  Since
$\mathsf P\cap H^\circ$ has finite index in $\mathsf P$ and $\ker\pi$ is
compact, Lemma \ref{lem.cptcartan} implies that counting $\mathsf P$ and
counting $\mathsf P_1$ give the same estimates, up to multiplicative constants, as before.

Write $u=\exp(X+Z)\in U$ with
$X\in\mathfrak g_\alpha$ and $Z\in\mathfrak g_{2\alpha}$.  By \cite[(2.5)]{Papageorgiou2025}, following
\cite[p.~72]{Rouviere2003},  
\[
\cosh ^2\left(\frac {d_{NA} (o, uo)}2\right)
=
\left(1+\frac{\norm{X}^2}{8}\right)^2
+
\frac14\norm{Z}^2
\] where $d_{NA}$ is the distance induced from the norm whose unit vector $H_1$ satisfies $\alpha(H_1)=1/2$.
It follows that $d_{NA}$ is the $2b$ multiple of the Riemannian distance $d$ induced from the norm
for which $H_0$ is a unit vector.

Therefore
\[
d(o,uo)\approx
\frac{2}{b}
\log \max\{1,\norm{X},\norm{Z}^{1/2}\}.
\]

 Define
\[
    \norm{u}_{\mathrm{cusp}}
    :=
    \max\{1,\norm{X},\norm{Z}^{1/2}\},
\]
where the term involving $Z$ is omitted if $\mathfrak g_{2\alpha}=0$.
Since $\mu(u)=d(o,uo)H_0$, we get
\[
    \psi(\mu(u))
    =
    \lambda_\psi d(o,uo) \approx
    \frac{2\lambda_\psi}{b}\log\norm{u}_{\mathrm{cusp}}. 
\]
Hence
\[
    \psi(\mu(u))\le T
    \quad\Longleftrightarrow\quad
    \norm{u}_{\mathrm{cusp}}
    \ll
    e^{bT/(2\lambda_\psi)},
\]
up to changing the implicit constants.

It remains to compute the volume growth of these $\norm{\cdot}_{\mathrm{cusp}}$-balls inside $U$.  Choose a linear complement $W$ to
$Z_{\mathsf P}$ in $\mathfrak u$.  The projection
$W\to V_{\mathsf P}$ is an isomorphism, so every $Y\in\mathfrak u$ can be
written uniquely as
\[
    Y=v+\phi(v)+z \quad \text{with }
    v\in V_{\mathsf P} \text{ and } z\in Z_{\mathsf P},
\]
for some linear map $\phi:V_{\mathsf P}\to\mathfrak g_{2\alpha}$.  The
condition
\[
    \norm{\exp Y}_{\mathrm{cusp}}\le R
\]
is equivalent, up to uniform constants, to
\[
    \norm{v}\ll R \quad \text{and} \quad
    \norm{\phi(v)+z}\ll R^2.
\]
Thus the $v$-variables contribute $R^{\dim V_{\mathsf P}}$ and the
$z$-variables contribute $R^{2\dim Z_{\mathsf P}}$.  Therefore
\[
    \lambda_U (\{u\in U:\norm{u}_{\mathrm{cusp}}\le R\})
    \asymp
    R^{Q(\mathsf P)}.
\]

Since $\mathsf P_1$ is a cocompact lattice in $U$, a compact fundamental
domain comparison gives
\[
    \#\{g\in\mathsf P_1:\norm{g}_{\mathrm{cusp}}\le R\}
    \asymp
    R^{Q(\mathsf P)}.
\]
Substituting $R=e^{bT/(2\lambda_\psi)}$ yields
\[
    \#\{g\in\mathsf P_1:\psi(\mu(g))\le T\}
    \asymp
    e^{\frac{bQ(\mathsf P)}{2\lambda_\psi}T}.
\]
This finishes the proof. Note that the last claim follows since $\i$ is trivial in rank one.
\end{proof}

\section{Shadow estimates at parabolic limit points}
\label{sec.atparabolic}

Let $\Ga<G$ be $\theta$-Morse relative to $\cal P$, with Morse embedding
$f:Y\to X$ of a Gromov model $(Y,d)$ for $(\Ga,\cal P)$. 
Let
$\psi\in\fa_\theta^*$ be positive on $\L_f-\{0\}$.  We normalize
$\psi$ so that $\delta_\psi(\Ga)=1$.  
Then the existence of Patterson-Sullivan measure is a consequence of the work of Canary-Zhang-Zimmer.

\begin{theorem} \cite{CZZ_relative} \label{thm.CZZ_PS}
    There exists a unique $(\Ga, \psi)$-Patterson-Sullivan measure $\nu$ on $\La_{\theta}$. Moreover, $\nu$ is atomless.
\end{theorem}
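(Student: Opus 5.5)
The plan is to run the classical Patterson--Sullivan package in the relatively Anosov setting, with the needed geometric inputs taken from the earlier sections and from \cite{CZZ_relative}. There are three steps: existence, then divergence type and the resulting dichotomy, then uniqueness and atomlessness.

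\emph{Existence.} Since $\delta_\psi(\Ga)=1$, I would use Patterson's construction. For $s>1$ set
\[
\nu_s=\frac{1}{\sum_\ga h(\psi(\mu(\ga)))e^{-s\psi(\mu(\ga))}}\sum_{\ga\in\Ga}h(\psi(\mu(\ga)))e^{-s\psi(\mu(\ga))}\,\mathcal D_{\kappa_\ga\xi_\theta},
\]
where $\ga\in\kappa_\ga A^+K$, $\mathcal D$ denotes a Dirac mass, and $h$ is a slowly increasing Patterson function, needed only if the series converges at $s=1$. I would then take a weak limit $\nu$ as $s\to1^+$.
\begin{itemize}
\item \emph{Support.} By $\theta$-regularity, $\kappa_\ga\xi_\theta$ accumulates only on $\La_\theta$ (Definition \ref{fc}), so $\nu$ is supported on $\La_\theta$.
\item \emph{Conformality.} For fixed $g$, if $\ga_i\to\xi$ then $\psi(\mu(g^{-1}\ga_i))-\psi(\mu(\ga_i))\to\psi(\beta^\theta_\xi(g,e))$. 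This is the standard Cartan/Busemann comparison \cite[Lemma 6.6]{Quint2002Mesures}, \cite{LO_invariant}. It passes the transformation rule \eqref{eqn.psmeas} to the limit, because the divergent normalizing sum kills finitely many terms.
\end{itemize}

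\emph{Divergence type and dichotomy.} I would show that $\Ga$ is of $\psi$-divergence type, following the argument in \cite{CZZ_relative}. The first input is the strict gap $\delta_\psi(\mathsf P)<1$ (Theorem \ref{thm.entropydrop}). The second is the coarse additivity of Cartan projections along geodesics of the Gromov model (Lemma \ref{lem.DKO_additive}). Together they let one decompose an element of $\Ga$ into thick-part segments and cusp excursions. This gives a Stratmann--Velani type comparison showing that the orbit grows strictly faster than each parabolic subgroup, which forces divergence at the critical exponent; in particular $h\equiv1$ suffices. Then Sullivan's argument applies: by the orbit shadow lemma (Theorem \ref{thm.shadowlemmahigherrank}) and the Borel--Cantelli/Hopf--Tsuji--Sullivan dichotomy, any $(\Ga,\psi)$-Patterson--Sullivan measure gives full measure to the image under $f$ of the conical limit points, and $\Ga$ acts ergodically with respect to it.

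\emph{Uniqueness and atomlessness.} Let $\nu,\nu'$ be two such measures. The shadow lemma with uniform constants and a Vitali-type covering of conical points by shadows $f(O_R(o_Y,\ga o_Y))$ give $\nu\ll\nu'$ with bounded Radon--Nikodym derivative. This derivative is $\Ga$-invariant, since both measures have the same cocycle, and is therefore constant by ergodicity; so $\nu=\nu'$. For atoms: a conical point $f(\xi)$ lies in shadows $f(O_R(o_Y,\ga_n o_Y))$ with $\ga_n\to\infty$. Hence
\[
\nu(\{f(\xi)\})\ll e^{-\psi(\mu(\ga_n))}\to0,
\]
because $\psi(\mu(\ga_n))\to\infty$ as $\psi>0$ on $\L_f-\{0\}$. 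The parabolic points form a countable set of non-conical points, which has measure zero by the full-measure statement. Thus $\nu$ is atomless.

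I expect the main obstacle to be divergence type, since everything else is formal once the shadow lemma is available. If a direct proof proves awkward, I would first try to show directly that the parabolic points are $\nu$-null. Take a compact fundamental domain $Q$ for $\mathsf P$ on $\partial Y-\{p\}$. Then
\[
\nu(f(\partial Y-\{p\}))=\sum_{\ga\in\mathsf P}\nu(\ga f(Q))\asymp\sum_{\ga\in\mathsf P}e^{-\psi(\mu(\ga))}<\infty
\]
by Proposition \ref{prop.gromovproductalongparabolic} and $\delta_\psi(\mathsf P)<1$. I would then combine this finiteness with the conformality at the fixed point $f(p)$ to rule out an atom there.
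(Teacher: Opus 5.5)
There is a genuine gap, and it sits where the real content of the theorem lies. The paper gives no proof of this statement; it quotes \cite{CZZ_relative}.

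Your existence step is fine: Patterson's construction, support in $\La_\theta$ by $\theta$-regularity, and conformality from the Cartan/Busemann comparison. Your uniqueness and atomlessness arguments are also fine \emph{once} you know that every $(\Ga,\psi)$-Patterson--Sullivan measure is carried by the conical points. But that hinges on $\Ga$ being of $\psi$-divergence type, and you give no argument for it. The phrase ``the orbit grows strictly faster than each parabolic subgroup'' is just Theorem \ref{thm.entropydrop} restated. A decomposition into thick segments and cusp excursions says nothing, by itself, about the Poincar\'e series at $s=1$.

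Your fallback fails as well, for two reasons.
\begin{itemize}
\item The identity $\sum_{\ga\in\mathsf P}\nu(\ga f(Q))=\nu(f(\partial Y-\{p\}))\le 1$ holds for every probability measure, so it carries no information.
\item Conformality under $\mathsf P$ at its own fixed point is vacuous. The map $\ga\mapsto\beta^\theta_{f(p)}(e,\ga)$ is a homomorphism $\mathsf P\to\fa_\theta$, and it is bounded because $\mathsf P$ coarsely preserves horospheres at $p$; hence it is zero.
\end{itemize}
So neither ingredient constrains $\nu(\{f(p)\})$. This is unavoidable. The orbit sum $\sum_{\ga\in\Ga/\mathsf P}e^{\psi(\beta^\theta_{\ga f(p)}(\ga,e))}\,\mathcal D_{\ga f(p)}$ is formally $(\Ga,\psi)$-conformal. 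By Lemma \ref{lem.buseandcartan}, coarse additivity and $\delta_\psi(\mathsf P)<1$, its total mass is finite exactly when $\Ga$ is of convergence type. Hence excluding parabolic atoms for an \emph{arbitrary} conformal measure is equivalent to divergence, and cannot be used to prove it. Note also that Borel--Cantelli only gives the convergent half of the Hopf--Tsuji--Sullivan dichotomy. The implication you actually use, divergence $\Rightarrow$ full conical measure, needs a separate argument.

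The missing idea is the Dal'bo--Otal--Peign\'e argument, which works with the approximating measures $\nu_s$ rather than with the limit.
\begin{itemize}
\item Arrange $o_Y$ to be within bounded distance of $\partial B_p$. Write $\ga=g\ga'$ with $g\in\mathsf P$ and $\ga'o_Y$ a closest point of $\mathsf P\ga'o_Y$ to $o_Y$. Then $[o_Y,\ga o_Y]$ passes uniformly close to $go_Y$, so Lemma \ref{lem.DKO_additive} gives $\psi(\mu(\ga))\ge\psi(\mu(g))+\psi(\mu(\ga'))-C$.
\item Let $V$ be a small neighborhood of $f(p)$. Apart from finitely many $\ga$, the condition $\kappa_\ga\xi_\theta\in V$ forces $g$ to lie outside a large finite set $F\subset\mathsf P$.
\item The slow growth $h(a+b)\le C_\varepsilon e^{\varepsilon b}h(a)$ then gives $\limsup_{s\to1^+}\nu_s(V)\ll\sum_{g\in\mathsf P-F}e^{-(1-\varepsilon)\psi(\mu(g))}$, with implied constant independent of $F$. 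This is a small tail of a convergent series once $1-\varepsilon>\delta_\psi(\mathsf P)$.
\item Hence the Patterson limit has no parabolic atoms and is carried by the conical points. If $\Ga$ were of convergence type, the upper bound in Theorem \ref{thm.shadowlemmahigherrank} together with Borel--Cantelli would make the conical set null, a contradiction.
\end{itemize}
With divergence established, the orbit-sum computation above shows that no $(\Ga,\psi)$-Patterson--Sullivan measure charges a parabolic point. That is precisely the full-measure statement your uniqueness and atomlessness steps require.
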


Let $\nu$ be a $(\Ga,\psi)$-Patterson-Sullivan measure on
$\La_\theta$ given in Theorem \ref{thm.CZZ_PS}. In this section we prove the global shadow lemma for
shadows along geodesic rays towards parabolic limit points and provide estimates on their complements.  Since all shadows we consider are
taken in $Y\cup\partial Y$, we identify $\nu$ with its pullback to
$\partial Y$ under the $\Ga$-equivariant homeomorphism
$f:\partial Y\to~\La_\theta$.

Suppose $\theta = \i(\theta)$ and set
\[
    \bar\psi:=\frac{\psi+\psi\circ\i}{2}.
\]
Since $\psi > 0$ on $\L_f - \{0\}$, so is
$\bar \psi$, and hence there exists $C_{\bar\psi}>0$ such that
\begin{equation} \label{eqn.C_barpsi}
    \bar\psi(v)>1-C_{\bar\psi}
    \quad
    \text{for all } v\in \mu(f(Y)^{-1}f(Y)).
\end{equation}

\subsection*{Shadows at parabolic limit points}

We first estimate shadows along geodesic rays ending at parabolic limit
points.  Recall that $o_Y\in Y$ is chosen so that $f(o_Y)=o$.  For
$\xi\in\partial Y$ and $t\ge 0$, let $\xi_t\in[o_Y,\xi]$ denote the point
with $d(o_Y,\xi_t)=t$, after choosing a geodesic ray $[o_Y,\xi]$.

\begin{theorem} \label{thm.shadowatpararep}
Let $\mathsf P\in\cal P$ and let $\xi=\xi_{\mathsf P}$.  For all
sufficiently large $R>0$,
\[
    \nu(O_R(o_Y,\xi_t))
    \asymp
    e^{2(\delta_{\bar\psi}(\mathsf P)-1)\d_{\bar\psi}(o_Y,\xi_t)}
    \bigl(C_{\bar\psi}+\d_{\bar\psi}(o_Y,\xi_t)\bigr)^{a_{\bar\psi}(\mathsf P)}
\]
for all $t\ge 0$, with implied constants independent of $t$.
\end{theorem}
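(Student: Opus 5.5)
The plan is to decompose the shadow $O_R(o_Y,\xi_t)$, where $\xi=\xi_{\mathsf P}$ is the parabolic point, into $\mathsf P$-translates of a fixed compact fundamental piece. Then we estimate $\nu$ of each translate by conformality and sum using the counting estimate of Theorem \ref{thm.countinginsegments}.

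First fix a compact set $Q\subset\partial Y-\{\xi\}$ with $\mathsf P Q=\partial Y-\{\xi\}$; this exists since $\xi$ is bounded parabolic. Since $\nu$ is atomless (Theorem \ref{thm.CZZ_PS}), we have
\[
\nu(O_R(o_Y,\xi_t))=\nu\bigl(O_R(o_Y,\xi_t)-\{\xi\}\bigr)\asymp\sum_{\ga\in\mathsf P,\ \ga Q\cap O_R(o_Y,\xi_t)\neq\emptyset}\nu(\ga Q),
\]
up to the bounded multiplicity of the cover $\{\ga Q\}$.

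For each $\ga\in\mathsf P$, conformality gives $\nu(\ga Q)=\int_Q e^{-\psi(\beta^\theta_{f(\eta)}(e,\ga^{-1}))}\,d\nu(\eta)$. We compute the Busemann cocycle through the Gromov product:
\[
\beta^\theta_{f(\eta)}(e,\ga^{-1})\approx\mu_\theta(\ga)-2\cal G^\theta\bigl(\ga f(\eta),f(\xi)\bigr)\quad\text{(up to }\i\text{)}.
\]
Proposition \ref{prop.gromovproductalongparabolic} says $\cal G^\theta(f(\xi),\ga f(\eta))\approx\tfrac12\mu_\theta(\ga)$ uniformly for $\eta\in Q$, and also that $\mu_\theta(\ga)\approx\i\mu_\theta(\ga)$. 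Hence $\psi(\beta)$ is bounded, i.e. $\nu(\ga Q)\asymp e^{-\psi(\mu(\ga))}$. A cleaner route is to use Lemma \ref{lem.shadowincpt}, Lemma \ref{lem.cptinshadow} and Theorem \ref{thm.shadowlemmahigherrank} directly, so that $\nu(\ga Q)\asymp\nu(O_R(o_Y,\ga o_Y))\asymp e^{-\psi(\mu(\ga))}$. In either case $\psi(\mu(\ga))\approx\bar\psi(\mu(\ga))$, again by $\mu_\theta(\ga)\approx\i\mu_\theta(\ga)$.

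Next we identify which $\ga$ contribute. By Lemma \ref{lem.braytiozzo} and Lemma \ref{lem.parabolictransmiddle}, the geodesic $[o_Y,\ga\eta]$ with $\eta\in Q$ follows $[o_Y,\xi]$ up to a point $w_\ga$ with $\d_{\bar\psi}(o_Y,w_\ga)\approx\tfrac12\bar\psi(\mu(\ga))$, using Lemma \ref{lem.DKO_additive} and the proof of Proposition \ref{prop.gromovproductalongparabolic}. Combined with Proposition \ref{prop.shadowcompare} and Lemma \ref{lem.shadowandgromprod}, this should give: for $R$ large there is a constant $c$ such that $\ga Q\subset O_R(o_Y,\xi_t)$ whenever $\bar\psi(\mu(\ga))\ge 2\d_{\bar\psi}(o_Y,\xi_t)+c$, and $\ga Q\cap O_R(o_Y,\xi_t)=\emptyset$ whenever $\bar\psi(\mu(\ga))\le 2\d_{\bar\psi}(o_Y,\xi_t)-c$. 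Write $s:=\d_{\bar\psi}(o_Y,\xi_t)$. It follows that
\[
\nu(O_R(o_Y,\xi_t))\asymp\sum_{\ga\in\mathsf P,\ \bar\psi(\mu(\ga))\ge 2s\pm c}e^{-\bar\psi(\mu(\ga))}.
\]

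Finally we sum over the shells $\{T+kn\le\bar\psi(\mu(\ga))<T+k(n+1)\}$ with $T\approx 2s$, using Theorem \ref{thm.countinginsegments} applied to $\bar\psi$. This gives
\[
\sum_{n\ge 0}e^{(\delta_{\bar\psi}(\mathsf P)-1)(T+kn)}(1+T+kn)^{a}.
\]
Since $\delta_{\bar\psi}(\mathsf P)<1$ (Theorem \ref{thm.entropydrop}, with $\delta_{\bar\psi}(\Ga)=\delta_\psi(\Ga)=1$ because $\mu(\Ga)$ is symmetric up to $\i$), the series is geometric-like. It is therefore dominated by its first term, $e^{2(\delta_{\bar\psi}(\mathsf P)-1)s}(C_{\bar\psi}+s)^{a}$. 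Bounded $t$, i.e. $T\le T_0$, is handled by adjusting constants, with $C_{\bar\psi}$ chosen so that the base stays positive.

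The main obstacle is the identification step, namely the two-sided containment with a uniform additive error $c$ in the $\bar\psi$-scale. This needs care because shadows are defined in $Y$ while the cutoff is in $\bar\psi\circ\mu$. I would handle it via the midpoint lemma (Lemma \ref{lem.parabolictransmiddle}), together with the coarse additivity of Lemma \ref{lem.QIpropsofmetriclike}(1) along $[o_Y,\xi]$.
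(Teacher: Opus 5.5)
Your argument is essentially the paper's proof. You decompose $O_R(o_Y,\xi_t)-\{\xi\}$ into $\mathsf P$-translates of a compact set $Q$, locate the contributing $\ga$ via Propositions \ref{prop.gromovproductalongparabolic} and \ref{prop.shadowcompare}, use $\nu(\ga Q)\asymp e^{-\psi(\mu(\ga))}$ and $\psi\circ\mu\approx\bar\psi\circ\mu$ on $\mathsf P$, and sum with Theorem \ref{thm.countinginsegments} for $\bar\psi$. Those two propositions are all your ``identification step'' needs; the midpoint lemma is already built into them. The paper detours through $\nu_{\xi_t}$ (Proposition \ref{prop.shadowatparabolic_pre} and Lemma \ref{lem.changebasepoint}), but the factors $e^{\pm\d_\psi(o_Y,\xi_t)}$ cancel, so working with $\nu$ directly is fine.

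Three local statements need correcting; none breaks the proof.

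First, your conformality heuristic is wrong. With the paper's convention, $\nu(\ga Q)=\int_Q e^{+\psi(\beta^\theta_{f(\eta)}(e,\ga^{-1}))}\,d\nu(\eta)$. What you need is $\beta^\theta_{f(\eta)}(e,\ga^{-1})\approx-\mu_\theta(\ga)$ uniformly on $Q$. Bounded $\psi(\beta)$, as you wrote, would give $\nu(\ga Q)\asymp\nu(Q)$, not $e^{-\psi(\mu(\ga))}$. The correct estimate comes from Lemma \ref{lem.cptinshadow}, which gives $Q\subset O_{R'}(\ga^{-1}o_Y,o_Y)$ for all $\ga\in\mathsf P$, combined with Lemma \ref{lem.buseandcartan}. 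Alternatively use your ``cleaner route'', which is the paper's; then enlarge $Q$ to contain the compact set of Lemma \ref{lem.shadowincpt} so that the lower bound holds.

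Second, $\delta_{\bar\psi}(\Ga)=\delta_\psi(\Ga)$ is false in general. The identity $\mu(\ga^{-1})=\i\mu(\ga)$ only gives $\delta_{\psi\circ\i}(\Ga)=\delta_\psi(\Ga)$, and H\"older then gives $\delta_{\bar\psi}(\Ga)\le 1$. That inequality, with Theorem \ref{thm.entropydrop}, is what the paper uses. More simply, $\delta_{\bar\psi}(\mathsf P)=\delta_\psi(\mathsf P)<1$, since $\psi\circ\mu-\bar\psi\circ\mu$ is bounded on $\mathsf P$.

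Third, your first $\asymp$, summing over $\{\ga:\ga Q\cap O_R(o_Y,\xi_t)\neq\emptyset\}$, is only an upper bound. The lower bound must sum over $\ga Q\subset O_R(o_Y,\xi_t)$, which is exactly what your later two-sided cutoff $2s\pm c$ supplies.
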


For an arbitrary parabolic limit point, we obtain the following translated
form.

\begin{theorem} \label{thm.atparabolicgamma}
There exist constants $c,R_0>0$ with the following property.  Let
$\mathsf P\in\cal P$ and let $\xi=\ga\xi_{\mathsf P}$ for some
$\ga\in\Ga$.  Suppose that
\[
    d(\xi_{t_0},\ga o_Y)<c \quad \text{for some $t_0\ge 0$}.
\]
  Then, for every $C\ge C_{\bar\psi}$, 
$R>R_0$, and $t\ge t_0$,
\[
\begin{aligned}
    \nu(O_R(o_Y,\xi_t))
    \asymp{}
    e^{-\psi(\mu(\ga))}
    e^{2(\delta_{\bar\psi}(\mathsf P)-1)\d_{\bar\psi}(\ga o_Y,\xi_t)}
    \cdot
    \bigl(C+\d_{\bar\psi}(\ga o_Y,\xi_t)\bigr)^{a_{\bar\psi}(\mathsf P)} .
\end{aligned}
\]
\end{theorem}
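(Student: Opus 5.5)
The plan is to reduce Theorem \ref{thm.atparabolicgamma} to Theorem \ref{thm.shadowatpararep} by translating by $\ga^{-1}$ and then using the conformality of $\nu$.

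\textbf{Step 1: translate the shadow.} Set $\xi' = \ga^{-1}\xi = \xi_{\mathsf P}$. Since $d(\xi_{t_0},\ga o_Y)<c$, the geodesic $\ga^{-1}[\xi_{t_0},\xi]$ is a ray from a point $c$-close to $o_Y$ to $\xi_{\mathsf P}$. By Lemma \ref{lem.stableqg} it is uniformly Hausdorff-close to $[o_Y,\xi_{\mathsf P}]$. Hence, for $t\ge t_0$, the point $\ga^{-1}\xi_t$ lies within a uniform distance $c'$ of $\xi'_{s}$, where $s = t-t_0$ up to a bounded error. Next, $\ga^{-1}O_R(o_Y,\xi_t) = O_R(\ga^{-1}o_Y,\ga^{-1}\xi_t)$. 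I claim this set is sandwiched between $O_{R-c''}(o_Y,\xi'_s)$ and $O_{R+c''}(o_Y,\xi'_s)$ for a uniform $c''$. Shadows seen from $\ga^{-1}o_Y$ versus from $o_Y$ are comparable here because $\ga^{-1}o_Y$ is far "behind" along the ray: the geodesic from $\ga^{-1}o_Y$ to a point of the shadow passes near $o_Y$ up to $O(\delta)$, by the usual thin-triangle argument. I expect this comparison of shadows from two different basepoints, uniformly in $t\ge t_0$, to be the main technical nuisance. I would handle it with Lemma \ref{lem.shadowandgromprod}, noting that for $\eta$ in either shadow the concatenation $[\ga^{-1}o_Y,o_Y]\cup[o_Y,\eta]$ is a uniform quasi-geodesic, since $o_Y$ lies near $[\ga^{-1}o_Y,\xi']$. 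Because Theorem \ref{thm.shadowatpararep} holds for all large $R$, changing $R$ by $c''$ is harmless. This is why $R>R_0$ is required.

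\textbf{Step 2: conformality.} Write $\nu(O_R(o_Y,\xi_t)) = \int_{\ga^{-1}O_R} e^{-\psi(\beta^\theta_{\eta}(e,\ga^{-1}))}\,d\nu(\eta)$ by \eqref{eqn.psmeas}. For $f(\eta)$ with $\eta\in\ga^{-1}O_R(o_Y,\xi_t)$, the Busemann cocycle satisfies $\psi(\beta^\theta_{f(\eta)}(e,\ga^{-1}))\approx \psi(\mu(\ga))$, up to sign and $\i$-conventions, uniformly. This is the standard shadow-lemma input from \cite{LO_invariant} and \cite{KOW_indicators}: $\eta$ lies in a shadow seen from $\ga^{-1}o_Y$ of a ball around $o_Y$, namely the shadow $O_{R'}(\ga^{-1}o_Y,o_Y)$ by Step 1. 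Transferring to $X$ via \cite[Proposition 5.7]{KO_entropy} gives the Busemann estimate. The result is the factor $e^{-\psi(\mu(\ga))}$ times $\nu(O_{R\pm c''}(o_Y,\xi'_s))$.

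\textbf{Step 3: apply Theorem \ref{thm.shadowatpararep}.} This yields $e^{2(\delta_{\bar\psi}(\mathsf P)-1)\d_{\bar\psi}(o_Y,\xi'_s)}(C_{\bar\psi}+\d_{\bar\psi}(o_Y,\xi'_s))^{a}$. Since $\xi'_s$ is uniformly close to $\ga^{-1}\xi_t$, Lemma \ref{lem.cptcartan} and $\Ga$-invariance give $\d_{\bar\psi}(o_Y,\xi'_s)\approx\d_{\bar\psi}(\ga o_Y,\xi_t)$ additively. An additive error changes the exponential term only by a bounded factor. For the polynomial factor, $C+D$ and $C_{\bar\psi}+D+O(1)$ are comparable for $C\ge C_{\bar\psi}$, because $C_{\bar\psi}+D\ge 1$ by \eqref{eqn.C_barpsi}. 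The comparability constant depends on $C$ but not on $t$ or $\ga$, which is what the statement requires. Finiteness of $\cal P$ makes all constants uniform over $\mathsf P$.
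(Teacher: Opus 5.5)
Your proposal follows the paper's proof essentially step for step. The paper's inclusions \eqref{eqn.linecut} are your sandwich translated by $\ga$. It extracts $e^{-\psi(\mu(\ga))}$ from $O_R(o_Y,\xi_t)\subset O_{R+c}(o_Y,\ga o_Y)$ via Lemma \ref{lem.buseandcartan}, applies Theorem \ref{thm.shadowatpararep}, and then adjusts the $\d_{\bar\psi}$-terms and the constant $C$ just as you do.

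Two points need tightening.

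First, the lower inclusion $O_{R-c''}(o_Y,\xi'_s)\subset\ga^{-1}O_R(o_Y,\xi_t)$ is not uniform in $t\ge t_0$. At $s=0$ the left side is all of $\partial Y$. But $O_R(\ga^{-1}o_Y,\ga^{-1}\xi_{t_0})$ is a proper subset once $d(o_Y,\ga^{-1}o_Y)$ is much larger than $R$. Your quasi-geodesic argument for the concatenation $[\ga^{-1}o_Y,o_Y]\cup[o_Y,\eta]$ only works when $s\ge R+K$ for a uniform $K$; the paper likewise asserts \eqref{eqn.linecut} only for large $t$. In the remaining range, the right-hand side is $\asymp e^{-\psi(\mu(\ga))}$, with constants depending on $R$. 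The upper bound is immediate from your Step 2, because $\nu$ is a probability measure. The lower bound follows from the monotonicity $O_{R-O(\delta)}(o_Y,\xi_{t_0+R+K})\subset O_R(o_Y,\xi_t)$, together with the estimate already proved at $t_0+R+K$ with radius $R-O(\delta)$.

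Second, your conformality formula has the wrong sign. It should read $\nu(\ga D)=\int_D e^{\psi(\beta^\theta_\eta(e,\ga^{-1}))}\,d\nu(\eta)$. Lemma \ref{lem.buseandcartan} with $x=\ga^{-1}o_Y$, $y=o_Y$, $g=\ga^{-1}$, $h=e$ then gives $\beta^\theta_\eta(e,\ga^{-1})\approx-\mu_\theta(\ga)$ exactly. Your hedge ``up to $\i$-conventions'' is not harmless here. In general $\psi\ne\psi\circ\i$, and $\ga$ is an arbitrary element of $\Ga$ rather than of $\mathsf P$. So $\psi(\mu(\ga))$ and $\psi(\mu(\ga^{-1}))$ need not be comparable, and you must pin down which one appears.
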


For $g\in G$, define the translated measure $\nu_g$ by
\[
    d\nu_g(\eta)
    =
    e^{\psi(\beta_\eta^\theta(e,g))}\,d\nu(\eta).
\]
Then, for $g,h\in G$,
\[
    d\nu_g(\eta)
    =
    e^{\psi(\beta_\eta^\theta(h,g))}\,d\nu_h(\eta).
\]
For $x\in Y$, we write
\[
    \nu_x:=\nu_g
\]
where $g\in G$ satisfies $f(x)=go$.  This is independent of the choice of
$g$.

The next lemma relates $\nu$ to its translate at the center of a shadow.

\begin{lemma} \label{lem.changebasepoint}
For all $\xi\in\partial Y$, $t\ge 0$, and $R>0$,
\[
    \nu(O_R(o_Y,\xi_t))
    \asymp
    e^{-\d_\psi(o_Y,\xi_t)}
    \nu_{\xi_t}(O_R(o_Y,\xi_t)).
\]
\end{lemma}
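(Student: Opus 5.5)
Write $f(\xi_t)=go$ with $g\in G$. By definition $d\nu_{\xi_t}(\eta)=e^{\psi(\beta^\theta_{f(\eta)}(e,g))}\,d\nu(\eta)$, so
\[
\nu_{\xi_t}(O_R(o_Y,\xi_t))=\int_{O_R(o_Y,\xi_t)} e^{\psi(\beta^\theta_{f(\eta)}(e,g))}\,d\nu(\eta).
\]
It therefore suffices to show that
\[
\psi\bigl(\beta^\theta_{f(\eta)}(e,g)\bigr)=\d_\psi(o_Y,\xi_t)+O(1) \qquad \text{for all } \eta\in O_R(o_Y,\xi_t),
\]
with error depending only on $R$ and the ambient constants. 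Integrating then gives $\nu_{\xi_t}(O_R)\asymp e^{\d_\psi(o_Y,\xi_t)}\nu(O_R)$, which is the claim.

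To prove the Busemann estimate, fix $\eta\in O_R(o_Y,\xi_t)$ and pick a geodesic ray $[o_Y,\eta]$ meeting $B(\xi_t,R)$ at some point $y$. Let $\eta_s\in[o_Y,\eta]$ be the points with $s\to\infty$. In $X$, the Busemann cocycle is a limit of differences of Cartan projections along sequences converging to $f(\eta)$. Since the image under the Morse embedding $f$ of the ray $[o_Y,\eta]$ is a uniformly regular Morse quasi-ray converging to $f(\eta)\in\F_\theta$, I expect
\[
\beta^\theta_{f(\eta)}(e,g)=\lim_{s\to\infty}\Bigl(\mu_\theta(f(\eta_s))-\mu_\theta\bigl(g^{-1}f(\eta_s)\bigr)\Bigr)+O(1).
\]
For $\Pi$-regular sequences in Anosov settings this is standard, e.g.\ in \cite{LO_invariant}. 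It follows from the fact that for $\theta$-regular sequences $h_s\to f(\eta)$ conically one has $\beta^\theta_{f(\eta)}(e,g)\approx\mu_\theta(h_s)-\mu_\theta(g^{-1}h_s)$ with bounded error, uniformly when $g^{-1}f(\eta)$ stays in a controlled position.

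Now apply Lemma~\ref{lem.DKO_additive} along $[o_Y,\eta]$ with the intermediate point $y$:
\[
\mu_\theta(f(\eta_s))\approx\mu_\theta(f(y))+\mu_\theta\bigl(f(y)^{-1}f(\eta_s)\bigr),
\]
and $\mu_\theta(g^{-1}f(\eta_s))=\mu_\theta(f(\xi_t)^{-1}f(\eta_s))\approx\mu_\theta(f(y)^{-1}f(\eta_s))$ by Lemma~\ref{lem.cptcartan}, since $f$ is a quasi-isometric embedding and $d(y,\xi_t)<R$. Subtracting, the difference is $\mu_\theta(f(y))+O(1)\approx\mu_\theta(f(\xi_t))$. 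Applying $\psi$ gives $\d_\psi(o_Y,\xi_t)+O(1)$, as required.

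The main obstacle is justifying the first displayed approximation of the Busemann map by Cartan-projection differences with a \emph{uniform} error, independent of $\eta$ and $t$. I would handle it via the standard estimate: for $h\in G$ with $h=k a k'$, one has $\|\sigma(h^{-1},\cdot)$-type quantities $-\,\mu\|$ bounded whenever the attracting flag of $h$ is uniformly transverse. Equivalently, $\beta_{\xi}^\theta(e,h)\approx\mu_\theta(h)$ when $\xi$ lies in the shadow of $ho$ viewed from $o$ in $X$. Combined with the compatibility of shadows in $Y$ and in $X$ (the ingredient behind Theorem~\ref{thm.shadowlemmahigherrank}, cf.\ \cite[Proposition 5.7]{KO_entropy}), which says $f(O_R(o_Y,\xi_t))$ lies in an $X$-shadow of a ball around $f(\xi_t)$ of uniform radius, this yields $\psi(\beta^\theta_{f(\eta)}(e,g))\approx\psi(\mu_\theta(g))=\d_\psi(o_Y,\xi_t)$ directly. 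I would present this second route as the proof, and keep the limiting argument above only as a heuristic.
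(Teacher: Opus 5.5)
Your proposal is correct and follows the paper's route. The paper deduces this lemma immediately from Lemma \ref{lem.buseandcartan}, the uniform bound $\|\beta^\theta_{f(\eta)}(e,g)-\mu_\theta(g)\|<C(R)$ for $\eta\in O_R(o_Y,\xi_t)$ and $go=f(\xi_t)$, which it obtains from \cite[Lemma 5.7]{LO_invariant} together with the comparison of shadows in $Y$ and $X$ under $f$. It then integrates the density $e^{\psi(\beta^\theta_{f(\eta)}(e,g))}$ over the shadow exactly as you do, so the limiting Cartan-difference heuristic is unnecessary and you are right to present the second route as the proof.
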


Lemma \ref{lem.changebasepoint} is an immediate consequence of the
following comparison between Busemann maps and Cartan projections inside
shadows.  The corresponding statement for shadows in the symmetric space
$X=G/K$ was proved in \cite[Lemma 5.7]{LO_invariant}; the present version
follows from  comparing shadows in $Y$ with shadows in $X$ under the Morse
embedding $f$.

\begin{lemma} \label{lem.buseandcartan}
For every $R>0$, there exists $C>0$ such that, for all $x,y\in Y$ and $\xi\in O_R(x,y)$,
\[
    \left\|
        \beta_{f(\xi)}^\theta(g,h)-\mu_\theta(g^{-1}h)
    \right\|<C
\]
whenever $g,h\in G$ satisfy $go=f(x)$ and $ho=f(y)$.
\end{lemma}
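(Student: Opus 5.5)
The plan is to reduce the statement to the symmetric-space version, \cite[Lemma 5.7]{LO_invariant}, using the Morse lemma. That lemma says: for every $R'>0$ there is $C'>0$ such that $\|\beta^\theta_{\eta}(g,h)-\mu_\theta(g^{-1}h)\|<C'$ whenever $\eta\in\F_\theta$ lies in the shadow in $\F_\theta$ of the ball $B(ho,R')\subset X$ seen from $go$. Concretely, this means $\eta=gk P_\theta$ for some $k\in K$ with $gk\exp(\fa^+)o$ meeting $B(ho,R')$. So it suffices to find $R'=R'(R)$ such that for all $x,y\in Y$ and $\xi\in O_R(x,y)$, the point $f(\xi)$ lies in the $X$-shadow of $B(f(y),R')$ from $f(x)$. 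Since $\beta^\theta$ is $G$-equivariant and $\mu_\theta$ is left-invariant, I would first translate by $g^{-1}$ and assume $f(x)=o$; the choice of $g,h$ modulo $K$ is harmless because both sides change by bounded amounts (Lemma \ref{lem.cptcartan}, and $K$-invariance of $\beta^\theta$ at the basepoint).

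\textbf{Step 1: compare the two shadows.} Take a geodesic ray $[x,\xi]$ in $Y$ meeting $B(y,R)$, and let $y'\in[x,\xi]$ be a point with $d(y,y')<R$. Because $f$ is a quasi-isometric embedding, $f([x,\xi])$ is a uniform quasi-geodesic ray in $X$. Because it is uniformly $\theta$-regular ($\L_f\cap\ker\alpha=\{0\}$ for $\alpha\in\theta$), the higher-rank Morse lemma \cite[Theorem 1.4]{KLP_2018} applies. It gives a uniform constant $D$ and a Weyl cone $V(f(x),\mathrm{st}(\tau))$ based at $f(x)$, over a simplex $\tau$ whose $\theta$-flag is the limit $f(\xi)$, such that $f([x,\xi])$ stays in the $D$-neighborhood of this cone. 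The same argument as in \cite[Proposition 5.7]{KO_entropy} then shows that $f(y')$ is $D'$-close to a point of a Weyl sector $gk\exp(\fa^+)o$ with $gkP_\theta=f(\xi)$, where $D'$ depends only on the Morse data. Indeed, uniform regularity lets one replace the cone by a single sector ray up to a bounded error at each point, since the points of the quasi-geodesic stay in the cone within a bounded distance of a sector through the apex. Combining with $d(f(y),f(y'))\le L R+L$ yields the comparison with $R'=LR+L+D'$.

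\textbf{Step 2: conclude.} Apply \cite[Lemma 5.7]{LO_invariant} with this $R'$. This gives the bound $\|\beta^\theta_{f(\xi)}(g,h)-\mu_\theta(g^{-1}h)\|<C'$, uniformly in $x$, $y$, $\xi$. Lemma \ref{lem.changebasepoint} then follows by writing $d\nu/d\nu_{\xi_t}=e^{-\psi(\beta^\theta_\eta(e,h))}$ with $ho=f(\xi_t)$, and integrating over the shadow.

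The main obstacle is Step 1, namely extracting from the Morse lemma the uniform statement that the image point $f(y')$ is close to a sector ray aimed exactly at $f(\xi)$, rather than merely lying in a diamond or cone. I would first handle this via the conical formulation of the Morse lemma, where quasi-geodesic rays are uniformly close to Weyl cones $V(f(x),\mathrm{st}(\tau_\xi))$. I would then use the fact that any point $p$ in such a cone lies on a sector from $f(x)$ whose $\theta$-flag is $\tau_\xi$. This holds because $\mathrm{st}(\tau)$ consists of chambers with $\theta$-face $\tau$, so the flag condition is exact and no further approximation is needed.
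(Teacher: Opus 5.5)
Your proposal is correct and follows the paper's route. The paper likewise deduces the lemma from the symmetric-space estimate \cite[Lemma 5.7]{LO_invariant}, after showing via the Morse lemma of \cite{KLP_2018} (as in \cite[Proposition 5.7]{KO_entropy}) that $f(O_R(x,y))$ lies in the $X$-shadow of $B(f(y),R')$ seen from $f(x)$, with $R'$ depending only on $R$. Your remark that every point of the Weyl cone $V(f(x),\mathrm{st}(\tau))$ lies on a Weyl sector from $f(x)$ whose $\theta$-face is exactly $\tau=f(\xi)$ is the detail the paper leaves implicit.
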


Since $\Ga$ acts cocompactly on $Y-\bigcup\cal B$, Lemma
\ref{lem.changebasepoint} and the ordinary shadow lemma imply the
following thick-part estimate.

\begin{lemma}
There exists $R_0>0$ such that, for every $R>R_0$ and every
$x\in Y-\bigcup\cal B$,
\[
    \nu_x(O_R(o_Y,x))\asymp 1.
\]
\end{lemma}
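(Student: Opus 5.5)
The plan is to reduce to the ordinary shadow lemma (Theorem \ref{thm.shadowlemmahigherrank}) by a $\Ga$-translation, using cocompactness of the thick part together with Lemma \ref{lem.changebasepoint}. The statement is a two-sided comparison $\nu_x(O_R(o_Y,x))\asymp 1$ for $x$ in the thick part $Y-\bigcup\cal B$. The upper bound should be trivial once we know $\nu_x$ has uniformly bounded total mass on the relevant set, so the content lies in the lower bound and in uniformity.

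\textbf{Step 1: reduce to orbit points.} Since $\Ga$ acts cocompactly on $Y-\bigcup\cal B$, there is $D>0$ such that every thick point $x$ satisfies $d(x,\ga o_Y)<D$ for some $\ga\in\Ga$. Shadows are monotone in the radius and stable under moving the center a bounded distance. Hence
\[
O_{R-D}(o_Y,\ga o_Y)\subset O_R(o_Y,x)\subset O_{R+D}(o_Y,\ga o_Y).
\]

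\textbf{Step 2: compare $\nu_x$ with $\nu_{\ga o_Y}$.} Choose $g,h\in G$ with $go=f(x)$ and $ho=f(\ga o_Y)=\ga o$. The density $d\nu_x/d\nu_{\ga o_Y}(\eta)$ equals $e^{\psi(\beta^\theta_\eta(h,g))}$. Since $f$ is a quasi-isometric embedding, $d(go,ho)$ is bounded by a constant depending only on $D$. Busemann functions are $1$-Lipschitz-like in the basepoints; more precisely, $\|\beta^\theta_\eta(h,g)\|\le C\,d(ho,go)$ uniformly in $\eta$. It follows that $\nu_x\asymp\nu_{\ga o_Y}$ on all of $\La_\theta$, with uniform constants.

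\textbf{Step 3: apply the orbit-point estimate.} By Lemma \ref{lem.changebasepoint}, or directly by Lemma \ref{lem.buseandcartan},
\[
\nu_{\ga o_Y}(O_{R'}(o_Y,\ga o_Y))\asymp e^{\d_\psi(o_Y,\ga o_Y)}\,\nu(O_{R'}(o_Y,\ga o_Y)).
\]
Here $\d_\psi(o_Y,\ga o_Y)=\psi(\mu(\ga))$ because $f(o_Y)=o$. Theorem \ref{thm.shadowlemmahigherrank} gives $\nu(O_{R'}(o_Y,\ga o_Y))\asymp e^{-\psi(\mu(\ga))}$ for all $R'$ larger than some threshold. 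Taking $R_0$ to be that threshold plus $D$, and applying this with $R'=R\pm D$, yields $\nu_x(O_R(o_Y,x))\asymp 1$.

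\textbf{Main obstacle.} The one step needing care is the lower bound via the smaller shadow $O_{R-D}$. This requires $R-D$ to exceed the threshold of Theorem \ref{thm.shadowlemmahigherrank}, which is exactly how $R_0$ is chosen. The implied constants depend on $R$, which is permitted by the statement.
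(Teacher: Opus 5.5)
Your proof is correct and follows essentially the paper's route: cocompactness of the thick part reduces to a nearby orbit point $\ga o_Y$, and the bounded change of basepoint together with Lemma \ref{lem.buseandcartan} (of which Lemma \ref{lem.changebasepoint} is the version along rays) turns $\nu_x$ on the shadow into $e^{\psi(\mu(\ga))}\nu$. Theorem \ref{thm.shadowlemmahigherrank} then finishes, and invoking Lemma \ref{lem.buseandcartan} directly, as you do, is the cleaner way to handle arbitrary thick points, since Lemma \ref{lem.changebasepoint} is stated only for points on rays from $o_Y$.
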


The main estimate needed for Theorems \ref{thm.shadowatpararep} and
\ref{thm.atparabolicgamma} is the following.

\begin{proposition} \label{prop.shadowatparabolic_pre}
Let $\mathsf P\in\cal P$ and let $\xi=\xi_{\mathsf P}$.  For all
sufficiently large $R>0$, there exists $C>0$ such that, for all $t\ge 0$,
\[
\begin{aligned}
    \nu_{\xi_t}(O_R(o_Y,\xi_t))
    &\ll
    e^{\d_\psi(o_Y,\xi_t)}
    \sum_{\substack{g\in\mathsf P\\
    \bar\psi(\mu(g))\ge 2\d_{\bar\psi}(o_Y,\xi_t)-C}}
    e^{-\psi(\mu(g))},
    \\
    \nu_{\xi_t}(O_R(o_Y,\xi_t))
    &\gg
    e^{\d_\psi(o_Y,\xi_t)}
    \sum_{\substack{g\in\mathsf P\\
    \bar\psi(\mu(g))\ge 2\d_{\bar\psi}(o_Y,\xi_t)+C}}
    e^{-\psi(\mu(g))}.
\end{aligned}
\]
\end{proposition}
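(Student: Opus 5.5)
The plan is to decompose the shadow $O_R(o_Y,\xi_t)$, with $\xi=\xi_{\mathsf P}=p$, into translates of a fixed compact fundamental domain for the $\mathsf P$-action on $\partial Y-\{p\}$. Because $p$ is a bounded parabolic point, there is a compact $Q\subset\partial Y-\{p\}$ with $\mathsf P Q=\partial Y-\{p\}$. Since $\nu$ is atomless (Theorem \ref{thm.CZZ_PS}), we have
\[
\nu_{\xi_t}(O_R(o_Y,\xi_t))\asymp\sum_{g}\nu_{\xi_t}(gQ\cap O_R(o_Y,\xi_t)),
\]
up to bounded multiplicity coming from overlaps of the $gQ$. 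The proof then has two parts. First, decide for which $g\in\mathsf P$ the set $gQ$ meets, or is contained in, the shadow. Second, estimate $\nu_{\xi_t}(gQ)$.

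\textbf{Membership criterion.} For $\eta=g\eta_0$ with $\eta_0\in Q$, Proposition \ref{prop.gromovproductalongparabolic} gives
\[
\psi(\cal G^\theta(f(p),f(\eta)))\approx\tfrac12\psi(\mu_\theta(g)).
\]
Its last assertion says $\mu_\theta(g)\approx\i\mu_\theta(g)$, so this is $\approx\tfrac12\bar\psi(\mu(g))$. Proposition \ref{prop.shadowcompare} then yields two statements.
\begin{enumerate}
\item If $gQ$ meets $O_R(o_Y,\xi_t)$, then $\bar\psi(\mu(g))\ge 2\d_{\bar\psi}(o_Y,\xi_t)-C$.
\item If $\bar\psi(\mu(g))\ge 2\d_{\bar\psi}(o_Y,\xi_t)+C$, then $gQ\subset O_R(o_Y,\xi_t)$, provided $R$ is large.
\end{enumerate}
This explains the two index sets. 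The finitely many $g$ excluded in Lemma \ref{lem.shadowincpt} only affect bounded $t$, and they can be absorbed into the constants.

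\textbf{Measure of each piece.} I write
\[
\nu_{\xi_t}(gQ)=\int_{gQ}e^{\psi(\beta^\theta_\eta(e,h_t))}\,d\nu(\eta),
\qquad f(\xi_t)=h_to,
\]
and change variables by $g$ using conformality, which gives
\[
\nu_{\xi_t}(gQ)=\int_Q e^{\psi(\beta^\theta_{g\eta_0}(e,h_t))}\,e^{-\psi(\beta^\theta_{\eta_0}(e,g^{-1}))}\,d\nu(\eta_0).
\]
The next step is to show the integrand is $\asymp e^{\d_\psi(o_Y,\xi_t)}e^{-\psi(\mu(g))}$ uniformly in $\eta_0\in Q$. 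Note that $\nu(Q)>0$, since $\mathsf P Q$ has full measure.

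For the second factor, Lemma \ref{lem.cptinshadow} gives $gQ\subset O_{R'}(o_Y,go_Y)$, i.e. $Q\subset O_{R'}(g^{-1}o_Y,o_Y)$. Lemma \ref{lem.buseandcartan} then gives $\beta^\theta_{\eta_0}(g^{-1},e)\approx\mu_\theta(g)$, hence $e^{-\psi(\beta_{\eta_0}(e,g^{-1}))}\asymp e^{-\psi(\mu(g))}$.

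For the first factor, $\beta^\theta_{g\eta_0}(e,h_t)\approx\mu_\theta(h_t)$ requires $g\eta_0\in O_{R''}(o_Y,\xi_t)$. By the membership criterion this holds for the $g$ in the lower-bound sum, and in the upper-bound case as well, using point (1) together with the fact that $\xi_t$ lies on $[o_Y,p]$. Lemma \ref{lem.buseandcartan} then gives $\beta^\theta_{g\eta_0}(e,h_t)\approx\mu_\theta(h_t)$. Combining the two factors and summing over the appropriate index set produces both inequalities.

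The main obstacle is this first factor in the upper bound. There one needs a uniform estimate of $\beta_{g\eta_0}(e,h_t)$ for every $g$ whose translate $gQ$ merely meets the shadow. I would handle it by choosing $R''\ge R+O(\delta)+\operatorname{diam}$ of the shadows of $Q$, using Lemma \ref{lem.shadowandgromprod}. If $gQ$ meets $O_R$, then all of $gQ$ has Gromov product with $p$ at least $\tfrac12 d(o_Y,go_Y)-O(1)$ by Lemma \ref{lem.braytiozzo}, and this is at least $t-R-O(1)$. Hence $gQ\subset O_{R''}(o_Y,\xi_t)$.
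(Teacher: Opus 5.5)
Your proposal is correct and is essentially the paper's argument: you cover $O_R(o_Y,\xi_t)-\{\xi\}$ (the point is $\nu$-null) by boundedly overlapping $\mathsf P$-translates $gQ$, read off the two index sets from Propositions \ref{prop.gromovproductalongparabolic} and \ref{prop.shadowcompare}, and move the basepoint with Lemma \ref{lem.buseandcartan}. The paper avoids your ``main obstacle'' by integrating only over $gQ\cap O_R(o_Y,\xi_t)$, where Lemma \ref{lem.buseandcartan} applies directly, and it gets $\nu(gQ)\asymp e^{-\psi(\mu(g))}$ by sandwiching $gQ$ between orbit shadows $O_R(o_Y,go_Y)$ via Lemmas \ref{lem.shadowincpt} and \ref{lem.cptinshadow}; your shadow-enlargement and conformality arguments with $\nu(Q)>0$ work equally well, but note that with $\gamma_*\nu(D)=\nu(\gamma^{-1}D)$ the Jacobian in your change of variables is $e^{+\psi(\beta^\theta_{\eta_0}(e,g^{-1}))}$, and only this, combined with $\beta^\theta_{\eta_0}(e,g^{-1})=-\beta^\theta_{\eta_0}(g^{-1},e)\approx-\mu_\theta(g)$, yields $e^{-\psi(\mu(g))}$ (as written, two sign slips cancel).
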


\begin{proof}
Choose $R>0$ large enough so that the ordinary shadow lemma holds.  Let
$Q\subset\partial Y-\{\xi\}$ be compact such that
$\mathsf P Q=\partial Y-\{\xi\}$ and such that the conclusion of
Lemma \ref{lem.shadowincpt} holds for the chosen $R$.  Increasing $R$ if
necessary, Proposition \ref{prop.shadowcompare} gives constants
$c_1,c_2>0$ such that
\[
\begin{aligned}
    \eta\in O_R(o_Y,\xi_t) - \{ \xi\}
    &\Longrightarrow
    \psi(\cal G^\theta(f(\xi),f(\eta)))
    \ge \d_{\bar\psi}(o_Y,\xi_t)-c_1,
    \\
    \psi(\cal G^\theta(f(\xi),f(\eta)))
    \ge \d_{\bar\psi}(o_Y,\xi_t)+c_2
    &\Longrightarrow
    \eta\in O_R(o_Y,\xi_t).
\end{aligned}
\]
By Proposition \ref{prop.gromovproductalongparabolic}, and using that
$\mu_\theta(g)-\i\mu_\theta(g)$ is uniformly bounded for
$g\in\mathsf P$, there exists $c>0$ such that
\begin{equation} \label{eqn.coveringshadowbycompacts}
\begin{aligned}
    \bigcup_{\substack{g\in\mathsf P\\
    \frac{1}{2}\bar\psi(\mu(g))
    \ge \d_{\bar\psi}(o_Y,\xi_t)+c+c_2}}
    gQ
    \subset
    O_R(o_Y,\xi_t) - \{\xi\}
    \subset
    \bigcup_{\substack{g\in\mathsf P\\
    \frac{1}{2}\bar\psi(\mu(g))
    \ge \d_{\bar\psi}(o_Y,\xi_t)-c-c_1}}
    gQ .
\end{aligned}
\end{equation}

If $gQ\subset O_R(o_Y,\xi_t)$, then Lemma \ref{lem.buseandcartan} gives
\[
    \nu_{\xi_t}(gQ)
    \asymp
    e^{\d_\psi(o_Y,\xi_t)}\nu(gQ).
\]
Similarly, if $gQ\cap O_R(o_Y,\xi_t)\ne\emptyset$, then
\[
    \nu_{\xi_t}(gQ\cap O_R(o_Y,\xi_t))
    \ll
    e^{\d_\psi(o_Y,\xi_t)}\nu(gQ).
\]
Since $\nu$ is atomless (Theorem \ref{thm.CZZ_PS}), using \eqref{eqn.coveringshadowbycompacts} and the bounded multiplicity of
the $\mathsf P$-translates of $Q$, we obtain 
\[
\begin{aligned}
    \nu_{\xi_t}(O_R(o_Y,\xi_t))
    &\ll
    e^{\d_\psi(o_Y,\xi_t)}
    \sum_{\substack{g\in\mathsf P\\
    \frac{1}{2}\bar\psi(\mu(g))
    \ge \d_{\bar\psi}(o_Y,\xi_t)-c-c_1}}
    \nu(gQ),
    \\
    \nu_{\xi_t}(O_R(o_Y,\xi_t))
    &\gg
    e^{\d_\psi(o_Y,\xi_t)}
    \sum_{\substack{g\in\mathsf P\\
    \frac{1}{2}\bar\psi(\mu(g))
    \ge \d_{\bar\psi}(o_Y,\xi_t)+c+c_2}}
    \nu(gQ).
\end{aligned}
\]
By Lemmas \ref{lem.shadowincpt} and \ref{lem.cptinshadow}, and the ordinary shadow
lemma,
\[
    \nu(gQ)\asymp e^{-\psi(\mu(g))}.
\]
The proposition follows.
\end{proof}

\subsection*{Proof of Theorem \ref{thm.shadowatpararep}}

Let $C>0$ be the constant from Proposition
\ref{prop.shadowatparabolic_pre}.  Let $k$ and $T_0$ be as in Theorem
\ref{thm.countinginsegments}, applied to $\bar\psi$.

First suppose that
\[
    2\d_{\bar\psi}(o_Y,\xi_t)-C>T_0.
\]
Set
\[
    T:=2\d_{\bar\psi}(o_Y,\xi_t)-C.
\]
By Proposition \ref{prop.gromovproductalongparabolic},
$\psi(\mu(g))\approx\bar\psi(\mu(g))$ for $g\in\mathsf P$. Since $\delta_{\psi\circ\i}(\Ga)=\delta_\psi(\Ga)=1$, it follows that $\delta_{\bar\psi}(\Ga)\le 1$.
Together with the entropy gap (Theorem \ref{thm.entropydrop}), this implies
$\delta_{\bar\psi}(\mathsf P)<1$.
Hence
Theorem \ref{thm.countinginsegments} gives
\[
\begin{aligned}
& 
  \sum_{\substack{g\in\mathsf P\\
  \bar\psi(\mu(g))\ge T}}
  e^{-\psi(\mu(g))} \\
 & \qquad \ll
\sum_{n=0}^{\infty}
e^{-(T+kn)}
\#\left\{
g\in\mathsf P:
T+kn\le \bar\psi(\mu(g))<T+k(n+1)
\right\}
\\
&\qquad \ll
\sum_{n=0}^{\infty}
e^{(\delta_{\bar\psi}(\mathsf P)-1)(T+kn)}
(1+T+kn)^{a_{\bar\psi}(\mathsf P)}
\\
&\qquad \asymp
e^{(\delta_{\bar\psi}(\mathsf P)-1)T}
(1+T)^{a_{\bar\psi}(\mathsf P)}
\\
&\qquad \asymp
e^{2(\delta_{\bar\psi}(\mathsf P)-1)\d_{\bar\psi}(o_Y,\xi_t)}
\bigl(C_{\bar\psi}+\d_{\bar\psi}(o_Y,\xi_t)\bigr)^{a_{\bar\psi}(\mathsf P)}.
\end{aligned}
\]
The lower bound is proved in the same way. 
Thus by Proposition \ref{prop.shadowatparabolic_pre},
\[
\begin{aligned}
    \nu_{\xi_t}(O_R(o_Y,\xi_t))
    \asymp{}
    e^{\d_\psi(o_Y,\xi_t)}
    e^{2(\delta_{\bar\psi}(\mathsf P)-1)\d_{\bar\psi}(o_Y,\xi_t)}
    \cdot
    \bigl(C_{\bar\psi}+\d_{\bar\psi}(o_Y,\xi_t)\bigr)^{a_{\bar\psi}(\mathsf P)} .
\end{aligned}
\]
Applying Lemma \ref{lem.changebasepoint} proves the theorem in this case.

It remains to consider the case when $2\d_{\bar\psi}(o_Y,\xi_t)-C\le T_0$.
Since $\bar\psi$ is positive on $\L_f-\{0\}$, this implies that
$d(o_Y,\xi_t)$ is uniformly bounded.  Hence, after increasing $R$ by a
uniform amount, $O_R(o_Y,\xi_t)=\partial Y$, and the desired estimate is
trivial.
\qed

\subsection*{Proof of Theorem \ref{thm.atparabolicgamma}}

Let $\xi=\ga_0\xi_{\mathsf P}$ for some $\ga_0\in\Ga$, and let
$B_\xi\in\cal B$ be the horoball based at $\xi$.  Then
$\stab_\Ga(B_\xi)=\ga_0\mathsf P\ga_0^{-1}$.  Since
$\ga_0\mathsf P\ga_0^{-1}$ acts cocompactly on $\partial B_\xi$, there
exist a uniform constant $c>0$, an element $\ga_1\in\mathsf P$, and
$t_0\ge 0$ such that
\[
    d(\xi_{t_0},\ga_0\ga_1o_Y)<c.
\]
Set $\ga:=\ga_0\ga_1$.

For $s\ge 0$, let $\eta_s\in[\ga o_Y,\xi]$ be the point satisfying
$d(\ga o_Y,\eta_s)=s$.  Increasing $c$ by a uniform amount, if necessary,
we have
\[
    d(\xi_t,\eta_{t-t_0})<c
    \quad\text{for all } t\ge t_0.
\]
Thus, there exists $c' > 0$ so that for all sufficiently large $R$,
\begin{equation} \label{eqn.linecut}
    O_{R-c'}(\ga o_Y,\eta_{t-t_0})
    \subset
    O_R(o_Y,\xi_t)
    \subset
    O_{R+c'}(\ga o_Y,\eta_{t-t_0})
\end{equation}
for all large $t \ge t_0$.

Moreover,
\[
    O_R(o_Y,\xi_t)\subset O_{R+c}(o_Y,\ga o_Y).
\]
Using Lemma \ref{lem.buseandcartan}, we obtain
\[
\begin{aligned}
    \nu(O_R(o_Y,\xi_t))
    &=
    \nu\bigl(\ga\,\ga^{-1}O_R(o_Y,\xi_t)\bigr)
    \\
    &=
    \int_{\ga^{-1}O_R(o_Y,\xi_t)}
    e^{\psi(\beta_x^\theta(e,\ga^{-1}))}\,d\nu(x)
    \\
    &=
    \int_{\ga^{-1}O_R(o_Y,\xi_t)}
    e^{-\psi(\beta_{\ga x}^\theta(e,\ga))}\,d\nu(x)
    \\
    &\asymp
    e^{-\psi(\mu(\ga))}
    \nu(\ga^{-1}O_R(o_Y,\xi_t)).
\end{aligned}
\]
By \eqref{eqn.linecut} and Theorem \ref{thm.shadowatpararep},
\[
\begin{aligned}
    \nu(O_R(o_Y,\xi_t))
    \asymp{}
    e^{-\psi(\mu(\ga))}
    e^{2(\delta_{\bar\psi}(\mathsf P)-1)
    \d_{\bar\psi}(\ga o_Y,\eta_{t-t_0})}
    \cdot
    \bigl(C_{\bar\psi}
    +\d_{\bar\psi}(\ga o_Y,\eta_{t-t_0})\bigr)^{a_{\bar\psi}(\mathsf P)} .
\end{aligned}
\]
Since \eqref{eqn.C_barpsi} holds, the factor
$C_{\bar\psi}+\d_{\bar\psi}$ may be replaced, up to multiplicative
constants, by $C+\d_{\bar\psi}$ for any $C\ge C_{\bar\psi}$.  Finally,
$\xi_t$ and $\eta_{t-t_0}$ are uniformly close, so Lemma
\ref{lem.cptcartan} gives
\[
    \d_{\bar\psi}(\ga o_Y,\eta_{t-t_0})
    \approx
    \d_{\bar\psi}(\ga o_Y,\xi_t).
\]
This proves the theorem.
\qed

\subsection*{Complements of parabolic shadows}

We also need an estimate for the complement of a shadow based at a
parabolic limit point.

\begin{proposition} \label{prop.complementshadow}
Let $\xi=\xi_{\mathsf P}$ for some $\mathsf P\in\cal P$.  For all
sufficiently large $R>0$ and all sufficiently large $t\ge 0$,
\[
\begin{aligned}
    \nu_{\xi_t}(\partial Y- O_R(o_Y,\xi_t))
    \asymp{}
    e^{-\d_\psi(\xi_t,o_Y)}
    e^{2\delta_{\bar\psi}(\mathsf P)\d_{\bar\psi}(o_Y,\xi_t)}
  \cdot
    \bigl(C_{\bar\psi}
    +\d_{\bar\psi}(o_Y,\xi_t)\bigr)^{a_{\bar\psi}(\mathsf P)} .
\end{aligned}
\]
\end{proposition}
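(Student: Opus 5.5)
We mirror the proof of Proposition \ref{prop.shadowatparabolic_pre}, using the covering \eqref{eqn.coveringshadowbycompacts} for the complement instead of the shadow. Fix $R$ large and a compact $Q\subset\partial Y-\{\xi\}$ with $\mathsf P Q=\partial Y-\{\xi\}$ and the conclusion of Lemma \ref{lem.shadowincpt}. Since $\nu$ is atomless, taking complements in \eqref{eqn.coveringshadowbycompacts} gives, for some $c_3>0$,
\[
\bigcup_{\substack{g\in\mathsf P\\ \bar\psi(\mu(g))< 2\d_{\bar\psi}(o_Y,\xi_t)-c_3}} gQ
\subset \partial Y- O_R(o_Y,\xi_t)\subset
\bigcup_{\substack{g\in\mathsf P\\ \bar\psi(\mu(g))< 2\d_{\bar\psi}(o_Y,\xi_t)+c_3}} gQ ,
\]
up to $\nu$-null sets. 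This is the precise sense in which the complement is covered by translates of $Q$; the rest of the argument estimates the $\nu_{\xi_t}$-mass of each such translate and then counts them.

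\textbf{Density on each translate.} For $g\in\mathsf P$ we need $\nu_{\xi_t}(gQ)$. Note $d\nu_{\xi_t}/d\nu(\eta)=e^{\psi(\beta^\theta_\eta(e,h))}$ with $ho=f(\xi_t)$. For $\eta\in gQ\subset O_{R}(o_Y,go_Y)$ we write $\beta_\eta(e,h)=\beta_\eta(e,g)+\beta_\eta(g,h)$. The first term is $\approx\mu_\theta(g)$ by Lemma \ref{lem.buseandcartan}. For the second, $\eta\in O_{R'}(go_Y,\xi_t)$ seen from $go_Y$: the geodesic from $go_Y$ to $\eta\in gQ$ starts near $go_Y$ and heads out. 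Instead it is better to view $\xi_t$ from $\eta$. Concretely, by Lemma \ref{lem.parabolictransmiddle} and thinness, the geodesic $[go_Y,\xi]$ passes near $\xi_t$ exactly when $\xi_t$ lies below the coarse midpoint of $[o_Y,go_Y]$, and in the complement region it does so. Thus $\eta\in gQ$ satisfies $\xi\in$ shadow, and a point near $\xi_t$ lies near $[go_Y,\eta']$ for a geodesic through $\eta$ and $\xi$.

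The cleanest route I would try is a cocycle bound:
\[
\psi(\beta^\theta_\eta(e,h))\approx -\psi(\mu(g))+\psi(\beta_\eta(g,h)),
\]
with $\beta_\eta(g,h)=\beta_\eta(g,e)+\beta_\eta(e,h)$ trivial. Equivalently, I would estimate $\beta_\eta(h,e)$ via a point $u$ on $[o_Y,\eta]$ near $\xi_t$. Such a $u$ exists since $\xi_t$ lies on $[o_Y,\xi]$ before the branching point of $[o_Y,\eta]$ and $[o_Y,\xi]$, whose distance from $o_Y$ is about $\frac12 d(o_Y,go_Y)$ (Lemma \ref{lem.braytiozzo}). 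Then $\eta\in O_{R'}(f(\xi_t)\text{-side})$ gives
\[
\beta_\eta(h,e)\approx \mu_\theta(h^{-1}),
\]
that is, $\eta\in O_{R'}(\xi_t,o_Y)$. For this we need $\xi_t$ near $[\xi_t\text{-ray}]$, which holds because $[\xi_t,\eta]$ passes near $o_Y$ coarsely. That is false, so instead we use that $\eta\in O(\xi_t, o_Y)$ fails but $o_Y$-shadow computations give $\beta_\eta(e,h)\approx \mu_\theta(h)-2\cdot(\text{Gromov product term})$. Precisely, $\beta_\eta(e,h)\approx \mu_\theta(h)-\mu_\theta(h)-\i\mu_\theta(h)+\ldots$. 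Thus the density is $\approx e^{-\d_\psi(\xi_t,o_Y)}$ uniformly on the complement, the standard "$\eta$ behind the viewpoint" estimate. I expect this density comparison, uniform in $g$, to be \textbf{the main obstacle}. I would prove it through Lemma \ref{lem.DKO_additive} along the quasi-geodesic $[\xi_t,u]\cup[u,o_Y]\cup[o_Y,\eta]$ with $u$ near $\xi_t$, together with the Busemann/Cartan comparison of Lemma \ref{lem.buseandcartan} applied from $\xi_t$.

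\textbf{Counting.} Granting the density estimate, and using $\nu(gQ)\asymp e^{-\psi(\mu(g))}$ together with bounded multiplicity,
\[
\nu_{\xi_t}(\partial Y-O_R)\asymp e^{-\d_\psi(\xi_t,o_Y)}\sum_{\substack{g\in\mathsf P\\ \bar\psi(\mu(g))<2\d_{\bar\psi}(o_Y,\xi_t)\pm c_3}} e^{\psi(\mu(g))-\psi(\mu(g))}\cdot(\ldots).
\]
More carefully, the product of the density with $\nu(gQ)$ is $\approx e^{-\d_\psi(\xi_t,o_Y)}$ times a factor independent of $g$, so the sum reduces to the count $\#\{g\in\mathsf P:\bar\psi(\mu(g))<2\d_{\bar\psi}(o_Y,\xi_t)\pm c_3\}$. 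Proposition \ref{prop.countingperipheral}, applied to $\bar\psi$, evaluates this count as
\[
e^{2\delta_{\bar\psi}(\mathsf P)\d_{\bar\psi}(o_Y,\xi_t)}\bigl(C_{\bar\psi}+\d_{\bar\psi}(o_Y,\xi_t)\bigr)^{a_{\bar\psi}(\mathsf P)} ,
\]
with the shifts $\pm c_3$ absorbed into the multiplicative constants, which is the claimed estimate.
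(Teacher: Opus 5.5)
Your covering of $\partial Y-O_R(o_Y,\xi_t)$ by the translates $gQ$ with $\bar\psi(\mu(g))\lesssim 2\d_{\bar\psi}(o_Y,\xi_t)$ agrees with the paper, and so does your final count via Proposition \ref{prop.countingperipheral} for $\bar\psi$. The gap is the step in between: the estimate $\nu_{\xi_t}(gQ)\asymp e^{-\d_\psi(\xi_t,o_Y)}$ uniformly in these $g$. The pointwise claim you give for it is false.

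Take $ho=f(\xi_t)$ and $\eta\in gQ\subset O_R(o_Y,go_Y)$. Lemma \ref{lem.buseandcartan} gives $\beta^\theta_\eta(e,g)\approx+\mu_\theta(g)$; your sign is reversed. Moreover, the geodesic from $\xi_t$ to $\eta$ passes near $go_Y$, not near $o_Y$. Hence $d\nu_{\xi_t}/d\nu\asymp e^{\psi(\mu(g))-\d_\psi(\xi_t,go_Y)}$ on $gQ$, which varies enormously with $g$.

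A uniform density $e^{-\d_\psi(\xi_t,o_Y)}$ on the complement would give $\nu_{\xi_t}(\partial Y-O_R(o_Y,\xi_t))\ll e^{-\d_\psi(\xi_t,o_Y)}$, since $\nu$ is a probability measure. That contradicts the growing factor $e^{2\delta_{\bar\psi}(\mathsf P)\d_{\bar\psi}(o_Y,\xi_t)}$ in the statement. Your counting paragraph silently reinstates the factor $e^{\psi(\mu(g))}$, which is inconsistent with the density you asserted.

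The mechanism you propose also cannot work. A point $u\in[o_Y,\eta]$ near $\xi_t$ means precisely $\eta\in O_R(o_Y,\xi_t)$, which is excluded on the complement. Likewise, $\eta\in O_{R'}(\xi_t,o_Y)$ fails for $\eta\in gQ$ once $d(o_Y,go_Y)$ is large.

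The missing idea is bounded displacement. If $gQ$ meets $\partial Y-O_R(o_Y,\xi_t)$, then $\frac12\bar\psi(\mu(g))\le\d_{\bar\psi}(o_Y,\xi_t)+O(1)$, and thin triangles give $d(\xi_t,g\xi_t)<R'$ for a uniform $R'$. Your remark that $[go_Y,\xi]$ passes near $\xi_t$ in the complement region is essentially this fact, but you never use it.

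The paper exploits it through equivariance of the family $\nu_x$: $\nu_{\xi_t}(gQ)=\nu_{g^{-1}\xi_t}(Q)\asymp\nu_{\xi_t}(Q)$. After enlarging $R$ one has $Q\subset O_R(\xi_t,o_Y)$, so Lemma \ref{lem.buseandcartan} gives $\nu_{\xi_t}(Q)\asymp e^{-\d_\psi(\xi_t,o_Y)}\nu(Q)$. In your pointwise language, the same fact gives $\d_\psi(\xi_t,go_Y)\approx\d_\psi(\xi_t,o_Y)$, so the factor $e^{\psi(\mu(g))}$ in the density cancels $\nu(gQ)\asymp e^{-\psi(\mu(g))}$. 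With this per-translate estimate in place, your bounded-multiplicity summation and the counting finish the proof as in the paper.
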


\begin{proof}
Let $R>0$ and $Q\subset\partial Y-\{\xi\}$ be as in the proof of
Proposition \ref{prop.shadowatparabolic_pre}. Since $\mathsf{P}Q = \partial Y - \{\xi\}$, it follows from 
\eqref{eqn.coveringshadowbycompacts} that 
\[
\begin{aligned}
    \bigcup_{\substack{g\in\mathsf P\\
    \frac{1}{2}\bar\psi(\mu(g))
    <\d_{\bar\psi}(o_Y,\xi_t)-c-c_1}}
    gQ
    \subset
    \partial Y- O_R(o_Y,\xi_t)
    \subset
    \bigcup_{\substack{g\in\mathsf P\\
    \frac{1}{2}\bar\psi(\mu(g))
    <\d_{\bar\psi}(o_Y,\xi_t)+c+c_2}}
    gQ .
\end{aligned}
\]
The thin-triangle property implies that there exists $R'>0$ such that, if
\[
    gQ\cap(\partial Y- O_R(o_Y,\xi_t))\ne\emptyset,
\]
then
\[
    d(\xi_t,g\xi_t)<R'.
\]
For such $g\in\mathsf P$, we therefore have
\[
    \nu_{\xi_t}(gQ)\asymp \nu_{\xi_t}(Q).
\]
After increasing $R$ by a uniform amount, we may also assume that
\[
    Q\subset O_R(\xi_t,o_Y).
\]
Hence Lemma \ref{lem.buseandcartan} gives
\[
    \nu_{\xi_t}(Q)
    \asymp
    e^{-\d_\psi(\xi_t,o_Y)}\nu(Q).
\]
Using Proposition \ref{prop.countingperipheral}, we obtain
\[
    \nu_{\xi_t}(\partial Y- O_R(o_Y,\xi_t))
    \asymp
    e^{-\d_\psi(\xi_t,o_Y)}
    e^{2\delta_{\bar\psi}(\mathsf P)\d_{\bar\psi}(o_Y,\xi_t)} 
    \bigl(C_{\bar\psi}
    +\d_{\bar\psi}(o_Y,\xi_t)\bigr)^{a_{\bar\psi}(\mathsf P)} 
\]
as claimed.
\end{proof}
\section{General form of the global shadow lemma}

Continuing with the setting of section \ref{sec.atparabolic}, we now prove
the general form of the global shadow lemma.  The estimate below should be
viewed as the cusp version of the usual shadow lemma: when the point under
consideration stays outside the horoballs in $\cal B$, one recovers the ordinary orbit-shadow estimate (Theorem \ref{thm.shadowlemmahigherrank}).

Recall that, for $\xi\in\partial Y$ and $t\ge 0$, we write
$\xi_t\in[o_Y,\xi]$ for the point satisfying $d(o_Y,\xi_t)=t$.

\begin{theorem}[Global Shadow Lemma] \label{thm.generalglobal}
For all sufficiently large $R>0$, the following estimate holds uniformly.
Let $\xi\in\partial Y$ and suppose that
$\xi_t\in B_\eta$
for some $t\ge 0$ and some horoball $B_\eta\in\cal B$ based at
$\eta\in\Ga\xi_{\mathsf P}$, where $\mathsf P\in\cal P$.  Let
$\ga\in\Ga$ be such that $\ga o_Y$ is a closest orbit point to $\xi_t$ in the orbit $\Gamma o_Y$, with respect to the metric $d$ on $Y$.
Then
\[
\begin{aligned}
    \nu(O_R(o_Y,\xi_t))
    \asymp{} &
    e^{-\d_\psi(o_Y,\xi_t)}
    e^{\d_\psi(\ga o_Y,\xi_t)}
    e^{2(\delta_{\bar\psi}(\mathsf P)-1)
        \d_{\bar\psi}(\ga o_Y,\xi_t)}
    \\
    &\cdot
    \bigl(C_{\bar\psi}
    +\d_{\bar\psi}(\ga o_Y,\xi_t)\bigr)^{a_{\bar\psi}(\mathsf P)} .
\end{aligned}
\]
Equivalently,
\[
\begin{aligned}
    \nu(O_R(o_Y,\xi_t))
    \asymp{}&
    e^{-\d_\psi(o_Y,\xi_t)}
    e^{\d_\psi(\Ga o_Y,\xi_t)}
    e^{2(\delta_{\bar\psi}(\mathsf P)-1)
        \d_{\bar\psi}(\Ga o_Y,\xi_t)}
    \\
    &\cdot
    \bigl(C_{\bar\psi}
    +\d_{\bar\psi}(\Ga o_Y,\xi_t)\bigr)^{a_{\bar\psi}(\mathsf P)} .
\end{aligned}
\]
The implied constants are independent of $\xi$, $t$, $\eta$, and $\ga$.
\end{theorem}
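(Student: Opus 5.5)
The plan is to reduce to Theorem \ref{thm.atparabolicgamma} by locating the entry point of the ray $[o_Y,\xi]$ into the horoball $B_\eta$, and then to split into cases according to whether $\xi=\eta$ (the ray goes straight to the cusp) or the ray exits $B_\eta$ at some later time.

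\textbf{Setup.} Let $t_0\le t$ be the entry time of $[o_Y,\xi]$ into $B_\eta$. Since $\stab_\Ga(B_\eta)$ acts cocompactly on $\partial B_\eta$, there is $\ga'\in\Ga$ with $d(\xi_{t_0},\ga' o_Y)<c$ for a uniform constant $c$. I would first show that $\ga' o_Y$ may replace a closest orbit point $\ga o_Y$ in the right-hand side. By Lemma \ref{lem.closestinhoroball}, translated by $\ga'$ and combined with Lemma \ref{lem.cptcartan}, we have $\d_\psi(\ga' o_Y,x)\approx \d_\psi(\Ga o_Y,x)$ for points $x$ on the ray $[\ga' o_Y,\eta]$. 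The same holds for $\bar\psi$ and for $\psi\circ\i$, using that $\d_{\psi\circ\i}(x,y)=\d_\psi(y,x)$. Also, $d(\ga o_Y,\xi_t)\le d(\ga' o_Y,\xi_t)$, and $\d_\psi\approx\d_\psi(\Ga o_Y,\cdot)$ up to bounded error, by the lower bound \eqref{eqn.lbdpsi} and coarse additivity. So the two displayed forms are equivalent, and we may work with $\ga'$; rename it $\ga$.

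\textbf{Case $\xi=\eta$.} Theorem \ref{thm.atparabolicgamma} gives
\[\nu(O_R(o_Y,\xi_t))\asymp e^{-\psi(\mu(\ga))}e^{2(\delta_{\bar\psi}(\mathsf P)-1)\d_{\bar\psi}(\ga o_Y,\xi_t)}\bigl(C_{\bar\psi}+\d_{\bar\psi}(\ga o_Y,\xi_t)\bigr)^{a_{\bar\psi}(\mathsf P)}.\]
Since $\xi_{t_0}$ is close to $\ga o_Y$ and lies on $[o_Y,\xi_t]$, Lemma \ref{lem.QIpropsofmetriclike}(1) gives
\[\d_\psi(o_Y,\xi_t)\approx \psi(\mu(\ga))+\d_\psi(\ga o_Y,\xi_t).\]
Hence $e^{-\psi(\mu(\ga))}\asymp e^{-\d_\psi(o_Y,\xi_t)}e^{\d_\psi(\ga o_Y,\xi_t)}$, which is exactly the claimed formula.

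\textbf{Case $\xi\ne\eta$.} The ray exits $B_\eta$ at some time $t_1>t$, with an exit point near some $\ga_2 o_Y$. By thinness of quasi-geodesic triangles relative to the horoball, the segment $[\xi_{t_0},\xi_{t_1}]$ fellow travels $[\xi_{t_0},\eta)$ up to the coarse "turning depth" $s^*\approx (t_1-t_0)/2$, and then fellow travels the return ray back out. For $t\le t_0+s^*$, I compare $O_R(o_Y,\xi_t)$ with $O_R(o_Y,\eta_t)$ along the ray to $\eta$. One shadow is contained in the other (up to enlarging $R$), but they are not equal: $O_R(o_Y,\xi_t)$ is obtained from the parabolic shadow $O_R(o_Y,\eta_t)$ by restricting to the $\mathsf P$-translates $gQ$ in a cone around the exit direction.

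To handle this, I would redo the covering argument of Proposition \ref{prop.shadowatparabolic_pre} with $\ga^{-1}\xi$ in place of $\xi_{\mathsf P}$. Write $\ga^{-1}\xi\in hQ$ with $h\in\mathsf P$, and $\frac12\bar\psi(\mu(h))\approx\d_{\bar\psi}(\ga o_Y,\xi_{t_0+s^*})$. The shadow $O_R(o_Y,\xi_t)$ is then covered by the translates $gQ$ for which $g$ lies in the coarse "subtree" of $\mathsf P$ passing near the point at depth $t-t_0$ on the way to $h$. Its measure is a sum of $e^{-\psi(\mu(g))}$ over those $g$, and should be computed via Proposition \ref{prop.complementshadow}: the shadow is a complement-type shadow of the horoball seen from the depth-$(t-t_0)$ point toward $\ga^{-1}\xi$, translated back to $o_Y$. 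Its measure is
\[e^{-\d_\psi(\xi_t,\ga o_Y)}\cdot e^{2\delta_{\bar\psi}(\mathsf P)\d_{\bar\psi}}(C_{\bar\psi}+\d_{\bar\psi})^{a},\]
after changing basepoints with Lemma \ref{lem.buseandcartan}. Combined with the $e^{-\psi(\mu(\ga))}$ factor, the coarse additivity $\d_\psi(o_Y,\xi_t)\approx\psi(\mu(\ga))+\d_\psi(\ga o_Y,\xi_t)$, and $\d_\psi(\ga o_Y,\xi_t)-\d_\psi(\xi_t,\ga o_Y)\approx \d_\psi(\ga o_Y,\xi_t)-2\d_{\bar\psi}(\ga o_Y,\xi_t)+\d_\psi(\ga o_Y,\xi_t)$, this should reassemble into the target expression. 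For $t>t_0+s^*$, apply the same argument symmetrically from the exit orbit point $\ga_2 o_Y$, which is then the closest orbit point, so the estimate becomes a case of the first half. The thick part reduces to Theorem \ref{thm.shadowlemmahigherrank} together with Lemma \ref{lem.changebasepoint}.

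\textbf{Main obstacle.} The main obstacle is the second case. It requires precise control of which $\mathsf P$-translates $gQ$ lie inside $O_R(o_Y,\xi_t)$, together with the bookkeeping between $\psi$ and $\psi\circ\i$ (the asymmetry between $\d_\psi(x,y)$ and $\d_\psi(y,x)$). I would first try to reduce it cleanly to Proposition \ref{prop.complementshadow}, applied at the point $\ga^{-1}\xi_t$ with the roles of entry and exit exchanged: $O_R(o_Y,\xi_t)$ is coarsely the complement, seen from the exit side, of the parabolic shadow at the symmetric depth.
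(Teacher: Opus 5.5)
Your case $\xi=\eta$ is fine and is essentially the paper's Case 1. For $\xi\ne\eta$, however, you have the geometry of the shadows reversed, and the step that carries the real content is missing.

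\emph{The way in.} While $\xi_t$ travels into $B_\eta$ (for $t$ up to about $\langle\xi,\eta\rangle_{o_Y}$), it stays $O(\delta)$-close to $[o_Y,\eta]$. So $O_R(o_Y,\xi_t)$ is sandwiched between parabolic shadows $O_{R\mp O(\delta)}(o_Y,x)$ with $x\in[o_Y,\eta]$. It contains $\eta$ and every translate $\ga gQ$ ($g\in\mathsf P$) with $\frac12\bar\psi(\mu(g))\gtrsim\d_{\bar\psi}(\ga o_Y,\xi_t)$, cf.\ \eqref{eqn.coveringshadowbycompacts}. It is not restricted to a cone around the exit direction. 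So nothing new is needed here: Theorem \ref{thm.atparabolicgamma} applies exactly as in your case $\xi=\eta$. This is the paper's Case 1, $\eta\in O_{R-A}(o_Y,\xi_t)$. The sum over a ``subtree of $\mathsf P$ near $h$'' that you propose computes the mass of a set that is not the shadow.

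\emph{The way out.} This is the genuinely new situation, and ``apply the same argument symmetrically from the exit orbit point $\ga_2o_Y$'' does not work. The shadow is cast from the fixed basepoint $o_Y$ on the entry side, so reversing the trajectory is not a symmetry. Your first-half computation uses that the closest orbit point lies coarsely on $[o_Y,\xi_t]$ in two places:
\begin{itemize}
\item to extract $e^{-\psi(\mu(\ga))}$ via Lemma \ref{lem.buseandcartan}, which needs $O_R(o_Y,\xi_t)\subset O_{R'}(o_Y,\ga o_Y)$;
\item for the additivity $\d_\psi(o_Y,\xi_t)\approx\psi(\mu(\ga))+\d_\psi(\ga o_Y,\xi_t)$.
\end{itemize}
On the way out, $\ga_2o_Y$ lies beyond $\xi_t$, and both steps fail. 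Lemma \ref{lem.DKO_additive} instead gives $\psi(\mu(\ga_2))\approx\d_\psi(o_Y,\xi_t)+\d_\psi(\xi_t,\ga_2o_Y)$. Consequently the first-half output $e^{-\psi(\mu(\ga_2))}e^{2(\delta_{\bar\psi}(\mathsf P)-1)\d_{\bar\psi}(\ga_2o_Y,\xi_t)}$ is off by the unbounded factor $e^{2\d_{\bar\psi}(\ga_2o_Y,\xi_t)}$.

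Past the turning region, $O_R(o_Y,\xi_t)$ no longer contains $\eta$. The missing idea is the paper's comparison with complements of parabolic shadows based at the exit point $o'$. With $t'=d(o',\xi_t)$ and $\eta_s\in[o',\eta]$, one has $O_R(o_Y,\xi_t)\subset\partial Y-O_{A/2}(o',\eta_{t'+R+A})$ and $\partial Y-O_{A/2}(o',\eta_{t'+R})\subset O_R(o_Y,\xi_t)$. These are proved with Gromov products at $o'$, using the bound $d(o',[o_Y,\eta])\gtrsim t'+R+A$ that comes from $\eta\notin O_{R+A}(o_Y,\xi_t)$. Only then do Proposition \ref{prop.complementshadow} (applied at $\ga_0^{-1}\eta_{t'+R+A}$, with $\ga_0o_Y$ near $o'$) and Lemma \ref{lem.changebasepoint} finish the argument, via $e^{-\d_\psi(\xi_t,\ga_0o_Y)}e^{2\delta_{\bar\psi}(\mathsf P)\d_{\bar\psi}(\ga_0o_Y,\xi_t)}=e^{\d_\psi(\ga_0o_Y,\xi_t)}e^{2(\delta_{\bar\psi}(\mathsf P)-1)\d_{\bar\psi}(\ga_0o_Y,\xi_t)}$.

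Your closing remark about ``the complement seen from the exit side'' is this idea. But it belongs to the outgoing half, not to $t\le t_0+s^*$, and the inclusions that justify it are exactly what your proposal lacks. The bounded transition zone near the turning point also needs the small extra argument of the paper's Case 3.
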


\begin{proof}
Let $A>0$ be a sufficiently large constant depending only on the
hyperbolicity constant of $Y$ and on the uniform constants fixed above.  We
allow $A$ to increase finitely many times during the proof.

First note that the endpoint of
\[
    [o_Y,\xi]\cap \partial B_\eta
\]
closest to $\xi_t$ is uniformly close to some orbit point of $\Ga o_Y$.
Consequently, if $\ga o_Y$ is chosen to be a closest orbit point to
$\xi_t$, for some $\ga \in \Ga$, then Lemma \ref{lem.closestinhoroball} gives
\[
    \d_\psi(\ga o_Y,\xi_t)
    \approx
    \d_\psi(\Ga o_Y,\xi_t)
    \quad \text{and} \quad 
    \d_{\bar\psi}(\ga o_Y,\xi_t)
    \approx
    \d_{\bar\psi}(\Ga o_Y,\xi_t).
\]
Thus it suffices to prove the estimate with this choice of $\ga$.

We divide the proof into three cases.

\medskip

\noindent
{\bf Case 1: $\eta\in O_{R-A}(o_Y,\xi_t)$.}
In this case, there exists $x\in[o_Y,\eta]$ such that
\[
    d(x,\xi_t)<R-A.
\]
This implies
\[
    O_A(o_Y,x)
    \subset
    O_R(o_Y,\xi_t)
    \subset
    O_{2R-A}(o_Y,x).
\]
Note that we may also choose $\ga \in \Ga$ so that $\ga o_Y$ is uniformly close to $[o_Y, \eta]$, in this case.
Applying Theorem \ref{thm.atparabolicgamma} to the parabolic point
$\eta$, and then using Lemma \ref{lem.cptcartan} to replace $x$ by
$\xi_t$, we obtain
\[
    \nu(O_R(o_Y,\xi_t))
    \asymp{}
    e^{-\psi(\mu(\ga))}
    e^{2(\delta_{\bar\psi}(\mathsf P)-1)
        \d_{\bar\psi}(\ga o_Y,\xi_t)} \cdot
    \bigl(C_{\bar\psi}
    +\d_{\bar\psi}(\ga o_Y,\xi_t)\bigr)^{a_{\bar\psi}(\mathsf P)} .
\]
Since $\xi_t$ lies in $B_\eta$ and is uniformly close to the ray
$[o_Y,\eta]$, Lemmas \ref{lem.closestinhoroball} and
\ref{lem.DKO_additive} imply
\[
    -\psi(\mu(\ga)) = - \d_{\psi}(o_Y, \ga o_Y)
    \approx
    -\d_\psi(o_Y,\xi_t)
    +
    \d_\psi(\ga o_Y,\xi_t).
\]
Substituting this into the preceding estimate proves the theorem in this
case.

\medskip

\noindent
{\bf Case 2: $\eta\notin O_{R+A}(o_Y,\xi_t)$.}
In particular, we have $\xi\ne\eta$.  Let $o'\in[o_Y,\xi]\cap\partial B_\eta$ be the
endpoint farthest from $o_Y$; equivalently, $o'$ is the point at which the
ray $[o_Y,\xi]$ exits the horoball $B_\eta$.  Set
\[
    t':=d(o',\xi_t).
\]

We first compare the shadow $O_R(o_Y,\xi_t)$ with complements of shadows
based at $o'$.  If $\zeta\in O_R(o_Y,\xi_t)$, then
$d([o_Y,\zeta],\xi_t)<R$, and hence
\[
    d(o',[o_Y,\zeta])
    \le
    d(o',\xi_t)+R.
\]
Therefore
\begin{equation} \label{eqn.aboveinclusionrel0}
    O_R(o_Y,\xi_t)
    \subset
    \partial Y
    -
    \left\{
        \zeta\in\partial Y:
        d(o',[o_Y,\zeta])>d(o',\xi_t)+R
    \right\}.
\end{equation}
Conversely, if
\[
    d(o',[o_Y,\zeta])
    \le
    d(o',\xi_t)+R- O(\delta), 
\]
then by the Gromov hyperbolicity, this implies that $[o_Y,\zeta]$ passes within distance $R$ of
$\xi_t$, provided 
$O(\delta)$ is sufficiently large.  Hence
\begin{equation} \label{eqn.aboveinclusionrel}
    \partial Y
    -
    \left\{
        \zeta\in\partial Y:
        d(o',[o_Y,\zeta])>d(o',\xi_t)+R-  O(\delta) 
    \right\}
    \subset
    O_R(o_Y,\xi_t).
\end{equation}

For $s\ge 0$, let $\eta_s\in[o',\eta]$ be the point satisfying
$d(o',\eta_s)=s$.  We claim that
\begin{equation} \label{eqn.shadowincomplement}
    O_R(o_Y,\xi_t)
    \subset
    \partial Y- O_{A/2}(o',\eta_{t'+R+A})
\end{equation}
and
\begin{equation} \label{eqn.complementinshadow}
    \partial Y- O_{A/2}(o',\eta_{t'+R})
    \subset
    O_R(o_Y,\xi_t).
\end{equation}

To prove \eqref{eqn.shadowincomplement}, let
\[
    \zeta\in O_{A/2}(o',\eta_{t'+R+A}).
\]
By Lemma \ref{lem.shadowandgromprod},
\[
    \langle \zeta,\eta\rangle_{o'}
    \ge
    t'+R+A/2-O(\delta).
\]
Since $\eta\notin O_{R+A}(o_Y,\xi_t)$, applying
\eqref{eqn.aboveinclusionrel} with $R+A$ in place of $R$ gives 
\[
    d(o',[o_Y,\eta])
    >
    t'+R+A-O(\delta).
\]
The Gromov product inequality \eqref{eqn.Gromovineq} then implies
\[
    \langle o_Y,\zeta\rangle_{o'}
    \ge
    t'+R+A/2-O(\delta).
\]
For $A$ sufficiently large, this and \eqref{eqn.aboveinclusionrel0} imply
that $\zeta\notin O_R(o_Y,\xi_t)$.  This proves
\eqref{eqn.shadowincomplement}.

The proof of \eqref{eqn.complementinshadow} is similar.  If
$\zeta\notin O_R(o_Y,\xi_t)$, then by
\eqref{eqn.aboveinclusionrel},
\[
    d(o',[o_Y,\zeta])>t'+R-O(\delta).
\]
Together with the estimate for $d(o',[o_Y,\eta])$ above, the Gromov
product inequality gives
\[
    \langle \zeta,\eta\rangle_{o'}
    \ge
    t'+R-O(\delta).
\]
By Lemma \ref{lem.shadowandgromprod}, this implies
\[
    \zeta\in O_{A/2}(o',\eta_{t'+R})
\]
after increasing $A$, if necessary.  This proves
\eqref{eqn.complementinshadow}.

\medskip

We now estimate the measure.  The hypothesis
$\eta\notin O_{R+A}(o_Y,\xi_t)$ implies, by thin triangles, that
$\eta_{t'}$ and $\xi_t$ are uniformly close.  Using
\eqref{eqn.shadowincomplement}, moving the basepoint from $\eta_{t'}$ to
$\eta_{t'+R+A}$, and absorbing the resulting multiplicative constant into
$\asymp$, we get
\[
\begin{aligned}
    \nu_{\xi_t}(O_R(o_Y,\xi_t))
    &\ll
    \nu_{\eta_{t'}}
    \bigl(\partial Y- O_{A/2}(o',\eta_{t'+R+A})\bigr)
    \\
    &\ll
    \nu_{\eta_{t'+R+A}}
    \bigl(\partial Y- O_{A/2}(o',\eta_{t'+R+A})\bigr).
\end{aligned}
\]

Since $o'\in\partial B_\eta$, there exists $\ga_0\in\Ga$ such that
$\eta=\ga_0\xi_{\mathsf P}$ and $d(o',\ga_0o_Y)$
is uniformly bounded.  By equivariance of the measures $\nu_x$,
\[
\begin{aligned}
&\nu_{\eta_{t'+R+A}}
\bigl(\partial Y- O_{A/2}(o',\eta_{t'+R+A})\bigr)
\\
&\qquad\asymp
\nu_{\ga_0^{-1}\eta_{t'+R+A}}
\bigl(
    \partial Y
    -
    O_{A/2}(o_Y,\ga_0^{-1}\eta_{t'+R+A})
\bigr).
\end{aligned}
\]
Applying Proposition \ref{prop.complementshadow}, we obtain
\[
\begin{aligned}
&\nu_{\ga_0^{-1}\eta_{t'+R+A}}
\bigl(
    \partial Y
    -
    O_{A/2}(o_Y,\ga_0^{-1}\eta_{t'+R+A})
\bigr)
\\
&\qquad\asymp
e^{-\d_\psi(\ga_0^{-1}\eta_{t'+R+A},o_Y)}
e^{2\delta_{\bar\psi}(\mathsf P)
    \d_{\bar\psi}(o_Y,\ga_0^{-1}\eta_{t'+R+A})}
\\
&\qquad\quad\cdot
\bigl(
    C_{\bar\psi}
    +
    \d_{\bar\psi}(o_Y,\ga_0^{-1}\eta_{t'+R+A})
\bigr)^{a_{\bar\psi}(\mathsf P)}
\\
&\qquad\asymp
e^{\d_\psi(\ga_0o_Y,\eta_{t'+R+A})}
e^{2(\delta_{\bar\psi}(\mathsf P)-1)
    \d_{\bar\psi}(\ga_0o_Y,\eta_{t'+R+A})}
\\
&\qquad\quad\cdot
\bigl(
    C_{\bar\psi}
    +
    \d_{\bar\psi}(\ga_0o_Y,\eta_{t'+R+A})
\bigr)^{a_{\bar\psi}(\mathsf P)} .
\end{aligned}
\]
Combining altogether gives
\[
\begin{aligned}
    \nu_{\xi_t}(O_R(o_Y,\xi_t))
    \ll{} &
    e^{\d_\psi(\ga_0o_Y,\eta_{t'+R+A})}
    e^{2(\delta_{\bar\psi}(\mathsf P)-1)
        \d_{\bar\psi}(\ga_0o_Y,\eta_{t'+R+A})}
    \\
    &\cdot
    \bigl(
        C_{\bar\psi}
        +
        \d_{\bar\psi}(\ga_0o_Y,\eta_{t'+R+A})
    \bigr)^{a_{\bar\psi}(\mathsf P)} .
\end{aligned}
\]
The points $\eta_{t'+R+A}$ and $\xi_t$ are at uniformly bounded distance
depending only on $R$ and $A$.  Therefore, by Lemmas
\ref{lem.cptcartan} and \ref{lem.closestinhoroball},
\[
    \d_\psi(\ga_0o_Y,\eta_{t'+R+A})
    \approx
    \d_\psi(\ga o_Y,\xi_t),
\]
and similarly for $\d_{\bar\psi}$.  Hence
\[
\begin{aligned}
    \nu_{\xi_t}(O_R(o_Y,\xi_t))
    \ll{}
    e^{\d_\psi(\ga o_Y,\xi_t)}
    e^{2(\delta_{\bar\psi}(\mathsf P)-1)
        \d_{\bar\psi}(\ga o_Y,\xi_t)}
    \cdot
    \bigl(
        C_{\bar\psi}
        +
        \d_{\bar\psi}(\ga o_Y,\xi_t)
    \bigr)^{a_{\bar\psi}(\mathsf P)} .
\end{aligned}
\]
The reverse inequality is obtained in the same way using 
\eqref{eqn.complementinshadow} instead of \eqref{eqn.shadowincomplement}.

Finally, Lemma \ref{lem.changebasepoint} gives
\[
    \nu(O_R(o_Y,\xi_t))
    \asymp
    e^{-\d_\psi(o_Y,\xi_t)}
    \nu_{\xi_t}(O_R(o_Y,\xi_t)),
\]
and the desired estimate follows.

\medskip

\noindent
{\bf Case 3: $\eta\notin O_{R-A}(o_Y,\xi_t)$ and
$\eta\in O_{R+A}(o_Y,\xi_t)$.}
This is the transition region between Cases 1 and 2.  Moving a uniformly
bounded distance farther along the ray $[o_Y,\xi]$, we obtain a point
$\xi_{t'}$ for which Case 2 applies.  Since $d(\xi_t,\xi_{t'})$ is
uniformly bounded, the corresponding shadows are comparable after changing
$R$ by a uniform amount, and all $\d_\psi$- and $\d_{\bar\psi}$-terms
change only by a uniform additive error.  Hence the estimate follows from
Case 2.

\medskip

The proof is complete.
\end{proof}

\begin{remark}\label{r1} When $\operatorname{rank} G=1$, the relatively Morse condition coincides
with geometric finiteness.  Moreover, after choosing the unit vector
$H_0\in\fa^+$, we identify $\fa$ with $\mathbb R H_0$ and take
$\psi(tH_0)=\delta_\Gamma t$.
Then
$\psi(\mu(g))=\delta_\Gamma d(o,go)$ and $\delta_\psi(\Gamma)=1$.
Since the opposition involution is trivial in rank one, $\bar\psi=\psi$; and
by Theorem \ref{thm.rankone_no_log}, we have
$ a_\psi(\mathsf P)=a_{\bar\psi}(\mathsf P)=0$.
Therefore Theorem \ref{thm.generalglobal} specializes to Theorem
\ref{thm.SVglobal}.  In this sense, Theorem \ref{thm.generalglobal} is the
higher-rank relatively Morse analogue of the global shadow lemma of
Stratmann--Velani \cite{SV_globalshadow}.  The condition
$\delta_\psi(\Gamma)=1$ in Theorem \ref{thm.generalglobal} is merely a
normalization of the Patterson--Sullivan parameter; in the rank-one
specialization above, it is achieved by the choice
$\psi(tH_0)=\delta_\Gamma t$.\end{remark}

\section{Local properties of Patterson-Sullivan measures}

In this section, we apply the global shadow lemma to local properties of
Patterson-Sullivan measures.  
As before, let $\Ga<G$ be
$\theta$-Morse relative to $\cal P$, with Morse embedding
$f:Y\to X$ of a Gromov model $(Y,d)$ for $(\Ga,\cal P)$.  Let
$\psi\in\fa_\theta^*$ be such that
\[
    \psi>0
    \quad\text{on } \L_f-\{0\}
    \quad \text{and} \quad 
    \delta_\psi(\Ga)=1.
\]
Let $\nu$ be a $(\Ga,\psi)$-Patterson-Sullivan measure on
$\La_\theta$.  Throughout this section, we also assume that
\[
\theta = \i (\theta) \quad \text{and} \quad 
    \psi=\psi\circ\i .
\]
Thus $\bar\psi=\psi$.

\subsection*{The visual quasi-metric}
We first define the higher-rank visual quasi-metric associated to $\psi$.
For distinct $\xi,\eta\in\La_\theta$, set
\begin{equation} \label{eqn.visualmetric}
    d_\psi(\xi,\eta)
    :=
    e^{-\psi(\cal G^\theta(\xi,\eta))},
\end{equation}
and put $d_\psi(\xi,\xi)=0$.  Note that this is
not the same object as the metric-like function $\d_\psi$ on $Y$. This function behaves like a metric: there
exists $c>0$ such that
\begin{equation} \label{eqn.trianglevisual}
    d_\psi(\xi,\eta)
    \le
    c\bigl(d_\psi(\xi,\zeta)+d_\psi(\zeta,\eta)\bigr)
\end{equation}
for all $\xi,\eta,\zeta\in\La_\theta$.  For Anosov groups, this was proved
in \cite[Proposition 5.3]{LO_invariant}; the same argument applies to a
general Morse embedding.

For $r>0$ and $\xi\in\La_\theta$, let
\[
    B_\psi(\xi,r)
    :=
    \{\eta\in\La_\theta:d_\psi(\xi,\eta)<r\}.
\]

We identify $\partial Y$ with $\La_\theta$ via the
$\Ga$-equivariant homeomorphism $f:\partial Y\to\La_\theta$. Since $\psi = \bar \psi$,
Proposition \ref{prop.shadowcompare} implies that, for all sufficiently
large $R>0$, there exist constants $c_1,c_2>0$ such that
\begin{equation} \label{eqn.compareshadowsandballs}
    B_\psi\bigl(\xi,c_1 e^{-\d_\psi(o_Y,\xi_t)}\bigr)
    \subset
    O_R(o_Y,\xi_t)
    \subset
    B_\psi\bigl(\xi,c_2 e^{-\d_\psi(o_Y,\xi_t)}\bigr)
\end{equation}
for all $\xi\in\La_\theta$ and all $t\ge 0$, where
$\xi_t\in[o_Y,\xi]$ is the point satisfying $d(o_Y,\xi_t)=t$.

For later use, we rewrite the global shadow lemma in a compact form.
If $x\in Y$ lies in a horoball based at a point of
$\Ga\xi_{\mathsf P}$, choose $\gamma_x\in\Ga$ so that
$\gamma_x o_Y$ is a closest orbit point to $x$, and set
\[
    h(x):=\d_\psi(\gamma_x o_Y,x),\quad
    \delta(x):=\delta_\psi(\mathsf P),\quad \text{and} \quad
    a(x):=a_\psi(\mathsf P).
\]

If $x$ lies outside the horoballs, we set
\[
    h(x)=0,\quad \delta(x)=0,\quad \text{and} \quad a(x)=0.
\]
By the hypothesis $\psi = \psi \circ \i$, the global shadow lemma, together with the ordinary shadow estimate in
the thick part, gives the uniform estimate
\begin{equation} \label{eqn.uniformshadowlocal}
    \nu(O_R(o_Y,x))
    \asymp
    e^{-\d_\psi(o_Y,x)}
    e^{(2\delta(x)-1)h(x)}
    \bigl(C_\psi+h_\psi(x)\bigr)^{a(x)} .
\end{equation}
The implied constants are independent of $x$.

\subsection*{Local doubling}

We first prove that $\nu$ is locally doubling with respect to the visual quasi-metric $d_\psi$.

\begin{theorem} \label{thm.doubling}
For every $L>1$, there exists $\varepsilon_L>0$ such that
\[
    \nu(B_\psi(\xi,r))
    \ge
    \varepsilon_L\,\nu(B_\psi(\xi,Lr))
\]
for all $\xi\in\La_\theta$ and all $r>0$ with $Lr\le 1$.
\end{theorem}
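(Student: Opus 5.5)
The plan is to convert balls into shadows via \eqref{eqn.compareshadowsandballs} and then compare the two shadows with the uniform estimate \eqref{eqn.uniformshadowlocal}.

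\textbf{Step 1: choose the shadow centers.} Fix $\xi\in\La_\theta$ and $r$ with $Lr\le 1$. By Lemma \ref{lem.QIpropsofmetriclike}(3), there are points $x=\xi_t$ and $y=\xi_s$ on $[o_Y,\xi]$ with
\[
e^{-\d_\psi(o_Y,x)}\approx r/c_1 \quad \text{and} \quad e^{-\d_\psi(o_Y,y)}\approx Lr/c_2,
\]
up to bounded multiplicative errors; if $Lr/c_2\ge 1$ we take $y=o_Y$. Then \eqref{eqn.compareshadowsandballs} gives
\[
O_R(o_Y,x)\subset B_\psi(\xi,r) \quad \text{and} \quad B_\psi(\xi,Lr)\subset O_R(o_Y,y),
\]
where the second inclusion is trivial if $y=o_Y$, after enlarging $R$ so that $O_R(o_Y,o_Y)=\partial Y$. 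Moreover $\d_\psi(o_Y,x)-\d_\psi(o_Y,y)=\log L+O(1)$. By Lemma \ref{lem.QIpropsofmetriclike}(1),(2), this means $d(x,y)\le D_L$ for a constant $D_L$ depending only on $L$, with $y$ lying before $x$ on the ray, up to a bounded error. It therefore suffices to show that
\[
\nu(O_R(o_Y,x))\ge \varepsilon_L\,\nu(O_R(o_Y,y))
\]
whenever $x,y\in[o_Y,\xi]$ satisfy $d(x,y)\le D_L$.

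\textbf{Step 2: compare the two shadows.} Apply \eqref{eqn.uniformshadowlocal} at both points.
\begin{itemize}
\item The first factor changes by $e^{-\d_\psi(o_Y,x)+\d_\psi(o_Y,y)}$, which is bounded below in terms of $L$ by Lemma \ref{lem.QIpropsofmetriclike}.
\item Since $d(x,y)\le D_L$ and $\Ga$ acts by isometries, the distances from $x$ and from $y$ to $\Ga o_Y$ differ by at most $D_L$. Lemma \ref{lem.cptcartan}, together with the triangle inequality for $\d_\psi$ and $\d_\psi(E,F)\approx$ closest-orbit-point values (Lemma \ref{lem.closestinhoroball}), then gives $|h(x)-h(y)|\le C_L$ in the case where $x$ and $y$ lie in the same horoball.
\item The polynomial factor $(C_\psi+h)^{a}$ is then comparable at $x$ and $y$.
\end{itemize}

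\textbf{Step 3: the main obstacle.} The difficulty is the case where $x$ and $y$ lie in different regions: one in a horoball $B$ and the other in the thick part or in a different horoball. Then $\delta(\cdot)$ and $a(\cdot)$ jump. I will handle this by noting that the segment between $x$ and $y$ has length at most $D_L$, so a point inside $B$ is within distance $D_L$ of $\partial B$. Its $h$-value is then bounded by a constant depending on $L$, because points of $\partial B$ are uniformly close to $\Ga o_Y$. With $h$ bounded, the factors $e^{(2\delta-1)h}$ and $(C_\psi+h)^a$ are bounded above and below by constants depending on $L$, and they can be absorbed into $\varepsilon_L$. Two distinct horoballs are treated the same way, since both points are near their respective boundaries.

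Combining Steps 1–3 gives
\[
\nu(B_\psi(\xi,r))\ge \nu(O_R(o_Y,x))\ge \varepsilon_L\,\nu(O_R(o_Y,y))\ge \varepsilon_L\,\nu(B_\psi(\xi,Lr)).
\]
The constant is uniform in $\xi$ and $r$ because every implied constant in \eqref{eqn.uniformshadowlocal} and \eqref{eqn.compareshadowsandballs} is uniform.
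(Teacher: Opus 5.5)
Your proof is correct. It follows the paper's overall structure: you sandwich the balls between shadows via \eqref{eqn.compareshadowsandballs}, apply \eqref{eqn.uniformshadowlocal} at both points, and handle the case where the two points lie in different horoballs (or one is in the thick part) by bounding $h$, exactly as the paper does. The genuine difference is the same-horoball case.

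\textbf{The paper's route.} It compares closest orbit points directly. If these coincide, it uses Lemma \ref{lem.DKO_additive}. If they sit near opposite ends of $[o_Y,\xi]\cap B_\eta$, it passes through the midpoint of that segment. There it uses Proposition \ref{prop.gromovproductalongparabolic} to show that both orbit points are at the same $\d_\psi$-distance from the midpoint.

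\textbf{Your route.} You replace $h$ by $\d_\psi(\Ga o_Y,\cdot)$. This function is coarsely Lipschitz with respect to $d$, by the coarse triangle inequality applied inside the infimum, so no subcase analysis is needed. Note that the relevant quantity is $\d_\psi(\Ga o_Y,\cdot)$, not $d(\Ga o_Y,\cdot)$.

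\textbf{Citation to fix.} The step $h\approx \d_\psi(\Ga o_Y,\cdot)$ should be justified by the second (``Equivalently'') form of Theorem \ref{thm.generalglobal}, not by Lemma \ref{lem.closestinhoroball}. That lemma, as stated, only covers points on the ray from $o_Y$ to $\xi_{\mathsf P}$, whereas your points lie on a segment crossing an arbitrary horoball. The parabolic symmetry that the paper invokes explicitly is hidden inside that equivalence.

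\textbf{What each route buys.} Yours is shorter and uniform across subcases. It also gives
\[
|h(x)-h(y)|\le \d_\psi(x,y)+O(1)=\log L+O(1),
\]
with the $O(1)$ independent of $L$, which is exactly what Theorem \ref{thm.localnonconcentration} later needs. The paper's version works directly with the closest-orbit-point form of the estimate and makes the role of the parabolic symmetry visible.

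\textbf{Small slip in Step 1.} The constants are swapped. For $O_R(o_Y,x)\subset B_\psi(\xi,r)$ you need $c_2e^{-\d_\psi(o_Y,x)}\le r$. For $B_\psi(\xi,Lr)\subset O_R(o_Y,y)$ you need $Lr\le c_1e^{-\d_\psi(o_Y,y)}$. The threshold for taking $y=o_Y$ should be adjusted accordingly.
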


\begin{proof}
Let $c_1,c_2>0$ be the constants in
\eqref{eqn.compareshadowsandballs}.  Choose $t\ge 0$ so that
\[
    c_2 e^{-\d_\psi(o_Y,\xi_t)}<r
\]
with $t$ minimal.  
Then
\eqref{eqn.compareshadowsandballs} gives
\[
    O_R(o_Y,\xi_t)\subset B_\psi(\xi,r).
\]
Similarly, choose $t_L\ge 0$ so that
\[
    Lr<c_1 e^{-\d_\psi(o_Y,\xi_{t_L})}
\]
with $t_L$ maximal. 
Then
\[
    B_\psi(\xi,Lr)\subset O_R(o_Y,\xi_{t_L}).
\]
Hence, by \eqref{eqn.uniformshadowlocal},
\[
\frac{\nu(B_\psi(\xi,r))}{\nu(B_\psi(\xi,Lr))}
\gg
\frac{
e^{-\d_\psi(o_Y,\xi_t)}
e^{(2\delta(\xi_t)-1)h_\psi(\xi_t)}
(C_\psi+h_\psi(\xi_t))^{a(\xi_t)}
}{
e^{-\d_\psi(o_Y,\xi_{t_L})}
e^{(2\delta(\xi_{t_L})-1)h_\psi(\xi_{t_L})}
(C_\psi+h_\psi(\xi_{t_L}))^{a(\xi_{t_L})}
}.
\]
By the choice of $t$ and $t_L$,
\[
    -\d_\psi(o_Y,\xi_t)\approx \log r
    \quad \text{and} \quad
    -\d_\psi(o_Y,\xi_{t_L})\approx \log L+\log r.
\]
This gives a lower bound for the factor $\frac{e^{-\d_{\psi}(o_Y, \xi_t)}}{e^{-\d_{\psi}(o_Y, \xi_{t_L})}}$ depending only on $L$. Note also that by Lemma \ref{lem.DKO_additive},  the above implies that $\d_{\psi}(\xi_t, \xi_{t_L}) \approx \log L$, and hence  $d(\xi_t, \xi_{t_L})$ is bounded above by a constant
depending only on $L$.

We now compare remaining factors. If $\xi_t$ and
$\xi_{t_L}$ lie in different horoballs, then, since the horoballs are
disjoint and the two points are within bounded distance of each other,
both points are within uniformly bounded distance, depending on $L$, of
$\Ga o_Y$.  Hence both $h_\psi(\xi_t)$ and $h_\psi(\xi_{t_L})$ are bounded
in terms of $L$, and therefore we obtain the desired estimate.

Now suppose that $\xi_t$ and $\xi_{t_L}$ lie in the same horoball
$B_\eta \in \mathcal{B}$, for some parabolic limit point $\eta \in \partial Y$. In particular, $\delta(\xi_t) = \delta(\xi_{t_L})$ and $a(\xi_t) = a(\xi_{t_L})$. Hence, it suffices to show that $|h_\psi(\xi_t)-h_\psi(\xi_{t_L})|$ is uniformly bounded from above by a constant determined by $L$.

If the closest orbit points are the same, then it follows from  Lemma
\ref{lem.DKO_additive} that
\[
    |h_\psi(\xi_t)-h_\psi(\xi_{t_L})|
\]
is bounded from above by a constant determined by $L$, since $d(\xi_t, \xi_{t_L})$ is  bounded from above by a constant depending on $L$.

The remaining possibility is that the closest orbit points are uniformly
close to opposite endpoints of the segment
\[
    [o_Y,\xi]\cap B_\eta .
\]
In this case, the midpoint $y$ of this segment lies between $\xi_t$ and
$\xi_{t_L}$. 
We then choose $\ga_L, \ga \in \Ga$ so that $\ga_L o_Y$ and $\ga o_Y$ are the closest orbit points to $\xi_{t_L}$ and $\xi_t$, respectively.  
Note that we may assume that $\ga_L^{-1} \ga \in \mathsf{P}$. 
Then as in the proof of Lemma \ref{lem.parabolictransmiddle}, it follows from Lemmas \ref{lem.braytiozzo} and 
    \ref{lem.gromovproductnearest} that $\d_{\psi}(\ga_L o_Y, y) \approx \psi ( \cal G^{\theta}( \xi_{\mathsf{P}}, \ga_L^{-1} \ga \zeta))$ and $\d_{\psi}(\ga o_Y, y) \approx \psi ( \cal G^{\theta}( \xi_{\mathsf{P}}, \ga^{-1} \ga_L \zeta))$ for some $\zeta \in \La_{\theta}$ such that $[\zeta, \xi_{\mathsf{P}}] \subset Y$ is uniformly close to $o_Y$. Here, the implied constant does not depend on $L$. By Proposition \ref{prop.gromovproductalongparabolic},
    $\psi ( \cal G^{\theta}( \xi_{\mathsf{P}}, \ga_L^{-1} \ga \zeta)) \approx \frac{1}{2}\psi (\mu(\ga_L^{-1} \ga)) = \frac{1}{2}\psi( \mu(\ga^{-1} \ga_L)) \approx \psi ( \cal G^{\theta}( \xi_{\mathsf{P}}, \ga^{-1} \ga_L \zeta))$ with the implied constant independent of $L$. Combining altogether, $\d_{\psi}(\ga_L o_Y, y) \approx \d_{\psi}(\ga o_Y, y)$.

    Now by Lemma \ref{lem.DKO_additive},
    $$\begin{aligned}
     h_\psi (\xi_{t_L}) + \d_{\psi}(\xi_{t_L}, y) & \approx \d_{\psi}(\ga_L o_Y, y) \\
     & \approx \d_{\psi}(\ga o_Y, y) \\
     & \approx h_\psi (\xi_t) + \d_{\psi}(\xi_{t}, y).
     \end{aligned}
    $$
    Hence,
    $$
    | h_\psi (\xi_{t_L}) - h_\psi (\xi_t)|  \approx | \d_{\psi}(\xi_{t_L}, y) - \d_{\psi}(\xi_{t}, y) |.
    $$
    Since $\d_{\psi}(\xi_{t_L}, \xi_t) \approx \d_{\psi}(\xi_{t_L}, y) + \d_{\psi}(\xi_{t}, y)$ by Lemma \ref{lem.DKO_additive} and $|\d_{\psi}(\xi_{t_L}, \xi_t)|$ is bounded from above by $\log L$ up to a uniform additive error, 
    $$| h_\psi (\xi_{t_L}) - h_\psi (\xi_t)|$$
    is bounded  by $\log L$ up to a uniform additive error as well.
 
Therefore, in any case, $| h_\psi (\xi_{t_L}) - h_\psi (\xi_t)|$ is uniformly bounded, and hence the ratio $\frac{\nu(B_\psi(\xi,r))}{\nu(B_\psi(\xi,Lr))}$ is  bounded below by a
positive constant depending only on $L$.
This proves the theorem.
\end{proof}

\subsection*{Local reverse doubling}

The next result is a local reverse doubling estimate: after shrinking the
radius by a sufficiently large factor, the mass drops by any prescribed
factor, uniformly in the center and the scale.

\begin{theorem} \label{thm.localnonconcentration}
For every $\kappa\ge 1$, there exists $L>1$ such that
\[
    \nu(B_\psi(\xi,r/L))
    \le
    \kappa^{-1}\nu(B_\psi(\xi,r))
\]
for all $\xi\in\La_\theta$ and all $0<r\le 1$.
\end{theorem}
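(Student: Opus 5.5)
We translate balls into shadows by \eqref{eqn.compareshadowsandballs} and then compare the two shadow measures using the uniform estimate \eqref{eqn.uniformshadowlocal}. The aim is to show that the logarithm of the shadow-measure function decreases at a definite linear rate along the ray.

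\emph{Reduction to shadows.} Fix $\xi\in\La_\theta$, $0<r\le 1$, and let $L>1$ be large, to be chosen. As in the proof of Theorem \ref{thm.doubling}, use Lemma \ref{lem.QIpropsofmetriclike}(3) to choose points $x=\xi_t$ and $x'=\xi_{t'}$ on $[o_Y,\xi]$ with
\[
    B_\psi(\xi,r/L)\subset O_R(o_Y,x') \quad\text{and}\quad O_R(o_Y,x)\subset B_\psi(\xi,r),
\]
such that $\d_\psi(o_Y,x)\approx -\log r$ and $\d_\psi(o_Y,x')\approx \log L-\log r$, with errors independent of $L$. Set $\Delta:=\d_\psi(x,x')$. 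By Lemma \ref{lem.QIpropsofmetriclike}(1), $\Delta\approx\log L$ and $t'\ge t$ once $L$ is large. Then \eqref{eqn.uniformshadowlocal} gives
\[
    \frac{\nu(B_\psi(\xi,r/L))}{\nu(B_\psi(\xi,r))}
    \ll
    e^{-\Delta}\,
    \frac{e^{(2\delta(x')-1)h(x')}\,(C_\psi+h(x'))^{a(x')}}{e^{(2\delta(x)-1)h(x)}\,(C_\psi+h(x))^{a(x)}} .
\]
It therefore suffices to show that the right-hand side is at most $C e^{-\epsilon\Delta}(1+\Delta)^{a}$ for uniform constants $C,\epsilon>0$ and $a=\max_{\mathsf P\in\cal P} a_\psi(\mathsf P)$. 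Choosing $L$ large then yields the factor $\kappa^{-1}$.

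\emph{Single horoball.} Suppose $x$ and $x'$ lie in the same horoball $B_\eta$, so $\delta(x)=\delta(x')=\delta<1$, where $0<\delta$ by Theorem \ref{thm.entropydrop} and $\delta<1$ as in the proof of Theorem \ref{thm.shadowatpararep}. The argument in the last part of the proof of Theorem \ref{thm.doubling} (splitting at the midpoint of $[o_Y,\xi]\cap B_\eta$ and using Lemmas \ref{lem.DKO_additive}, \ref{lem.braytiozzo}, \ref{lem.gromovproductnearest} and Proposition \ref{prop.gromovproductalongparabolic}) shows $|h(x')-h(x)|\le\Delta+O(1)$. Write $u:=h(x')-h(x)\in[-\Delta,\Delta]$ up to $O(1)$. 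The exponent is $-\Delta+(2\delta-1)u$. Since this is linear in $u$, it is maximized at an endpoint, giving at most $\max(-2\delta\Delta,\,(2\delta-2)\Delta)=-2\min(\delta,1-\delta)\Delta$. The polynomial ratio is at most $(C_\psi+\Delta)^{a}$, because $h(x')\le h(x)+\Delta+O(1)$ and $(C+h+\Delta)/(C+h)\le C+\Delta$. So $\epsilon=2\min_{\mathsf P}\min(\delta_\psi(\mathsf P),1-\delta_\psi(\mathsf P))>0$ works.

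\emph{General position.} Cut $[x,x']$ at its entry and exit points of the horoballs it meets. At those points $h\approx 0$ and $\delta=a=0$, so the shadow estimate there is just $e^{-\d_\psi(o_Y,\cdot)}$. On each thick subsegment the log-ratio is $-(\text{$\d_\psi$-length})+O(1)$. On each horoball subsegment the previous paragraph applies; the case where one endpoint is thick is the special case $h=0$ at that end. The log-ratios add, by Lemma \ref{lem.QIpropsofmetriclike}(1), to at most $-\epsilon\Delta$ plus errors.

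\emph{Main obstacle.} The additive $O(1)$ errors could accumulate over many cut points. I would control this by grouping: only cuts at horoballs whose traversed portion has $\d_\psi$-length larger than a fixed constant $M$ are kept, and short horoball crossings are absorbed into thick segments. There $h\le M+O(1)$, so these contribute only a bounded multiplicative error per unit of length, which the linear decay $e^{-\epsilon\Delta}$ dominates once $M$ is fixed relative to $\epsilon$. The number of kept cuts is at most $\Delta/M$, so the total error is $O(\Delta/M)$, again dominated by $\epsilon\Delta$ for large $M$. Likewise the product of polynomial factors is bounded by $\prod_i(C+\Delta_i)^{a}\le e^{O(\Delta/M)\log M}$, which is fine. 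Choosing $M$ and then $L$ large completes the proof.
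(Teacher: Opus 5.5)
Your proposal is correct. The reduction to shadows and the same-horoball case match the paper's argument: your rate $2\min(\delta,1-\delta)$ is the paper's $1-\sigma$, with $\sigma=\max_{\mathsf P}|2\delta_\psi(\mathsf P)-1|$.

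The two routes differ when $x$ and $x'$ do not lie in a common horoball. You telescope through the entry and exit points of every horoball crossed by $[x,x']$, and then need a grouping device to keep the additive errors from accumulating. The paper does not decompose at all. Since \eqref{eqn.uniformshadowlocal} is an explicit formula evaluated only at the two endpoints, it observes that $[x,x']$ leaves the horoball containing $x$ through a point uniformly close to $\Ga o_Y$ before entering the horoball containing $x'$. Lemma \ref{lem.DKO_additive} and \eqref{eqn.lbdpsi} then give $h(x)+h(x')\le \Delta+O(1)$. The correction ratio is therefore at least $e^{-\sigma(h(x)+h(x'))}\gg L^{-\sigma}$. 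This yields $\nu(B_\psi(\xi,r))/\nu(B_\psi(\xi,r/L))\gg L^{1-\sigma}(1+\log L)^{-a_0}$ in one step, without your $O(\Delta/M)$ loss in the exponent. Your ``main obstacle'' is thus an artifact of the decomposition.

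Your justification for absorbing short crossings (``a bounded multiplicative error per unit of length'') is also misdirected. Interior values of $h$ never enter the estimate. A piece $[y,y']$ with both endpoints on horoball boundaries contributes $-\d_\psi(y,y')+O(1)$, whatever it crosses. Only the end pieces at $x$ and $x'$ can carry an $O(M)$ loss. Read that way, your bookkeeping goes through. What it buys is finer but unneeded information: the decay runs at full rate across thick stretches and complete horoball traversals, with the reduced rate confined to the two partial end pieces.
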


\begin{proof}
We regard $L>1$ as a parameter to be chosen.  Let $c_1,c_2>0$ be the
constants in \eqref{eqn.compareshadowsandballs}.  Choose $t\ge 0$ so that
\[
    c_2 e^{-\d_\psi(o_Y,\xi_t)}<r
\]
with $t$ minimal. 
Then
\[
    O_R(o_Y,\xi_t)\subset B_\psi(\xi,r).
\]
Choose $t_L\ge 0$ so that
\[
    r/L<c_1 e^{-\d_\psi(o_Y,\xi_{t_L})}
\]
with $t_L$ maximal. 
Then
\[
    B_\psi(\xi,r/L)\subset O_R(o_Y,\xi_{t_L}).
\]
Using \eqref{eqn.uniformshadowlocal}, we obtain
\[
\begin{aligned}
\frac{\nu(B_\psi(\xi,r))}{\nu(B_\psi(\xi,r/L))}
&\gg
\frac{
e^{-\d_\psi(o_Y,\xi_t)}
}{
e^{-\d_\psi(o_Y,\xi_{t_L})}
}\cdot
\frac{
e^{(2\delta(\xi_t)-1)h_\psi(\xi_t)}
(C_\psi+h_\psi(\xi_t))^{a(\xi_t)}
}{
e^{(2\delta(\xi_{t_L})-1)h_\psi(\xi_{t_L})}
(C_\psi+h_\psi(\xi_{t_L}))^{a(\xi_{t_L})}
}.
\end{aligned}
\]
By the choice of $t$ and $t_L$,
\[
    -\d_\psi(o_Y,\xi_t)\approx \log r
    \quad \text{and} \quad
    -\d_\psi(o_Y,\xi_{t_L})\approx \log r-\log L,
\]
and hence
\[
    \frac{
    e^{-\d_\psi(o_Y,\xi_t)}
    }{
    e^{-\d_\psi(o_Y,\xi_{t_L})}
    }
    \asymp L.
\]

We now estimate the remaining factor from below.  Since
$0 < \delta_\psi(\mathsf P)<1$ for every $\mathsf P\in\cal P$ by Theorem
\ref{thm.entropydrop}, and since $\cal P$ is finite, there exists
$0 < \sigma<1$ such that
\[
    |2\delta_\psi(\mathsf P)-1|\le \sigma
    \quad\text{for all } \mathsf P\in\cal P.
\]
Let
\[
    a_0:=\max_{\mathsf P\in\cal P} a_\psi(\mathsf P).
\]

If $\xi_t$ and $\xi_{t_L}$ lie in different horoballs, then $h_\psi (\xi_t) + h_\psi (\xi_{t_L})$ is bounded above by $\log L$, up to a uniform additive error, by Lemmas
\ref{lem.closestinhoroball} and \ref{lem.DKO_additive}.  Hence
\[
\frac{
e^{(2\delta(\xi_t)-1)h_\psi(\xi_t)}
}{
e^{(2\delta(\xi_{t_L})-1)h_\psi(\xi_{t_L})}
}
\gg
L^{-\sigma}.
\]
We also have that 
$$
\frac{
(C_\psi+h_\psi(\xi_t))^{a(\xi_t)}
}{
(C_\psi+h_\psi(\xi_{t_L}))^{a(\xi_{t_L})}
} \gg (1+\log L)^{-a_0}.
$$
Therefore, combining altogether, we have 
\[
    \frac{\nu(B_\psi(\xi,r))}{\nu(B_\psi(\xi,r/L))}
    \gg
    L^{1-\sigma}(1+\log L)^{-a_0}.
\]

If $\xi_t$ and $\xi_{t_L}$ lie in the same horoball in $\mathcal{B}$, the same argument 
as in the proof of Theorem \ref{thm.doubling} gives that 
\[
    |h_\psi(\xi_t)-h_\psi(\xi_{t_L})|
\]
is bounded from above by $\log L$, up to a uniform additive error. Since $\delta(\xi_t) = \delta(\xi_{t_L})$ and $a(\xi_t) = a(\xi_{t_L})$ in this case, this implies that the same lower bound holds:
\[
    \frac{\nu(B_\psi(\xi,r))}{\nu(B_\psi(\xi,r/L))}
    \gg
    L^{1-\sigma}(1+\log L)^{-a_0}.
\]

Now in any case, since $\sigma<1$, the right-hand side tends to infinity as
$L\to\infty$.  We may therefore choose $L>1$ large enough so that
\[
    \frac{\nu(B_\psi(\xi,r))}{\nu(B_\psi(\xi,r/L))}
    \ge \kappa
\]
uniformly in $\xi$ and $r$.  This proves the theorem.
\end{proof}
\section{Hausdorff measure}

In this section, we characterize when Patterson-Sullivan measures are
Hausdorff measures, using the global shadow lemma we obtained.  As in the previous
sections, let $\Ga<G$ be $\theta$-Morse relative to $\cal P$, with Morse
embedding
\[
    f:Y\to X
\]
from a Gromov model $(Y,d)$ for $(\Ga,\cal P)$.  Let
$\psi\in\fa_\theta^*$ satisfy
\[
    \psi>0 \quad \text{on } \L_f-\{0\} \quad \text{and} \quad 
    \delta_\psi(\Ga)=1.
\]
Let $\nu$ be a $(\Ga,\psi)$-Patterson-Sullivan measure on
$\La_\theta$.  Throughout this section, we also assume that
\[
    \theta = \i(\theta) \quad \text{and} \quad \psi=\psi\circ\i .
\]

We equip $\La_\theta$ with the visual quasi-metric $d_\psi$ defined in
\eqref{eqn.visualmetric}.  Since $d_\psi$ satisfies the triangle inequality
up to a multiplicative constant, as in \eqref{eqn.trianglevisual}, the
Vitali covering lemma holds for $d_\psi$ by the standard proof; see, for
instance, \cite[Lemma 6.12]{LO_invariant}.

For $s>0$, $\varepsilon>0$, and $B\subset\La_\theta$, define
\[
    \cal H_{\psi,\varepsilon}^s(B)
    :=
    \inf
    \left\{
        \sum_i (\diam_\psi U_i)^s :
        B\subset\bigcup_i U_i,\ 
        \sup_i \diam_\psi U_i\le \varepsilon
    \right\},
\]
where $\diam_\psi U
    :=
    \sup_{\xi,\eta\in U} d_\psi(\xi,\eta)$.
Then
\[
    \cal H_\psi^s(B)
    :=
    \lim_{\varepsilon\to 0}
    \cal H_{\psi,\varepsilon}^s(B)
\]
defines an outer measure and hence a Borel measure on $\La_\theta$; see
\cite{Falconer_fractatl} and \cite[Appendix A]{DK_patterson}.  We call
$\cal H_\psi^s$ the $s$-dimensional Hausdorff measure associated to
$d_\psi$.  For $s=1$, we write simply
\[
    \cal H_\psi:=\cal H_\psi^1 .
\]

\begin{theorem} \label{thm.Hmeasure}
Suppose that, for every $\mathsf P\in\cal P$, one of the following holds:
\begin{enumerate}
    \item $\delta_\psi(\mathsf P)<1/2$;
    \item $\delta_\psi(\mathsf P)=1/2$ and $a_\psi(\mathsf P)=0$.
\end{enumerate}
Then $\nu$
is a positive multiple of $\cal H_\psi$.

\end{theorem}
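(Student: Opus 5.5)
The plan is to use the standard Rogers--Taylor/mass distribution argument: show that for every $\xi\in\La_\theta$ one has $\nu(B_\psi(\xi,r))\asymp r$ uniformly for $0<r\le 1$ (or at least that the lower and upper densities $\liminf/\limsup_{r\to0}\nu(B_\psi(\xi,r))/r$ are bounded above and below by positive constants for $\nu$-a.e. and in fact every $\xi$). Then $\nu$ and $\cal H_\psi$ are mutually absolutely continuous with bounded Radon--Nikodym derivative, via the Vitali covering lemma for $d_\psi$. Finally, ergodicity of $\nu$ under $\Ga$ combined with the fact that $\cal H_\psi$ is also $(\Ga,\psi)$-conformal will force the density to be constant. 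Uniqueness of the Patterson--Sullivan measure (Theorem \ref{thm.CZZ_PS}) gives the same conclusion directly once $\cal H_\psi$ is shown to be finite, nonzero, and $(\Ga,\psi)$-conformal.

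For the ball estimate, fix $\xi$ and $r$, and choose $t$ with $e^{-\d_\psi(o_Y,\xi_t)}\asymp r$, using Lemma \ref{lem.QIpropsofmetriclike}(3). By \eqref{eqn.compareshadowsandballs} and the local doubling Theorem \ref{thm.doubling} (to absorb the constants $c_1,c_2$), we get $\nu(B_\psi(\xi,r))\asymp \nu(O_R(o_Y,\xi_t))$. The uniform estimate \eqref{eqn.uniformshadowlocal} then gives
\[
\nu(B_\psi(\xi,r))\asymp r\, e^{(2\delta(\xi_t)-1)h(\xi_t)}\bigl(C_\psi+h(\xi_t)\bigr)^{a(\xi_t)}.
\]
Under hypothesis (1) the correction factor is $\le C$ uniformly, since $x\mapsto e^{-\epsilon x}(C+x)^a$ is bounded. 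Under hypothesis (2) it equals $C_\psi^0=1$ up to constants. So $\nu(B_\psi(\xi,r))\ll r$ for all $\xi, r$. For the lower bound, the correction factor may tend to $0$ in case (1) when $\xi_t$ is deep in a cusp. I would handle this with the standard Stratmann--Velani trick. For $\xi$ a conical (radial) point, there are arbitrarily small $r$ with $\xi_t$ in the thick part, where $h=0$, so $\limsup_{r\to0}\nu(B_\psi(\xi,r))/r\gg 1$. Parabolic points form a countable set and $\nu$ is atomless, so this holds $\nu$-a.e. In case (2) we even have $\nu(B)\asymp r$ everywhere.

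Next I would apply the density theorems for the doubling-type quasi-metric space (Vitali). An upper bound $\nu(B(\xi,r))\ll r$ for all $\xi$ gives $\nu\ll \cal H_\psi$ with $\nu(E)\ll\cal H_\psi(E)$. An upper density bound $\limsup \nu(B(\xi,r))/r\gg1$ on a set $E$ gives $\cal H_\psi(E)\ll\nu(E)$. The latter requires care: $\cal H_\psi$ of the countable parabolic set is $0$, and the conical set has full $\cal H_\psi$-measure once we know $\cal H_\psi(\La_\theta)<\infty$, which itself follows from the same covering argument applied to $E=$ conical points plus countability. Hence $\nu\asymp\cal H_\psi$ with bounded density $\phi$.

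To finish, I would verify that $\cal H_\psi$ is $(\Ga,\psi)$-conformal. This needs $d_\psi(\ga\xi,\ga\eta)$ to equal $d_\psi(\xi,\eta)$ times $e^{-\frac12\psi(\beta_\xi(\ga^{-1},e)+\beta_\eta(\ga^{-1},e))}$, using $\psi=\psi\circ\i$, which is the standard transformation law for $\cal G^\theta$. Continuity then makes the local scaling factor $e^{\psi(\beta^\theta_\xi(e,\ga^{-1}))}$, with exponent $1$ since $s=1$. Normalized, $\cal H_\psi$ is therefore a $(\Ga,\psi)$-Patterson--Sullivan measure supported on $\La_\theta$, and Theorem \ref{thm.CZZ_PS} yields $\nu=c\,\cal H_\psi$. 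I expect the main obstacle to be the lower bound in case (1) near parabolic points, i.e. justifying the density comparison using only upper densities at conical points, together with checking the conformality of $\cal H_\psi$ for the quasi-metric (which is only coarsely a metric, so the scaling has to be handled via limits at small scales).
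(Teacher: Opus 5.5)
Your proposal is correct and follows essentially the same route as the paper. The global shadow lemma gives $\nu(B_\psi(\xi,r))\ll r$ everywhere, since the cusp correction is bounded under either hypothesis, and returns of the ray $[o_Y,\xi]$ near the orbit at conical $\xi$ give $\nu(B_\psi(\xi,r_i))\gg r_i$ along some $r_i\to 0$. A cover argument plus Vitali then yields $0<\cal H_\psi(\La_\theta)<\infty$, using that the countable parabolic set is $\cal H_\psi$-null, and conformality of $\cal H_\psi$ together with uniqueness of the Patterson--Sullivan measure finishes the proof. Your detours through local doubling (to get a two-sided ball/shadow comparison) and through ergodicity are unnecessary, because only the one-sided inclusions in \eqref{eqn.compareshadowsandballs} are used.
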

\begin{remark}
This theorem generalizes Sullivan's Hausdorff-measure criterion for
geometrically finite Kleinian groups \cite{Sullivan1984entropy}. In the real hyperbolic case,  if $\mathsf P$ is a rank-$k$ parabolic subgroup, then its critical exponent is 
$\delta_{\mathsf P}=k/2$, and after the normalization
$\delta_\psi(\Gamma)=1$ we have
\[
    \delta_\psi(\mathsf P)=\frac{\delta_{\mathsf P}}{\delta_\Gamma}.
\]
Thus the classical
condition $k\le\delta_\Gamma$
 is exactly 
condition $\delta_\psi(\mathsf P)\le 1/2$. Recalling also that $a_\psi(\mathsf P)$ is always $0$ in rank one, Theorem \ref{thm.Hmeasure} may therefore be viewed as a higher-rank generalization of Sullivan's criterion on Patterson-Sullivan measures to be Hausdorff measures.

Recall also that Anosov groups are special cases of relatively Morse groups, with trivial peripheral subgroups, and hence Theorem \ref{thm.Hmeasure} generalizes \cite[Theorem 1.1]{DKO_AR}.
\end{remark}

\begin{remark}
    We also note that the hypothesis $\psi = \psi \circ \i$ is necessary, as in the Anosov case \cite[Theorem 1.3]{DKO_AR}. Indeed, although $\psi \neq \bar \psi$, Lemma \ref{lem.gromovproductnearest} implies that the identity map between $(\La_{\theta}, d_{\psi})$ and $(\La_{\theta}, d_{\bar \psi})$ is bi-Lipschitz, and hence their Hausdorff measures are mutually absolutely continuous to each other.  On the other hand, associated Patterson-Sullivan measures are singular \cite{kim2024conformal}. Hence, when 
    the Patterson-Sullivan measure for $\bar \psi$ is the Hausdorff measure for $(\La_{\theta}, d_{\bar \psi})$ as in Theorem \ref{thm.Hmeasure}, the  
Patterson-Sullivan measure for $\psi$ cannot be the Hausdorff measure for $(\La_{\theta}, d_{\psi})$.
\end{remark}

The rest of this section is devoted to the proof of Theorem
\ref{thm.Hmeasure}. Recall the hypothesis that $\delta_\psi(\Ga)=1$. First, the Hausdorff measure $\cal H_\psi$ has the
same conformality rule as the Patterson-Sullivan measure: for
$\gamma\in\Ga$,
\[
    \frac{d\gamma_*\cal H_\psi}{d\cal H_\psi}(\xi)
    =
    e^{\psi(\beta_\xi^\theta(e,\gamma))}.
\]
This was proved in \cite[Lemma 9.7]{DKO_AR} for Anosov subgroups, and the
same proof applies in the present relatively Morse setting.  Therefore,
by the uniqueness of the $(\Ga,\psi)$-Patterson-Sullivan measure (Theorem \ref{thm.CZZ_PS}), it suffices to prove that
\[
    0<\cal H_\psi(\La_\theta)<\infty.
\]

\subsection*{Finite positive Hausdorff measure}

We first establish local upper and lower estimates for $\nu$ with respect
to the visual quasi-metric. 
We identify $\partial Y$ with $\La_\theta$ via
$f:\partial Y\to\La_\theta$, and denote by $\La_{\theta}^{\rm con}$ the $f$-image of the conical limit set in $\partial Y$.

\begin{lemma} \label{lem.Hmeasurelocalsize}
Assume that, for every $\mathsf P\in\cal P$, either
$\delta_\psi(\mathsf P)<1/2$, or
$\delta_\psi(\mathsf P)=1/2$ and $a_\psi(\mathsf P)=0$.  Then there exists
$C>1$ such that:
\begin{enumerate}
    \item for every $\xi\in\La_\theta$ and every $r>0$,
    \[
        \nu(B_\psi(\xi,r))\le Cr;
    \]

    \item for every conical limit point
    $\xi\in\La_\theta^{\rm con}$, there exists a sequence
    $r_i\to 0$ such that
    \[
        \nu(B_\psi(\xi,r_i))\ge C^{-1}r_i
        \quad\text{for all } i.
    \]
\end{enumerate}
\end{lemma}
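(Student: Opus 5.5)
We will translate balls into shadows via \eqref{eqn.compareshadowsandballs} and then read off the size of the shadows from the uniform estimate \eqref{eqn.uniformshadowlocal}.

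\textbf{Part (1).} Given $\xi\in\La_\theta$ and $r>0$, we may assume $r$ is small; otherwise $\nu(B_\psi(\xi,r))\le 1\ll r$. Using Lemma \ref{lem.QIpropsofmetriclike}(3), choose $t\ge0$ on the ray $[o_Y,\xi]$ such that $c_1e^{-\d_\psi(o_Y,\xi_t)}$ is comparable to $r$ and at least $r$; this fixes $e^{-\d_\psi(o_Y,\xi_t)}\asymp r$. Then $B_\psi(\xi,r)\subset O_R(o_Y,\xi_t)$, and \eqref{eqn.uniformshadowlocal} gives
\[
\nu(B_\psi(\xi,r))\ll r\, e^{(2\delta(\xi_t)-1)h(\xi_t)}\bigl(C_\psi+h(\xi_t)\bigr)^{a(\xi_t)} .
\]
It therefore suffices to bound the correction factor uniformly. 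If $\xi_t$ lies in the thick part, the factor equals $1$. If $\xi_t$ lies in a horoball of type $\mathsf P$ with $\delta_\psi(\mathsf P)<1/2$, the factor is $e^{-(1-2\delta)h}(C_\psi+h)^{a}$. Since $h\ge -c$ by \eqref{eqn.lbdpsi}, this is bounded uniformly in $h$, because exponential decay beats polynomial growth. If $\delta_\psi(\mathsf P)=1/2$ and $a_\psi(\mathsf P)=0$, the factor is identically $1$. Since $\cal P$ is finite, we obtain a single constant $C$. The only point needing care is that $h$ stays bounded below, so that $C_\psi+h$ stays positive; this holds by the choice of $C_\psi$ in \eqref{eqn.C_barpsi}.

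\textbf{Part (2).} Let $\xi$ be conical. Then there are $\gamma_n\in\Ga$ and $R_1>0$ with $\xi\in O_{R_1}(o_Y,\gamma_no_Y)$ and $\gamma_n o_Y\to\xi$. Choose $t_n$ with $d(\xi_{t_n},\gamma_no_Y)<R_1+O(\delta)$. Then each $\xi_{t_n}$ is within bounded distance of the orbit, so $h(\xi_{t_n})$ is uniformly bounded: it is either $0$ in the thick part or $\d_\psi(\gamma_x o_Y,\xi_{t_n})=O(1)$ by Lemma \ref{lem.QIpropsofmetriclike}(2), and $\xi_{t_n}$ lies within a uniform distance of the orbit. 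Hence \eqref{eqn.uniformshadowlocal} yields
\[
\nu(O_R(o_Y,\xi_{t_n}))\gg e^{-\d_\psi(o_Y,\xi_{t_n})},
\]
with $R$ large enough that the estimate applies. Set $r_n:=c_2e^{-\d_\psi(o_Y,\xi_{t_n})}$. Then \eqref{eqn.compareshadowsandballs} gives $O_R(o_Y,\xi_{t_n})\subset B_\psi(\xi,r_n)$, so $\nu(B_\psi(\xi,r_n))\gg r_n$. Finally, $r_n\to0$ because $t_n\to\infty$ and $\d_\psi(o_Y,\xi_{t_n})\ge a t_n-b$.

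\textbf{Main obstacle.} The step that needs the most care is the uniform bound on the cusp correction factor in (1), together with the borderline case $\delta_\psi(\mathsf P)=1/2$. In that case the factor is exactly $(C_\psi+h)^{a}$, so the assumption $a_\psi(\mathsf P)=0$ is precisely what is needed to keep it bounded. We also have to check that \eqref{eqn.uniformshadowlocal} is being applied to points $\xi_t$ on the ray $[o_Y,\xi]$ itself, which is the setting in which it holds by Theorem \ref{thm.generalglobal}.
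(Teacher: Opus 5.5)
Your proposal is correct and follows the paper's proof essentially step for step. In (1) you use the hypothesis on $\delta_\psi(\mathsf P)$ and $a_\psi(\mathsf P)$ to bound the cusp correction factor in \eqref{eqn.uniformshadowlocal} uniformly, and you place $B_\psi(\xi,r)$ inside a shadow of comparable size. In (2) you choose points on $[o_Y,\xi]$ near orbit points and compare shadows with balls via \eqref{eqn.compareshadowsandballs}. The only cosmetic difference is in (2): you read off $\nu(O_R(o_Y,\xi_{t_n}))\asymp e^{-\d_\psi(o_Y,\xi_{t_n})}$ from \eqref{eqn.uniformshadowlocal} with $h$ bounded, while the paper cites the ordinary orbit-shadow lemma together with Lemma \ref{lem.cptcartan}, which amounts to the same thing.
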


\begin{proof}

We first prove the upper bound.  Let $R>0$ be large enough so that the
shadow-ball compatibility \eqref{eqn.compareshadowsandballs} and the global
shadow lemma hold.  By the hypothesis on the parabolic subgroups, the cusp
correction factor in the global shadow lemma is uniformly bounded above.
Indeed, if $x$ lies in a horoball associated to $\mathsf P$, then the
correction factor is
\[
    e^{(2\delta_\psi(\mathsf P)-1)\d_\psi(\Ga o_Y,x)}
    \bigl(C_\psi+\d_\psi(\Ga o_Y,x)\bigr)^{a_\psi(\mathsf P)}.
\]
This is uniformly bounded when
$\delta_\psi(\mathsf P)<1/2$, and also when
$\delta_\psi(\mathsf P)=1/2$ and $a_\psi(\mathsf P)=0$.  In the thick
part, the usual shadow estimate gives the same conclusion.  Hence
\[
    \nu(O_R(o_Y,x))
    \ll
    e^{-\d_\psi(o_Y,x)} \quad\text{uniformly for all $x\in Y$.}
\]

Now fix $\xi\in\La_\theta$ and $0<r\le 1$.  Choose
$x\in[o_Y,\xi]$ so that $ e^{-\d_\psi(o_Y,x)}\asymp r$
and $B_\psi(\xi,r)\subset O_R(o_Y,x)$,
which is possible by Lemma \ref{lem.QIpropsofmetriclike} and 
\eqref{eqn.compareshadowsandballs}, after changing
the implicit constants.  Then
\[
    \nu(B_\psi(\xi,r))
    \le
    \nu(O_R(o_Y,x))
    \ll
    e^{-\d_\psi(o_Y,x)}
    \asymp r.
\]
After increasing the constant, the same bound holds for all $r>0$, since
$\nu$ is a probability measure.

We now prove the lower bound at conical limit points.  Let
$\xi\in\La_\theta^{\rm con}$.  By conicality, there exist
$D>0$ and a sequence $\gamma_i\in\Ga$ with
\[
    d(\gamma_i o_Y,[o_Y,\xi])\le D
    \quad \text{and} \quad
    d(o_Y,\gamma_i o_Y)\to\infty .
\]
Choose $x_i\in[o_Y,\xi]$ with $d(x_i,\gamma_i o_Y)\le D$.  Then
\[
    \d_\psi(o_Y,x_i)\to\infty .
\]
By the ordinary orbit-shadow lemma, together with Lemma
\ref{lem.cptcartan},
\[
    \nu(O_R(o_Y,x_i))
    \asymp
    e^{-\d_\psi(o_Y,x_i)}.
\]
Using \eqref{eqn.compareshadowsandballs}, choose $r_i\asymp e^{-\d_\psi(o_Y,x_i)}$
so that
\[
    O_R(o_Y,x_i)\subset B_\psi(\xi,r_i).
\]
Then $r_i\to 0$ and
\[
    \nu(B_\psi(\xi,r_i))
    \ge
    \nu(O_R(o_Y,x_i))
    \gg
    e^{-\d_\psi(o_Y,x_i)}
    \asymp r_i.
\]
This proves the lemma.
\end{proof}

Now the following finishes the proof of Theorem \ref{thm.Hmeasure}.

\begin{proposition} \label{prop.Hmeasurepositivefinite}
Assume that, for every $\mathsf P\in\cal P$, either
$\delta_\psi(\mathsf P)<1/2$, or
$\delta_\psi(\mathsf P)=1/2$ and $a_\psi(\mathsf P)=0$.  Then
\[
    0<\cal H_\psi(\La_\theta)<\infty .
\]
\end{proposition}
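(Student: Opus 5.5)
The plan is to run the standard mass distribution/density argument, using the two local estimates of Lemma \ref{lem.Hmeasurelocalsize} together with the Vitali covering lemma for the quasi-metric $d_\psi$. The argument splits into an upper bound, which uses conical limit points and the lower density estimate, and a lower bound, which uses the uniform upper estimate $\nu(B_\psi(\xi,r))\le Cr$.

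\emph{Lower bound $\cal H_\psi(\La_\theta)>0$.} Let $\{U_i\}$ be any cover of $\La_\theta$ with $\diam_\psi U_i\le\varepsilon$, and discard the empty sets. Pick $\xi_i\in U_i$. By \eqref{eqn.trianglevisual}, each $U_i$ lies in the ball $B_\psi(\xi_i, 2c\,\diam_\psi U_i)$; if $\diam_\psi U_i=0$, then $U_i$ is a single point and has $\nu$-measure zero, since $\nu$ is atomless by Theorem \ref{thm.CZZ_PS}. Lemma \ref{lem.Hmeasurelocalsize}(1) then gives
\[
1=\nu(\La_\theta)\le \sum_i \nu(U_i)\le 2cC\sum_i \diam_\psi U_i .
\]
Therefore $\cal H_{\psi,\varepsilon}(\La_\theta)\ge (2cC)^{-1}$ for every $\varepsilon$, and hence $\cal H_\psi(\La_\theta)>0$.

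\emph{Upper bound $\cal H_\psi(\La_\theta)<\infty$.} The limit set splits as the conical points plus the countable set of parabolic points $\Ga\xi_{\mathsf P}$. A countable set has $\cal H_\psi$-measure zero, since each point can be covered by a set of arbitrarily small diameter. It therefore suffices to show $\cal H_\psi(\La_\theta^{\rm con})<\infty$.

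Fix $\varepsilon>0$. For each conical $\xi$, Lemma \ref{lem.Hmeasurelocalsize}(2) supplies a radius $r_\xi\le\varepsilon$ with $\nu(B_\psi(\xi,r_\xi))\ge C^{-1}r_\xi$. The radii are bounded, so the Vitali covering lemma for $d_\psi$ applies. It gives a countable disjoint subfamily $B_\psi(\xi_j,r_j)$ such that the enlarged balls $B_\psi(\xi_j,Kr_j)$ cover $\La_\theta^{\rm con}$, where $K$ depends only on $c$. The enlarged balls have diameter at most $2cKr_j$. Using disjointness,
\[
\cal H_{\psi,2cK\varepsilon}(\La_\theta^{\rm con})\le \sum_j 2cKr_j\le 2cKC\sum_j\nu(B_\psi(\xi_j,r_j))\le 2cKC .
\]
Letting $\varepsilon\to0$ gives $\cal H_\psi(\La_\theta^{\rm con})\le 2cKC<\infty$.

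The main obstacle is the covering step. The Vitali lemma must be used in the quasi-metric form, and countability of the selected family must be checked. Countability comes from disjointness together with the fact that $\nu$ gives positive mass to each selected ball, so $\La_\theta$ being separable is not strictly required.

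Combining the two bounds gives $0<\cal H_\psi(\La_\theta)<\infty$. Together with the conformality of $\cal H_\psi$ and the uniqueness in Theorem \ref{thm.CZZ_PS}, this completes the proof of Theorem \ref{thm.Hmeasure}.
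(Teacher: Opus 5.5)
Your proposal is correct and follows essentially the same route as the paper. Positivity comes from the bound $\nu(B_\psi(\xi,r))\le Cr$ of Lemma~\ref{lem.Hmeasurelocalsize}(1) applied to balls containing the covering sets, and finiteness comes from discarding the countable parabolic set and applying the Vitali covering lemma to the balls at conical points from Lemma~\ref{lem.Hmeasurelocalsize}(2). The only cosmetic difference is that the paper puts $U_i$ inside a ball of radius $\rho_i>\diam_\psi U_i$ directly, which needs neither the quasi-triangle inequality nor atomlessness, whereas you use radius $2c\,\diam_\psi U_i$ and treat diameter-zero sets separately.
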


\begin{proof}
We first prove positivity.  Fix $\varepsilon>0$ and let
$\{U_i\}_{i\in\mathbb N}$ be a countable cover of $\La_\theta$ with
$\diam_\psi U_i\le\varepsilon$
    for all $i$.  For each $i$, choose 
    $\xi_i\in U_i$ and 
$\rho_i>\diam_\psi U_i$ such that
\[
    \sum_i \rho_i
    \le
    \varepsilon + \sum_i \diam_\psi U_i.
\]
Then
\[
    U_i\subset B_\psi(\xi_i,\rho_i).
\]
By Lemma \ref{lem.Hmeasurelocalsize},
\[
    1
    =
    \nu(\La_\theta)
    \le
    \sum_i \nu(B_\psi(\xi_i,\rho_i))
    \le
    C\sum_i \rho_i
    \le
    C\left(\varepsilon + \sum_i \diam_\psi U_i \right).
\]
Taking the infimum over all such covers and then letting
$\varepsilon\to 0$, we obtain
\[
    \cal H_\psi(\La_\theta)>0.
\]

We now prove finiteness.  Since the set of parabolic limit points is
countable, it has $\cal H_\psi$-measure zero.  It therefore suffices to
show that
$\cal H_\psi(\La_\theta^{\rm con})<\infty$.
Fix $\varepsilon>0$.  By Lemma \ref{lem.Hmeasurelocalsize}, for every
$\xi\in\La_\theta^{\rm con}$ we may choose
$0<r_\xi<\varepsilon$ such that
$\nu(B_\psi(\xi,r_\xi))\ge C^{-1}r_\xi$.
Applying the Vitali covering lemma to the family
$\{B_\psi(\xi,r_\xi):\xi\in\La_\theta^{\rm con}\}$, 
there exists a countable disjoint subcollection $\{B_\psi(\xi_n,r_n):n\in\mathbb N\}$
such that
\[
    \La_\theta^{\rm con}
    \subset
    \bigcup_n B_\psi(\xi_n,\lambda r_n)
\]
for some uniform constant $\lambda>1$.  Since $d_\psi$ satisfies the
triangle inequality up to a multiplicative constant \eqref{eqn.trianglevisual}, there exists
$D>0$ such that
\[
    \diam_\psi B_\psi(\xi_n,\lambda r_n)
    \le
    D r_n \quad\text{for all $n$.}
\]
  Hence
\[
\begin{aligned}
    \cal H_{\psi,D\varepsilon}(\La_\theta^{\rm con})
    \le
    \sum_n D r_n  \le
    DC\sum_n \nu(B_\psi(\xi_n,r_n))  \le
    DC\,\nu(\La_\theta).
\end{aligned}
\]
Since $\varepsilon>0$ is arbitrary, this proves
$\cal H_\psi(\La_\theta^{\rm con})<\infty$. This completes the proof.
\end{proof}

\bibliographystyle{plain} 
\bibliography{GS}

\end{document}